\numberwithin{equation}{section}
\newcommand*{\centerfloat}{%
  \parindent \z@
  \leftskip \z@ \@plus 1fil \@minus \textwidth
  \rightskip\leftskip
  \parfillskip \z@skip}
\newcounter{ctr}
\theoremstyle{plain}
\newtheorem{theorem}{Theorem}[section]
\newtheorem{lemma}[theorem]{Lemma}
\newtheorem{corollary}[theorem]{Corollary}
\newtheorem{proposition}[theorem]{Proposition}
\newtheorem{conjecture}[theorem]{Conjecture}
\theoremstyle{definition}
\newtheorem{definition}[theorem]{Definition}
\newtheorem{remark}[theorem]{Remark}
\newtheorem{example}[theorem]{Example}
\newcommand{\ignore}[1]{}
\newcommand{\bb}{\ensuremath{\mathbf{H}}}
\newcommand{\CC}{\ensuremath{\mathbb{C}}}
\newcommand{\End}{\text{\rm End}}
\newcommand{\HH}{\ensuremath{H}}
\renewcommand{\hom}{\text{\rm Hom}}
\DeclareMathOperator{\im}{Image}
\renewcommand{\L}{\ensuremath{\mathscr{L}}}
\renewcommand{\O}{\ensuremath{\mathcal{O}}}
\newcommand{\p}{\ensuremath{\mathfrak{p}}}
\newcommand{\QQ}{\ensuremath{\mathbb{Q}}}
\newcommand{\NN}{\ensuremath{\mathbb{N}}}
\newcommand{\sgn}{\text{\rm sgn}}
\newcommand{\fs}{\ensuremath{\mathfrak{s}}}
\newcommand{\fn}{\ensuremath{\mathfrak{n}}}
\newcommand{\ZZ}{\ensuremath{\mathbb{Z}}}
\newcommand{\be}{\begin{equation}}
\newcommand{\ee}{\end{equation}}
\renewcommand{\SS}{\ensuremath{\mathcal{S}}}
\newcommand{\tsr}{\ensuremath{\otimes}}
\newcommand{\eS}{\widehat{\SS}}
\newcommand{\gd}{\ensuremath{\trianglerighteq}} 
\newcommand{\sh}{\text{\rm shape}}
\newcommand{\kskew}{\ensuremath{k\text{\rm -skew}}}
\DeclareMathOperator{\inside}{inside}
\DeclareMathOperator{\outside}{outside}
\DeclareMathOperator{\upp}{up}
\DeclareMathOperator{\down}{down}
\DeclareMathOperator{\chainup}{top}
\DeclareMathOperator{\chaindown}{bot}
\DeclareMathOperator{\downpath}{downpath}
\DeclareMathOperator{\uppath}{uppath}
\DeclareMathOperator{\bpath}{path}
\DeclareMathOperator{\cvr}{cover}
\DeclareMathOperator{\cover}{\textsc{cover}}
\DeclareMathOperator{\Par}{Par}
\DeclareMathOperator{\tpar}{\widetilde{Par}}
\DeclareMathOperator{\spin}{spin}
\DeclareMathOperator{\Span}{span}
\DeclareMathOperator{\bounce}{bounce}
\DeclareMathOperator{\markz}{marks}
\DeclareMathOperator{\Gr}{Gr}
\DeclareMathOperator{\chr}{ch}
\newcommand{\eroot}[1]{\ensuremath{\varepsilon_{#1}}}
\newcommand{\mynone}{~}
\newcommand{\charge}{\text{charge}}
\DeclareMathOperator{\sort}{sort}
\def\Tiny{\fontsize{6pt}{6pt}\selectfont}
\newcommand{\crc}[1]{#1\star}
\newcommand{\core}{\ensuremath{\mathfrak{c}}}
\newcommand{\tL}{\ensuremath{\tilde{L}}}
\newcommand{\beperp}{\ensuremath{L}}
\newcommand{\VSMT}{\text{\rm VSMT}}
\newcommand{\SMT}{\text{\rm SMT}}
\newcommand{\RI}{\ensuremath{(\Psi,\gamma)}}
\newlength{\mycellsize}
\newcommand\mytbl[1]{
\vcenter{
\let\\=\cr
\baselineskip=-16000pt \lineskiplimit=16000pt \lineskip=0pt
\halign{&\mytblcell{##}\cr#1\crcr}}}
\newcommand{\mytblcell}[1]{{%
\def \arg{#1}\def \void{}%
\ifx \void \arg
\vbox to \mycellsize{\vfil \hrule width \mycellsize height 0pt}%
\else \unitlength=\mycellsize
\begin{picture}(1,1)
\put(0,0){\makebox(1,1){$#1\vphantom{\crc{#1}}$}}
\put(0,0){\line(1,0){1}}
\put(0,1){\line(1,0){1}}
\put(0,0){\line(0,1){1}}
\put(1,0){\line(0,1){1}}
\end{picture}%
\fi}}
\newcommand{\skl}[3]{\ensuremath{#1 \xrightarrow{#2} #3}}
\newlength{\cellsize}
\newcommand\mytableau[1]{
\vcenter{
\let\\=\cr
\baselineskip=-16000pt \lineskiplimit=16000pt \lineskip=0pt
\halign{&\mytableaucell{##}\cr#1\crcr}}}
\newcommand{\mytableaucell}[1]{{%
\def \arg{#1}\def \void{}%
\ifx \void \arg
\vbox to \cellsize{\vfil \hrule width \cellsize height 0pt}%
\else \unitlength=\cellsize
\begin{picture}(1,1)
\put(0,0){\makebox(1,1){$#1\vphantom{\crc{#1}}$}}
\put(0,0){\line(1,0){1}}
\put(0,1){\line(1,0){1}}
\put(0,0){\line(0,1){1}}
\put(1,0){\line(0,1){1}}
\end{picture}%
\fi}}
\newcommand\boldtableau[1]{
\vcenter{
\let\\=\cr
\baselineskip=-16000pt \lineskiplimit=16000pt \lineskip=0pt
\halign{&\boldtableaucell{##}\cr#1\crcr}}}
\newcommand{\boldtableaucell}[1]{{%
\def \arg{#1}\def \void{}%
\ifx \void \arg
\vbox to \cellsize{\vfil \hrule width \cellsize height 0pt}%
\else \unitlength=\cellsize
\begin{picture}(1,1)
\put(0,0){\makebox(1,1){$\mathbf{#1\vphantom{\crc{#1}}}$}}
\put(0,0){\line(1,0){1}}
\put(0,1){\line(1,0){1}}
\put(0,0){\line(0,1){1}}
\put(1,0){\line(0,1){1}}
\end{picture}%
\fi}}
\title{Catalan functions and $k$-Schur positivity}
\keywords{$k$-Schur functions, Schur positivity, branching rule,  spin, strong tableaux, generalized Kostka polynomials}
\begin{document}

\author{Jonah Blasiak}
\address{Department of Mathematics, Drexel University, Philadelphia, PA 19104}
\email{jblasiak@gmail.com}

\author{Jennifer Morse}
\address{Department of Math, University of Virginia, Charlottesville, VA 22904}
\email{morsej@virginia.edu}

\author{Anna Pun}
\address{Department of Mathematics, Drexel University, Philadelphia, PA 19104}
\email{annapunying@gmail.com}

\author{Daniel Summers}
\address{Department of Mathematics, Drexel University, Philadelphia, PA 19104}
\email{danielsummers72@gmail.com}

\thanks{Authors were supported by NSF Grants DMS-1600391 (J.~B.)
and DMS-1833333  (J.~M.). }

\begin{abstract}
We prove that graded $k$-Schur functions are
 $G$-equivariant Euler characteristics of vector bundles
on the flag variety, settling a conjecture of Chen-Haiman.
We expose a new miraculous shift invariance property of the graded $k$-Schur functions and resolve
the Schur positivity and $k$-branching conjectures
in the strongest possible terms by providing direct combinatorial formulas using strong marked tableaux.
\end{abstract}
\maketitle

\vspace{-2mm}
\section{Introduction}

We resolve conjectures made in~\cite{LLM}, \cite{ChenThesis}, and \cite{LLMSMemoirs2} which were inspired by
problems on Macdonald polynomials.
These remarkable polynomials
form a basis for the ring of symmetric functions over the field  $\QQ(q,t)$.
Their study over the last three decades has generated
an impressive body of research, a prominent focus being the
Macdonald positivity conjecture:
the Schur expansion coefficients
of the (Garsia)
modified Macdonald polynomials $H_\mu(\mathbf{x};q,t)$ lie in  $\mathbb{N}[q,t]$.
This was proved by Haiman  \cite{Haiman1} using geometry of Hilbert schemes, yet
many questions arising in this study remain unanswered.

Lapointe, Lascoux, and Morse \cite{LLM} considerably strengthened the Macdonald positivity conjecture.
They constructed a family of functions and conjectured (i) they form
a basis for the space
$\Lambda^k = \Span_{\QQ(q,t)} \{H_\mu(\mathbf {x};q,t)\}_{\mu_1\leq k}$,
(ii) they are Schur positive,
 and (iii)
the expansion of $H_\mu(\mathbf {x};q,t)\in\Lambda^k$
in this basis has coefficients in $\NN[q,t]$.
The problem of Schur expanding Macdonald polynomials
thus factors into the two positivity problems (ii) and (iii).
Because the intricate construction of these functions lacked in mechanism for proof,
many conjecturally equivalent candidates have since been proposed.
Informally, all these candidates are now called {\it $k$-Schur functions}.

At the forefront of $k$-Schur investigations is
the conjectured {\it  branching property}\,:
\begin{equation}
\label{kbranching}
\text{ the $k+1$-Schur expansion of a $k$-Schur function has coefficients in $\NN[t]$\,.}
\end{equation}
Since a  $k$-Schur function reduces to a Schur function for large  $k$, the iteration of branching implies (ii).
However, every effort to prove that even one of the $k$-Schur candidates satisfies (i) and \eqref{kbranching}, or even (i) and (ii),
over the last decades has failed.

In contrast, the {\it ungraded} case has been more tractable.
The $k$-Schur concept is interesting even when $t=1$ despite needing
generic $t$ for Macdonald polynomial applications.
The existence of an ungraded family of $k$-Schur functions satisfying (the  $t=1$ versions of) (i), (ii), and \eqref{kbranching}
was established for functions $s_\lambda^{(k)}({\bf x})$
defined in~\cite{LMktableau} using chains in weak order of the affine
symmetric group  $\eS_{k+1}$.
It was proven that
they form a basis \break for $\Lambda^k|_{t =1}$ with the Gromov-Witten invariants
for the quantum cohomology of the Grassmannian included in their structure constants~\cite{LMquantum}.
Lam established that the basis $s_\lambda^{(k)}(\mathbf{x})$  represents
Schubert classes in the homology of the affine Grassmannian  $\Gr_{SL_{k+1}}$
of $SL_{k+1}$~\cite{LamSchubert} and gave a geometric proof of \eqref{kbranching} at $t=1$:
the branching property
reflects that the image of a Schubert class
is a positive sum of Schubert classes under an
inclusion $H_*(\Gr_{SL_{k+1}}) \to H_*(\Gr_{SL_{k+2}})$
\cite{LamBulletin}.
Additionally, an algorithm for computing the branching coefficients
using an intricate equivalence relation is worked out in~\cite{LLMSMemoirs1,LLMSMemoirs2}.

Building off the work on the ungraded case,
a  candidate $\{s_\lambda^{(k)}(\mathbf{x};t)\}$ for  $k$-Schur functions was
proposed in \cite{LLMSMemoirs1} by attaching a nonnegative integer called {spin} to {\it strong marked tableaux},
certain chains in the strong (Bruhat) order of $\eS_{k+1}$.
It was conjectured that they satisfy the desired properties (i)--(iii)
and shown \cite{LLMSMemoirs1} that they are equal to the weak order $k$-Schur functions  $s_\lambda^{(k)}(\mathbf{x})$ at $t=1$.

In a different vein, Li-Chung Chen and Mark Haiman~\cite{ChenThesis}
conjectured that $k$-Schur functions are a subclass of a family of symmetric
functions indexed by pairs  $(\Psi, \gamma)$ consisting of an upper order ideal  $\Psi$ of positive roots (of which there are Catalan many) and a weight  $\gamma \in \ZZ^{\ell}$.
These {\it Catalan (symmetric) functions}
can be defined by a Demazure-operator formula, and
are equal
to $GL_\ell$-equivariant Euler characteristics of vector bundles
on the flag variety
by the Borel-Weil-Bott theorem.
Chen-Haiman \cite{ChenThesis} investigated their Schur expansions and
conjectured a positive combinatorial formula when $\gamma_1\geq\gamma_2\geq\cdots$.
Panyushev \cite{Panyushev} studied similar questions and proved a cohomological vanishing theorem to establish
Schur positivity of a large subclass of Catalan functions.

Catalan functions are the modified Hall-Littlewood polynomials
when the ideal of roots $\Psi$ consists of all the positive roots and  $\gamma$ is a partition.
Here their Schur expansion coefficients are
the  \emph{Kostka-Foulkes polynomials} (Lusztig's  $t$-analog of weight multiplicities in type~A),
which have been extensively studied from algebraic, geometric, and combinatorial perspectives
(see, e.g., \cite{Macbook,H3,DLT}).
In the case that $\Psi$ consists of the roots above a block diagonal matrix, the Schur expansion coefficients
are the \emph{generalized Kostka polynomials} investigated by
Broer, Shimozono-Weyman, and others
\cite{BroerNormality, SW, Scyclageposet, SchillingWarnaar, Shimozonoaffine, KSS}. 

Chen-Haiman constructed an ideal of roots associated to each partition  $\lambda$ with  $\lambda_1 \le k$ \break
and conjectured that the associated Catalan functions $\{\fs_\lambda^{(k)}(\mathbf x;t)\}$
are $k$-Schur functions~\cite{ChenThesis}.
A key discovery in our work is an elegant set of ideals of roots which we
show yields the same family $\{\fs_\lambda^{(k)}(\mathbf x;t)\}$.
From there, we prove

\vspace{-1mm}
\begin{itemize}[leftmargin=.8cm]
\item{(Chen-Haiman conjecture)} The polynomials $\fs_\lambda^{(k)}(\mathbf x;t)$
are the $k$-Schur functions $s_\lambda^{(k)}(\mathbf x;t)$.
\item{($k$-Schur branching)} The coefficients of \eqref{kbranching}
are $\sum t^{\spin(T)}$ over certain skew strong tableaux $T$;
this settles Conjecture 1.1 of \cite{LLMSMemoirs2} in the strongest possible terms.
\item{(Schur positive basis)} The functions $s_\lambda^{(k)}(\mathbf x;t)$ are a Schur positive basis of~$\Lambda^k$.
This resolves step (ii) in the route for finding the Schur expansion of Macdonald polynomials and
finally establishes that a  $k$-Schur candidate satisfies (i), (ii), and~\eqref{kbranching}.
\item{(Dual Pieri rule)} The polynomials $\fs_\lambda^{(k)}({\bf x};t)$ satisfy a vertical-strip defining rule.
\item{(Shift invariance)}
$\fs_\lambda^{(k)}(\mathbf{x};t)=e_\ell^\perp \fs_{\lambda+1^\ell}^{(k+1)}(\mathbf{x};t)$.
This powerful new discovery follows easily from our definition of $\fs_\lambda^{(k)}(\mathbf x;t)$
and, together with the dual Pieri rule, it immediately yields our $k$-Schur branching rule.
\end{itemize}

\pagebreak[3]

\section{Main results}
\label{s main results}
We work in the ring $\Lambda = \QQ(t)[h_1,h_2,\dots]$ of symmetric functions in infinitely many variables
$\mathbf{x} = (x_1, x_2, \dots)$,
where  $h_d = h_d(\mathbf{x})=\sum_{i_1\leq\cdots \leq i_d} x_{i_1}\cdots x_{i_d}$.
The Schur functions $s_\lambda$ indexed by partitions $\lambda$ form a basis for $\Lambda$.
Schur functions may be defined more generally for any $\gamma \in \ZZ^\ell$ by the following version of the Jacobi-Trudi formula:
\begin{align}
\label{ed s gamma}
s_\gamma = s_\gamma(\mathbf{x}) = \det( h_{\gamma_i + j-i}(\mathbf{x}) )_{1 \le i, j \le \ell}
\ \in\, \Lambda,
\end{align}
where by convention $h_0(\mathbf{x}) = 1$  and $h_d(\mathbf{x}) = 0$ for  $d < 0$.

Central to our work is a family of symmetric functions investigated by Chen-Haiman \cite{ChenThesis} and
Panyushev \cite{Panyushev}; special cases include all $s_\gamma$, as well as the modified Hall-Littlewood polynomials
and generalizations thereof studied by Broer and Shimozono-Weyman~\cite{BroerNormality, SW}.
They can be described geometrically in terms of cohomology of vector bundles on the flag variety (see Theorem \ref{t cohomology}), or algebraically as follows:

\begin{definition}
\label{d HH gamma Psi}
Fix a positive integer  $\ell$.
A \emph{root ideal} is an upper order ideal
of the poset  $\Delta^+_\ell = \Delta^+  := \{(i,j) \mid 1 \le i < j \le \ell \big\}$
with partial order given by $(a,b) \leq (c,d)$ when $a\geq c$ and $b\leq d$.
Given a root ideal $\Psi \subset \Delta^+_\ell$ and $\gamma\in\mathbb Z^\ell$, the associated
\emph{Catalan function} is
\begin{align}
\label{e d HH gamma Psi}
H(\Psi;\gamma)(\mathbf{x};t) := \prod_{(i,j) \in \Psi} \big(1-tR_{i j}\big)^{-1} s_\gamma(\mathbf{x}) \ \in \, \Lambda,
\end{align}
where the raising operator $R_{i j}$ acts on the subscripts of the $s_\gamma$ by $R_{i j} s_\gamma = s_{\gamma + \epsilon_i - \epsilon_j}$
\break (a discussion of raising operators is given in \S\ref{ss Catalan functions}).
\end{definition}

Let $\Par^k_\ell = \{(\mu_1, \dots, \mu_\ell) \in \ZZ^\ell :  k \ge \mu_1 \ge \cdots \ge  \mu_\ell \ge 0 \}$
denote the set of partitions contained in the $\ell \times k$-rectangle and  $\Par^k$ the set of partitions  $\mu$ with  $\mu_1 \le k$.
The trailing zeros are a useful bookkeeping device for elements of  $\Par^k_\ell$,
whereas for $\Par^k$ we adopt the more common convention:
each  $\mu\in \Par^k$ is identified with $(\mu_1, \dots, \mu_{\ell(\mu)}, 0^i)$ for any  $i \ge 0$, where the \emph{length} $\ell(\mu)$ is
the number of nonzero parts of  $\mu$.

\begin{definition}
\label{d0 catalan kschur}
For  $\mu \in \Par^k_\ell$, define the root ideal
\begin{align}
\Delta^k(\mu) = \{(i,j) \in \Delta^+_\ell \mid  k-\mu_i + i < j\},
\end{align}
and the Catalan function
\begin{align}
\label{d catalan kschur}
&\fs^{(k)}_\mu(\mathbf{x};t) := H(\Delta^k(\mu);\mu)=
\prod_{i=1}^\ell\,\prod_{j=k+1-\mu_i+i}^\ell \big(1-tR_{i j}\big)^{-1} s_\mu(\mathbf{x})
\,.
\end{align}
\end{definition}

We will soon see (Theorem~\ref{c kschur eq catalan}) that the  $\fs^{(k)}_\mu(\mathbf{x};t)$ are the $k$-Schur functions,
which proves a conjecture of Chen-Haiman.  This is a consequence of four fundamental properties of these Catalan functions
described in the next theorem.
Their statement requires the definition of strong marked tableaux, which are certain saturated chains in the strong Bruhat order
for the affine symmetric group $\eS_{k+1}$, together with some extra data.
The precise definition is most readily given in terms of
partitions arising in modular representation theory called $k+1$-cores.
Combinatorial examples are provided in \S\ref{ss example strong tab}.

The \emph{diagram} of a partition $\lambda$ is the subset of boxes
$\{(r,c) \in \ZZ_{\geq 1} \times \ZZ_{\geq 1} \mid c \le \lambda_r\}$
in the plane, drawn in English (matrix-style) notation so that rows (resp. columns)
are increasing from north to south (resp. west to east).
Each box has a \emph{hook length} which counts the
number of boxes below it in its column and weakly to its right in its row.
A {\it $k+1$-core} is a partition with no box of hook length $k+1$.
There is a bijection $\p$ \cite{LMtaboncores}
from the set of $k+1$-cores to $\Par^k$
mapping a $k+1$-core $\kappa$ to the partition  $\lambda$ whose $r$-th row  $\lambda_r$ is
the number of boxes in the  $r$-th row of $\kappa$ having hook length $\le k$.

A \emph{strong cover} $\tau \Rightarrow \kappa$ is a pair of $k+1$-cores such that $\tau \subset \kappa$ and $|\p(\tau)| + 1 = |\p(\kappa)|$.
A \emph{strong marked cover} $\tau \xRightarrow{~~r~~} \kappa$ is a strong cover  $\tau \Rightarrow \kappa$
together with a positive integer~$r$ which is allowed to be the smallest row index of
any connected component of the skew shape $\kappa/\tau$.
Let $\eta=(\eta_1,\eta_2,\ldots)\in \ZZ^\infty_{\ge 0}$ with  $m = |\eta| := \sum_i \eta_i$ finite.
 A \emph{strong marked tableau}
$T$ of \emph{weight} $\eta$ is a sequence of strong marked covers
\[\kappa^{(0)} \xRightarrow{~~r_1~~} \kappa^{(1)} \xRightarrow{~~r_2~~} \cdots  \xRightarrow{~~r_m~~} \kappa^{(m)}\]
such that
$r_{v_i+1} \ge r_{v_i+2} \ge \dots \ge r_{v_i+\eta_i}$  for all $i \ge 1$, where $v_i := \eta_1 + \cdots + \eta_{i-1}$.
A \emph{vertical strong marked tableau} is defined the same way except we require each subsequence $r_{v_i+1} < r_{v_i+2} < \dots < r_{v_i+\eta_i}$ to be strictly increasing rather than weakly decreasing.
We write $\inside(T) = \p(\kappa^{(0)})$ and $\outside(T) = \p(\kappa^{(m)})$.
The set of strong marked tableaux (resp. vertical strong marked tableaux) $T$ of weight $\eta$ with $\outside(T) = \mu$ is denoted $\SMT^k_\eta(\mu)$
(resp. $\VSMT^k_\eta(\mu)$).

The \emph{spin} of a strong marked cover $\tau \xRightarrow{~~r~~} \kappa$ is defined to be $c\cdot (h-1) + N$, where $c$ is the number of connected components of  the skew shape $\kappa/\tau$, $h$ is the height (number of rows) of each component, and $N$ is the number of components entirely contained in rows $>r$.
For a (vertical) strong marked tableau $T$, $\spin(T)$ is defined to be the sum of the spins of the strong marked covers comprising $T$.

For  $f \in \Lambda$,
let  $f^\perp$ be the linear operator on  $\Lambda$ that is adjoint to multiplication by $f$ with respect to the Hall inner product, i.e.,
$\langle f^\perp (g), h\rangle = \langle g, f h \rangle$ for all  $g,h \in \Lambda$.

\begin{theorem}
\label{t three properties}
Fix a positive integer $\ell$.
The Catalan functions $\{\fs^{(k)}_\mu\}_{k \ge 1, \, \mu \in \Par^k_\ell}$,
satisfy the following properties:
\begin{align}
\label{et three properties 1}
&\text{\emph{(horizontal dual Pieri rule)}} &&\text{$h_d^\perp \fs^{(k)}_\mu = \!\! \sum_{T \in \SMT^k_{(d)}(\mu)} t^{\spin(T)} \fs^{(k)}_{\inside(T)}$ \ \  for all $d \ge 0$}\,; \\
\label{et three properties 2}\
&\text{\emph{(vertical dual Pieri rule)}} &&\text{$e_d^\perp \fs^{(k)}_\mu = \!\! \sum_{T \in \VSMT^k_{(d)}(\mu)} t^{\spin(T)} \fs^{(k)}_{\inside(T)}$ \  for all $d  \ge 0$}\,;\\
\label{et three properties 3}
&\text{\emph{(shift invariance)}} 
&&\text{$\fs^{(k)}_{\mu} = e_\ell^\perp \fs^{(k+1)}_{\mu+1^\ell}$}\,;\\[2mm]
\label{et three properties 4}
&\text{\emph{(Schur function stability)}} &&\text{if $k \ge |\mu|$, then $\fs^{(k)}_\mu = s_\mu$}. 
\end{align}
\end{theorem}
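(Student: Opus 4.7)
The plan is to prove the four properties in order of increasing difficulty. For the \emph{shift invariance} (3), a direct computation from the definition shows $\Delta^{k+1}(\mu+1^\ell) = \Delta^k(\mu)$, since $(k+1)-(\mu_i+1)+i = k-\mu_i+i$. Combined with the identity $e_\ell^\perp s_\gamma = s_{\gamma-1^\ell}$ for every $\gamma \in \ZZ^\ell$ (a routine extension via Jacobi-Trudi of the standard $e$-Pieri rule for partitions of length $\ell$), this gives $e_\ell^\perp R_{ij} s_\gamma = s_{\gamma + \epsilon_i - \epsilon_j - 1^\ell} = R_{ij} e_\ell^\perp s_\gamma$, so that $e_\ell^\perp$ commutes with the entire raising-operator product; applying it to the definition of $\fs^{(k+1)}_{\mu+1^\ell}$ then yields $\fs^{(k)}_\mu$. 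For \emph{Schur function stability} (4), I would first establish (as a preliminary lemma) that $\fs^{(k)}_\mu$ depends only on $\mu$ as a partition and not on the chosen length $\ell \ge \ell(\mu)$, so that one may take $\ell = \ell(\mu)$. Then $k \ge |\mu| \ge \mu_1 + \ell - 1$ forces $\mu_i \le \mu_1 \le k - \ell + 1 \le k - \ell + i$ for all $i$, ruling out every $(i,j) \in \Delta^k(\mu)$; the raising-operator product is trivial and $\fs^{(k)}_\mu = s_\mu$ directly.

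The \emph{horizontal dual Pieri rule} (1) is the principal obstacle. My plan is to expand the raising-operator product $\prod_{(i,j) \in \Delta^k(\mu)} (1-tR_{ij})^{-1}$ as a formal power series in the $R_{ij}$ acting on $s_\mu$, push $h_d^\perp$ through using the standard coproduct formula $h_d^\perp s_\gamma = \sum_\alpha s_{\gamma - \alpha}$ (sum over horizontal strips $\alpha$ of size $d$, extended to arbitrary $\gamma \in \ZZ^\ell$ via Jacobi-Trudi), and reorganize the resulting multiple sum so that each orbit of terms collapses into a single summand $t^{\spin(T)} \fs^{(k)}_{\inside(T)}$ for some $T \in \SMT^k_{(d)}(\mu)$. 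I would use induction on $\ell$, peeling off the top row to reduce to a Catalan function in $\ell-1$ variables. The hard part is the \emph{combinatorial matching}: translating the exponents of $t$ from the geometric series $(1-tR_{ij})^{-1} = \sum_{n \ge 0} (tR_{ij})^n$ into the spin statistic, which is defined via connected components of skew shapes $\kappa/\tau$ between $k+1$-cores. This is where the bijection between $k+1$-cores and $k$-bounded partitions, together with the strong Bruhat order on $\eS_{k+1}$, must be carefully synchronized with the algebra of the root ideals $\Delta^k(\mu)$.

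For the \emph{vertical dual Pieri rule} (2), my first attempt would be to derive it from (1) combined with shift invariance (3): iterating (3) gives $\fs^{(k)}_\mu = (e_\ell^\perp)^N \fs^{(k+N)}_{\mu+N^\ell}$, and suitable manipulation of the resulting expression via (1) and Pieri-type interactions between $e^\perp$ and $h^\perp$ should expose the vertical rule. Should this route fail to close, the same algebraic and combinatorial strategy of (1) applies verbatim with $e_d^\perp$ (and its coproduct formula over vertical strips) in place of $h_d^\perp$, and strictly increasing row-index sequences in place of weakly decreasing ones; the inductive scaffolding on $\ell$ and the spin bookkeeping should carry over with only cosmetic changes.
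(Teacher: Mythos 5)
Your arguments for properties \eqref{et three properties 3} and \eqref{et three properties 4} are essentially sound (though for shift invariance you should make the commutation precise at the level of formal series in $z_1^{\pm},\dots,z_\ell^{\pm}$ rather than at the level of symmetric function operators, since the $R_{ij}$ are not well-defined on $\Lambda$; the paper even flags this pitfall). You reach \eqref{et three properties 3} by a slightly more direct route than the paper, which instead passes through the compositional Hall-Littlewood description and the commutation of $e_d^\perp$ with the Garsia--Jing operators $\bb_m$ (Lemma~\ref{l ed perp Jing}); both work. Your plan for \eqref{et three properties 4} depends on a trailing-zeros lemma that you invoke but do not prove; this is exactly Proposition~\ref{p trailing zeros} in the paper and does carry real content, but it is a small point.

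The genuine gap is in the dual Pieri rules, which are the substance of the theorem. For \eqref{et three properties 1} your plan amounts to ``expand the raising-operator product, push $h_d^\perp$ through, and reorganize so each orbit collapses to $t^{\spin(T)}\fs^{(k)}_{\inside(T)}$'' --- but this is a description of what a proof must achieve, not a mechanism for achieving it. The reorganization you gesture at involves massive signed cancellation (see the remark after Example~\ref{ex partial res}: the individual terms $H(\Delta^k(\lambda);\lambda-\epsilon_S)$ are not even Schur-positive individually), and is handled in the paper only after developing a substantial apparatus: the recurrences of Proposition~\ref{p inductive computation atom}, the Mirror Lemmas (\ref{l little lemma many zero 2}, \ref{l cascading toggle lemma}) and their compatibility with subset lowering operators (Lemmas \ref{l little lemma many zero 2 ed}, \ref{l cascading toggle lemma ed}), the $k$-Schur straightening rule (Theorem~\ref{t k schur straightening many} and its subset-lowering version Theorem~\ref{t k schur straightening many ed}), and the bounce-graph-to-core dictionary (Propositions \ref{p covers agree}, \ref{p bijection marked covers}, \ref{p spins agree}). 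None of this is visible in your plan, and ``induction on $\ell$, peeling off the top row'' does not match the actual inductive structure, which is an induction on $d$ with a strengthened statement tracking the set $[m]$ of allowed markings via the operators $\beperp_{d,m}$ (Theorem~\ref{t partial restriction}).

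There is also a strategic issue. You propose to prove \eqref{et three properties 1} first and get \eqref{et three properties 2} from it by ``Pieri-type interactions,'' falling back to a parallel argument if that fails. The paper deliberately runs the other way: it proves \eqref{et three properties 2} first precisely because $e_d^\perp$ interacts very cleanly with the vertex operators, $e_d^\perp H(\Psi;\gamma)=\sum_{|S|=d}H(\Psi;\gamma-\epsilon_S)$ (Lemma~\ref{l ed perp HH}), whereas no equally clean identity holds for $h_d^\perp$. It then derives \eqref{et three properties 1} from \eqref{et three properties 2} via noncommuting strong-marked-cover operators $u_r$ and the generating-function identity $\tilde H(y)\tilde E(-y)=1$, matched term by term against $\sum_i(-1)^i e_i h_{m-i}=0$. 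That derivation could in principle be reversed (both $\tilde h_d$ and $\tilde e_d$ are determined by the same recursion), so deducing \eqref{et three properties 2} from \eqref{et three properties 1} is not hopeless --- but ``suitable manipulation'' is not a proof, and the direct attack on \eqref{et three properties 1} that you would need as a base is exactly the part missing the ideas.
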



Theorem~\ref{t three properties}  has several powerful consequences.
Foremost is that the $\fs^{(k)}_\mu$ are $k$-Schur functions.
More precisely, we adopt the following definition of the  $k$-Schur functions
from \cite[\S9.3]{LLMSMemoirs1} (see also~\cite{AssafBilley,LLMSMemoirs2}):
for  $\mu \in \Par^k$, let
\begin{align}
\label{ec strong marked def kschur 1}
s^{(k)}_\mu({\bf x};t) = \sum_{\eta \in \ZZ^\infty_{\geq 0}, \,  |\eta| = |\mu|} \ \,  \sum_{T \in \SMT^k_{\eta}(\mu)} t^{\spin(T)} \mathbf{x}^\eta.
\end{align}
Among the several conjecturally equivalent definitions,
this one has the advantage that
its $t=1$ specializations
$\{s^{(k)}_\mu(\mathbf{x};1)\}$
are known to agree \cite{LLMSMemoirs1} with a quite different looking combinatorial definition
using weak tableaux from \cite{LMktableau},  and are Schubert classes in  the homology of the
affine Grassmannian  $\Gr_{SL_{k+1}}$ of  $SL_{k+1}$ \cite{LamSchubert}.

\begin{theorem}
\label{c kschur eq catalan}
For $\mu\in \Par^k_\ell$, the $k$-Schur function $s_\mu^{(k)}({\bf x};t)$ is
the Catalan function $\fs^{(k)}_\mu(\mathbf{x};t)$.
\end{theorem}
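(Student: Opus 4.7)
The plan is to extract the monomial expansion of $\fs^{(k)}_\mu(\mathbf{x};t)$ via iterated application of the horizontal dual Pieri rule \eqref{et three properties 1}, and then match it term-by-term with the definition \eqref{ec strong marked def kschur 1} of $s^{(k)}_\mu(\mathbf{x};t)$.

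First I recall the standard fact that for any $f \in \Lambda$ and any partition $\lambda$, the coefficient of $m_\lambda$ (equivalently $[\mathbf{x}^\lambda]f$) equals $\langle f, h_\lambda\rangle = \langle h_\lambda^\perp f, 1\rangle$ by adjointness. Factoring $h_\eta^\perp = h_{\eta_1}^\perp h_{\eta_2}^\perp \cdots h_{\eta_m}^\perp$, I apply \eqref{et three properties 1} repeatedly to $\fs^{(k)}_\mu$, so that each $h_{\eta_i}^\perp$ peels off one block of $\eta_i$ strong marked covers. Concatenating these blocks produces a chain of strong marked covers in which the per-block row-index condition $r_{v_i+1}\ge r_{v_i+2}\ge\cdots\ge r_{v_i+\eta_i}$ is precisely the defining condition of $\SMT^k_\eta(\mu)$, and spins add. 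This yields
\[
h_\eta^\perp \fs^{(k)}_\mu \;=\; \sum_{T \in \SMT^k_\eta(\mu)} t^{\spin(T)}\, \fs^{(k)}_{\inside(T)}.
\]

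Next I specialize to $|\eta|=|\mu|$. Every surviving term then has $\inside(T)=\varnothing$, and $\fs^{(k)}_\varnothing = s_\varnothing = 1$ by Schur function stability \eqref{et three properties 4}; since $\fs^{(k)}_\nu$ is homogeneous of positive degree $|\nu|$ for $\nu\ne\varnothing$, all other contributions vanish when paired with $1$. Hence
\[
[m_\eta]\, \fs^{(k)}_\mu \;=\; \sum_{T \in \SMT^k_\eta(\mu)} t^{\spin(T)}.
\]
Performing the same iteration for any composition $\eta'$ of $|\mu|$ in the order dictated by $\eta'$ yields $\sum_{T \in \SMT^k_{\eta'}(\mu)} t^{\spin(T)}$, and commutativity of the operators $h_i^\perp$ forces this quantity to depend only on $\sort(\eta')$. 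This simultaneously shows that $s^{(k)}_\mu(\mathbf{x};t)$, defined by the composition sum in \eqref{ec strong marked def kschur 1}, is a symmetric function, and that its monomial expansion agrees with that of $\fs^{(k)}_\mu(\mathbf{x};t)$; hence the two are equal.

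The main obstacle is the combinatorial bookkeeping in the iteration: verifying that concatenating $m$ separate strong marked tableaux of single-block weights $(\eta_i)$ assembles into a single strong marked tableau of weight $(\eta_1,\ldots,\eta_m)$ with exactly the right per-block row-marking conditions and additive spin. Both assertions follow transparently from the definitions, as all constraints on strong marked tableaux are local to individual blocks, but they must be tracked with care to rule out spurious inter-block constraints and to confirm that $\inside(T_{i+1})=\outside(T_i)$ glues the sub-chains consistently.
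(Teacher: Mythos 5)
Your proposal is essentially identical to the paper's own proof: both extract $\langle \fs^{(k)}_\mu, h_\lambda\rangle$ by iterating the horizontal dual Pieri rule \eqref{et three properties 1}, collapse $\fs^{(k)}_{\inside(T)}$ to $1$ via Schur stability \eqref{et three properties 4} once $|\eta|=|\mu|$, and then invoke commutativity of the $h_d^\perp$ to show the weight-$\eta$ sums depend only on $\sort(\eta)$. The only cosmetic difference is that the paper records the intermediate monomial expansion \eqref{ec strong marked def kschur 2} explicitly before comparing with \eqref{ec strong marked def kschur 1}, whereas you fold that step into the statement that $[\mathbf{x}^\eta]\fs^{(k)}_\mu$ agrees with the inner sum of \eqref{ec strong marked def kschur 1}.
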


\begin{proof}
The homogeneous symmetric function basis for $\Lambda$ consists of $h_\lambda=\prod_i h_{\lambda_i}$,
as $\lambda$ ranges over partitions.  For any partition $\lambda$ of size $|\mu|$,
\eqref{et three properties 1} implies
\[\langle \fs^{(k)}_\mu, h_\lambda \rangle  = \langle h_\lambda^\perp \fs^{(k)}_\mu, 1 \rangle
= \sum_{T \in \SMT^k_{\lambda}(\mu)} t^{\spin(T)},
\]
where we have also used \eqref{et three properties 4} to obtain $\fs^{(k)}_{\inside(T)} = \fs^{(k)}_{0^\ell} = 1$ for every $T$ in the sum.
The basis of monomial symmetric functions $\{m_\nu(\mathbf{x})\}$ is dual to
the homogenous basis and thus
\begin{align}
\label{ec strong marked def kschur 2}
\fs^{(k)}_\mu(\mathbf{x};t)\, &= \sum_{\text{partitions } \lambda \text{ of } |\mu|} \  \sum_{T \in \SMT^k_{\lambda}(\mu)} t^{\spin(T)}
m_\lambda(\mathbf{x})
\,.
\end{align}
Let $\eta \in \ZZ^\infty_{\ge 0}$ with  $|\eta| = |\mu|$ and let  $\lambda$ be the partition obtained by sorting $\eta$.
Since the  $h_d^\perp$ pairwise commute,  again using \eqref{et three properties 1} and \eqref{et three properties 4} we obtain
\begin{align}
\label{ec strong marked commute 1}
\sum_{T \in \SMT^k_{\eta}(\mu)} t^{\spin(T)} = (h_{\eta_1}^\perp h_{\eta_2}^\perp \cdots) \fs^{(k)}_\mu = (h_{\lambda_1}^\perp h_{\lambda_2}^\perp \cdots) \fs^{(k)}_\mu =  \sum_{T \in \SMT^k_{\lambda}(\mu)} t^{\spin(T)}\,.
\end{align}
Thus the right sides of \eqref{ec strong marked def kschur 2} and \eqref{ec strong marked def kschur 1} agree, as desired.
\end{proof}

Theorem \ref{c kschur eq catalan} has
several important consequences; we give informal statements now, deferring their precise versions for later.

\begin{corollary}
\label{cor kschur eq catalan}
\
\begin{list}{\emph{(\arabic{ctr})}} {\usecounter{ctr} \setlength{\itemsep}{1pt} \setlength{\topsep}{2pt}}
\item The $k$-Schur functions $s^{(k)}_\mu(\mathbf{x};t)$ defined by \eqref{ec strong marked def kschur 1} are symmetric functions.
\item The  $k$-Schur functions $s^{(k)}_\mu(\mathbf{x};t)$ are equal to Catalan functions defined by Chen-Haiman
via a different root ideal than our  $\Delta^k(\mu)$ (see Section \ref{s Chen root set}). 
\item For $\mu\in\Par^k_\ell$, $s^{(k)}_\mu(\mathbf{x};t)$ is the $GL_\ell$-equivariant
Euler characteristic of a vector bundle  on the flag variety  determined by $\mu$ and $k$.
\item Homology Schubert classes of $\Gr_{SL_{k+1}}$ are equal to the ungraded ($t=1$)  version of this Euler characteristic.
\end{list}
%
\end{corollary}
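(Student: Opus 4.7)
The plan is to derive each of (1)--(4) as a consequence of Theorem~\ref{c kschur eq catalan}, which identifies the combinatorial $k$-Schur function $s^{(k)}_\mu(\mathbf{x};t)$ with the algebraically defined Catalan function $\fs^{(k)}_\mu(\mathbf{x};t) = H(\Delta^k(\mu);\mu)$. With that theorem in hand, each statement reduces to translating a known property of $\fs^{(k)}_\mu$ through this identification, so the work is primarily a conceptual harvest rather than new combinatorics.

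For (1), I would observe that $\fs^{(k)}_\mu(\mathbf{x};t)$ lies in $\Lambda$ by Definition~\ref{d HH gamma Psi}, since it is defined as a raising-operator polynomial applied to a Schur function. By contrast, the combinatorial expression \eqref{ec strong marked def kschur 1} is a~priori only a formal power series in $\mathbf{x}$ with coefficients in $\ZZ_{\ge 0}[t]$; Theorem~\ref{c kschur eq catalan} upgrades it to an element of $\Lambda$. For (3), I would invoke the Borel-Weil-Bott description of Catalan functions referenced in the introduction (Theorem~\ref{t cohomology}): every $H(\Psi;\gamma)$ is the $GL_\ell$-equivariant Euler characteristic of the vector bundle on $GL_\ell/B$ attached to $(\Psi,\gamma)$. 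Specializing to $(\Delta^k(\mu),\mu)$ and applying Theorem~\ref{c kschur eq catalan} gives (3). For (4), I would set $t=1$ in (3) and combine with Lam's theorem \cite{LamSchubert} that $s^{(k)}_\mu(\mathbf{x};1)$ represents the Schubert class in $H_*(\Gr_{SL_{k+1}})$; the two descriptions of this common symmetric function then equate Schubert classes with ungraded Euler characteristics.

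Part (2) requires actual work, to be carried out in Section~\ref{s Chen root set}. The goal is to show that the Catalan function attached to Chen-Haiman's root ideal coincides with $H(\Delta^k(\mu);\mu)$. My plan is to prove a general equality $H(\Psi;\mu) = H(\Psi';\mu)$ by peeling off or inserting factors $(1-tR_{ij})^{-1}$ that act trivially on $s_\mu$ after Jacobi-Trudi straightening: typically $R_{ij} s_\mu$ straightens to zero when the resulting weight has two equal entries in adjacent positions, so the corresponding geometric series reduces to the identity when acting on such Schur functions. A careful row-by-row analysis should then reduce Chen-Haiman's ideal to $\Delta^k(\mu)$ modulo such trivially-acting factors.

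The main obstacle will be (2). The two ideals are defined through quite different conventions, and the straightening behavior of $R_{ij} s_\mu$ depends sensitively on the shape of $\mu$ near the boundary of $\Delta^k(\mu)$, so the comparison requires a careful case analysis rather than a uniform one-line argument. Once (2) is in hand, (1), (3), and (4) are essentially formal consequences of the identification in Theorem~\ref{c kschur eq catalan} together with the standard geometric and Schubert-calculus dictionaries already available in the literature.
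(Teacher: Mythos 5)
Your overall outline matches the paper's: the corollary is a harvest from Theorem~\ref{c kschur eq catalan}, with (1) following because $\fs^{(k)}_\mu\in\Lambda$ by construction while \eqref{ec strong marked def kschur 1} is only a~priori a formal power series, (3) coming straight from Theorem~\ref{t cohomology}, (4) obtained by specializing $t=1$ and invoking Lam's theorem (together with \cite[Theorem~4.11]{LLMSMemoirs1} to move between the two combinatorial models of $s^{(k)}_\mu(\mathbf{x};1)$), and (2) deferred to the root-ideal comparison in Section~\ref{s Chen root set}. Where I want to push back is on the mechanism you propose for that deferred step. You suggest that factors $(1-tR_{ij})^{-1}$ can be inserted or removed because $R_{ij}s_\mu$ straightens to zero when the shifted weight has repeated adjacent entries. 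That reasoning treats the raising operators as acting one at a time on $s_\mu$ and straightening after each application, but in \eqref{e d HH gamma Psi formal} the entire product of geometric series is applied on the Laurent-polynomial level to $\mathbf{z}^\mu$ first and only then pushed through $\pi$; other factors can raise $\mu$ to weights on which $R_{ij}$ acts nontrivially, so $R_{ij}s_\mu=0$ alone does not make the factor $(1-tR_{ij})^{-1}$ removable. (As a sanity check: with $\ell=3$, $\mu=(2,2,2)$, $\Psi=\{(1,2)\}$ and $\Psi'=\Psi\cup\{(2,3)\}$ one has $R_{23}s_{222}=0$ yet $H(\Psi';\mu)\ne H(\Psi;\mu)$ already at order $t^2$.) The actual argument in Theorem~\ref{t Chen roots} proceeds row by row via Lemma~\ref{le:forchen}, which rests on the recurrence of Proposition~\ref{p inductive computation atom} together with the Mirror Lemma (Lemma~\ref{l cascading toggle lemma}) showing a whole shifted Catalan function $H(\Psi;\mu+\epsilon_i-\epsilon_j)$ vanishes under wall/ceiling/mirror hypotheses---a global cancellation, not a term-by-term one. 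So the corollary-level plan is sound, but the sketch for how to prove (2) needs to be replaced by that mirror-lemma machinery.
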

\begin{proof}
Theorem~\ref{c kschur eq catalan} allows us to work interchangeably with $\fs^{(k)}_\mu(\mathbf{x}; t)$
and $s^{(k)}_\mu(\mathbf x;t)$ and we do so from now on without further mention.
Symmetry follows directly from the definition of the Catalan functions.
We match $\fs^{(k)}_\mu$ to the Catalan functions
studied by Chen-Haiman in Theorem~\ref{t Chen roots} to settle (2).
Statement (3) is proved in Theorem~\ref{t cohomology}, and this in turn implies
(4) using~\cite[Theorem 4.11]{LLMSMemoirs1} and~\cite[Theorem 7.1]{LamSchubert}.
\end{proof}

The realization of the $k$-Schur functions as the subclass~\eqref{d catalan kschur} of Catalan functions
is highly lucrative.  One of the most striking outcomes is Property~\eqref{et three properties 3}; overlooked in prior investigations of $k$-Schur functions, it fully resolves $k$-branching:

\vspace{1.6mm}
\parbox{15cm}{\emph{The coefficients in the $k+1$-Schur expansion of a  $k$-Schur function are none other than
the structure coefficients appearing in the vertical dual Pieri rule~\eqref{et three properties 2}.}}

\begin{theorem}[$k$-Schur branching rule]
\label{t branching}
For $\mu \in \Par^k_\ell$,
the expansion of the  $k$-Schur function $s^{(k)}_\mu$ into $k+1$-Schur functions is given by
\begin{align}
\label{et branching}
s^{(k)}_\mu = \sum_{T \in \VSMT^{k+1}_{(\ell)}(\mu + 1^\ell)} t^{\spin(T)} s^{(k+1)}_{\inside(T)}.
\end{align}
\end{theorem}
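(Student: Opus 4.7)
The plan is to derive the branching rule as a direct two-step consequence of the properties established in Theorem~\ref{t three properties}, specifically shift invariance \eqref{et three properties 3} and the vertical dual Pieri rule \eqref{et three properties 2}. Throughout I work with the Catalan functions $\fs^{(k)}_\mu$, which by Theorem~\ref{c kschur eq catalan} coincide with the $k$-Schur functions $s^{(k)}_\mu$.

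First, since $\mu \in \Par^k_\ell$ implies $\mu + 1^\ell \in \Par^{k+1}_\ell$, I apply the shift invariance identity \eqref{et three properties 3} at level $k$ to obtain
\begin{equation*}
\fs^{(k)}_\mu \; = \; e_\ell^\perp\, \fs^{(k+1)}_{\mu + 1^\ell}.
\end{equation*}
This passes from the $k$-Schur world up to the $(k{+}1)$-Schur world, at the cost of introducing the adjoint operator $e_\ell^\perp$. Next, I expand the right-hand side using the vertical dual Pieri rule \eqref{et three properties 2} at level $k+1$ with $d=\ell$, giving
\begin{equation*}
e_\ell^\perp\, \fs^{(k+1)}_{\mu + 1^\ell} \; = \sum_{T \in \VSMT^{k+1}_{(\ell)}(\mu+1^\ell)} \!\!\! t^{\spin(T)}\, \fs^{(k+1)}_{\inside(T)}.
\end{equation*}
Combining the two displays and re-translating through Theorem~\ref{c kschur eq catalan} yields the claimed formula \eqref{et branching}.

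The main obstacle does not lie in the derivation of the branching rule itself, which is this two-line chain, but rather is already packaged into Theorem~\ref{t three properties}: one must independently establish both the vertical dual Pieri rule and the shift invariance identity for the specific root ideals $\Delta^k(\mu)$. Shift invariance is the key new ingredient and one expects it to follow from a structural comparison of $\Delta^k(\mu)$ and $\Delta^{k+1}(\mu+1^\ell)$, whereas the vertical dual Pieri rule requires genuinely combinatorial input, matching the action of $e_\ell^\perp$ on the raising-operator expression $\prod(1-tR_{ij})^{-1}s_{\mu+1^\ell}$ against the weighted enumeration of vertical strong marked tableaux. Granted Theorem~\ref{t three properties}, however, the branching rule itself is essentially free.
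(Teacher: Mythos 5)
Your proof is correct and is exactly the paper's argument: the text preceding Theorem~\ref{t branching} points out that shift invariance \eqref{et three properties 3} followed by the vertical dual Pieri rule \eqref{et three properties 2} with $d=\ell$ yields the branching rule, and the paper's explicit proof of Theorem~\ref{t k Schur to Schur} uses this same two-step pattern (iterated). Your observation that all the substance lies in proving Theorem~\ref{t three properties} rather than in the branching deduction is also how the paper frames it.
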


The Schur expansion of a $k$-Schur function can then be achieved by incrementally iterating~\eqref{et branching}
until $k$ is large enough to apply~\eqref{et three properties 4}.  Interestingly, a more elegant formula can be derived by
a different combination of~\eqref{et three properties 2},~\eqref{et three properties 3}, and~\eqref{et three properties 4}.

\begin{theorem}[$k$-Schur into Schur]
\label{t k Schur to Schur}
Let $\mu \in \Par^k_\ell$ and set $m = \max(|\mu|-k, 0)$.
The Schur expansion of the $k$-Schur function $s^{(k)}_\mu$ is given by
\begin{align}
\label{et k Schur to Schur}
s^{(k)}_\mu = \sum_{T \in \VSMT^{k+m}_{(\ell^{m})}(\mu + m^\ell)} t^{\spin(T)} s_{\inside(T)}.
\end{align}
\end{theorem}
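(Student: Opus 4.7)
The strategy is to iterate the shift invariance property \eqref{et three properties 3} a total of $m$ times, then expand via the vertical dual Pieri rule \eqref{et three properties 2}, and finally collapse the result to ordinary Schur functions using Schur function stability \eqref{et three properties 4}.

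First I would check that iteration of shift invariance is valid: since $\mu \in \Par^k_\ell$ implies $\mu + j^\ell \in \Par^{k+j}_\ell$ for every $j \ge 0$ (weak decreasingness is preserved, and $\mu_1 + j \le k + j$ because $\mu_1 \le k$), applying \eqref{et three properties 3} repeatedly gives
\[
\fs^{(k)}_\mu \;=\; e_\ell^\perp\, \fs^{(k+1)}_{\mu + 1^\ell} \;=\; (e_\ell^\perp)^2\, \fs^{(k+2)}_{\mu + 2^\ell} \;=\; \cdots \;=\; (e_\ell^\perp)^m\, \fs^{(k+m)}_{\mu + m^\ell}.
\]

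Next I would apply the vertical dual Pieri rule \eqref{et three properties 2} once for each of the $m$ factors of $e_\ell^\perp$. The key combinatorial point is that iterating \eqref{et three properties 2} with $d = \ell$ produces a sum indexed by sequences of $m$ vertical strong marked tableaux, each of weight $(\ell)$ in the $(k+m)$-setting, such that consecutive tableaux are composable (outer of one equals inner of the next). Such a sequence concatenates to a single element of $\VSMT^{k+m}_{(\ell^m)}(\mu + m^\ell)$, and conversely every element of this set arises uniquely in this way. This bijection is immediate from the definitions: weight $(\ell^m)$ imposes strict increase of marks only within each of the $m$ length-$\ell$ blocks with no constraint between consecutive blocks, which is exactly the structure produced by the iteration; moreover spin is additive over the strong marked covers, so the $t$-weights match. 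Therefore
\[
\fs^{(k)}_\mu \;=\; \sum_{T \in \VSMT^{k+m}_{(\ell^m)}(\mu + m^\ell)} t^{\spin(T)}\, \fs^{(k+m)}_{\inside(T)}.
\]

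To close the argument I would note that for every $T$ appearing in the sum, $|\inside(T)| = |\mu + m^\ell| - m\ell = |\mu|$, and the choice $m = \max(|\mu|-k, 0)$ guarantees $k+m \ge |\mu| = |\inside(T)|$. Schur function stability \eqref{et three properties 4} then replaces each $\fs^{(k+m)}_{\inside(T)}$ by $s_{\inside(T)}$, which yields \eqref{et k Schur to Schur}. I do not anticipate any serious obstacle: the proof is a clean three-line combination of properties \eqref{et three properties 2}--\eqref{et three properties 4}, and the only point requiring care is the routine verification that the iteration of $\VSMT^{k+m}_{(\ell)}$ matches $\VSMT^{k+m}_{(\ell^m)}$ compatibly with spin.
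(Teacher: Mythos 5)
Your proof is correct and follows essentially the same route as the paper: iterate shift invariance \eqref{et three properties 3} $m$ times to get $(e_\ell^\perp)^m \fs^{(k+m)}_{\mu+m^\ell}$, expand by the vertical dual Pieri rule \eqref{et three properties 2}, and apply Schur stability \eqref{et three properties 4}. The only difference is that you spell out the routine verification that iterating weight-$(\ell)$ vertical strong marked tableaux recovers $\VSMT^{k+m}_{(\ell^m)}$ with additive spin, which the paper leaves implicit.
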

\begin{proof}
Apply~\eqref{et three properties 3} $m$ times to obtain
\begin{align}
\fs^{(k)}_\mu = (e_\ell^\perp)^{m} \, \fs^{(k+m)}_{\mu+m^\ell}\,.
\end{align}
The vertical dual Pieri rule~\eqref{et three properties 2} then gives the
$(k+m)$-Schur function decomposition and~\eqref{et three properties 4}
ensures this is the Schur function decomposition by the careful choice of $m$.
\end{proof}

See Example \ref{ex branching} for Theorem \ref{t branching} and Example \ref{ex k Schur to Schur} and Figure \ref{f branching to Schur} for  Theorem \ref{t k Schur to Schur}.

Recall from the introduction that the original  $k$-Schur candidate of \cite{LLM} (as well as subsequent candidates) conjecturally satisfies (i)--(ii), i.e., forms
a Schur positive basis for the space
$\Lambda^k = \Span_{\QQ(q,t)} \{H_\mu(\mathbf{x};q,t)\}_{\mu_1\leq k}$.
We settle this conjecture for the $k$-Schur functions of~\eqref{ec strong marked def kschur 1}, thereby giving the first proof that a  $k$-Schur candidate satisfies (i)--(ii).
This follows from Theorem \ref{t k Schur to Schur} together with the next result which
additionally refines statement (i) to give bases for subspaces  $\Lambda^k_\ell$
which depend on  $\ell$ as well as  $k$.

\begin{theorem}
\label{t kschurs are a basis}
For any positive integers $\ell$ and $k$,
the $k$-Schur functions $\{s_\mu^{(k)}(\mathbf x;t)\}_{\mu\in\Par_\ell^k}$ form a basis for the space
$\Lambda^k_\ell =  \Span_{\QQ(t)}\{H_{\mu}(\mathbf{x};t) \mid \mu \in \Par_\ell^k \} \subset \Lambda$.
\end{theorem}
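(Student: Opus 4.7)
The plan is to establish linear independence of $\{\fs^{(k)}_\mu\}_{\mu \in \Par^k_\ell}$ and the containment $\fs^{(k)}_\mu \in \Lambda^k_\ell$, and then to conclude by dimension count. Linear independence is immediate: expanding the geometric series in the raising-operator definition~\eqref{d catalan kschur} gives $\fs^{(k)}_\mu = s_\mu + \sum_{\lambda \rhd \mu} c_\lambda(t)\,s_\lambda$, upper triangular in dominance order with unit leading term. The analogous Kostka-Foulkes triangularity $H_\nu = s_\nu + \sum_{\lambda \rhd \nu} K_{\lambda\nu}(t)\,s_\lambda$ shows $\{H_\nu\}_{\nu \in \Par^k_\ell}$ is linearly independent, so $\dim \Lambda^k_\ell = |\Par^k_\ell|$. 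It thus suffices to prove $\fs^{(k)}_\mu \in \Lambda^k_\ell$ for every $\mu \in \Par^k_\ell$.

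Since $\Delta^k(\mu) \subset \Delta^+_\ell$, the raising-operator product factors and yields
\begin{align*}
\fs^{(k)}_\mu \;=\; \Bigl(\,\prod_{(i,j)\, \in\, \Delta^+_\ell \setminus \Delta^k(\mu)} (1 - t R_{ij})\Bigr) \cdot H_\mu.
\end{align*}
Expanding this finite product and using $R^M H_\mu = H_{\mu + \sigma_M}$---where $\sigma_M := \sum_{(i,j)\in M}(\epsilon_i - \epsilon_j)$ and $H_\gamma := H(\Delta^+_\ell; \gamma)$ extends the Hall-Littlewood notation to arbitrary $\gamma \in \ZZ^\ell$---yields
\begin{align*}
\fs^{(k)}_\mu = \sum_{M \,\subset\, \Delta^+_\ell \setminus \Delta^k(\mu)} (-t)^{|M|}\, H_{\mu + \sigma_M}.
\end{align*}
Applying the linear operator $\prod(1-tR_{pq})^{-1}$ to the classical Schur straightening $s_\gamma = \sgn(\sigma)\,s_{\sigma\cdot\gamma}$ transfers the straightening to $H$: each $H_{\mu+\sigma_M}$ equals $\sgn(\sigma)\,H_{\lambda(M)}$, or $0$ if $\lambda(M)$ has a negative part, where $\lambda(M)$ is the partition with $\{\lambda(M)_i - i\} = \{(\mu+\sigma_M)_i - i\}$ as multisets.

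The main step is to check that every nonvanishing summand has $\lambda(M) \in \Par^k_\ell$. Length $\ell(\lambda(M)) \leq \ell$ is automatic. For $\lambda(M)_1 \leq k$, the defining condition of $\Delta^k(\mu)$ in Definition~\ref{d0 catalan kschur} implies that every $(p,j) \in \Delta^+_\ell \setminus \Delta^k(\mu)$ satisfies $\mu_p + (j - p) \leq k$. Hence the number $a_p := |\{j > p : (p,j) \in M\}|$ of raising contributions to row $p$ is at most $k - \mu_p$, so $(\mu + \sigma_M)_p \leq \mu_p + a_p \leq k$. The straightening formula $\lambda(M)_1 = \max_p[(\mu + \sigma_M)_p - p + 1]$ then gives $\lambda(M)_1 \leq \max_p(k - p + 1) = k$, so $\lambda(M) \in \Par^k_\ell$. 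Thus $\fs^{(k)}_\mu$ is a $\QQ(t)$-combination of $\{H_\lambda\}_{\lambda \in \Par^k_\ell}$, placing $\fs^{(k)}_\mu \in \Lambda^k_\ell$.

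I expect the bound $\lambda(M)_1 \leq k$ to be the main obstacle: it is the combinatorial heart of the argument, reflecting that the root ideal $\Delta^k(\mu)$ is engineered precisely so that the raising operators absent from it cannot shift any row above the threshold $k$.
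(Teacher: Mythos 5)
Your overall plan (triangularity for linear independence, then $\fs^{(k)}_\mu \in \Lambda^k_\ell$ via the expansion $\fs^{(k)}_\mu = \prod_{(i,j)\in\Delta^+_\ell\setminus\Delta^k(\mu)}(1-t\mathbf{R}_{ij})H_\mu$) is the same as the paper's, and your bound $(\mu+\sigma_M)_p\le k$ on every weight arising after applying those raising operators is exactly the right observation. The gap is in the straightening step: it is \emph{not} true that $H_{\mu+\sigma_M}=\sgn(\sigma)\,H_{\lambda(M)}$ or $0$ as it is for Schur functions. The expression $\prod(1-tR_{ij})^{-1}$ is not a well-defined linear operator on $\Lambda$ (raising operators act on subscripts, not on symmetric functions), so the identity $s_\gamma=\sgn(\sigma)s_{\sort(\gamma+\rho)-\rho}$ cannot be pushed through it. Concretely, take $\ell=3$, $\gamma=(2,0,1)$: then $\gamma+\rho=(4,1,1)$ has a repeated part, so $s_{201}=0$, but the Garsia--Jing relation $\bb_0\bb_1=t\,\bb_1\bb_0$ gives $H_{201}=t\,H_{210}\neq 0$. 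This already appears in the expansion of $\fs^{(2)}_{111}$: one computes
\begin{align*}
\fs^{(2)}_{111}=H_{111}-tH_{201}-tH_{120}+t^2H_{210}=H_{111}-t^2H_{210},
\end{align*}
whereas your proposed straightening would discard $H_{201}$ and $H_{120}$ and produce $H_{111}+t^2H_{210}$. So the step is wrong in both magnitude and sign.

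The correct tool is Garsia's quadratic relation for the vertex operators, $\bb_m\bb_n=t\,\bb_{m+1}\bb_{n-1}+t\,\bb_n\bb_m-\bb_{n-1}\bb_{m+1}$ for $m<n$, iterated until the index is weakly decreasing. This is \emph{not} a signed-permutation rule but a three-term recurrence; crucially, each term on the right has both indices $\le n$ and sits strictly higher in dominance order, so one gets $H_\gamma\in\Span_{\QQ(t)}\{H_\lambda\mid\lambda\in\Par^k_\ell,\ \lambda\gd\gamma\}$ whenever $\max(\gamma)\le k$ --- this is precisely Lemma~\ref{l B dominance}. Combining that with your bound $(\mu+\sigma_M)_p\le k$ gives $\fs^{(k)}_\mu\in\Lambda^k_\ell$, and since raising operators strictly increase dominance, the transition matrix from $\{\fs^{(k)}_\mu\}$ to $\{H_\lambda\}$ is unitriangular, so the basis claim follows without a separate linear-independence or dimension-count argument. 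In short: replace Schur straightening by the Garsia--Jing commutation relation and the rest of your argument goes through essentially as the paper does it.
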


Here $H_{\mu}(\mathbf{x};t) = H_{\mu}(\mathbf{x};0, t)$ are the modified Hall-Littlewood polynomials
(denoted $Q'_\mu(\mathbf{x};t)$ in \cite[p. 234]{Macbook}).
Note that $\QQ(q,t) \otimes_{\QQ(t)} (\sum_\ell \Lambda^k_\ell) = \Lambda^k$ since  $\Lambda^k$
has the alternative description $\Lambda^k = \Span_{\QQ(q,t)}\{H_{\mu}(\mathbf{x}; t) \mid \mu \in \Par^k\}$; 
this follows from the fact that the Schur functions and Macdonald's integral forms $J_\mu(\mathbf{x};q,t) $ 
are related by a triangular change of basis---see, e.g., Equations 6.6, 2.18, and 2.20 of \cite{LMksplit}.

We also give three intrinsic descriptions of the $k$-Schur functions.
Studies of ungraded $k$-Schur functions have been served well by the characterization of $s^{(k)}_\mu({\bf x};1)$ as the unique symmetric functions satisfying~\eqref{et three properties 1}
and~\eqref{et three properties 4} at $t=1$.
We can now show that the $k$-Schur functions  $s^{(k)}_\mu({\bf x};t)$ of~\eqref{ec strong marked def kschur 1}
have this characterization (without the $t=1$ specialization), as well as a  new one coming from the shift invariance
property \eqref{et three properties 3}.

\begin{corollary}
\label{c several descriptions k schur}
The $k$-Schur functions $\{s^{(k)}_\mu\}_{k \ge 1, \, \mu \in \Par^k_\ell}$ are the unique family of symmetric functions
satisfying the following subsets of properties \eqref{et three properties 1}--\eqref{et three properties 4}.
\begin{list}{\emph{(\arabic{ctr})}} {\usecounter{ctr} \setlength{\itemsep}{1pt} \setlength{\topsep}{2pt}}
\item \eqref{et three properties 1} and \eqref{et three properties 4};
\item \eqref{et three properties 2} and \eqref{et three properties 4};
\item \eqref{et three properties 2} for  $d = \ell$, \eqref{et three properties 3}, and \eqref{et three properties 4}.
\end{list}
\end{corollary}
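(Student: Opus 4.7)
The plan is to obtain existence immediately from Theorem~\ref{t three properties} (which shows $\{\fs^{(k)}_\mu\}$ satisfies all four properties \eqref{et three properties 1}--\eqref{et three properties 4}) and then prove uniqueness in each of (1), (2), (3) separately, following the ``extract and iterate Pieri'' strategy that drives the proof of Theorem~\ref{c kschur eq catalan}.

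For case (1), I would strongly induct on $|\mu|$ across all pairs $(k,\mu)$ with $\mu\in\Par^k_\ell$. The base $|\mu|=0$ follows from \eqref{et three properties 4}. For the inductive step, iterating \eqref{et three properties 1} and invoking the inductive hypothesis on each $f^{(k)}_{\inside(T)}$ appearing gives, for every partition $\lambda$ with $|\lambda|=|\mu|$,
\[
\langle f^{(k)}_\mu,\,h_\lambda\rangle \;=\; \langle h^\perp_{\lambda_n}\cdots h^\perp_{\lambda_1}\,f^{(k)}_\mu,\,1\rangle \;=\; \sum_{T\in\SMT^k_\lambda(\mu)} t^{\spin(T)},
\]
matching the inner product $\langle\fs^{(k)}_\mu,h_\lambda\rangle$ computed in the proof of Theorem~\ref{c kschur eq catalan}. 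Since $\{h_\lambda\}_{|\lambda|=|\mu|}$ is dual to the degree-$|\mu|$ monomial basis, this identifies the homogeneous degree-$|\mu|$ component of $f^{(k)}_\mu$ with that of $\fs^{(k)}_\mu$; a parallel homogeneity induction (exploiting that $h_d^\perp f^{(k)}_\mu$ is homogeneous of degree $|\mu|-d$ by the inductive hypothesis, and that $\SMT^k_{(d)}(\mu)=\varnothing$ for $d>|\mu|$) rules out contributions in other degrees. Case (2) proceeds identically, with $e_d^\perp$, $\VSMT$, and $\{e_\lambda\}$ (dual to the forgotten symmetric functions) replacing $h_d^\perp$, $\SMT$, and $\{h_\lambda\}$.

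Case (3) I would handle without invoking any dual basis. Combining shift invariance with the vertical dual Pieri rule at $d=\ell$ produces the single identity
\[
f^{(k)}_\mu \;=\; e_\ell^\perp f^{(k+1)}_{\mu+1^\ell} \;=\; \sum_{T\in\VSMT^{k+1}_{(\ell)}(\mu+1^\ell)} t^{\spin(T)}\,f^{(k+1)}_{\inside(T)},
\]
in which every $\inside(T)$ has size $|\mu|$. Iterating $m=\max(|\mu|-k,0)$ times brings $f^{(k)}_\mu$ to a sum of $f^{(k+m)}_\nu$ with $|\nu|=|\mu|\le k+m$, at which point \eqref{et three properties 4} collapses each $f^{(k+m)}_\nu$ to $s_\nu$. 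The same iteration applied to $\fs^{(k)}_\mu$ yields the identical Schur expansion (this is essentially the derivation of Theorem~\ref{t k Schur to Schur}), forcing $f^{(k)}_\mu=\fs^{(k)}_\mu$.

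The main obstacle lies in (1) and (2): because $h_d^\perp$ and $e_d^\perp$ annihilate scalars for $d\ge 1$, the iterated Pieri computation a priori pins down $f^{(k)}_\mu$ only modulo a constant term, and this ambiguity must be killed by the parallel homogeneity induction described above. Arranging that induction so the inductive hypothesis simultaneously controls both the values and the degrees of every $f^{(k)}_\nu$ with $|\nu|<|\mu|$ is the real technical point. Case (3) bypasses this entirely, since its iteration terminates directly in Schur functions, which are already homogeneous of the correct degree.
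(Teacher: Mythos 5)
Your strategy is essentially the paper's: cases (1) and (2) rerun the proof of Theorem~\ref{c kschur eq catalan}, pairing against $\{h_\lambda\}$ or $\{e_\lambda\}$ and reducing to $\fs^{(k)}_{0^\ell} = 1$ via \eqref{et three properties 4}, while case (3) iterates \eqref{et three properties 3} and \eqref{et three properties 2} exactly as in Theorem~\ref{t k Schur to Schur}. The one real difference is that you explicitly surface the constant-term ambiguity in (1) and (2) that the paper leaves implicit.

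But your proposed ``parallel homogeneity induction'' does not actually kill that ambiguity. What it shows is that for each homogeneous component $f_m$ of $f^{(k)}_\mu$ with $m \ge 1$ and $m \ne |\mu|$, one has $h_d^\perp f_m = 0$ for all $d \ge 1$, hence $\langle f_m, h_\lambda\rangle = 0$ for every nonempty $\lambda$ of size $m$, hence $f_m = 0$; it is silent about the degree-$0$ piece $f_0 = \langle f^{(k)}_\mu, 1\rangle$, since the only $\lambda$ of size $0$ is empty and $h_\varnothing^\perp$ is the identity. Nothing in \eqref{et three properties 1} together with \eqref{et three properties 4} constrains $\langle f^{(k)}_\mu, 1\rangle$ once $|\mu| > k$. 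Concretely, with $\ell = 2$, $k = 1$, replace $f^{(1)}_{11} = H_{11}$ by $H_{11} + c$ for any scalar $c$ and leave the rest of the family untouched: this still satisfies \eqref{et three properties 1} and \eqref{et three properties 4}, because $h_d^\perp c = 0$ for $d \ge 1$, because $(1,1)$ never occurs as $\inside(T)$ for any $\mu' \in \Par^1_2$, and because $|(1,1)| = 2 > k$. So cases (1) and (2) must be read with the built-in hypothesis (automatic for $\fs^{(k)}_\mu$ by construction, and tacit in the paper) that each $f^{(k)}_\mu$ is homogeneous of degree $|\mu|$; under that hypothesis your argument matches the paper's exactly. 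Your observation that case (3) sidesteps the issue, because the iteration terminates directly in Schur functions, is correct.
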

\begin{proof}
The proof of Theorem~\ref{c kschur eq catalan} establishes (1).
By a similar argument, \eqref{et three properties 2} and \eqref{et three properties 4} imply
\begin{align}
\fs^{(k)}_\mu(\mathbf{x};t)\, &= \sum_{\text{partitions } \lambda \text{ of } |\mu|} \  \sum_{T \in \VSMT^k_{\lambda}(\mu)} t^{\spin(T)}
\omega(m_\lambda(\mathbf{x}))
\,,
\end{align}
where $\omega$ is the involution on $\Lambda$ defined by $\omega(e_d) = h_d$ for $d \ge 0$.
This establishes (2).
The properties in (3) determine a unique family of symmetric functions by Theorem~\ref{t k Schur to Schur}.
\end{proof}

\subsection{Outline}
The bulk of our paper is devoted to developing machinery to prove the vertical dual Pieri rule \eqref{et three properties 2}.
The remaining results are fairly straightforward;
we prove  \eqref{et three properties 3}, \eqref{et three properties 4}, and Theorem \ref{t kschurs are a basis} in Section \ref{s:defCat},
and \eqref{et three properties 1} in \S\ref{ss horizontal dual Pieri} as a corollary to \eqref{et three properties 2}.

Here are some highlights from the proof of the vertical dual Pieri rule, many of which
we feel will have further applications beyond this paper.  This also serves to give a rough outline of the proof.
\begin{itemize}
\item 
    We prove that $e_d^\perp (H(\Psi;\gamma)) = \sum_{S \subset [\ell], \, |S| = d} H(\Psi; \gamma - \epsilon_S)$ (Lemma~\ref{l ed perp HH}), which is
    our starting point for evaluating the left side of \eqref{et three properties 2}.
\item To handle the terms $H(\Psi; \gamma - \epsilon_S)$ in this sum, we prove a \emph{$k$-Schur straightening rule} (Theorem \ref{t k schur straightening many}) which shows that analogs of the $\fs^{(k)}_\mu$ indexed by nonpartitions
    are equal to 0 or to a power of  $t$ times $\fs^{(k)}_\nu$ for partition $\nu$.
\item Miraculously, the combinatorics arising in this rule exactly matches that of strong covers (Proposition \ref{p covers agree}). 
\item  To prove the $k$-Schur straightening rule, we develop several tools for working with Catalan functions including
a recurrence which expresses a Catalan function as the sum of two such polynomials with similar root ideals (Proposition \ref{p inductive computation atom}).
 \item To prove \eqref{et three properties 2} by induction on  $d$, we must prove a stronger statement in which the right side of
\eqref{et three properties 2} is replaced by a sum over tableaux which are marked only in rows $\le m$, and the left side is an algebraically
defined generalization of $e_d^\perp \fs^{(k)}_\mu$.
\end{itemize}

\subsection{Combinatorial examples}
\label{ss example strong tab}

\begin{example}
The $5$-core $\kappa = 53221$ and its image $\p(\kappa) = 32221 \in \Par^4$
are
\[
\kappa=
\tiny\tableau[scY]{9&7&4 &2 &1 \\6&4 &1 \\4 &2 \\3 &1 \\1 }
\quad
\mapsto
\quad
\p(\kappa)=
\tiny\tableau[scY]{&&\\&\\&\\&\\\\}\,,
\]
where the boxes of $\kappa$ are labeled by their hook lengths.
\end{example}

A (vertical) strong marked tableau  $T = (\kappa^{(0)} \xRightarrow{~~r_1~~} \kappa^{(1)} \xRightarrow{~~r_2~~} \cdots  \xRightarrow{~~r_m~~} \kappa^{(m)})$ is drawn by
filling each skew shape $\kappa^{(i)}/\kappa^{(i-1)}$ with the entry $i$ and starring the entry in position $(r_i,\kappa^{(i)}_{r_i})$, for all $i \in [m]$.
(This is really the standardization of $T$, but suffices for the examples in this paper as we will always specify the weight  $\eta$ separately.)
Strong marked covers are drawn this way too,
regarding them as strong marked tableaux of weight  $(1)$.

\begin{example}
\label{ex strong cover and spin}
Let $k = 4$. For $\tau = 663331111$ and $\kappa = 665443221$,
$\p(\tau) = 332221111$ and $\p(\kappa) = 222222221$.
Thus $\tau \Rightarrow \kappa$ is a strong cover and it has two distinct markings:
\[
\begin{array}{cc}
{\fontsize{7pt}{5pt}\selectfont \text{\tableau{
~&~&~&~&~&~\\
~&~&~&~&~&~\\
~&~&~&1&1\\
~&~&~&1\\
~&~&~&1\\
~&1&\crc{1}\\
~&1\\
~&1\\
~}}} &
{\fontsize{7pt}{5pt}\selectfont \text{\tableau{
~&~&~&~&~&~\\
~&~&~&~&~&~\\
~&~&~&1&\crc{1}\\
~&~&~&1\\
~&~&~&1\\
~&1&1\\
~&1\\
~&1\\
~}}}\\[3.5mm]
\tau \xRightarrow{~~6~~} \kappa & \tau \xRightarrow{~~3~~} \kappa\\[1mm]
\spin = 4 & \spin = 5
\end{array}\]
\end{example}

\begin{remark}
Although strong marked covers are typically marked by the content of the northeastmost box
of a connected component of $\kappa/\tau$,
it is equivalent (and more natural for us) to use row indices.
\end{remark}

Given a  $k+1$-core  $\kappa$ and $\lambda =\p(\kappa)$,
the  \emph{$k$-skew diagram} of  $\lambda$ denoted $\kskew(\lambda)$,
is the subdiagram of  $\kappa$ consisting of boxes with hook length
$\le k$.  Hence the row lengths  of  $\kskew(\lambda)$ are given  by $\lambda$ itself.

\begin{example}
\label{ex branching}
According to Theorem~\ref{t branching}, the expansion of $s^{(3)}_{22221}$ into 4-Schur functions is obtained by summing $t^{\spin(T)}s^{(4)}_{\inside(T)}$ over the set $\VSMT^4_{(5)}(33332)$ of vertical strong marked tableaux given below.  Note that $86532 = \p^{-1}(33332)$ is the outer shape of each diagram on the first line.
\[
\begin{array}{rllll}
T\quad\text{   }&
{\fontsize{7pt}{5pt}\selectfont
\text{\tableau{~&~&~&~&~&\crc{1}&3&5\\~&~&2&2&\crc{2}&4\\~&~&2&\crc{3}&5\\~&~&\crc{4}\\3&\crc{5}\\}}
}&
{\fontsize{7pt}{5pt}\selectfont
\text{\tableau{~&~&~&~&~&\crc{1}&3&5\\~&~&~&~&\crc{2}&4\\~&~&1&\crc{3}&5\\~&2&\crc{4}\\3&\crc{5}\\}}
}&
{\fontsize{7pt}{5pt}\selectfont
\text{\tableau{~&~&~&~&\crc{1}&3&3&5\\~&~&~&~&\crc{2}&4\\~&1&3&\crc{3}&5\\~&2&\crc{4}\\~&\crc{5}\\}}
}&
{\fontsize{7pt}{5pt}\selectfont
\text{\tableau{~&~&~&~&\crc{1}&3&3&5\\~&~&~&2&\crc{2}&4\\~&~&3&\crc{3}&5\\~&~&\crc{4}\\~&\crc{5}\\}}
}\\[12mm]
\kskew(\inside(T))&
{\fontsize{7pt}{5pt}\selectfont
\text{\tableau{\bl&\bl&~&~&~&\bl&\bl&\bl\\~&~\\~&~\\~&~\\\bl\\\bl\\}}
}&
{\fontsize{7pt}{5pt}\selectfont
\text{\tableau{\bl&\bl&~&~&~&\bl&\bl&\bl\\ \bl&~&~&~\\~&~\\~\\\bl\\\bl\\}}
}&
{\fontsize{7pt}{5pt}\selectfont
\text{\tableau{\bl&~&~&~&\bl&\bl&\bl&\bl\\ \bl&~&~&~\\~\\~\\~\\\bl\\}}
}&
{\fontsize{7pt}{5pt}\selectfont
\text{\tableau{\bl& \bl &~&~&\bl&\bl&\bl&\bl\\ \bl&~&~\\~&~\\~&~\\~\\\bl\\}}
}\\[9mm]
\inside(T)& \ 3222 \quad &\ 3321 \quad &\ 33111 \quad & \ 22221 \quad \\[4mm]
\spin(T) & \ \ 2 \quad & \ \ 2 \quad & \ \ 2 \quad & \ \ 0 \quad \\[4mm]
s^{(3)}_{22221} \ \, =  &  t^2s^{(4)}_{3222} \quad \quad +& t^2s^{(4)}_{3321}\quad \quad +& t^2s^{(4)}_{33111}\quad \quad + & s^{(4)}_{22221}.
\end{array}
\]
\end{example}

%

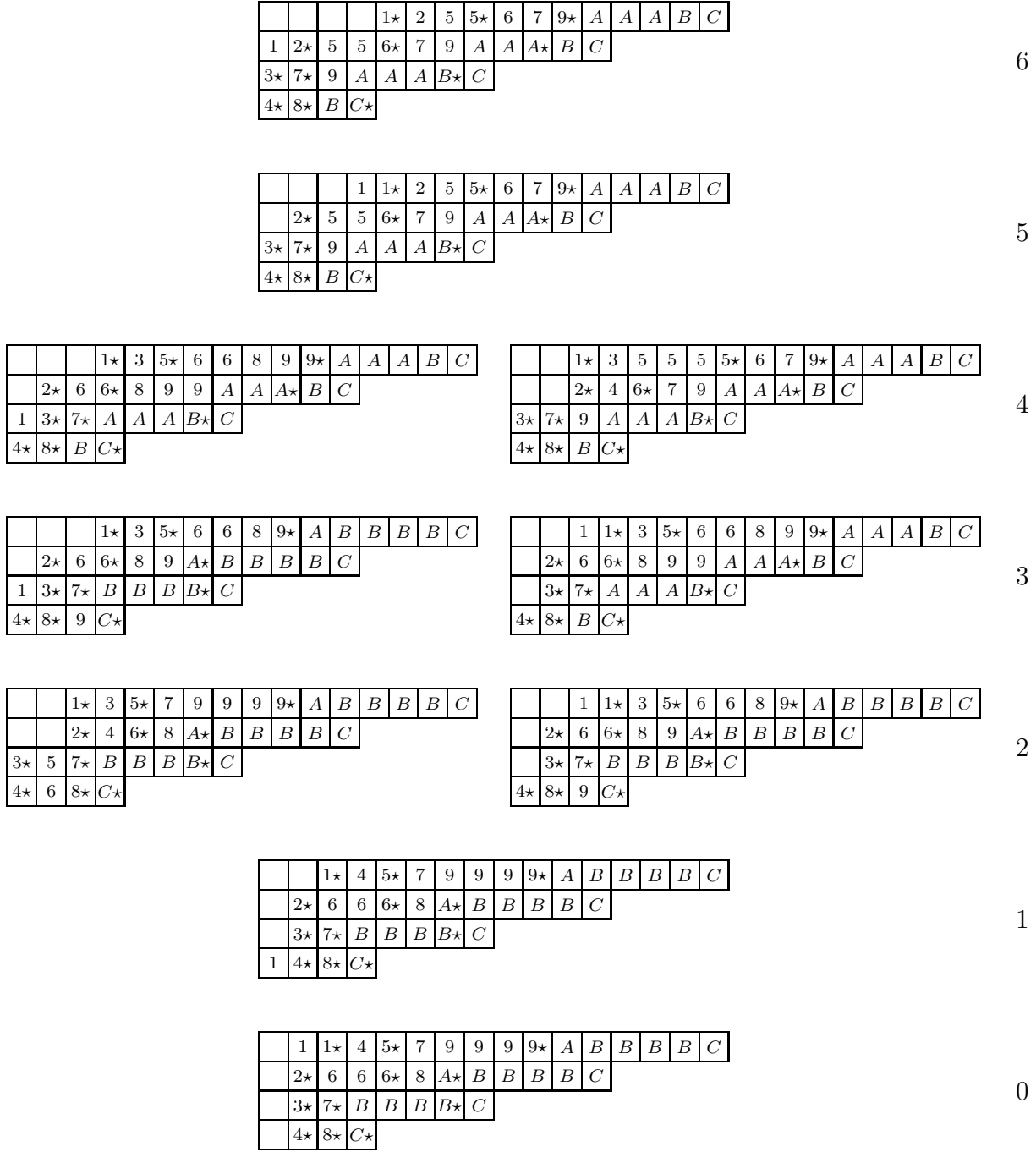
\begin{figure}[H]
\centerfloat
\begin{tikzpicture}[xscale = .43,yscale = 2.7]
\tikzstyle{vertex}=[inner sep=3pt, outer sep=4pt]
\tikzstyle{framedvertex}=[inner sep=3pt, outer sep=4pt]
\tikzstyle{aedge} = [draw, thin, ->,black]
\tikzstyle{edge} = [draw, thick, -,black]
\tikzstyle{doubleedge} = [draw, thick, double distance=1pt, -,black]
\tikzstyle{hiddenedge} = [draw=none, thick, double distance=1pt, -,black]
\tikzstyle{dashededge} = [draw, very thick, dashed, black]
\tikzstyle{LabelStyleH} = [text=black, anchor=south]
\tikzstyle{LabelStyleHn} = [text=black, anchor=north]
\tikzstyle{LabelStyleV} = [text=black, anchor=east]
\setlength{\cellsize}{1.6ex}
\begin{scope}[xscale = 2.3][xshift = 540pt]
\node[vertex] at (0,5) {\fontsize{8pt}{6.5pt}\selectfont $\tableau{
~&~&~&~&\crc{1}&2&5&\crc{5}&6&7&\crc{9}&A&A&A&B&C\\1&\crc{2}&5&5&\crc{6}&7&9&A&A&\crc{A}&B&C\\\crc{3}&\crc{7}&9&A&A&A&\crc{B}&C\\\crc{4}&\crc{8}&B&\crc{C}\\
}$};
\node at (0,4) {\fontsize{8pt}{6.5pt}\selectfont $\tableau{
~&~&~&1&\crc{1}&2&5&\crc{5}&6&7&\crc{9}&A&A&A&B&C\\~&\crc{2}&5&5&\crc{6}&7&9&A&A&\crc{A}&B&C\\\crc{3}&\crc{7}&9&A&A&A&\crc{B}&C\\\crc{4}&\crc{8}&B&\crc{C}\\
}$};
\node at (4,3) {\fontsize{8pt}{6.5pt}\selectfont $\tableau{
~&~&\crc{1}&3&5&5&5&\crc{5}&6&7&\crc{9}&A&A&A&B&C\\~&~&\crc{2}&4&\crc{6}&7&9&A&A&\crc{A}&B&C\\\crc{3}&\crc{7}&9&A&A&A&\crc{B}&C\\\crc{4}&\crc{8}&B&\crc{C}\\
}$};
\node at (-4,3) {\fontsize{8pt}{6.5pt}\selectfont $\tableau{
~&~&~&\crc{1}&3&\crc{5}&6&6&8&9&\crc{9}&A&A&A&B&C\\~&\crc{2}&6&\crc{6}&8&9&9&A&A&\crc{A}&B&C\\1&\crc{3}&\crc{7}&A&A&A&\crc{B}&C\\\crc{4}&\crc{8}&B&\crc{C}\\
}$};
\node at (4,2) {\fontsize{8pt}{6.5pt}\selectfont $\tableau{
~&~&1&\crc{1}&3&\crc{5}&6&6&8&9&\crc{9}&A&A&A&B&C\\~&\crc{2}&6&\crc{6}&8&9&9&A&A&\crc{A}&B&C\\~&\crc{3}&\crc{7}&A&A&A&\crc{B}&C\\\crc{4}&\crc{8}&B&\crc{C}\\
}$};
\node at (-4,2) {\fontsize{8pt}{6.5pt}\selectfont $\tableau{
~&~&~&\crc{1}&3&\crc{5}&6&6&8&\crc{9}&A&B&B&B&B&C\\~&\crc{2}&6&\crc{6}&8&9&\crc{A}&B&B&B&B&C\\1&\crc{3}&\crc{7}&B&B&B&\crc{B}&C\\\crc{4}&\crc{8}&9&\crc{C}\\
}$};
\node at (4,1) {\fontsize{8pt}{6.5pt}\selectfont $\tableau{
~&~&1&\crc{1}&3&\crc{5}&6&6&8&\crc{9}&A&B&B&B&B&C\\~&\crc{2}&6&\crc{6}&8&9&\crc{A}&B&B&B&B&C\\~&\crc{3}&\crc{7}&B&B&B&\crc{B}&C\\\crc{4}&\crc{8}&9&\crc{C}\\
}$};
\node at (-4,1) {\fontsize{8pt}{6.5pt}\selectfont $\tableau{
~&~&\crc{1}&3&\crc{5}&7&9&9&9&\crc{9}&A&B&B&B&B&C\\~&~&\crc{2}&4&\crc{6}&8&\crc{A}&B&B&B&B&C\\\crc{3}&5&\crc{7}&B&B&B&\crc{B}&C\\\crc{4}&6&\crc{8}&\crc{C}\\
}$};
\node at (0,0) {\fontsize{8pt}{6.5pt}\selectfont $\tableau{
~&~&\crc{1}&4&\crc{5}&7&9&9&9&\crc{9}&A&B&B&B&B&C\\~&\crc{2}&6&6&\crc{6}&8&\crc{A}&B&B&B&B&C\\~&\crc{3}&\crc{7}&B&B&B&\crc{B}&C\\1&\crc{4}&\crc{8}&\crc{C}\\
}$};
\node at (0,-1) {\fontsize{8pt}{6.5pt}\selectfont $\tableau{
~&1&\crc{1}&4&\crc{5}&7&9&9&9&\crc{9}&A&B&B&B&B&C\\~&\crc{2}&6&6&\crc{6}&8&\crc{A}&B&B&B&B&C\\~&\crc{3}&\crc{7}&B&B&B&\crc{B}&C\\~&\crc{4}&\crc{8}&\crc{C}\\
}$};
\end{scope}
\begin{scope}[xshift = 550pt]
\node at (0,5.5) {$\text{Spin}$};
\node at (0,5) {6};
\node at (0,4) {5};
\node at (0,3) {4};
\node at (0,2) {3};
\node at (0,1) {2};
\node at (0,0) {1};
\node at (0,-1) {0};
\end{scope}

\vspace{4mm}

\end{tikzpicture}
\caption{\label{f branching to Schur}
According to Theorem \ref{t k Schur to Schur} with  $k=1$, $\ell =4$,
the Schur expansion of the $1$-Schur function $s^{(1)}_{1111}$
(also equal to the modified Hall-Littlewood polynomial $H_{1111}$) is obtained by
summing
$t^{\spin(T)} s_{\inside(T)}$ over the set $\VSMT^4_{(4,4,4)}(4,4,4,4)$ of vertical strong marked tableaux  $T$ given above.
We have written  $A,B,C$ in place  $10,11,12$.
Note that $T$ having weight  $\ell^m = (4,4,4)$ means that the skew shapes (strong covers) labeled by  $1,2,\dots,\ell$ have stars in rows  $1,2, \dots, \ell$, as do the next  $\ell$ skew shapes, and so on.
}
\end{figure}

\begin{example}
\label{ex k Schur to Schur}
We compute the Schur expansion of $s^{(4)}_{3321}$ using the (proof of) Theorem~\ref{t k Schur to Schur}:
it is given by the sum $t^{\spin(T)}s_{\inside(T)}$ over the set $\VSMT^7_{(4,4,4)}(6654)$ of vertical strong marked tableaux given below.
Note that the $m$ in Theorem \ref{t k Schur to Schur} is a convenient choice, but often a smaller $m$ suffices;
the $7$-Schur expansion of $s^{(4)}_{3321}$ is already the Schur expansion, so $m = 3$ (hence $k+m = 7$) suffices.
\[\begin{tikzpicture}[xscale = .42,yscale = 2.3]
\tikzstyle{vertex}=[inner sep=3pt, outer sep=4pt]
\tikzstyle{framedvertex}=[inner sep=3pt, outer sep=4pt]
\tikzstyle{aedge} = [draw, thin, ->,black]
\tikzstyle{edge} = [draw, thick, -,black]
\tikzstyle{doubleedge} = [draw, thick, double distance=1pt, -,black]
\tikzstyle{hiddenedge} = [draw=none, thick, double distance=1pt, -,black]
\tikzstyle{dashededge} = [draw, very thick, dashed, black]
\tikzstyle{LabelStyleH} = [text=black, anchor=south]
\tikzstyle{LabelStyleHn} = [text=black, anchor=north]
\tikzstyle{LabelStyleV} = [text=black, anchor=east]
\setlength{\cellsize}{1.6ex}
\begin{scope}[xscale = 1.8][xshift = 540pt]
\node[vertex] at (0,5) {\fontsize{7.5pt}{6pt}\selectfont $\text{ \tableau{
~&~&~&~&~&\crc{1}&3&\crc{5}&7&\crc{9}&B\\~&~&~&~&\crc{2}&6&\crc{6}&8&\crc{A}&C\\\crc{3}&5&\crc{7}&9&\crc{B}\\\crc{4}&\crc{8}&A&\crc{C}\\}
}$};
\node[vertex] at (-4,4) {\fontsize{7.5pt}{6pt}\selectfont $\text{
\tableau{
~&~&~&~&\crc{1}&4&5&\crc{5}&7&\crc{9}&B\\~&~&~&~&\crc{2}&6&\crc{6}&8&\crc{A}&C\\~&\crc{3}&\crc{7}&9&\crc{B}\\\crc{4}&\crc{8}&A&\crc{C}\\}
}$};
\node[vertex] at (4,4) {\fontsize{7.5pt}{6pt}\selectfont $\text{
\tableau{
~&~&~&~&~&\crc{1}&4&\crc{5}&7&\crc{9}&B\\~&~&~&\crc{2}&\crc{6}&A&A&A&\crc{A}&C\\~&\crc{3}&\crc{7}&9&\crc{B}\\1&\crc{4}&\crc{8}&\crc{C}\\}
}$};
\node[vertex] at (4,3) {\fontsize{7.5pt}{6pt}\selectfont $\text{
\tableau{
~&~&~&~&1&\crc{1}&4&\crc{5}&7&\crc{9}&B\\~&~&~&\crc{2}&\crc{6}&A&A&A&\crc{A}&C\\~&\crc{3}&\crc{7}&9&\crc{B}\\~&\crc{4}&\crc{8}&\crc{C}\\}
}$};
\node[vertex] at (-4,3) {\fontsize{7.5pt}{6pt}\selectfont $\text{
\tableau{
~&~&~&~&\crc{1}&4&\crc{5}&8&9&\crc{9}&B\\~&~&~&\crc{2}&\crc{6}&A&A&A&\crc{A}&C\\~&~&\crc{3}&\crc{7}&\crc{B}\\\crc{4}&5&\crc{8}&\crc{C}\\}
}$};
\node[vertex] at (0,2) {\fontsize{7.5pt}{6pt}\selectfont $\text{
\tableau{
~&~&~&\crc{1}&5&5&\crc{5}&8&9&\crc{9}&B\\~&~&~&\crc{2}&\crc{6}&A&A&A&\crc{A}&C\\~&~&\crc{3}&\crc{7}&\crc{B}\\~&\crc{4}&\crc{8}&\crc{C}\\}
}$};
\end{scope}
\begin{scope}[xshift = 520pt]
\node at (0,5.5) {$\text{Spin}$};
\node at (0,5) {3};
\node at (0,4) {2};
\node at (0,3) {1};
\node at (0,2) {0};
\end{scope}
\end{tikzpicture}\]

\[s^{(4)}_{3321} = t^3s_{54} + t^2(s_{441}+s_{531}) + t(s_{432}+ s_{4311})
 + s_{3321}.
\]
\end{example}

\section{Catalan functions as  $G$-equivariant Euler characteristics}
\label{s cat history}

We first review the geometric description of Catalan functions from \cite{ChenThesis, Panyushev}, and then
summarize prior work on these polynomials.
This serves to provide context for our results but is not necessary for the remainder of the paper.

Let  $G = GL_\ell(\CC)$, $B \subset G$ the standard lower triangular Borel subgroup, and $H \subset B$ the subgroup of
diagonal matrices.
The character group of  $H$ (integral weights) are identified with $\ZZ^\ell$
via the correspondence sending $\gamma \in \ZZ^\ell$ to  the character  $H \to \CC^\times$ given by $\text{diag}(z_1, \dots, z_\ell) \mapsto z_1^{\gamma_1} \cdots z_\ell^{\gamma_\ell}$.

The \emph{character} of a $G$-module $M$ is defined by
\begin{align}
\label{ed character}
\chr(M) = \sum_{\substack{\text{partitions }\lambda \\ \ell(\lambda) \le \ell}}
 \dim \hom(V_\lambda, M) s_\lambda  \in \Lambda,
 \end{align}
where $V_\lambda$ denotes the irreducible $G$-module of highest weight $\lambda$.

Given a $B$-module $N$, let $G\times_B N$ denote the homogeneous $G$-vector bundle on $G/B$ with fiber $N$ above $B \in G/B$,
and let $\L_{G/B}(N)$ denote the locally free $\O_{G/B}$-module of its sections.
For $\gamma \in \ZZ^\ell$, let $\CC_\gamma$ denote the one-dimensional $B$-module of weight $\gamma$.
Consider the adjoint action of $B$ on  the Lie algebra $\fn$ of strictly lower triangular matrices.
The $B$-stable (or ``ad-nilpotent'')
ideals of  $\fn$ are in bijection with root ideals via
the map sending the root ideal $\Psi$ to the $B$-submodule, call it $N_\Psi$, of $\fn$ with weights  $\{\epsilon_j-\epsilon_i \mid (i,j)\in \Psi\}$.
Note that the dual $N_\Psi^*$ has weights given by  $\Psi$.

The Catalan functions appear naturally as certain  $G$-equivariant Euler characteristics, as the following result shows.

\begin{theorem}[\cite{ChenThesis, Panyushev}]
\label{t cohomology}
Let $\RI$ be an indexed root ideal.
Let 
$S^j N_\Psi^*$ denote the $j$-th symmetric power of the  $B$-module  $N_\Psi^*$.
Then
\begin{align}
\label{et cohomology}
H(\Psi;\gamma) =  \sum_{i, j \ge 0} (-1)^i t^j \chr \Big( H^i\big(G/B, \L_{G/B}(S^j N_\Psi^*  \tsr \CC_\gamma) \big) \Big).
\end{align}
\end{theorem}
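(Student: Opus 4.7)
The plan is to reduce the $G$-equivariant Euler characteristic on the right of~\eqref{et cohomology} to a sum of generalized Schur functions via Borel-Weil-Bott, and then to recognize the resulting generating function in $t$ as the raising-operator product $\prod_{(i,j)\in\Psi}(1-tR_{ij})^{-1} s_\gamma$ that defines $H(\Psi;\gamma)$. First I would filter the finite-dimensional $B$-module $S^j N_\Psi^* \otimes \CC_\gamma$ by $B$-submodules with one-dimensional quotients, which exists since $B$ is solvable (Lie's theorem); the successive quotients are weight lines $\CC_{\gamma+\mu}$, with $\mu$ ranging with multiplicity $m^j_\mu$ over the weights of $S^j N_\Psi^*$. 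Because $G \to G/B$ is a principal $B$-bundle the functor $\L_{G/B}(\,\cdot\,)$ is exact on $B$-modules, and additivity of the equivariant Euler characteristic on the induced long exact sequences in sheaf cohomology yields
\[
\sum_i (-1)^i \chr H^i\!\bigl(G/B,\L_{G/B}(S^j N_\Psi^* \otimes \CC_\gamma)\bigr)
= \sum_\mu m^j_\mu \sum_i (-1)^i \chr H^i\!\bigl(G/B,\L_{G/B}(\CC_{\gamma+\mu})\bigr).
\]

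Next I would invoke Borel-Weil-Bott on each inner alternating sum: for any $\delta \in \ZZ^\ell$, $\sum_i (-1)^i \chr H^i(G/B,\L_{G/B}(\CC_\delta))$ vanishes when $\delta+\rho$ (with $\rho = (\ell-1,\ell-2,\dots,0)$) has a repeated entry, and otherwise equals $(-1)^{\ell(w)} s_\lambda$ for the unique $w \in \Sc_\ell$ making $w(\delta+\rho)-\rho = \lambda$ dominant. A direct comparison shows this alternating sum is exactly the generalized Schur function $s_\delta$ of~\eqref{ed s gamma}: both the Borel-Weil-Bott expression and the Jacobi-Trudi determinant are alternating under the dot action of $\Sc_\ell$ on $\delta$, and they agree on dominant $\delta$ (where the classical Borel-Weil theorem gives $H^0 = V_\delta$ and the determinant reduces to the ordinary Schur function via the Weyl character formula). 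Substituting and summing over $j$ with weight $t^j$ yields
\[
\sum_{i,j}(-1)^i t^j \chr H^i\!\bigl(G/B,\L_{G/B}(S^j N_\Psi^* \otimes \CC_\gamma)\bigr)
= \sum_{j,\mu} t^j m^j_\mu\, s_{\gamma+\mu}.
\]

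Finally, since $N_\Psi^*$ has $H$-weights $\{\epsilon_i-\epsilon_j : (i,j)\in\Psi\}$, the graded $H$-character of $\bigoplus_j t^j S^j N_\Psi^*$ is $\prod_{(i,j)\in\Psi}(1-tz^{\epsilon_i-\epsilon_j})^{-1}$; translating each $z^{\epsilon_i-\epsilon_j}$ into the raising operator $R_{ij}$ (which acts by $R_{ij} s_\gamma = s_{\gamma+\epsilon_i-\epsilon_j}$) converts $\sum_{j,\mu} t^j m^j_\mu\, s_{\gamma+\mu}$ into $\prod_{(i,j)\in\Psi}(1-tR_{ij})^{-1} s_\gamma = H(\Psi;\gamma)$, establishing~\eqref{et cohomology}. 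The main technical point is to interpret the raising-operator product rigorously as a formal power series (per~\S\ref{ss Catalan functions}): its action on $s_\gamma$ produces an a priori infinite sum of generalized Schur functions $s_\delta$, which descends to a well-defined element of $\Lambda$ only after straightening each $s_\delta$ to $0$ or $\pm s_\lambda$ via its alternating dot-action behavior. The identification works cleanly because both sides are naturally organized by the symmetric-power degree $j$, and for each fixed $j$ only finitely many weight shifts $\mu$ contribute, matching exactly the signs produced by Borel-Weil-Bott.
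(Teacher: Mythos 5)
Your argument is correct and follows essentially the same Borel--Weil--Bott route that the paper invokes by citing the passage from Equation 2.1 to Equation 2.4 in Shimozono--Weyman: filter $S^j N_\Psi^* \otimes \CC_\gamma$ into weight lines (Lie's theorem), reduce to line bundles via exactness of $\L_{G/B}$ and additivity of the Euler characteristic, apply Borel--Weil--Bott, and identify the resulting alternating character with the Jacobi--Trudi $s_\delta$ so that the graded sum over $j$ becomes the raising-operator product. The one point worth making fully explicit (cf.\ item (4) of the Remark following Theorem~\ref{t cohomology}) is that $\chr$ as defined in \eqref{ed character} discards the non-polynomial $V_\lambda$, which is exactly what makes $\sum_i(-1)^i\chr H^i(G/B,\L_{G/B}(\CC_\delta))$ equal the \emph{truncated} $s_\delta$ of Proposition~\ref{p schur straighten} (both vanish when $\sort(\delta+\rho)-\rho$ has a negative entry), so the two sides of \eqref{et cohomology} match term by term.
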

\begin{proof}
This is a consequence of the  Borel-Weil-Bott theorem and
follows from a straightforward extension of the argument going from Equation 2.1 to
Equation 2.4 in \cite{SW}.
\end{proof}

\begin{remark}
\
\begin{itemize}[leftmargin=1cm]
\item[(1)] A result essentially the same as this one is proved in {\cite[Theorem 3.8]{Panyushev}} by adapting a proof  \cite{Hs1} for the case $\Psi = \Delta^+$.
\item[(2)]
For the precise statement here, we have followed the conventions of \cite{SW}, which conveniently handles a duality in Borel-Weil-Bott.
Note that had we chosen to use the upper triangular Borel subgroup $B'$ instead,
Borel-Weil-Bott implies that
$\chi_{G/B'}(\gamma)  := \sum_{i\ge 0} (-1)^i \chr \big(H^i(G/B', \L_{G/B'}(\CC_{\gamma}) )\big) = s_{(\gamma_\ell, \dots, \gamma_1)}$,
while  $\chi_{G/B}(\gamma) = H(\varnothing; \gamma) = s_\gamma$.
See \cite[\S3]{KamnitzerLectures} for a nice explanation of this duality.
\item[(3)] A version of \eqref{et cohomology} in fact holds for any  $B$-module  $N$, with the left side replaced by a raising operator formula over
the multiset of weights of $N$. However, restricting to the $N_\Psi^*$ is natural from the geometric perspective;
\cite{Panyushev} allows more general (but not arbitrary) $B$-modules  $N$ than the $N_\Psi^*$ considered here.
\item[(4)]
By our definition \eqref{ed s gamma} of  $s_\lambda$, $s_\lambda = 0$ for weakly decreasing  $\lambda \in \ZZ^\ell$ with  $\lambda_\ell < 0$,
so both sides of \eqref{et cohomology} record only the polynomial representations.
This differs from \cite{SW, Panyushev}, which give versions of \eqref{et cohomology} without the polynomial truncation.
\end{itemize}
\end{remark}

\begin{conjecture}[{\cite[Conj.~5.4.3]{ChenThesis}}]
\label{cj chen vanishing}
For any partition  $\gamma$ and root ideal $\Psi$, the cohomology in \eqref{et cohomology} vanishes for $i > 0$ and hence
$H(\Psi;\gamma)$ is a Schur positive symmetric function.
\end{conjecture}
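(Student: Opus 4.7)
The plan is to establish the conjecture by combining an inductive combinatorial argument for Schur positivity with a refined geometric argument for the cohomological vanishing, leveraging the Catalan recurrence of Proposition~\ref{p inductive computation atom} and the straightening rule Theorem~\ref{t k schur straightening many}.

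First I would tackle the Schur positivity of $H(\Psi;\gamma)$ by induction on the pair $(|\Delta^+_\ell \setminus \Psi|,\, |\gamma|)$ in lexicographic order. The base case $\Psi = \Delta^+_\ell$ is classical Kostka-Foulkes positivity, since $H(\Delta^+_\ell;\gamma)$ is a modified Hall-Littlewood polynomial. For the inductive step, I would pick a root $\alpha \in \Delta^+_\ell \setminus \Psi$ whose adjunction still yields a root ideal, and apply the recurrence to express $H(\Psi;\gamma)$ schematically as $H(\Psi';\gamma) + t\, H(\Psi'';\gamma')$, where $\Psi', \Psi''$ are closer to $\Delta^+_\ell$ and $\gamma'$ has strictly smaller size. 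A crucial ingredient is a generalization of the $k$-Schur straightening rule beyond $\Delta^k(\mu)$: whenever the recurrence produces a Catalan function with a non-partition weight, straightening should reduce it to $\pm t^a H(\Psi;\nu)$ for some partition $\nu$ with $|\nu| \le |\gamma|$, closing the induction. Panyushev's theorem provides the positivity in a large subclass, which can be used as an additional source of base cases when the induction reaches an ideal in that class.

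Second, to upgrade Schur positivity to the cohomological vanishing $H^i\bigl(G/B,\L_{G/B}(S^j N_\Psi^* \tsr \CC_\gamma)\bigr) = 0$ for $i>0$, I would extend Panyushev's vanishing argument. The essential tool is to construct a $B$-equivariant filtration or Koszul-type resolution of $S^j N_\Psi^* \tsr \CC_\gamma$ whose associated graded pieces are line bundles $\L_{G/B}(\CC_\lambda)$ with $\lambda + \rho$ regular dominant; Borel-Weil-Bott then forces the higher cohomology to vanish piece by piece, and a spectral-sequence argument recovers vanishing for the whole bundle. Because $\gamma$ is a partition, the shift by $\gamma + \rho$ should keep the weights of $S^j N_\Psi^*$ in a chamber that is compatible with the dominance condition, provided one orders the filtration in a manner adapted to the poset structure of $\Psi$.

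The main obstacle will be the straightening step outside the $\Delta^k(\mu)$ setting. In the paper, the tight compatibility between $\Delta^k(\mu)$ and the weight $\mu$ is decisive: removing a corner of $\Psi$ matches a strong cover on $\mu$, which is what powers the Pieri rules. For an arbitrary pair $(\Psi,\gamma)$ this alignment disappears, and the recurrence can produce weights whose reduction to partition shape requires genuinely new raising-operator identities. I expect this to force either a delicate case analysis according to how $\gamma$ sits relative to the corners of $\Psi$, or a purely geometric bypass via Frobenius splitting or a Weyman-complex argument \emph{à la} \cite{SW} that establishes the cohomological vanishing uniformly and then deduces Schur positivity a posteriori from~\eqref{et cohomology}.
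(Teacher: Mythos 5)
The statement you are trying to prove is \emph{Conjecture}~\ref{cj chen vanishing} in the paper, attributed to Chen--Haiman, and it is not proved there; it remains open. The paper establishes Schur positivity (via a manifestly positive combinatorial formula) only for the particular subclass $H(\Delta^k(\mu);\mu)$ with $\mu\in\Par^k_\ell$, i.e.\ the $k$-Schur functions, by combining shift invariance~\eqref{et three properties 3} with the vertical dual Pieri rule~\eqref{et three properties 2} (Theorem~\ref{t k Schur to Schur}); it makes no claim about arbitrary pairs $(\Psi,\gamma)$. There is therefore no ``paper's own proof'' to compare against, and a blind attempt to settle the full conjecture is aiming at something genuinely open.

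Beyond that mismatch, your proposed induction has a concrete defect. The recurrence that adds a root to $\Psi$, namely~\eqref{e Rlambda recurrence nonroot HH}, reads $\HH(\Psi;\gamma) = \HH(\Psi\cup\beta;\gamma) - t\,\HH(\Psi\cup\beta;\gamma+\eroot{\beta})$, with a \emph{minus} sign, so Schur positivity of the two right-hand terms does not propagate to the left. The variant with a plus sign,~\eqref{e Rlambda recurrence HH}, \emph{removes} a root from $\Psi$, moving away from $\Delta^+_\ell$, so your primary measure $|\Delta^+_\ell\setminus\Psi|$ does not decrease. In both recurrences the weight changes by $\pm\eroot{\alpha}=\pm(\epsilon_i-\epsilon_j)$, which preserves $|\gamma|$, so your secondary coordinate does not decrease either; the lexicographic induction never terminates. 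You correctly flag the straightening step as the hard part, but that difficulty is decisive, not incidental: Theorem~\ref{t k schur straightening many} is tailored to $(\Delta^k(\mu),\mu)$, where the removable roots of the ideal align with the shape of $\mu$, and no analogue is available for an arbitrary indexed root ideal. The geometric half of the plan is similarly aspirational: the assertion that the weights of $S^j N_\Psi^*\tsr\CC_\gamma$ stay in a Borel--Weil--Bott-friendly chamber whenever $\gamma$ is a partition is false in general, and the known vanishing results (Broer's for $\Delta^+$ or for dominant rectangle sequences, Panyushev's under the shifted-dominance condition recalled in Section~\ref{s cat history}) cover only proper subclasses. The residual general case is exactly why~\ref{cj chen vanishing} is still a conjecture.
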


As previously mentioned, the Catalan functions  $H(\Delta^+; \mu)$  for partition $\mu$
are the modified Hall-Littlewood polynomials (see Proposition \ref{p HL Schur and Jing}).  In this case, the Schur expansion coefficients are
the Kostka-Foulkes polynomials, which have been extensively studied
(see, e.g., \cite{Macbook,H3,DLT}).
Another well-studied class of Catalan functions are the
\emph{parabolic Hall-Littlewood polynomials}---the case
$\Psi = \Delta(\eta)$ for some $\eta\in \ZZ_{\geq 0}^r$, where
$$
\Delta(\eta):=\big\{ \text{$\alpha \in \Delta^+_{|\eta|}$ above the block diagonal with block sizes $\eta_1,\ldots,\eta_r$}\big\}\,.
$$
For example,
\[
\Delta(1,3,2) =
\ytableausetup{mathmode, boxsize=.8em,centertableaux}
{\tiny
\begin{ytableau}
*(white)     &*(red)  &*(red)   &*(red)  &*(red)  &*(red) \\
\mynone & *(white) & *(white) & *(white) & *(red)  &*(red)  \\
\mynone &*(white)  & *(white) & *(white) & *(red)  &*(red)  \\
\mynone &*(white)  & *(white)  & *(white) & *(red) &*(red) \\
\mynone &\mynone  &\mynone  &\mynone  & *(white) & *(white) \\
\mynone &\mynone  &\mynone  &\mynone  &*(white)  & *(white)
\end{ytableau}
}.
\]

Broer proved that in the case $\Psi = \Delta^+$,
the cohomology in \eqref{et cohomology} vanishes for  $i > 0$ if and only if $\gamma_i - \gamma_j \ge -1 \text{ for } i < j$
(see \cite[Theorem 2.4]{Broer} and \cite[Proposition 2(iii)]{Broer97}).
Broer posed Conjecture \ref{cj chen vanishing} in the parabolic case  $\Psi = \Delta(\eta)$ (see, e.g., \cite[Conjecture~5]{SW}) and proved it in
the subcase $\gamma$ is a \emph{dominant sequence of rectangles},
meaning that $\gamma=(a_1^{\eta_1},a_2^{\eta_2},\ldots,a_r^{\eta_r})$
for some  $a_1 \ge a_2 \ge \cdots \ge a_r$ \cite[Theorem 2.2]{BroerNormality}.

Panyushev proved that the cohomology in \eqref{et cohomology} vanishes for  $i > 0$ when the weight
$\gamma - \rho + \sum_{(i,j) \in \Delta^+ \setminus \Psi} \epsilon_i- \epsilon_j$ is weakly decreasing, where $\rho = (\ell-1, \ell-2, \dots, 0)$
(see \cite[Theorem~3.2]{Panyushev} for the full statement of which this is a special case);
this includes the case where $\gamma$ is a partition with distinct parts and $\Psi$ is any root ideal as well as many instances where $\gamma$ is not a partition.

A natural combinatorial problem arising here is to find a positive combinatorial formula for the Schur expansion of  $H(\Psi; \gamma)$ when  $\gamma$ is a partition.
Shimozono-Weyman posed a (still open) conjecture that the Schur expansion of $H(\Delta(\eta); \gamma)$ when  $\gamma$ is a partition can be described using an intricate
combinatorial procedure called katabolism~\cite{SW}.
Progress has been made in the case  $\gamma$ is a dominant sequence of rectangles:
the Schur expansion was described by
Schilling-Warnaar \cite{SchillingWarnaar} and Shimozono~\cite{Scyclageposet} (independently)
using a cocyclage poset on Littlewood-Richardson tableaux;
by Shimozono \cite{Shimozonoaffine} using affine Demazure crystals;
and by A. N. Kirillov-Schilling-Shimozono \cite{KSS} using rigged configurations.
Chen-Haiman conjectured a generalization of the Shimozono-Weyman katabolism formula to any root ideal  $\Psi$ and  partition $\gamma$ \cite[Conjecture~5.4.3]{ChenThesis}.

\section{Catalan functions}
\label{s:defCat}

We elaborate on the definition of the Catalan functions $H(\Psi;\gamma)$ 
and give another description of these polynomials using Hall-Littlewood vertex operators. 
We then prove
 \eqref{et three properties 3}, \eqref{et three properties 4}, and~Theorem \ref{t kschurs are a basis},
along the way establishing some basic facts about Catalan functions
which
may have further applications beyond this paper.

\subsection{Notation}
\label{ss notation}
Throughout the paper we use the following notation/conventions:
We write $[a,b]$ for the interval $\{i \in \ZZ \mid a \le i \le b\}$ and $[n] := [1,n]$.
For  $i\in [\ell]$, we write  $\epsilon_i \in  \ZZ^\ell$ for the weight with a 1 in position  $i$ and 0's elsewhere,
and for a set $S\subset  [\ell]$,  denote $\epsilon_S = \sum_{i \in S} \epsilon_i$.
We often omit set braces on singleton sets to avoid clutter.

\subsection{Indexed root ideals}
We regard the set $\Delta^+_\ell=
\Delta^+ := \big\{(i,j) \mid 1 \le i < j \le \ell \big\}$ as labels for the set of positive roots of the root system of type $A_{\ell-1}$ (though for brevity we refer to elements of $\Delta^+$ as roots as well).
For  $\alpha = (i,j) \in \Delta^+_\ell$, we write $\eroot{\alpha} = \epsilon_i - \epsilon_j \in \ZZ^\ell$
for the corresponding positive root (not to be confused with  $\epsilon_{\{i,j\}} = \epsilon_i + \epsilon_j$).
As previously mentioned, a
\emph{root ideal} is an upper order ideal of the poset $\Delta^+$ with partial order given by $(a,b) \leq (c,d)$ when $a\geq c$ and $b\leq d$.
We also work with the complement $\Delta^+ \setminus \Psi$, a lower order ideal of  $\Delta^+$.

\begin{figure}[h]
\[\ytableausetup{mathmode, boxsize=1.7em,centertableaux}
\begin{array}{cc}
\tiny
\begin{ytableau}
 ~ & ~ & *(red!80) 1,\!3 & *(red!80) 1,\!4 & *(red!80) 1,\!5 \\
 \mynone &  ~ & ~ & *(red!80) 2,\!4& *(red!80) 2,\!5 \\
 \mynone & \mynone  &  ~ & *(red!80) 3,\!4& *(red!80) 3,\!5\\
 \mynone & \mynone  & \mynone  &  ~ & ~ \\
 \mynone & \mynone  & \mynone  & \mynone  &  ~
\end{ytableau}
&\tiny
\begin{ytableau}
 ~ & *(blue!48) 1,\!2 & *(red!10) & *(red!10) & *(red!10) \\
 \mynone &  ~ & *(blue!48) 2,\!3 & *(red!10)  & *(red!10)\\
 \mynone & \mynone  &  ~ & *(red!10)  & *(red!10)\\
 \mynone & \mynone  & \mynone  &  ~ & *(blue!48) 4,\!5  \\
 \mynone & \mynone  & \mynone  & \mynone  &  ~
\end{ytableau}\\[14mm]
\text{\small $\Psi = \{(1,3),\!(1,4),(1,5), (2,4), (2,5), (3,4), (3,5)\}$ }&
\text{\small $\Delta^+ \setminus \Psi = \{(1,2),(2,3),(4,5)\} $}
\end{array}
\]
\caption{\label{f root set} For  $\ell =5$, a root ideal  $\Psi$ and its complement  $\Delta^+ \setminus \Psi$:}
\end{figure}

An {\it indexed root ideal of length $\ell$} is
a pair $(\Psi, \gamma)$
consisting of a root ideal $\Psi\subset\Delta_\ell^+$ and a weight $\gamma\in\ZZ^\ell$.

Given an indexed root ideal  $(\Psi,\gamma)$ of length  $\ell$,
we represent the Catalan function  $H(\Psi; \gamma)$
by the $\ell\times \ell$ grid of boxes (labeled by matrix-style coordinates as in Figure~\ref{f root set}),
with the boxes of $\Psi$ shaded and 
the entries of $\gamma$ written along the diagonal.
For example, with $\Psi$ as in Figure~\ref{f root set} and $\gamma = 33411$,
\[\ytableausetup{mathmode, boxsize=1.1em,centertableaux}
H(\Psi; \gamma) =
\tiny
\begin{ytableau}
 3 & ~ & *(red)   & *(red)   & *(red)   \\
 \mynone &  3 & ~ & *(red) & *(red)  \\
 \mynone & \mynone  &  4 & *(red) & *(red) \\
 \mynone & \mynone  & \mynone  &  1 & ~  \\
 \mynone & \mynone  & \mynone  & \mynone  &  1
\end{ytableau}
.
\]
This is the most convenient way we have found to compute with these polynomials.

\subsection{Catalan functions}
\label{ss Catalan functions}

Recall from \eqref{ed s gamma} that for any $\gamma \in \ZZ^\ell$, the Schur function $s_\gamma$ is defined to be $\det( h_{\gamma_i + j-i} )$.
The  $s_\gamma$ are expressed in the basis of Schur functions indexed by partitions by the following \emph{straightening rule}:
\begin{proposition}[Schur function straightening]
\label{p schur straighten}
For any $\gamma \in \ZZ^\ell$,
\[s_\gamma(\mathbf{x}) =
\begin{cases}
\sgn(\gamma+\rho) s_{\sort(\gamma+\rho) -\rho}(\mathbf{x}) & \text{if $\gamma + \rho$ has distinct nonnegative parts,}\\
0                                                          & \text{otherwise,}
\end{cases}
\]
where $\rho=(\ell-1,\ell-2,\dots,0)$, $\sort(\beta)$ denotes the weakly decreasing sequence obtained by sorting $\beta$,
and $\sgn(\beta)$ denotes sign of the shortest permutation taking $\beta$ to $\sort(\beta)$.
\end{proposition}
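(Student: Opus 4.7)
The plan is to work directly with the determinantal formula \eqref{ed s gamma} defining $s_\gamma$ and analyze what happens when we permute the rows of that matrix. Let $\beta = \gamma + \rho \in \ZZ^\ell$ and consider the $\ell \times \ell$ matrix $M = (h_{\gamma_i + j - i})_{1 \le i,j \le \ell}$. Since $\gamma_i + j - i = \beta_i - \ell + j$, the $i$-th row of $M$ depends on $\gamma$ only through the single integer $\beta_i$: its entries are $h_{\beta_i - \ell + 1}, h_{\beta_i - \ell + 2}, \dots, h_{\beta_i}$. This observation is the engine that drives everything.

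I would then handle the three possible cases for $\beta$. First, if some $\beta_i$ is negative, then every entry $h_{\beta_i - \ell + j}$ in row $i$ has negative subscript, so the entire row vanishes (using the convention $h_d = 0$ for $d<0$) and hence $s_\gamma = \det M = 0$. Second, if $\beta_i = \beta_{i'}$ for some $i \ne i'$, then rows $i$ and $i'$ of $M$ are literally identical, forcing $\det M = 0$. These two cases together cover exactly the situation where $\beta$ does \emph{not} have distinct nonnegative parts, giving the second clause of the statement.

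Finally, assume $\beta$ has distinct nonnegative parts. Let $\pi$ be the (unique shortest) permutation with $\sort(\beta)_i = \beta_{\pi(i)}$; by the definition in the statement, $\sgn(\gamma+\rho) = \sgn(\pi)$. Permuting the rows of $M$ by $\pi$ produces a new matrix $M'$ whose $i$-th row depends on $\beta_{\pi(i)} = \sort(\beta)_i$, and so $M'$ is precisely the Jacobi--Trudi matrix associated to the weight $\gamma' := \sort(\beta) - \rho$. Because $\sort(\beta)$ is strictly decreasing with nonnegative entries and $\rho_i - \rho_{i+1} = 1$, the weight $\gamma'$ is in fact a partition, so $s_{\gamma'}$ is the honest Schur function $s_{\sort(\gamma+\rho)-\rho}$. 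Since row-permutation by $\pi$ multiplies the determinant by $\sgn(\pi)$, we get $s_{\gamma'} = \sgn(\pi)\, s_\gamma$, i.e.\ $s_\gamma = \sgn(\gamma+\rho)\, s_{\sort(\gamma+\rho)-\rho}$, which is the first clause.

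No step here is a real obstacle; the main care needed is in getting the sign right, specifically checking that the permutation which sorts the \emph{rows} of $M$ into the Jacobi--Trudi order for $\gamma'$ is the same as the permutation which sorts the sequence $\beta = \gamma+\rho$ into decreasing order, so that the sign introduced by row-permutation matches the combinatorial $\sgn(\gamma+\rho)$ defined in the statement. Once that bookkeeping is pinned down, the argument is a short three-case analysis of the determinant.
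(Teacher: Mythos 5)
Your proof is correct and is essentially the paper's argument spelled out in full: the paper simply cites the Jacobi--Trudi determinant together with the row-interchange property, which is exactly the case analysis you perform (a repeated or negative entry of $\gamma+\rho$ kills a row or duplicates one, and otherwise a row permutation converts $M$ to the Jacobi--Trudi matrix of $\sort(\gamma+\rho)-\rho$ at the cost of $\sgn(\gamma+\rho)$). Your sign bookkeeping and the check that $\sort(\gamma+\rho)-\rho$ is a genuine partition are the only details the paper leaves implicit, and you have handled them correctly.
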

\begin{proof}
This is a consequence of the Jacobi-Trudi formula $s_\lambda = \det( h_{\lambda_i + j-i} )$
for partitions $\lambda$ and the row interchange property of the determinant.
\end{proof}

\begin{corollary}
\label{c poly truncation}
For a fixed degree $d \in \ZZ$, the Schur function $s_\gamma$ is nonzero for only finitely many of the weights  $\{\gamma \in \ZZ^\ell : |\gamma| = d\}$.
\end{corollary}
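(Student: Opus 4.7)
The plan is to derive both a lower and an upper bound on each coordinate $\gamma_i$ for those $\gamma \in \ZZ^\ell$ with $|\gamma| = d$ for which $s_\gamma \neq 0$, using Proposition~\ref{p schur straighten} as the only input.

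First, I would use the straightening rule directly. By Proposition~\ref{p schur straighten}, $s_\gamma = 0$ unless $\gamma + \rho = (\gamma_1 + \ell - 1, \gamma_2 + \ell - 2, \ldots, \gamma_\ell)$ has distinct nonnegative parts. In particular, nonvanishing forces the entry-wise inequality $\gamma_i + \ell - i \geq 0$ for every $i \in [\ell]$, which yields the uniform lower bound
\[
\gamma_i \ \geq\ i - \ell \ \geq\ 1 - \ell \qquad \text{for all } i \in [\ell].
\]

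Next, I would convert this lower bound into an upper bound using the fixed degree condition. Since $\sum_j \gamma_j = d$ and each $\gamma_j \geq 1 - \ell$, for any fixed index $i$ we have
\[
\gamma_i \ =\ d - \sum_{j \neq i} \gamma_j \ \leq\ d - (\ell - 1)(1 - \ell) \ =\ d + (\ell - 1)^2.
\]
Thus every coordinate of any nonvanishing $\gamma$ lies in the finite interval $[1 - \ell,\, d + (\ell-1)^2]$, and so the set of such $\gamma$ is finite.

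There is essentially no obstacle: the straightening rule already restricts us to weights whose shifts $\gamma + \rho$ lie in $\ZZ_{\geq 0}^\ell$, and combining a nonnegativity constraint with a fixed sum immediately forces boundedness in both directions. I would present the argument as a short two-line proof invoking Proposition~\ref{p schur straighten}.
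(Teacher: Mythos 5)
Your proof is correct and follows exactly the intended route: the paper states the corollary without proof precisely because it falls out of Proposition~\ref{p schur straighten} in the way you describe, with the nonnegativity of $\gamma+\rho$ giving the lower bound and the fixed degree giving the upper bound. Your explicit interval $[1-\ell,\ d+(\ell-1)^2]$ is a clean way to make the finiteness concrete.
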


\begin{example}
For $\ell = 4$ and $\gamma = (3,1,2,5)$, $s_{\gamma}(\mathbf{x}) = 0$ since
$\gamma + \rho = (6,3,3,5)$ does not have distinct parts.
For $\gamma = (4,7,1,6)$,
$s_{4716}(\mathbf{x}) = s_{6552}(\mathbf{x})$ since
$\gamma + \rho = (7,9,2,6)$,  $\sgn(\gamma + \rho) = +1$, and
$\sort(\gamma + \rho)  - \rho  = (9,7,6,2)-(3,2,1,0) = (6,5,5,2)$.
\end{example}


Let  $(\Psi, \gamma)$ be an indexed root ideal.
In \eqref{e d HH gamma Psi}, we defined the Catalan function  $H(\Psi;\gamma)$ using raising operators  $R_{ij}$.
Raising operators do not give rise to well-defined operators on  $\Lambda$ (for example  $R_{12} \, s_{12} = s_{21} \ne 0$, but $s_{12} = 0$),
so they should be thought of as acting on the subscripts  $\gamma$ rather than the  $s_\gamma$ themselves.
A precise way to interpret \eqref{e d HH gamma Psi} is
\begin{align}
\label{e d HH gamma Psi formal}
H(\Psi;\gamma)(\mathbf{x};t) = \tilde{\pi} \bigg( \prod_{(i,j) \in \Psi} \big(1-tz_i/z_j\big)^{-1} \mathbf{z}^\gamma \bigg),
\end{align}
where the map $\tilde{\pi}$ is defined by first letting
$\pi: \QQ[z_1^{\pm}, \dots, z_\ell^{\pm}]  \to \QQ[h_1,h_2,\dots]$ be the linear map determined by $\mathbf{z}^\gamma \mapsto s_{\gamma}$,
and then $\tilde{\pi}: (\QQ[z_1^{\pm}, \dots, z_\ell^{\pm}] )[[t]] \to (\QQ[h_1,h_2,\dots])[[t]]$ is its natural extension, given by $\sum_{i\ge 0} f_i t^i \mapsto \sum_{i \ge 0} \pi(f_i) t^i$ for any  $f_i \in \QQ[z_1^{\pm}, \dots, z_\ell^{\pm}]$.
It follows from Corollary~\ref{c poly truncation} that the right side of \eqref{e d HH gamma Psi formal} actually lies in the degree $|\gamma|$ part of the ring of symmetric functions $\Lambda = \QQ(t)[h_1,h_2,\dots]$.

\begin{remark}
The  map $\pi$ is essentially the Demazure operator corresponding to the longest element of  $\SS_\ell$, though the Demazure operator is typically defined as a map from  $\QQ[z_1^{\pm}, \dots, z_\ell^{\pm}] \to \QQ[z_1^{\pm}, \dots, z_\ell^{\pm}]^{\SS_\ell}$ (see, e.g., \cite[\S2.1]{SW}).
\end{remark}

\begin{example}
\label{ex 3321}
With $\ell=4$, $\mu = 3321$, and $\Psi = \{(1,3),(2,4),(1,4)\}$, we have
\begin{align*}
H(\Psi; \mu)
&= (1-tR_{1 3})^{-1}(1-tR_{2 4})^{-1}(1-tR_{1 4})^{-1}s_{3321} \\
&= s_{3321}  +
t(s_{3420}+ s_{4311} + s_{4320})
+ t^2(s_{4410}+ s_{5301} + s_{5310}) \\
&\quad + t^3(s_{63-11}+ s_{5400}+ s_{6300})
+ t^4(s_{64-10}+s_{73-10})\\
&= s_{3321}  +
t(s_{4320}+ s_{4311})
+ t^2(s_{4410}+s_{5310})
+ t^3s_{5400}.
\end{align*}
Proposition~\ref{p schur straighten} is used
to truncate the series to terms  $s_\alpha$ with  $\alpha + \rho \in \ZZ_{\ge 0}^\ell$
for the second equality and it is used again for the third to give
$s_{3420} = s_{5301} = s_{64-10} = s_{73-10} = 0$ and  $s_{63-11} = - s_{6300}$.

This demonstrates the direct way to obtain the Schur expansion from the definition of Catalan functions. In contrast,
Example \ref{ex k Schur to Schur} illustrates
the Schur expansion of $H(\Psi;\mu) = \fs^{(4)}_{3321}$ obtained via Theorem \ref{t k Schur to Schur};
the former involves cancellation whereas the latter is manifestly positive.
%
\end{example}

\subsection{Compositional Hall-Littlewood polynomials}
\label{ss compositional HL}

A useful  alternative description of the Catalan functions involves Garsia's version \cite{Garsiaop} of Jing's Hall-Littlewood vertex operators \cite{Jing}.
These are the symmetric function operators defined for any $m\in \ZZ$ by
\begin{align}
\label{ed Jing}
 \bb_m = \sum_{i,j \ge 0}  (-1)^i t^j h_{m+i+j}(\mathbf{x}) e_i^\perp h_j^\perp \, \in \End(\Lambda)
\end{align}
(see \cite[Definition 2.5.2]{ZabrockiThesis} for this formula for $\bb_m$).
Now for $\gamma \in \ZZ^\ell$, define
\begin{align}
\label{e bb gamma}
\bb_\gamma &=  \bb_{\gamma_1}\bb_{\gamma_2}\cdots\bb_{\gamma_\ell}\qquad
\text{and}\qquad
H_\gamma(\mathbf{x};t)= \bb_\gamma \cdot 1.
\end{align}
When $\mu$ is a partition, $H_\mu(\mathbf{x};t)$ is the modified Hall-Littlewood polynomial  \cite[Theorem~2.1]{Garsiaop}.
For general  $\gamma$, the $H_\gamma(\mathbf{x};t)$ are known as {\it  compositional Hall-Littlewood polynomials}.
They played a key role in the recent proof~\cite{CarMel} of the Shuffle Conjecture~\cite{HHLRU}.
As we now show, any Catalan function can be conveniently expressed in terms of compositional Hall-Littlewood polynomials, making
them central to our work as well.


\begin{proposition}
\label{p HL Schur and Jing}
The compositional Hall-Littlewood polynomials are Catalan functions for the root ideal $\Delta^+$.
That is, for any $\gamma\in\ZZ^\ell$,
\[H(\Delta^+;\gamma) = \prod_{(i,j) \in \Delta^+} \big(1-t R_{ij}\big)^{-1} s_\gamma =  \bb_\gamma \cdot 1 = H_\gamma.
\]
\end{proposition}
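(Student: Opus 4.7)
The plan is to induct on $\ell$. The base case $\ell=1$ is immediate: $\Delta^+_1=\varnothing$, so $H(\varnothing;\gamma_1)=s_{\gamma_1}=h_{\gamma_1}$; on the other side $\bb_{\gamma_1}\cdot 1=h_{\gamma_1}$ because $e_i^\perp\cdot 1=h_j^\perp\cdot 1=0$ for $i,j\ge 1$, leaving only the $i=j=0$ term of~\eqref{ed Jing}.

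For the inductive step, write $\gamma=(m,\gamma')$ with $\gamma'\in\ZZ^{\ell-1}$, so by the inductive hypothesis $H_\gamma=\bb_m(\bb_{\gamma'}\cdot 1)=\bb_m\, H(\Delta^+_{\ell-1};\gamma')$. The raising operators $R_{1j}$ commute with the $R_{i'j'}$ for $2\le i'<j'\le\ell$ (they shift disjoint pairs of coordinates), so
$$H(\Delta^+_\ell;(m,\gamma'))=\prod_{j=2}^{\ell}(1-tR_{1j})^{-1}\,\Big(\sum_\nu c_\nu\, s_{(m,\nu)}\Big),$$
where the $c_\nu\in\QQ[t]$ are determined by the formal expansion $H(\Delta^+_{\ell-1};\gamma')=\sum_\nu c_\nu s_\nu$, with only finitely many $\nu$ contributing in each degree by Corollary~\ref{c poly truncation}. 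By $\QQ(t)$-linearity, the inductive step reduces to the key identity
$$\bb_m\, s_\nu \;=\; \prod_{j=2}^{\ell}(1-tR_{1j})^{-1}\, s_{(m,\nu)}\qquad\text{for every }\nu\in\ZZ^{\ell-1}.$$

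To prove the key identity I would decompose $\bb_m$ via the Bernstein vertex operator. Since $e_i^\perp$ and $h_j^\perp$ commute in $\End(\Lambda)$,~\eqref{ed Jing} rearranges as $\bb_m=\sum_{j\ge 0}t^j\,\bb^{(0)}_{m+j}\,h_j^\perp$, where $\bb^{(0)}_n:=\sum_{i\ge 0}(-1)^i h_{n+i}\,e_i^\perp$ is the classical Bernstein vertex operator. A row expansion of the Jacobi--Trudi determinant yields $\bb^{(0)}_n\, s_\lambda = s_{(n,\lambda)}$ for any $\lambda\in\ZZ^k$. Combining with the dual Pieri rule $h_j^\perp s_\nu=\sum_{\mu:\,\nu/\mu\text{ h-strip of size }j}s_\mu$ gives
$$\bb_m\, s_\nu \;=\; \sum_{j\ge 0}\, t^j\!\!\sum_{\mu:\,\nu/\mu\text{ h-strip of size }j}\!\! s_{(m+j,\,\mu)},$$
while expanding the geometric series on the right produces
$$\prod_{j=2}^{\ell}(1-tR_{1j})^{-1}\, s_{(m,\nu)}=\sum_{\beta\in\NN^{\ell-1}}t^{|\beta|}\, s_{(m+|\beta|,\,\nu-\beta)}.$$

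The two sums agree on those $\beta$ for which $\nu-\beta$ is a horizontal-strip removal; the main obstacle is the cancellation of the remaining ``bad'' $\beta$'s. I plan to handle this via a sign-reversing involution based on Proposition~\ref{p schur straighten}: at the smallest index $i$ with $\beta_i > \nu_i - \nu_{i+1}$, swap the adjacent entries of $(m+|\beta|,\nu-\beta)$ at positions $i+1,i+2$ via the Weyl reflection $(a,b)\mapsto(b-1,a+1)$. This preserves $|\beta|$, maps the bad set to itself with the same first violation index (so is well-defined and an involution), flips the sign of the Schur function, and has fixed points precisely where two shifted Jacobi--Trudi rows coincide, forcing $s_{(m+|\beta|,\nu-\beta)}=0$. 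Verifying these properties -- especially the matching of first violation indices and the sign flip under straightening -- is the delicate combinatorial content; a plethystic alternative, writing $\sum_m \bb_m z^m = H(z)\,E^\perp(-1/z)\,H^\perp(t/z)$ and interpreting the composition as evaluation at a shifted alphabet, may offer a cleaner route.
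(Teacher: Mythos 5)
The paper's own proof of Proposition~\ref{p HL Schur and Jing} is a one-line citation to Proposition~7 and Remark~2 of \cite{SZ}, so there is no internal argument to compare against; your proposal is a genuinely self-contained alternative. The skeleton is sound: the induction on $\ell$ is legitimate, the factorization $\bb_m=\sum_{j\ge 0}t^j\,\bb^{(0)}_{m+j}\,h_j^\perp$ is correct (since the operators $e_i^\perp$ and $h_j^\perp$ commute), and $\bb^{(0)}_n s_\lambda=s_{(n,\lambda)}$ does hold for arbitrary $\lambda\in\ZZ^k$, not just partitions, because $\bb^{(0)}_{\gamma_1}\cdots\bb^{(0)}_{\gamma_\ell}\cdot 1=s_\gamma$ is an identity valid for all $\gamma\in\ZZ^\ell$.

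The problem is in the last step. You need the key identity $\bb_m\,s_\nu=\prod_{j=2}^\ell(1-tR_{1j})^{-1}s_{(m,\nu)}$ for \emph{all} $\nu\in\ZZ^{\ell-1}$, because the intermediate coefficients $c_\nu$ come from a formal raising-operator expansion in which most of the indices $\nu$ are not partitions. But the horizontal-strip form of the dual Pieri rule you invoke, $h_j^\perp s_\nu=\sum_{\mu:\,\nu/\mu\text{ h-strip}}s_\mu$, is only valid for partition $\nu$, and the attempted repair via a sign-reversing involution is both delicate and left unverified. Fortunately the involution is unnecessary. Expand $s_\nu$ by Jacobi--Trudi and apply the coproduct identity $h_j^\perp(f_1\cdots f_k)=\sum_{\beta\ge 0,\,|\beta|=j}\prod_i h_{\beta_i}^\perp f_i$ with $h_a^\perp h_n=h_{n-a}$ to each monomial: this yields
\[
h_j^\perp\, s_\nu \;=\; \sum_{\beta\in\NN^{\ell-1},\,|\beta|=j} s_{\nu-\beta}
\qquad\text{for every }\nu\in\ZZ^{\ell-1},
\]
with no positivity or straightening needed. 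Plugging this into $\bb_m s_\nu=\sum_j t^j\,\bb^{(0)}_{m+j}\,h_j^\perp s_\nu$ gives $\sum_{\beta\ge 0}t^{|\beta|}s_{(m+|\beta|,\,\nu-\beta)}$, which is literally, term by term, the geometric-series expansion of $\prod_{j=2}^\ell(1-tR_{1j})^{-1}s_{(m,\nu)}$. So the key identity follows directly, and the induction closes. If you replace the horizontal-strip step and the involution by this one-line Jacobi--Trudi computation, your proof is complete and correct.
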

\begin{proof}
This follows from Proposition 7 and Remark 2 of \cite{SZ}.
\end{proof}

\begin{proposition}
\label{p H definition CHL}
For any indexed root ideal  $(\Psi, \gamma)$,
\begin{align}
\label{ep H definition CHL}
H(\Psi;\gamma)(\mathbf{x};t) &= \prod_{(i,j) \in \Delta^+ \setminus \Psi} (1-t\mathbf{R}_{i j})H_\gamma(\mathbf{x};t),
\end{align}
where the raising operator $\mathbf{R}_{i j}$ acts on the subscripts of the $H_\gamma$ by $\mathbf{R}_{i j}H_\gamma = H_{\gamma + \epsilon_i - \epsilon_j}$.
\end{proposition}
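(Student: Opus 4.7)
The plan is to verify \eqref{ep H definition CHL} by expanding both sides as the same finite alternating sum of compositional Hall-Littlewood polynomials indexed by subsets $S \subseteq \Delta^+ \setminus \Psi$. The product $\prod_{\Delta^+ \setminus \Psi}(1-t\mathbf{R}_{ij})$ is a \emph{finite} product of commuting operators (they all just translate subscripts), so its expansion is a polynomial in the $\mathbf{R}_{ij}$'s with no convergence or ordering issues. Combining this with Proposition~\ref{p HL Schur and Jing}, which identifies $H_\beta$ with $\tilde\pi$ applied to a formal rational expression in the $z_i$'s, will let me match terms.

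For the right side, I would simply expand
\[
\prod_{(i,j) \in \Delta^+ \setminus \Psi}(1-t\mathbf{R}_{ij}) \, H_\gamma \;=\; \sum_{S \subseteq \Delta^+ \setminus \Psi} (-t)^{|S|} \, H_{\gamma + \sum_{(i,j) \in S}(\epsilon_i - \epsilon_j)},
\]
using the defining property $\mathbf{R}_{ij} H_\beta = H_{\beta + \epsilon_i - \epsilon_j}$ together with the commutativity of the $\mathbf{R}_{ij}$'s.

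For the left side, I would use the formal definition \eqref{e d HH gamma Psi formal} and factor
\[
\prod_{(i,j) \in \Psi}(1-tz_i/z_j)^{-1} \;=\; \prod_{(i,j) \in \Delta^+}(1-tz_i/z_j)^{-1} \cdot \prod_{(i,j) \in \Delta^+ \setminus \Psi}(1-tz_i/z_j)
\]
inside $\tilde\pi$. Expanding the finite product on the right as $\sum_{S \subseteq \Delta^+ \setminus \Psi} (-t)^{|S|} \prod_{(i,j) \in S} z_i/z_j$, pulling the resulting monomial shift through to the exponent of $\mathbf{z}^\gamma$, and then applying $\tilde\pi$ term by term gives
\[
H(\Psi;\gamma) \;=\; \sum_{S \subseteq \Delta^+ \setminus \Psi} (-t)^{|S|} \, \tilde\pi\!\left(\prod_{(i,j) \in \Delta^+}(1-tz_i/z_j)^{-1}\, \mathbf{z}^{\gamma + \sum_{(i,j) \in S}(\epsilon_i - \epsilon_j)}\right).
\]
By Proposition~\ref{p HL Schur and Jing} (applied with root ideal $\Delta^+$), each term in this sum equals $H_{\gamma + \sum_{(i,j) \in S}(\epsilon_i - \epsilon_j)}$, matching the expansion of the right side term by term.

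There is no real obstacle here; the argument is a bookkeeping exercise, and the only point requiring care is to respect the formal nature of the raising operators. Specifically, the infinite products $\prod_{\Delta^+}(1-tz_i/z_j)^{-1}$ on the left must be interpreted via $\tilde\pi$ as in \eqref{e d HH gamma Psi formal}, but multiplication by the finite polynomial $\prod_{\Delta^+ \setminus \Psi}(1-tz_i/z_j)$ passes through $\tilde\pi$ without issue because it only shifts subscripts, and Corollary~\ref{c poly truncation} guarantees that the resulting expression lies in the degree $|\gamma|$ part of $\Lambda$. The translation between multiplication by $z_i/z_j$ before applying $\tilde\pi$ and the subscript shift $\beta \mapsto \beta + \epsilon_i - \epsilon_j$ on $H_\beta$ is then immediate from the definitions.
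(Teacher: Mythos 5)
Your argument is correct and is essentially the paper's own proof: both factor $\prod_{(i,j)\in\Psi}(1-tz_i/z_j)^{-1} = \prod_{\Delta^+}(1-tz_i/z_j)^{-1}\cdot\prod_{\Delta^+\setminus\Psi}(1-tz_i/z_j)$ inside $\tilde\pi$ and invoke Proposition~\ref{p HL Schur and Jing} to recognize $\tilde\pi$ of the $\Delta^+$-factor times a Laurent monomial as a compositional Hall--Littlewood polynomial. The paper packages this as the operator identity $\Phi=\tilde\pi\circ D$ rather than expanding the finite product into $2^{|\Delta^+\setminus\Psi|}$ terms, but that is purely a notational difference.
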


The right side of \eqref{ep H definition CHL} can be expressed more formally as
\begin{align}
\label{ep H definition CHL formal}
\Phi \bigg( \prod_{(i,j) \in  \Delta^+ \setminus \Psi} \big(1-tz_i/z_j\big) \mathbf{z}^\gamma \bigg),
\end{align}
where $\Phi$ is the linear map $\QQ[z_1^{\pm}, \dots, z_\ell^{\pm}][t] \to \Lambda$ determined by
$t^i \mathbf{z}^\gamma \mapsto t^i H_\gamma$.

\begin{proof}
It follows from Proposition \ref{p HL Schur and Jing} that $\Phi$ is equal to the composition  $\tilde{\pi} \circ D$,
where  $D : \QQ[z_1^{\pm}, \dots, z_\ell^{\pm}][t] \to \QQ[z_1^{\pm}, \dots, z_\ell^{\pm}][[t]]$ is the linear map given by left multiplication by
$\prod_{(i,j) \in \Delta^+} \big(1-t \;\! z_i/z_j\big)^{-1}$.
Using the description \eqref{e d HH gamma Psi formal} of the Catalan function $H(\Psi; \gamma)$, we have
\begin{align}
H(\Psi; \gamma) &=
 \tilde{\pi}\bigg( \prod_{(i,j) \in \Psi} (1-t \;\! z_i/ z_j)^{-1}  \mathbf{z}^\gamma \bigg) \notag\\
&= \tilde{\pi}\bigg( \prod_{(i,j) \in \Delta^+} (1-t \;\! z_i/ z_j)^{-1} \prod_{(i,j) \in \Delta^+ \setminus \Psi} (1-t \;\! z_i/ z_j)  \mathbf{z}^\gamma \bigg) \notag\\
&= \tilde{\pi} \circ D \bigg( \prod_{(i,j) \in \Delta^+ \setminus \Psi} (1-t \;\! z_i/ z_j)  \mathbf{z}^\gamma \bigg), \notag
\end{align}
which agrees with \eqref{ep H definition CHL formal}.
\end{proof}

The Garsia-Jing operator $\bb_m$ at $t=1$ reduces to multiplication
by $h_m(\mathbf{x})$ (this is equivalent to the identity \eqref{e E(-y)H(y) 3} arising in a proof later on).
Hence for any $\gamma\in \ZZ^\ell$,
\begin{align*}
H_{\gamma}(\mathbf{x};1)  = h_\gamma(\mathbf{x}) = h_{\gamma_1}(\mathbf{x})h_{\gamma_2}(\mathbf{x})\cdots h_{\gamma_\ell}(\mathbf{x}),
\end{align*}
where  $h_m(\mathbf{x})=0$ if $m<0$.  This yields the following explicit description of the Catalan functions at  $t=1$ in terms
of homogeneous symmetric functions.

\begin{corollary}
\label{c H gamma t1}
For any indexed root ideal  $(\Psi, \gamma)$,
\begin{align*}
H(\Psi,\gamma)(\mathbf{x};1)  =
\prod_{(i,j) \in \Delta^+ \setminus \Psi} (1-\mathbf{R}_{i j})h_\gamma(\mathbf{x}).
\end{align*}
\end{corollary}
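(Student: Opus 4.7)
The plan is to obtain the corollary by specializing Proposition~\ref{p H definition CHL} at $t=1$. Since the product $\prod_{(i,j) \in \Delta^+\setminus\Psi}(1-tz_i/z_j)\mathbf{z}^\gamma$ appearing in the formal description~\eqref{ep H definition CHL formal} is polynomial in $t$, the map $\Phi$ commutes with the evaluation $t\mapsto 1$, and so
\[H(\Psi;\gamma)(\mathbf{x};1) = \prod_{(i,j)\in \Delta^+\setminus\Psi}(1-\mathbf{R}_{ij})\,H_\gamma(\mathbf{x};1),\]
where $\mathbf{R}_{ij}$ continues to act on subscripts of the compositional Hall--Littlewood polynomials via $\mathbf{R}_{ij}H_\alpha = H_{\alpha+\epsilon_i-\epsilon_j}$.

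It then remains to identify $H_\gamma(\mathbf{x};1) = h_\gamma(\mathbf{x})$ for any $\gamma \in \ZZ^\ell$. This reduces to the assertion made just before the corollary, that the operator $\bb_m\big|_{t=1}$ is multiplication by $h_m(\mathbf{x})$. To verify it, I would collect terms in~\eqref{ed Jing} by the value $k = i+j$, obtaining
\[\bb_m\big|_{t=1} \;=\; \sum_{k \geq 0} h_{m+k}(\mathbf{x})\sum_{i+j=k}(-1)^i e_i^\perp h_j^\perp.\]
The inner sum is the adjoint of the degree-$k$ coefficient of $E(-y)H(y) = \sum_k\bigl(\sum_{i+j=k}(-1)^i e_ih_j\bigr)y^k = 1$ in $\Lambda$, so it equals $\delta_{k,0}\cdot\mathrm{id}$. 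Hence $\bb_m\big|_{t=1}$ is multiplication by $h_m(\mathbf{x})$, and iterating the definition $H_\gamma = \bb_{\gamma_1}\cdots\bb_{\gamma_\ell}\cdot 1$ at $t=1$ produces $h_{\gamma_1}(\mathbf{x})\cdots h_{\gamma_\ell}(\mathbf{x}) = h_\gamma(\mathbf{x})$.

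Substituting this into the specialized identity of the first paragraph yields the claim. There is essentially no obstacle; the corollary simply packages two established facts. The only mild subtlety is ensuring one may freely set $t=1$ in the raising-operator identity of Proposition~\ref{p H definition CHL}, which is safeguarded by the finiteness of $\Delta^+\setminus\Psi$: the product $\prod(1-t\mathbf{R}_{ij})$ expands to an honest polynomial in $t$ whose coefficients are finite sums of subscript shifts, so evaluating at $t=1$ poses no convergence issue.
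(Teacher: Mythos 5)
Your proof is correct and follows the same route as the paper: the paper simply records that $\bb_m|_{t=1}$ is multiplication by $h_m$ (citing the identity that you re-derive from $E(-y)H(y)=1$), concludes $H_\gamma(\mathbf{x};1)=h_\gamma(\mathbf{x})$, and specializes Proposition~\ref{p H definition CHL} at $t=1$. Your argument fills in the same steps with a bit more detail.
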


\begin{example}
The Catalan function from Example~\ref{ex 3321} at  $t=1$
is computed using Corollary~\ref{c H gamma t1} as follows:
for $\mu = 3321$ and $\Delta^+ \setminus \Psi = \{(1,2),(2,3),(3,4)\}$,
\begin{align*}
H(\Psi;\mu)(\mathbf{x};1)  &=
 (1 - \mathbf{R}_{1 2})(1 - \mathbf{R}_{2 3})(1-\mathbf{R}_{3 4}) h_{3321}\\
 &= h_{3321} - h_{3330} - h_{3411} - h_{4221} + h_{3420} + h_{4230} + h_{4311} - h_{4320}\\
&= h_{3321} - h_{3330} \hphantom{- 1} \hphantom{h_{3411}} - h_{4221} \hphantom{+ 1} \hphantom{h_{3420}} + h_{4230}\\
&= s_{3321} + s_{4320}+ s_{4311} + s_{4410}+s_{5310} + s_{5400}.
\end{align*}
\end{example}

\subsection{Proof of Property \eqref{et three properties 3}}

We first give relations satisfied by the Garsia-Jing and $e_d^\perp$ operators and
then deduce a general result about the action of $e_d^\perp$ on Catalan functions.
\begin{lemma}
\label{l ed perp Jing}
Let $d \in \ZZ_{\ge 0}$,  $m \in \ZZ$, and $\gamma \in \ZZ^\ell$.  Then
\begin{align}
e_d^\perp \bb_m &= \bb_m e_d^\perp  + \bb_{m-1} e_{d-1}^\perp \,,\\
\label{el ed perp Jing 1}
e_d^\perp \bb_\gamma &= \sum_{S\subset  [\ell], \, |S| \le d} \bb_{\gamma - \epsilon_S} e_{d-|S|}^\perp \,.
\end{align}
\end{lemma}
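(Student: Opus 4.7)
The plan is to first establish the basic commutation
\begin{equation*}
e_d^\perp \, h_n = h_n\, e_d^\perp + h_{n-1}\, e_{d-1}^\perp
\end{equation*}
as operators on $\Lambda$ (where the $h_n$ on the right hand side denotes multiplication). This is a standard Hopf-algebra computation: using $\Delta(e_d) = \sum_{k=0}^d e_k \otimes e_{d-k}$, adjointness gives $e_d^\perp(h_n f) = \sum_{k=0}^d e_k^\perp(h_n)\cdot e_{d-k}^\perp(f)$, and a direct check shows $e_k^\perp(h_n)$ equals $h_n$ when $k=0$, $h_{n-1}$ when $k=1$, and $0$ for $k\ge 2$ (using that $\{h_\lambda\}$ and $\{m_\lambda\}$ are dual bases and the only monomial $m_\lambda$ appearing in $e_k g$ with a single part is controlled by the degree).

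For the first identity, substitute the definition $\bb_m = \sum_{i,j \ge 0} (-1)^i t^j h_{m+i+j}\, e_i^\perp h_j^\perp$ and push $e_d^\perp$ through term by term. On each summand, the commutation above yields
\begin{equation*}
e_d^\perp h_{m+i+j}\, e_i^\perp h_j^\perp = h_{m+i+j}\, e_d^\perp e_i^\perp h_j^\perp + h_{m+i+j-1}\, e_{d-1}^\perp e_i^\perp h_j^\perp.
\end{equation*}
Since $h_j, e_i, e_d, e_{d-1}$ all commute as elements of $\Lambda$, their skewing operators commute as well, so $e_d^\perp e_i^\perp h_j^\perp = e_i^\perp h_j^\perp\, e_d^\perp$ and similarly with $d-1$. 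Summing over $i,j$, the first term collects to $\bb_m\, e_d^\perp$ and the second to $\bb_{m-1}\, e_{d-1}^\perp$, giving \eqref{el ed perp Jing 1}.

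For the second identity, proceed by induction on $\ell$. The base case $\ell = 1$ is exactly the first identity, with $S$ ranging over $\emptyset$ and $\{1\}$. For the inductive step, write $\bb_\gamma = \bb_{\gamma_1}\bb_{(\gamma_2, \dots, \gamma_\ell)}$, apply the first identity to move $e_d^\perp$ past $\bb_{\gamma_1}$, and then apply the induction hypothesis to each of the two resulting terms. The term coming from $\bb_{\gamma_1}\, e_d^\perp$ contributes precisely the sum over subsets $S \subset [2,\ell]$ (those not containing $1$), while the term from $\bb_{\gamma_1-1}\, e_{d-1}^\perp$ contributes the sum over subsets of the form $\{1\} \cup S'$ with $S' \subset [2,\ell]$ (those containing $1$), with the correct shift of $\gamma_1$ by $-1$. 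Together these exhaust all $S \subset [\ell]$ with $|S| \le d$, matching the right hand side.

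There is no serious obstacle here: the whole proof is formal manipulation once the commutation $[e_d^\perp, h_n] = h_{n-1}\, e_{d-1}^\perp$ is in hand. The only thing to watch is the bookkeeping in the inductive step, where one must verify that splitting the sum according to whether $1 \in S$ and using $|S| = |S'| + [1 \in S]$ correctly reproduces the weights $\gamma - \epsilon_S$ and the residual skewing operator $e_{d-|S|}^\perp$.
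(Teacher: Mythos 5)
Your proof is correct. The second identity is handled exactly as in the paper (induction on $\ell$, splitting subsets $S$ according to whether $1\in S$), so there the match is essentially word-for-word. The difference is the first identity: the paper simply cites Garsia--Procesi \cite[Eq.~5.37]{GP}, whereas you derive it directly from the explicit formula $\bb_m = \sum_{i,j\ge 0}(-1)^i t^j h_{m+i+j}\, e_i^\perp h_j^\perp$ by commuting $e_d^\perp$ past the multiplication operator $h_{m+i+j}$ via the standard coproduct/adjointness identity $e_d^\perp(h_n f) = h_n\,e_d^\perp f + h_{n-1}\,e_{d-1}^\perp f$ (using $e_k^\perp h_n = 0$ for $k\ge 2$) and then collecting terms, since the remaining skewing operators $e_i^\perp h_j^\perp$ all commute with $e_d^\perp$. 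This is a correct and clean self-contained derivation; it buys you independence from the reference at the cost of a few extra lines, but the paper's citation-based route is more economical given that \cite{GP} is already in the bibliography.
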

\begin{proof}
The first relation is
\cite[Equation 5.37]{GP} and the second
 follows from the first by a straightforward induction on $\ell$.
\end{proof}

\begin{lemma}
\label{l ed perp HH}
For $d \in \ZZ_{\ge 0}$ and $\RI$ any indexed root ideal of length $\ell$,
\begin{align}
\label{ec ed perp HH}
e_d^\perp (H(\Psi;\gamma)) = \sum_{S \subset [\ell], \, |S| = d} H(\Psi; \gamma - \epsilon_S).
\end{align}
\end{lemma}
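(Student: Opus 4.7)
The plan is to deduce this from Proposition~\ref{p H definition CHL} (which expresses $H(\Psi;\gamma)$ in terms of the compositional Hall-Littlewood polynomials $H_\gamma$) together with Lemma~\ref{l ed perp Jing} (which handles $e_d^\perp \bb_\gamma$). First I would establish the single-$H_\gamma$ version of the identity: applying Lemma~\ref{l ed perp Jing} to $H_\gamma = \bb_\gamma \cdot 1$ and using $e_k^\perp(1) = \delta_{k,0}$ kills every term except those with $|S| = d$, giving
\begin{align*}
e_d^\perp H_\alpha = \sum_{S \subset [\ell],\, |S|=d} H_{\alpha - \epsilon_S}
\qquad\text{for every }\alpha \in \ZZ^\ell.
\end{align*}

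Next I would promote this to a statement about Catalan functions via the map $\Phi$. Working with the formal description \eqref{ep H definition CHL formal}, one sees that the single-$H_\gamma$ identity above is exactly the statement that $e_d^\perp \circ \Phi = \Phi \circ M_d$, where $M_d$ denotes multiplication by $e_d(z_1^{-1},\dots,z_\ell^{-1}) = \sum_{|S|=d}\mathbf{z}^{-\epsilon_S}$ on $\QQ[z_1^{\pm},\dots,z_\ell^{\pm}][t]$. Applying this intertwining identity to $f = \prod_{(i,j)\in\Delta^+\setminus\Psi}(1-tz_i/z_j)\,\mathbf{z}^\gamma$ and pulling the sum over $S$ outside $\Phi$,
\begin{align*}
e_d^\perp H(\Psi;\gamma)
= \sum_{S \subset [\ell],\,|S|=d} \Phi\!\left(\prod_{(i,j)\in\Delta^+\setminus\Psi}(1-tz_i/z_j)\,\mathbf{z}^{\gamma-\epsilon_S}\right)
= \sum_{S \subset [\ell],\,|S|=d} H(\Psi;\gamma-\epsilon_S),
\end{align*}
where the last equality is Proposition~\ref{p H definition CHL} applied to each $\gamma - \epsilon_S$.

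Alternatively, one can avoid the map $\Phi$ by expanding $\prod_{(i,j)\in\Delta^+\setminus\Psi}(1-t\mathbf{R}_{ij})$ as a finite $\ZZ[t]$-linear combination $\sum_T (-t)^{|T|}\mathbf{R}^T$ indexed by subsets $T \subseteq \Delta^+ \setminus \Psi$, applying $e_d^\perp$ to each $H_{\gamma + \beta_T}$ via the single-$H$ identity, and then observing that the sum over $T$ reassembles $H(\Psi;\gamma-\epsilon_S)$ for each $S$ by a second use of Proposition~\ref{p H definition CHL}. The translation invariance of the sum over $T$ (shifting $\gamma$ by $-\epsilon_S$ does not disturb the raising-operator expansion) makes the regrouping transparent.

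I do not anticipate a serious obstacle: the only delicate point is the fact that raising operators are formal and must be interpreted through a linear map such as $\Phi$ or directly through the termwise expansion; once this bookkeeping is set up, the argument is a two-step combination of Lemma~\ref{l ed perp Jing} and Proposition~\ref{p H definition CHL}.
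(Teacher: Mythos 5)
Your proposal is correct and follows the same route as the paper: express $H(\Psi;\gamma)$ via Proposition~\ref{p H definition CHL} in terms of compositional Hall–Littlewood polynomials, derive the single-$H_\gamma$ identity $e_d^\perp H_\alpha = \sum_{|S|=d} H_{\alpha-\epsilon_S}$ from Lemma~\ref{l ed perp Jing} and $e_i^\perp(1)=0$ for $i>0$, and apply it termwise. The paper's proof is the same argument, merely compressed into one sentence.
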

\begin{proof}
Proposition~\ref{p H definition CHL} allows us to express any Catalan function in terms of
compositional Hall-Littlewood polynomials.
The result then follows from Lemma \ref{l ed perp Jing}, \eqref{e bb gamma}, and the fact that $e_i^\perp(1) = 0$ for $i > 0$.
\end{proof}

In particular, letting $d=\ell$ in the lemma gives
$e_\ell^\perp H(\Psi;\gamma) = H(\Psi; \gamma-1^\ell)$.
In turn,  we have
$e_\ell^\perp \fs^{(k+1)}_{\mu+1^\ell} = \fs^{(k)}_{\mu}$
since the root ideals $\Delta^{k}(\mu)$ and $\Delta^{k+1}(\mu + 1^\ell)$ are equal.

\subsection{Proof of Property \eqref{et three properties 4}}

The Catalan functions of length  $\ell$ contain those of length  $\ell-1$ in a natural way.
\begin{proposition}
\label{p trailing zeros}
For an indexed root ideal $(\Psi,\gamma)$ of length  $\ell$ with  $\gamma_\ell = 0$,
\[\quad \qquad H(\Psi;\gamma) = H(\hat{\Psi};\hat{\gamma}),
\quad \text{where $\hat{\Psi} = \{(i, j) \in \Psi \mid j < \ell\}$  and $\hat{\gamma} = (\gamma_1, \ldots, \gamma_{\ell-1})$.}\]
\end{proposition}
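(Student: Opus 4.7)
The plan is to work directly from the raising operator definition \eqref{e d HH gamma Psi}. Partition $\Psi$ as $\hat\Psi \sqcup \Psi_\ell$ where $\Psi_\ell = \{(i,\ell) \in \Psi\}$, and expand
\[
\prod_{(i,j) \in \Psi} (1-tR_{ij})^{-1} \;=\; \sum_{N : \Psi \to \ZZ_{\ge 0}} t^{|N|} \prod_{(i,j) \in \Psi} R_{ij}^{N_{ij}},
\]
so that $H(\Psi;\gamma) = \sum_N t^{|N|} s_{\gamma + \sum N_{ij}(\epsilon_i - \epsilon_j)}$. The goal is to show that only the terms with $N$ supported on $\hat\Psi$ survive, and that each survivor agrees with the corresponding term of $H(\hat\Psi;\hat\gamma)$.

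The first step is to eliminate contributions from $\Psi_\ell$. For any $N$ with $m := \sum_{(i,\ell) \in \Psi_\ell} N_{i\ell} > 0$, the resulting weight $\alpha = \gamma + \sum N_{ij}(\epsilon_i - \epsilon_j)$ has $\alpha_\ell = -m < 0$ (using $\gamma_\ell = 0$ and the fact that no root $(i,j) \in \hat\Psi$ touches coordinate $\ell$). Since $\rho_\ell = 0$, the last entry of $\alpha + \rho$ is negative, so Proposition~\ref{p schur straighten} gives $s_\alpha = 0$. Hence
\[
H(\Psi;\gamma) \;=\; \sum_{N : \hat\Psi \to \ZZ_{\ge 0}} t^{|N|}\, s_{\gamma + \sum N_{ij}(\epsilon_i - \epsilon_j)}.
\]

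The second step is to drop the trailing zero. For any $N$ supported on $\hat\Psi$, the weight $\alpha$ satisfies $\alpha_\ell = 0$, so in the Jacobi-Trudi determinant \eqref{ed s gamma} of $s_\alpha$ the last row is $(h_{1-\ell}, \ldots, h_{-1}, h_0) = (0,\ldots,0,1)$. Expanding along this row gives $s_\alpha = s_{\hat\alpha}$ with $\hat\alpha = (\alpha_1, \ldots, \alpha_{\ell-1}) = \hat\gamma + \sum N_{ij}(\epsilon_i - \epsilon_j)$, the last equality since $\hat\Psi \subset \Delta^+_{\ell-1}$. Reassembling,
\[
H(\Psi;\gamma) \;=\; \sum_{N : \hat\Psi \to \ZZ_{\ge 0}} t^{|N|}\, s_{\hat\gamma + \sum N_{ij}(\epsilon_i - \epsilon_j)} \;=\; H(\hat\Psi;\hat\gamma),
\]
as desired. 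There is no genuine obstacle here: the only subtlety is invoking Schur straightening to kill the $\Psi_\ell$ contributions, which is the mechanism by which the negative powers of $z_\ell$ introduced by the factors $(1-tz_i/z_\ell)^{-1}$ (cf.\ the formal description \eqref{e d HH gamma Psi formal}) fail to produce nonzero Schur functions when $\gamma_\ell = 0$.
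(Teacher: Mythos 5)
Your proof is correct, and it takes a genuinely different route from the paper's. The paper's proof invokes Proposition~\ref{p H definition CHL} to rewrite $H(\Psi;\gamma)$ in terms of the compositional Hall–Littlewood polynomials $H_\alpha = \bb_\alpha \cdot 1$, and then uses the properties $\bb_0 \cdot 1 = 1$ and $\bb_m \cdot 1 = 0$ for $m<0$ of the Garsia–Jing operators: since $\alpha_\ell = 0$ throughout, each factor $(1-t\mathbf R_{h\ell})$ with $(h,\ell) \in \Delta^+_\ell\setminus\Psi$ acts as the identity, and $H_{(\alpha_1,\ldots,\alpha_{\ell-1},0)} = H_{(\alpha_1,\ldots,\alpha_{\ell-1})}$ gives the length reduction. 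You instead stay entirely within the raising-operator expansion of Definition~\ref{d HH gamma Psi}: you kill the column-$\ell$ contributions via Schur straightening (Proposition~\ref{p schur straighten}), using that $\rho_\ell = 0$ makes the last entry of $\alpha + \rho$ negative whenever any $N_{i\ell} > 0$, and you drop the trailing zero by cofactor-expanding the Jacobi–Trudi determinant \eqref{ed s gamma} along its last row. The two arguments are parallel in shape — annihilate the roots in column $\ell$, then shorten by one — but rest on complementary machinery. Yours is more self-contained, needing only \eqref{ed s gamma}, Proposition~\ref{p schur straighten}, and the finiteness in Corollary~\ref{c poly truncation} to legitimize the expansion over $N$; the paper's version is aligned with the vertex-operator description it uses systematically throughout Section~\ref{s:defCat}.
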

\begin{proof}
Using the description of the Catalan functions from Proposition \ref{p H definition CHL}, we have
\[
H(\Psi;\gamma) = \!\! \prod_{(i,j) \in \Delta^+ \setminus \Psi} \!\! (1-t\mathbf{R}_{i j})H_\gamma =
\!\! \prod_{(h,\ell) \in \Delta^+_\ell \setminus \Psi} \!\! (1-t\mathbf{R}_{h \ell})  \!\! \prod_{(i,j) \in \Delta^+_{\ell-1} \setminus \hat{\Psi}} \!\! (1-t\mathbf{R}_{i j}) H_\gamma  =
H(\hat{\Psi};\hat{\gamma}).
\]
The last equality uses that $\bb_0 \cdot 1 = 1$ and  $\bb_m \cdot 1 =0$ for  $m < 0$ to conclude that for any  $\alpha \in \ZZ^\ell$ with  $\alpha_\ell = 0$, we have
$\prod_{(h,\ell) \in \Delta^+_\ell \setminus \Psi} (1-t\mathbf{R}_{h \ell}) H_\alpha = H_{(\alpha_1, \ldots, \alpha_{\ell-1})}$.
\end{proof}

Property \eqref{et three properties 4} now follows easily:
let $\mu \in \Par^k_\ell$ with $|\mu| \le k$.  We need to show  $\fs^{(k)}_\mu = s_\mu$.
Proposition \ref{p trailing zeros} allows us to reduce to the case $\mu_\ell > 0$.
Then we have $k \ge |\mu| \ge \mu_i + \ell -i$ for all  $i \in [\ell]$.
Hence the root ideal $\Delta^k(\mu) = \{(i,j) \in \Delta^+ \mid k-\mu_i + i < j\}$ is empty and $\fs^{(k)}_\mu = H(\Delta^k(\mu); \mu)=  s_\mu$ follows.


\subsection{Proof of Theorem \ref{t kschurs are a basis}}


Define the \emph{dominance partial order}  $\gd$ on  $\ZZ^\ell$ by  $\gamma \gd \delta$ if  $\gamma_1 + \dots + \gamma_i \ge \delta_1 + \dots + \delta_i$ for all  $i \in [\ell]$.
\begin{lemma}
\label{l B dominance}
For $\gamma \in \ZZ^\ell$ and $k = \max(\gamma)$,
$H_\gamma \in \Span_{\QQ(t)}\!\big\{H_\lambda \mid \lambda \in \Par^k_\ell \text{ and } \lambda \gd \gamma \big\}.$
\end{lemma}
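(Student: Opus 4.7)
The plan is to proceed by reverse induction on the dominance order, using a commutation relation among the Garsia--Jing vertex operators of the form
\[
\bb_m\bb_n \;=\; t\,\bb_n\bb_m \;+\; t\,\bb_{m+1}\bb_{n-1} \;-\; \bb_{n-1}\bb_{m+1},
\]
which holds in $\End(\Lambda)$ for all integers $m,n$. This operator identity can be derived from the raising-operator description of Proposition~\ref{p HL Schur and Jing} applied in length two and is standard in the vertex-operator literature.

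Assume first that $\gamma$ is weakly decreasing: if $\gamma_\ell\ge 0$ then $\gamma\in\Par^k_\ell$ and $\lambda=\gamma$ works, while if $\gamma_\ell<0$ then $H_\gamma=0$ because $\bb_{\gamma_\ell}\cdot 1 = h_{\gamma_\ell}=0$. Otherwise $\gamma$ has a descent at some index $i$, with $m:=\gamma_i<n:=\gamma_{i+1}$. Applying the commutation relation to the consecutive factors in $H_\gamma=\bb_{\gamma_1}\cdots\bb_{\gamma_\ell}\cdot 1$ yields
\[
H_\gamma \;=\; t\,H_{\gamma^{(1)}} + t\,H_{\gamma^{(2)}} - H_{\gamma^{(3)}},
\]
where $\gamma^{(1)},\gamma^{(2)},\gamma^{(3)}$ are obtained from $\gamma$ by replacing the pair at positions $(i,i+1)$ by $(n,m)$, $(m+1,n-1)$, and $(n-1,m+1)$ respectively. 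Each such replacement fixes every partial sum $\gamma_1+\cdots+\gamma_j$ with $j\ne i$ and weakly increases the $i$-th, so $\gamma^{(a)}\gd\gamma$; moreover the entries of each $\gamma^{(a)}$ remain bounded by $k=\max(\gamma)$. When $n>m+1$ all three strictly dominate $\gamma$; when $n=m+1$ one has $\gamma^{(1)}=\gamma^{(2)}$ and $\gamma^{(3)}=\gamma$, and the relation collapses to $H_\gamma=t\,H_{\gamma^{(1)}}$ with $\gamma^{(1)}\gdneq\gamma$.

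The set of $\gamma'\in\ZZ^\ell$ with $\gamma'\gd\gamma$, $|\gamma'|=|\gamma|$, and $\max(\gamma')\le k$ is finite, since its elements are bounded above coordinatewise by $k$ and their partial sums are bounded below via dominance. Reverse induction on dominance in this set closes the argument: each $\gamma^{(a)}$ above strictly dominates $\gamma$ and has $\max(\gamma^{(a)})\le k$, so by the inductive hypothesis $H_{\gamma^{(a)}}$ is a $\QQ(t)$-linear combination of $H_\lambda$ with $\lambda\in\Par^k_\ell$ and $\lambda\gd\gamma^{(a)}\gd\gamma$, whence the same holds for $H_\gamma$.

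The main obstacle is pinning down the commutation relation correctly: the familiar Jing vertex operator satisfies $B(z)B(w)=\tfrac{z-w}{z-tw}B(w)B(z)$, whereas Garsia's modified operator $\bb_m$ (producing modified Hall--Littlewood polynomials) requires a variant with different signs and powers of $t$, and a small generating-function check is needed to confirm that the resulting identity holds on all of $\Lambda$ rather than just on~$1$.
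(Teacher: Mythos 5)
Your proposal is correct and takes essentially the same route as the paper: both use Garsia's commutation relation $\bb_m\bb_n = t\,\bb_{m+1}\bb_{n-1} + t\,\bb_n\bb_m - \bb_{n-1}\bb_{m+1}$ (for $m<n$) to straighten descents, with the $n=m+1$ degeneration $\bb_m\bb_{m+1}=t\,\bb_{m+1}\bb_m$ handled separately, and both use $\bb_m\cdot 1 =0$ for $m<0$ to discard terms with negative trailing parts. The only cosmetic difference is that the paper states the stronger operator-level claim about $\bb_\gamma$ before applying to $1$, and it simply cites Garsia \cite[Theorem~2.2]{Garsiaop} for the relation rather than gesturing at a derivation; your worry about verifying the relation as an honest operator identity on $\Lambda$ is resolved by that citation.
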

\begin{proof}
Since $\bb_m \cdot 1 = 0$ for  $m < 0$, the result is a consequence of the stronger claim:
\[\bb_\gamma \in \Span_{\QQ(t)}\!\big\{\bb_\lambda \mid \lambda \in \ZZ^\ell_{\le k} \text{ is weakly decreasing and } \lambda \gd \gamma \big\}. \]
This follows from repeated application of the identity \cite[Theorem~2.2]{Garsiaop}
\begin{equation}
\label{e bb commute}
\bb_m \bb_n = t \, \bb_{m+1}\bb_{n-1}+ t \, \bb_n\bb_m-\bb_{n-1}\bb_{m+1} \,.
\end{equation}
with  $m < n$,
noting that for  $n = m +1$, we must rearrange to obtain  $\bb_m \bb_{m+1} = t\bb_{m+1}\bb_{m}$, rather than apply \eqref{e bb commute} directly.
\end{proof}

The proof of  Theorem \ref{t kschurs are a basis} now goes as follows:
by
Proposition~\ref{p H definition CHL}, for any $\mu\in \Par_\ell^k$,
\begin{equation}
\label{kschur in Bop}
\fs^{(k)}_\mu = \prod_{i=1}^\ell\,\prod_{j=i+1}^{k-\mu_i+i}
(1-t\mathbf{R}_{ij}) H_\mu \,.
\end{equation}
Consider $H_\gamma$ arising from such a successive application of raising operators to $H_\mu$.
Then  $\gamma_i$ is obtained by adding some amount not exceeding $k-\mu_i$
to $\mu_i$.  Hence  $\gamma \in \ZZ^\ell_{\le k}$, implying
$H_\gamma \in \Span_{\QQ(t)}\!\big\{H_\lambda \mid  \lambda \in \Par^k_\ell \text{ and } \lambda \gd \gamma \big\}$
by Lemma \ref{l B dominance}.
Since each application of a  raising operator strictly increases dominance order, $\gamma \gd \mu$.
It follows that
 $\fs^{(k)}_\mu \in \Lambda^k_\ell$ and
the transition matrix expressing   $\{\fs^{(k)}_\mu\}_{\mu \in \Par^k_\ell}$ in terms of  $\{H_\lambda\}_{\lambda \in \Par^k_\ell}$ is
upper unitriangular with respect to dominance order, implying that the former is a basis for  $\Lambda^k_\ell$.

\section{Recurrences for the Catalan functions}

Computations with Catalan functions are facilitated by recurrences which express a Catalan function as the
sum of two Catalan functions
with similar indexed root ideals.
The bounce graph of a root ideal, defined below, is the natural combinatorial object arising in these computations.

\subsection{Bounce graphs}

We say that $\alpha\in \Psi$ is a \emph{removable root of  $\Psi$} if  $\Psi \setminus \alpha$ is a root ideal
and a root $\beta \in \Delta^+ \setminus \Psi$ is \emph{addable to $\Psi$} if $\Psi \cup \beta$ is a root ideal.

\begin{definition}
Fix a root ideal $\Psi\in\Delta^+_\ell$ and $x\in[\ell]$.
If there is a removable root  $(x,j)$ of  $\Psi$, then define $\down_\Psi(x) = j$; otherwise, $\down_\Psi(x)$ is undefined.
Similarly, if there is a removable root $(i,x)$ of  $\Psi$, then define $\upp_\Psi(x) = i$; otherwise, $\upp_\Psi(x)$ is undefined.
\end{definition}

\begin{definition}
\label{d row chain graph}
The \emph{bounce graph} of a root ideal  $\Psi \subset \Delta^+_\ell$ is the graph on the vertex set $[\ell]$
with edges $(r, \down_\Psi(r))$ for each $r\in [\ell]$ such that $\down_\Psi(r)$ is defined.
The bounce graph of  $\Psi$ is a disjoint union of paths called \emph{bounce paths of  $\Psi$}.

For each vertex $r \in [\ell]$,
distinguish $\chaindown_\Psi(r)$ (resp. $\chainup_\Psi(r)$) to be
the maximum (resp. minimum) element of the bounce path of  $\Psi$ containing  $r$.
For  $a,b \in [\ell]$ in the same bounce path of  $\Psi$ with  $a\le b$, we define
\[ \bpath_\Psi(a,b) = (a, \down_\Psi(a), \down^2_\Psi(a), \dots, b), \]
i.e., the list of indices in this path lying between  $a$ and  $b$.
We also set  $\downpath_\Psi(r) = \bpath(r, \chaindown_\Psi(r))$
and $\uppath_\Psi(r)$ to be the reverse of $\bpath(\chainup_\Psi(r),r)$ for any $r \in [\ell]$.
By a slight abuse of notation, we also write  $\bpath_\Psi(a,b)$, $\downpath_\Psi(r)$, and  $\uppath_\Psi(r)$ for the corresponding sets of indices.
For $b = \down_\Psi^{m}(a)$, the \emph{bounce} from  $a$ to  $b$ is
\[ B_\Psi(a,b) := |\bpath_\Psi(a,b)|-1 = m.\]
\end{definition}

\begin{example}
Examples of $\downpath$, $\uppath$, and bounce for the root ideal $\Psi$ below:
\ytableausetup{mathmode, boxsize=1.03em,centertableaux}
\[\begin{array}{cccccc}
{\tiny \begin{ytableau}
~ & *(red) & *(red)& *(red) & *(red)&*(red)&*(red)&*(red)&*(red)&*(red)\\
~ & *(blue!20) & & & *(red)&*(red)&*(red)&*(red)&*(red)&*(red)\\
~ & & & & &*(red)& *(red) & *(red) &*(red)&*(red)\\
~ & & & & & &*(red)&*(red)&*(red)&*(red)\\
~ & & & & *(blue!20) & & &*(red)&*(red)&*(red)\\
~ & & & & & & & & &*(red)\\
~ & & & & & & & & &*(red) \\
~ & & & & & & & *(blue!20) & & *(red)\\
~ & & & & & & & & & \\
~ & & & & & & & & &
\end{ytableau} } & & &
{\tiny \begin{ytableau}
~ & *(red) & *(red)& *(red) & *(red)&*(red)&*(red)&*(red)&*(red)&*(red)\\
~ & & & & *(red)&*(red)&*(red)&*(red)&*(red)&*(red)\\
~ & & *(blue!20)& & &*(red)& *(red) & *(red) &*(red)&*(red)\\
~ & & & & & &*(red)&*(red)&*(red)&*(red)\\
~ & & & &  & & &*(red)&*(red)&*(red)\\
~ & & & & &*(blue!20) & & & &*(red)\\
~ & & & & & & & & &*(red) \\
~ & & & & & & &  & & *(red)\\
~ & & & & & & & & & \\
~ & & & & & & & & &
\end{ytableau} } & &
{\tiny \begin{ytableau}
*(blue!20) & *(red) & *(red)& *(red) & *(red)&*(red)&*(red)&*(red)&*(red)&*(red)\\
~ & *(blue!20) & & & *(red)&*(red)&*(red)&*(red)&*(red)&*(red)\\
~ & & & & &*(red)& *(red) & *(red) &*(red)&*(red)\\
~ & & & & & &*(red)&*(red)&*(red)&*(red)\\
~ & & & & *(blue!20) & & &*(red)&*(red)&*(red)\\
~ & & & & & & & & &*(red)\\
~ & & & & & & & & &*(red) \\
~ & & & & & & & *(blue!20) & & *(red)\\
~ & & & & & & & & & \\
~ & & & & & & & & & *(blue!20)
\end{ytableau} }\\[16.4mm]
\bpath_{\Psi}(2,8) = \color{blue}{2,5, 8} & & & \downpath_{\Psi}(3) = \color{blue}{3,6} & &  \uppath_{\Psi}(10) = \color{blue}{10, 8, 5, 2,1}
\end{array}\]
\vspace{-1mm}
\[\text{$B_\Psi(2, 8) = 2$, $B_\Psi(1,10) = 4$, $B_\Psi(3,6) = 1$, and $B_\Psi(3,3) = 0$}.\]
\end{example}

\begin{definition}
A root ideal $\Psi$ is said to have

\emph{a wall in rows $r,r+1$} if rows $r$ and $r+1$ of $\Psi$ have the same length,

\emph{a ceiling in columns $c,c+1$} if columns $c$ and $c+1$ of $\Psi$ have the same length, and

\emph{a mirror in rows} $r,r+1$ if $\Psi$ has removable roots $(r,c)$, $(r+1,c+1)$ for some $c > r+1$.
\end{definition}

\begin{example}
The root ideal $\Psi$ in the previous example has
a ceiling in columns  $2,3$, in columns $3,4$,  and in columns  $8,9$,
a wall in rows  $6,7$, in rows  $7,8$, and in rows  $9, 10$, and  a mirror in rows  $2,3$, in rows  $3, 4$, and in rows  $4, 5$.
\end{example}

\subsection{Recurrences for the Catalan functions}

\begin{proposition}
\label{p inductive computation atom}
Let $(\Psi, \mu)$ be an indexed root ideal.
For any root $\beta$ addable to $\Psi$,
\begin{align}\label{e Rlambda recurrence nonroot HH}
\HH(\Psi; \mu) = \HH(\Psi \cup \beta; \mu) - t\;\! \HH(\Psi\cup \beta; \mu+ \eroot{\beta}).
\end{align}
For any removable root $\alpha$ of $\Psi$,
\begin{align}
\label{e Rlambda recurrence HH}
\HH(\Psi;\mu) = \HH(\Psi \setminus \alpha; \mu) + t\;\! \HH(\Psi; \mu+ \eroot{\alpha}). \qquad
\end{align}
\end{proposition}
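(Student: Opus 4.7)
The plan is to reduce both identities to a single algebraic manipulation, namely the trivial series identity
\begin{equation*}
(1-x)^{-1} \;=\; 1 + x\,(1-x)^{-1},
\end{equation*}
applied to $x = t\,z_a/z_b$ inside the formal description of the Catalan functions. Recall from \eqref{e d HH gamma Psi formal} that
\begin{equation*}
H(\Psi;\gamma) = \tilde{\pi}\!\left( \prod_{(i,j)\in\Psi} (1-t\,z_i/z_j)^{-1}\,\mathbf{z}^{\gamma}\right),
\end{equation*}
and that $\tilde{\pi}$ is a $\QQ(t)$-linear map. So I will operate on the integrand and push the result through $\tilde{\pi}$ at the end.

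First, I would prove \eqref{e Rlambda recurrence HH}. Let $\alpha=(a,b)$ be a removable root of $\Psi$ and write $\Psi''=\Psi\setminus\alpha$, so that
\begin{equation*}
\prod_{(i,j)\in\Psi} (1-t\,z_i/z_j)^{-1}
= (1-t\,z_a/z_b)^{-1}\prod_{(i,j)\in\Psi''} (1-t\,z_i/z_j)^{-1}.
\end{equation*}
Applying $(1-x)^{-1}=1+x(1-x)^{-1}$ with $x=t\,z_a/z_b$, splitting, and using $z_a/z_b\cdot \mathbf{z}^\mu = \mathbf{z}^{\mu+\varepsilon_\alpha}$, the integrand becomes
\begin{equation*}
\prod_{(i,j)\in\Psi''} (1-t\,z_i/z_j)^{-1}\,\mathbf{z}^{\mu}
\;+\; t\,\prod_{(i,j)\in\Psi} (1-t\,z_i/z_j)^{-1}\,\mathbf{z}^{\mu+\varepsilon_\alpha}.
\end{equation*}
Now applying $\tilde{\pi}$ term-by-term recovers exactly $H(\Psi\setminus\alpha;\mu) + t\,H(\Psi;\mu+\varepsilon_\alpha)$, proving \eqref{e Rlambda recurrence HH}.

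For \eqref{e Rlambda recurrence nonroot HH}, let $\beta$ be addable to $\Psi$, so that $\beta$ is a removable root of $\Psi\cup\beta$. Apply \eqref{e Rlambda recurrence HH} to the pair $(\Psi\cup\beta,\,\mu)$ with $\alpha=\beta$:
\begin{equation*}
H(\Psi\cup\beta;\mu) = H(\Psi;\mu) + t\,H(\Psi\cup\beta;\mu+\varepsilon_\beta),
\end{equation*}
and rearrange. So the two recurrences are really one identity read in two directions.

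There is no serious obstacle; the only subtle point is checking that the formal manipulations are justified inside $(\QQ[z_1^{\pm},\dots,z_\ell^{\pm}])[[t]]$ before applying $\tilde{\pi}$. Each factor $(1-t\,z_i/z_j)^{-1}$ is a well-defined element of that ring (expand geometrically in $t$), $\tilde{\pi}$ is linear and coefficient-wise, and Corollary \ref{c poly truncation} ensures that the output lies in the degree $|\mu|$ part of $\Lambda$, so no convergence concerns arise. Hence the proof is purely formal and amounts to the single rewrite above.
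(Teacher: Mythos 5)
Your proof is correct. The route differs from the paper's in two minor but noticeable ways. The paper proves the addable-root identity \eqref{e Rlambda recurrence nonroot HH} first, working from Proposition~\ref{p H definition CHL} (the reformulation of $H(\Psi;\gamma)$ as $\prod_{(i,j)\in\Delta^+\setminus\Psi}(1-t\mathbf{R}_{ij})H_\gamma$, a \emph{finite} product of raising operators acting on compositional Hall--Littlewood polynomials), and then derives the removable-root identity \eqref{e Rlambda recurrence HH} as the specialization $\Psi\mapsto\Psi\setminus\alpha$, $\beta\mapsto\alpha$. You instead peel a factor of $(1-tz_a/z_b)^{-1}$ out of the defining formula \eqref{e d HH gamma Psi formal} as a formal power series in $t$, prove \eqref{e Rlambda recurrence HH} as the fundamental statement, and get \eqref{e Rlambda recurrence nonroot HH} by rearranging. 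What your approach buys is that it works directly from the definition of $H(\Psi;\gamma)$ and does not require Proposition~\ref{p H definition CHL} as input; what the paper's approach buys is that factoring out $(1-t\mathbf{R}_\beta)$ is a purely finite algebraic step (distributing one linear factor), so one never has to invoke the geometric-series rewrite inside $(\QQ[z_1^{\pm},\dots,z_\ell^{\pm}])[[t]]$. Both are legitimate; your formal-series justification and the appeal to Corollary~\ref{c poly truncation} handle the one potential subtlety correctly.
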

\begin{proof}
The first identity \eqref{e Rlambda recurrence nonroot HH} follows directly from Proposition \ref{p H definition CHL} :
\begin{align*}
\HH(\Psi; \mu) &= \prod_{(i,j) \in \Delta^+ \setminus \Psi} (1-t\mathbf{R}_{i j}) H_\mu \\
&=  (1-t\mathbf{R}_\beta )\prod_{(i,j) \in \Delta^+ \setminus (\Psi \cup \beta)} (1-t\mathbf{R}_{i j}) H_\mu \\
&=  \prod_{(i,j) \in \Delta^+ \setminus (\Psi \cup \beta)} \!\! (1-t\mathbf{R}_{i j}) H_\mu \, - \, t \prod_{(i,j) \in \Delta^+ \setminus (\Psi \cup \beta)} \!\! (1-t\mathbf{R}_{i j}) H_{\mu+ \eroot{\beta}}.
\end{align*}
The second identity \eqref{e Rlambda recurrence HH} is then obtained by applying \eqref{e Rlambda recurrence nonroot HH} with  $\Psi = \Psi \setminus \alpha$ and  $\beta = \alpha$.
\end{proof}

We also record a convenient application of the recurrence \eqref{e Rlambda recurrence HH} obtained by iterating it along a downpath.

\begin{corollary}
\label{c inductive computation atom down}
Let $(\Psi,\mu)$ be an indexed root ideal of length  $\ell$
and  $m \in [\ell]$.
Then
\begin{align}
\HH(\Psi;\mu) = \sum_{z \in \downpath_\Psi(m)} t^{B_\Psi(m,z)}\HH(\Psi^z;\mu+ \epsilon_m - \epsilon_z),
\label{ec inductive computation atom down}
\end{align}
where $\Psi^z := \Psi \setminus \{(z, \down_\Psi(z))\}$ for $z \neq \chaindown_\Psi(m)$ and $\Psi^{\chaindown_\Psi(m)} := \Psi$.
\end{corollary}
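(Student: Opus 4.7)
The plan is to iterate the recurrence \eqref{e Rlambda recurrence HH} along the downpath $\downpath_\Psi(m)=(m_0,m_1,\dots,m_s)$ where $m_0=m$, $m_{i+1}=\down_\Psi(m_i)$, and $m_s=\chaindown_\Psi(m)$. I will prove the identity by induction on $s = B_\Psi(m,\chaindown_\Psi(m))$.

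For the base case $s=0$, the downpath from $m$ is the singleton $(m)$ and $\down_\Psi(m)$ is undefined, so $\Psi^m = \Psi$ by convention and the right-hand side reduces to $t^0\,H(\Psi;\mu) = H(\Psi;\mu)$.

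For the inductive step, note $\alpha := (m_0,m_1)$ is a removable root of $\Psi$. Applying \eqref{e Rlambda recurrence HH} gives
\begin{equation*}
H(\Psi;\mu) \;=\; H(\Psi\setminus\alpha;\mu) \;+\; t\,H(\Psi;\mu+\epsilon_{m_0}-\epsilon_{m_1}).
\end{equation*}
The first summand is exactly $t^{B_\Psi(m,m)}H(\Psi^{m_0};\mu+\epsilon_m-\epsilon_{m_0})$, which is the $z=m_0$ term on the right side of \eqref{ec inductive computation atom down}. For the second summand, the key observation is that $\Psi$ has been left intact, so the pair $(\Psi,\mu+\epsilon_{m_0}-\epsilon_{m_1})$ is a new indexed root ideal to which we may apply the inductive hypothesis starting at index $m_1$. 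The downpath from $m_1$ in $\Psi$ is $(m_1,m_2,\dots,m_s)$, which has bounce length $s-1$, so the hypothesis yields
\begin{equation*}
H(\Psi;\mu+\epsilon_{m_0}-\epsilon_{m_1}) \;=\; \sum_{z \in \downpath_\Psi(m_1)} t^{B_\Psi(m_1,z)}\,H(\Psi^z;\mu+\epsilon_{m_0}-\epsilon_z),
\end{equation*}
where the weight shift simplifies because $(\epsilon_{m_0}-\epsilon_{m_1})+(\epsilon_{m_1}-\epsilon_z) = \epsilon_{m_0}-\epsilon_z$.

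Multiplying by $t$ and using the additivity $B_\Psi(m_0,z)=1+B_\Psi(m_1,z)$ for $z\in\downpath_\Psi(m_1)$, the two contributions combine into a single sum over $\downpath_\Psi(m_0)=\{m_0\}\cup\downpath_\Psi(m_1)$, matching \eqref{ec inductive computation atom down}. I do not expect any serious obstacle: the only point requiring care is the bookkeeping of the convention $\Psi^{\chaindown_\Psi(m)}:=\Psi$, which is forced precisely because at the terminal vertex $m_s$ there is no removable root $(m_s,\down_\Psi(m_s))$ to excise, so the final step of the iteration cannot invoke \eqref{e Rlambda recurrence HH} and must leave $\Psi$ unchanged.
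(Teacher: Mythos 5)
Your proof is correct and is essentially the paper's own argument: both induct on the length of the downpath (your $s$ is one less than the paper's $|\downpath_\Psi(m)|$), expand $\HH(\Psi;\mu)$ via \eqref{e Rlambda recurrence HH} on the removable root $(m,\down_\Psi(m))$, identify the first summand as the $z=m$ term, and apply the inductive hypothesis at $\down_\Psi(m)$ to the second, with the weight shifts telescoping.
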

\begin{proof}
The proof is by induction on $|\downpath_\Psi(m)|$.
The base case $|\downpath_\Psi(m)|=1$ is clear.
Now suppose $|\downpath_\Psi(m)| > 1$ and set  $m' = \down_\Psi(m)$.
The desired result is obtained by expanding  $\HH(\Psi;\mu)$ on the root $(m,m')$ using the recurrence \eqref{e Rlambda recurrence HH} and then
applying the inductive hypothesis:
\begin{align*}
\HH(\Psi;\mu)
&=  \HH(\Psi^m; \mu) + t\, \HH(\Psi; \mu+\epsilon_{m} - \epsilon_{m'}) \\
&= \HH(\Psi^m;\mu) + t \sum_{z \in \downpath_\Psi(m')} t^{B_\Psi(m',z)}\HH(\Psi^z;\mu+ \epsilon_{m} - \epsilon_{m'} + \epsilon_{m'} - \epsilon_z) \\
&= \sum_{z \in \downpath_\Psi(m)} t^{B_\Psi(m,z)}\HH(\Psi^z;\mu+ \epsilon_m - \epsilon_z). \qedhere
\end{align*}
\end{proof}

\section{Mirror lemmas}

We first give a natural generalization of Schur function straightening to Catalan functions (Lemma \ref{l little sl2 lemma})
and then deduce two \emph{Mirror Lemmas}.  The first gives sufficient conditions for a Catalan function to be zero and
the second gives sufficient conditions for two Catalan functions to be equal.
We further show that these lemmas often ``commute'' with certain generalizations of the operator $e_d^\perp$ called subset lowering operators.

The symmetric group $\SS_\ell$ acts on the ring $\QQ[z_1^{\pm 1}, z_2^{\pm 1}, \dots, z_\ell^{\pm 1}]$ by permuting variables.
The simple reflections $\tau_1, \dots, \tau_{\ell-1} \in \SS_\ell$ act on the basis of Laurent monomials  $\{\mathbf{z}^\gamma\}_{\gamma \in \ZZ^\ell}$
by $\tau_i \mathbf{z}^\gamma = \mathbf{z}^{\tau_i \gamma}$,
where $\tau_i \gamma = (\gamma_1, \dots,\gamma_{i-1}, \gamma_{i+1}, \gamma_{i}, \gamma_{i+2},\dots)$.
This action extends in the natural way to an action on  $\QQ[z_1^{\pm 1}, z_2^{\pm 1}, \dots, z_\ell^{\pm 1}][[t]]$.
We also consider the action of $\SS_\ell$ on subsets  $\Psi \subset [\ell] \times [\ell]$ given by
$\tau_i\Psi = \{(\tau_i(a), \tau_i(b)) \mid (a,b) \in \Psi\}$.

\begin{lemma}
\label{l little sl2 lemma}
Let $\Psi \subset \Delta^+_\ell$ be a root ideal such that $\tau_i \Psi = \Psi$.
Then for any $\gamma \in \ZZ^\ell$,
\[H(\Psi;\gamma) + H(\Psi; \epsilon_{i+1} - \epsilon_i + \tau_i\gamma) = 0.\]
\end{lemma}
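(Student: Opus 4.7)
The plan is to work with the formal description of the Catalan function given in \eqref{e d HH gamma Psi formal}, namely
\[
H(\Psi;\gamma) = \tilde{\pi}\Big( P \cdot \mathbf{z}^\gamma \Big), \qquad P := \prod_{(a,b) \in \Psi} (1 - t z_a/z_b)^{-1},
\]
and deduce the identity by pushing the proof down to the level of individual Laurent monomials.

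First I would establish the monomial-level identity $\pi(\mathbf{z}^\delta) + \pi(\mathbf{z}^{\tau_i\delta + \epsilon_{i+1}-\epsilon_i}) = 0$ for every $\delta \in \ZZ^\ell$. This is exactly the row-swap antisymmetry of the Jacobi-Trudi determinant \eqref{ed s gamma}; concretely, setting $\alpha = \tau_i\delta + \epsilon_{i+1}-\epsilon_i$, a direct computation gives $\alpha+\rho = \tau_i(\delta+\rho)$, so the straightening rule in Proposition~\ref{p schur straighten} yields $s_\alpha = -s_\delta$. This is the $\Psi = \varnothing$ case of the lemma.

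Next I would use the hypothesis $\tau_i \Psi = \Psi$ to conclude $\tau_i P = P$ in $\QQ[z_1^{\pm 1}, \dots, z_\ell^{\pm 1}][[t]]$, since the $\SS_\ell$-action permutes the factors $(1 - tz_a/z_b)^{-1}$ indexed by $\Psi$ among themselves. Expanding $P \cdot \mathbf{z}^\gamma = \sum_\delta c_\delta(t)\, \mathbf{z}^\delta$, the $\tau_i$-invariance of $P$ gives
\[
P \cdot \mathbf{z}^{\tau_i \gamma} = \tau_i (P \cdot \mathbf{z}^\gamma) = \sum_\delta c_\delta(t)\, \mathbf{z}^{\tau_i \delta},
\]
and multiplying by $\mathbf{z}^{\epsilon_{i+1}-\epsilon_i}$ yields $P \cdot \mathbf{z}^{\tau_i\gamma + \epsilon_{i+1} - \epsilon_i} = \sum_\delta c_\delta(t)\, \mathbf{z}^{\tau_i\delta + \epsilon_{i+1}-\epsilon_i}$. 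Adding, applying $\tilde\pi$ termwise, and invoking the monomial-level identity from the previous paragraph gives
\[
H(\Psi;\gamma) + H(\Psi; \epsilon_{i+1}-\epsilon_i + \tau_i\gamma) \;=\; \sum_\delta c_\delta(t)\bigl(s_\delta + s_{\tau_i\delta + \epsilon_{i+1}-\epsilon_i}\bigr) \;=\; 0.
\]

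The only mildly subtle point is that $P$ is an infinite formal power series in $t$, so I would be careful to note that for each fixed total degree in $\mathbf{z}$ and each fixed power of $t$, only finitely many $\delta$ contribute, which (together with Corollary~\ref{c poly truncation}) makes the termwise application of $\tilde\pi$ legitimate. Apart from this bookkeeping, the argument is a direct transfer of the Jacobi-Trudi row-swap through a $\tau_i$-symmetric raising-operator kernel, so no further obstacle is anticipated.
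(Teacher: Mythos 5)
Your proof is correct and follows essentially the same route as the paper: both reduce to the $\tau_i$-invariance of the raising-operator kernel $\prod_{(a,b)\in\Psi}(1-tz_a/z_b)^{-1}$ obtained from $\tau_i\Psi = \Psi$, and then to the monomial-level identity $s_\delta + s_{\tau_i\delta + \epsilon_{i+1}-\epsilon_i} = 0$ coming from Jacobi-Trudi row-swap (Proposition~\ref{p schur straighten}). The paper phrases the final step as the vanishing of the composite operator $\tilde\pi\circ(1+ (z_{i+1}/z_i)\tau_i)$ while you expand into Laurent monomials and argue termwise, but these are the same computation; the paper also explicitly notes that $\tau_i\Psi=\Psi$ forces $(i,i+1)\notin\Psi$, which is the small check implicit in your claim that $\tau_i$ permutes the factors of $P$ among themselves.
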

\begin{proof}
Set  $f^\Psi = \prod_{(i,j) \in \Psi} \big(1-t \, z_i/ z_j \big)^{-1}$.
Recall from  \eqref{e d HH gamma Psi formal} that
the Catalan functions may be defined in terms of the linear map  $\tilde{\pi}$ by $H(\Psi;\gamma) = \tilde{\pi} (f^\Psi\, \mathbf{z}^\gamma)$.
Thus
\begin{align*}
H(\Psi;\gamma) + H(\Psi; \epsilon_{i+1} - \epsilon_i + \tau_i\gamma)
= \tilde{\pi} \big(f^\Psi\, \mathbf{z}^\gamma + f^\Psi\, \mathbf{z}^{\epsilon_{i+1} - \epsilon_i + \tau_i\gamma} \big).
\end{align*}
We have $\tau_i\Psi = \Psi$ implies  $\tau_i f^\Psi = f^\Psi$ (note that $\Psi \subset \Delta^+$ and $\tau_i \Psi =\Psi$ imply $(i, i+1) \notin \Psi$).  Hence we obtain
\begin{align*}
\tilde{\pi} \big(f^\Psi\, \mathbf{z}^\gamma + f^\Psi\, \mathbf{z}^{\epsilon_{i+1} - \epsilon_i + \tau_i\gamma} \big)
= \tilde{\pi} \circ (1+z_{i+1}/z_i \tau_i)\big(f^\Psi\, \mathbf{z}^\gamma \big),
\end{align*}
where $1+ z_{i+1}/z_i \tau_i$ is regarded as an operator on $\QQ[z_1^{\pm 1}, z_2^{\pm 1}, \dots, z_\ell^{\pm 1}][[t]]$.
We now claim that the operator
$\tilde{\pi} \circ (1+z_{i+1}/z_i \tau_i): \QQ[z_1^{\pm 1}, z_2^{\pm 1}, \dots, z_\ell^{\pm 1}][[t]] \to \QQ[h_1,h_2,\dots][[t]]$
is identically 0, which will complete the proof.
It suffices to show that  $\pi \circ (1 + z_{i+1}/z_i \tau_i)(\mathbf{z}^\delta) =0$ for any $\delta \in \ZZ^\ell$,
where  $\pi$ is the map used to define $\tilde{\pi}$ (see \eqref{e d HH gamma Psi formal}).  We have
\[\pi \circ (1 + z_{i+1}/z_i \tau_i)(\mathbf{z}^\delta)
= \pi \big(\mathbf{z}^\delta + \mathbf{z}^{\epsilon_{i+1} - \epsilon_i + \tau_i\delta}\big)
= s_{\delta} + s_{\epsilon_{i+1} - \epsilon_i + \tau_i\delta}
= 0,\]
where the last equality is by the Schur function straightening rule (Proposition \ref{p schur straighten}).
\end{proof}

\begin{lemma}
\label{l toggle lemma zero}
Let $(\Psi,\mu)$ be an indexed root ideal of length $\ell$ and $z \in [\ell-1]$, and  suppose
\begin{align}
\label{el toggle lemma zero 1}
&\text{$\Psi$ has a ceiling in columns $z, z+1$;} \qquad \\[-.7mm]
\label{el toggle lemma zero 2}
&\text{$\Psi$ has a wall in rows $z,z+1$;}  \qquad \\[-.7mm]
\label{el toggle lemma zero 3}
&\text{$\mu_{z} = \mu_{z+1}-1$.} \qquad
\end{align}
Then $\HH(\Psi;\mu)=0$.
\end{lemma}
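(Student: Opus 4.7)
The plan is to reduce the statement to a direct application of Lemma \ref{l little sl2 lemma} with $i = z$ and $\gamma = \mu$. For this to yield $2H(\Psi;\mu)=0$, two hypotheses must be verified: first, that $\tau_z\Psi = \Psi$, and second, that $\epsilon_{z+1}-\epsilon_z+\tau_z\mu = \mu$. The latter is immediate from (\ref{el toggle lemma zero 3}): since $\tau_z\mu$ differs from $\mu$ only in positions $z$ and $z+1$, where the entries get swapped, the vector $\epsilon_{z+1}-\epsilon_z+\tau_z\mu$ equals $\mu$ precisely when $\mu_z = \mu_{z+1}-1$, which is exactly what (\ref{el toggle lemma zero 3}) asserts.

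The main work, therefore, is to extract $\tau_z\Psi = \Psi$ from hypotheses (\ref{el toggle lemma zero 1}) and (\ref{el toggle lemma zero 2}). First I would observe, using the upper-order-ideal property of $\Psi$, that each row $r$ of $\Psi$ occupies a contiguous interval $[a_r,\ell]$ of columns (possibly empty), and each column $c$ occupies an interval $[1,b_c]$ of rows. The wall condition (\ref{el toggle lemma zero 2}) then forces $a_z = a_{z+1}$, while the ceiling condition (\ref{el toggle lemma zero 1}) forces $b_z = b_{z+1}$; combined with $\Psi \subset \Delta^+$, these equalities also rule out $(z,z+1)\in\Psi$. A short case analysis on the possible membership of $(a,b)$ in $\{z,z+1\}$ then shows that for every $(a,b)\in\Psi$ one has $(\tau_z(a),\tau_z(b))\in\Psi$: the nontrivial cases are $(z,b)\leftrightarrow(z+1,b)$ for $b\ge z+2$, handled by the wall, and $(a,z)\leftrightarrow(a,z+1)$ for $a\le z-1$, handled by the ceiling (with one direction of each swap being free from the root-ideal property).

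With $\tau_z\Psi = \Psi$ established, Lemma \ref{l little sl2 lemma} gives
\begin{equation*}
H(\Psi;\mu) + H(\Psi;\epsilon_{z+1}-\epsilon_z+\tau_z\mu) = 0,
\end{equation*}
and the computation in the first paragraph identifies the second summand with $H(\Psi;\mu)$, so $2H(\Psi;\mu) = 0$ and hence $H(\Psi;\mu)=0$. No step looks like a serious obstacle: the verification of $\tau_z\Psi=\Psi$ is the only part with any content, and once the row/column intervals are in hand it reduces to bookkeeping. The main conceptual point is simply that the three hypotheses together encode precisely the invariance needed to invoke the $s_\gamma + s_{\epsilon_{i+1}-\epsilon_i+\tau_i\gamma}=0$ instance of Schur straightening that powers Lemma \ref{l little sl2 lemma}.
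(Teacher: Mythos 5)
Your proposal is correct and follows exactly the same route as the paper's proof: recognize that \eqref{el toggle lemma zero 1}--\eqref{el toggle lemma zero 2} are equivalent to $\tau_z\Psi=\Psi$, that \eqref{el toggle lemma zero 3} gives $\epsilon_{z+1}-\epsilon_z+\tau_z\mu=\mu$, and then invoke Lemma~\ref{l little sl2 lemma}. The paper simply states the equivalence with $\tau_z\Psi=\Psi$ without spelling out the case analysis you give, but the argument is identical.
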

\begin{proof}
Conditions \eqref{el toggle lemma zero 1}--\eqref{el toggle lemma zero 2} are just another way of saying $\tau_z\Psi = \Psi$.
By \eqref{el toggle lemma zero 3}, \break $\epsilon_{z+1} - \epsilon_z + \tau_z \mu = \mu$.
Hence the result follows from Lemma~\ref{l little sl2 lemma}.
\end{proof}

\begin{example}
By Lemma~\ref{l toggle lemma zero} with  $z=2$, the following Catalan function is zero:
\ytableausetup{mathmode, boxsize=1.1em,centertableaux}
\[{\scriptsize
\begin{ytableau}
   3    &  *(red) & *(red) &*(red)  & *(red)   \\
        &    1   &         &        & *(red)   \\
        &        &     2   &        & *(red) \\
        &        &         &    1   &          \\
        &        &         &        &    1
\end{ytableau}}
=0.
\]
\smallskip
\end{example}

\begin{lemma}[Mirror Lemma I]
\label{l little lemma many zero 2}
Let $(\Psi, \mu)$ be an indexed root ideal of length  $\ell$, and let $y,z,w$ be indices in the same bounce path of $\Psi$ with $1 \le y \le z \le w < \ell$, satisfying
\begin{align}
\label{el little lemma many zero 2 1}
&\text{$\Psi$ has a ceiling in columns $y, y+1$;}  \\[-.7mm]
\label{el little lemma many zero 2 2}
&\text{$\Psi$ has a mirror in rows $x, x+1$ for all $x \in \bpath_\Psi(y, \upp_\Psi(w)) $;}  \\[-.7mm]
\label{el little lemma many zero 2 3}
&\text{$\Psi$ has a wall in rows $w,w+1$;}  \\[-.7mm]
\label{el little lemma many zero 2 4}
&\text{$\mu_{x} = \mu_{x+1}$ for all $x \in \bpath_\Psi(y,w) \setminus \{z\}$;}  \\[-.7mm]
\label{el little lemma many zero 2 5}
&\text{$\mu_{z} = \mu_{z+1} - 1$}.
\end{align}
Then $\HH(\Psi;\mu) = 0$.
\end{lemma}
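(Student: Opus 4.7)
The plan is to prove Mirror Lemma I by induction on the length $n=|\bpath_\Psi(y,w)|$ of the bounce path from $y$ to $w$. The base case $n=1$ forces $y=z=w$; then hypothesis \eqref{el little lemma many zero 2 1} gives a ceiling in columns $z,z+1$, hypothesis \eqref{el little lemma many zero 2 3} gives a wall in rows $z,z+1$ (these two together are exactly the statement $\tau_z \Psi = \Psi$), and hypothesis \eqref{el little lemma many zero 2 5} gives $\epsilon_{z+1}-\epsilon_z+\tau_z\mu=\mu$, so Lemma \ref{l little sl2 lemma} yields $2\HH(\Psi;\mu)=0$. This is precisely Lemma \ref{l toggle lemma zero}, so the base case is immediate.

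For the inductive step ($n\ge 2$), the idea is to use the mirror structure \eqref{el little lemma many zero 2 2} to peel off one end of the bounce path. Let $x=\upp_\Psi(w)$, the predecessor of $w$ along the bounce path; by hypothesis, $\Psi$ has a mirror in rows $x,x+1$, so $\alpha=(x,w)$ and $\beta=(x+1,w+1)$ are both removable roots of $\Psi$. I would apply the recurrence \eqref{e Rlambda recurrence HH} at $\alpha$ to write
\[
\HH(\Psi;\mu)=\HH(\Psi\setminus\alpha;\mu)+t\,\HH(\Psi;\mu+\epsilon_x-\epsilon_w),
\]
and then apply a second recurrence (on $\beta$, or on the wall root just above row $w+1$) to the $t$-term. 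The goal is that after this two-step expansion, each surviving summand either (i) satisfies the hypotheses of Mirror Lemma I for a strictly shorter bounce path and hence vanishes by the inductive hypothesis, or (ii) satisfies the hypotheses of the base-case Lemma \ref{l toggle lemma zero} directly (using that the ceiling/wall/mirror data on the modified root ideal still line up with the possibly-shifted weight), or (iii) cancels in pairs via Lemma \ref{l little sl2 lemma} applied to some $\tau_i$-symmetric subideal.

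An alternative and perhaps cleaner route is to apply Corollary \ref{c inductive computation atom down} to expand $\HH(\Psi;\mu)$ along the downpath starting at $z+1$: this produces a sum $\sum_{v\in \downpath_\Psi(z+1)} t^{B_\Psi(z+1,v)}\HH(\Psi^v;\mu+\epsilon_{z+1}-\epsilon_v)$. The mirror hypothesis \eqref{el little lemma many zero 2 2} means that the root ideals $\Psi^v$ still carry enough of the ceiling/wall structure for each summand to satisfy the hypotheses of Lemma \ref{l toggle lemma zero} (with the toggling column chosen so that the weight shift $\mu+\epsilon_{z+1}-\epsilon_v$ lines up the $-1$ discrepancy required by \eqref{el toggle lemma zero 3}), whence every summand vanishes individually.

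The main obstacle, in either approach, is the combinatorial bookkeeping: verifying that after the recurrence/expansion, the weight shifts interact correctly with the ceiling, wall, and mirror data so that the inductive hypothesis (or the base case) is genuinely applicable to each surviving term. Hypothesis \eqref{el little lemma many zero 2 4} (equal consecutive $\mu$-parts everywhere along the path except at $z$) is exactly what makes the weight after shifting still possess the correct equalities to trigger a $\tau_i$-symmetry. I expect that careful tracking of which removable/addable roots sit directly above the wall, together with the mirror condition propagating the ceiling structure down the path, is precisely what makes the inductive step close.
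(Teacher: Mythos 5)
Your base case and the general strategy (induct on the length of the bounce path from $y$ to $w$, peel off the far end using a recurrence, and land in Lemma~\ref{l toggle lemma zero} or the inductive hypothesis) are the right ideas, and your observation that hypothesis~\eqref{el little lemma many zero 2 4} is what makes the shifted weight line up with a $\tau_i$-symmetry is also correct. However, the specific recurrence you choose does not close the induction, and this is a genuine gap.

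Your main proposal is to apply \eqref{e Rlambda recurrence HH} at the removable root $\alpha=(x,w)$ (with $x=\upp_\Psi(w)$), giving
$\HH(\Psi;\mu)=\HH(\Psi\setminus\alpha;\mu)+t\,\HH(\Psi;\mu+\epsilon_x-\epsilon_w)$.
The problem is the $t$-term: it still carries the \emph{unchanged} root ideal $\Psi$, hence the \emph{same} bounce path of length $w-y$, so your induction parameter has not decreased. Applying a second removable-root recurrence at $\beta=(x+1,w+1)$ makes the middle piece vanish (by Lemma~\ref{l toggle lemma zero}, since $\Psi\setminus\beta$ then has a ceiling in columns $w,w+1$), but the surviving $t^2$-term is again $\HH(\Psi;\cdot)$ with the full $\Psi$, so the process never reduces the bounce path. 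This cascade of removals would have to be iterated indefinitely and controlled by a separate (degree-based) termination argument, which you do not supply and which would not be a proof by induction on $w-y$. Your alternative route via Corollary~\ref{c inductive computation atom down} starting at $m=z+1$ has the analogous defect: already the leading summand $\HH(\Psi\setminus\{(z+1,\down_\Psi(z+1))\};\mu)$ does not have the matching wall/ceiling pair in rows and columns $z,z+1$ (rows $z,z+1$ of that ideal differ by a box, not zero), so it does not vanish by Lemma~\ref{l toggle lemma zero}, contrary to the claim that each summand vanishes individually.

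The decisive move you are missing is to \emph{add} a root rather than remove one, namely the addable root $\beta=(\upp_\Psi(w)+1,w)$ (the cell between your two removable roots $(x,w)$ and $(x+1,w+1)$) via the recurrence \eqref{e Rlambda recurrence nonroot HH}. Then
$\HH(\Psi;\mu)=\HH(\Psi\cup\beta;\mu)-t\,\HH(\Psi\cup\beta;\mu+\eroot{\beta})$,
and the crucial point is that \emph{both} summands now involve $\Psi\cup\beta$: adding $\beta$ simultaneously converts the mirror at rows $\upp_\Psi(w),\upp_\Psi(w)+1$ into a wall in those rows and creates a ceiling in columns $w,w+1$, while truncating the bounce path from $y$ so that it now ends at $\upp_\Psi(w)$. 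Depending on whether $z=w$ or $z<w$, one summand then falls to Lemma~\ref{l toggle lemma zero} and the other to the inductive hypothesis (the shifted weight $\mu+\eroot{\beta}$ has its $-1$ discrepancy at exactly the right spot, as you anticipated), and the induction closes in one step.
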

\begin{proof}
The proof is by induction on $w-y$.
The base case $y=w$ is Lemma~\ref{l toggle lemma zero}.
Now assume  $y < w$.
By \eqref{el little lemma many zero 2 2}, the root $\beta = (\upp_\Psi(w+1),w)$ is addable to $\Psi$.
So we can expand $\HH(\Psi;\mu)$ using \eqref{e Rlambda recurrence nonroot HH} to obtain
\[\HH(\Psi;\mu) = \HH(\Psi \cup \beta;\mu) - t\;\! \HH(\Psi\cup \beta; \mu+ \eroot{\beta}).
\]
The root ideal $\Psi \cup \beta$ has a wall in rows $\upp_\Psi(w), \upp_{\Psi}(w) + 1$ and a ceiling in columns $w,w+1$.
Hence, if  $z \ne w$, we have $\HH(\Psi \cup \beta;\mu) = 0$ by the inductive hypothesis and
$\HH(\Psi\cup \beta; \mu+ \eroot{\beta}) = 0$ by Lemma~\ref{l toggle lemma zero} (\eqref{el toggle lemma zero 3} holds by $(\mu+ \eroot{\beta})_{w} = (\mu + \eroot{\beta})_{w+1} - 1$).
If  $z = w$, then we have $\HH(\Psi \cup \beta;\mu) = 0$ by Lemma~\ref{l toggle lemma zero} and
$\HH(\Psi\cup \beta; \mu+ \eroot{\beta}) = 0$ by the inductive hypothesis
(\eqref{el little lemma many zero 2 5} holds with $\mu+ \eroot{\beta}$ in place of  $\mu$ and $\upp_\Psi(w)$ in place of  $z$).
\end{proof}

Here is another useful variant:
\begin{lemma}[Mirror Lemma II]
\label{l cascading toggle lemma}
Let $(\Psi, \mu)$ be an indexed root ideal of length $\ell$,
and let $y,w$ be indices in the same bounce path of $\Psi$ with $1 \le y \le w < \ell$,
satisfying \eqref{el little lemma many zero 2 1}--\eqref{el little lemma many zero 2 3}~and
\begin{align}
\label{el cascading toggle lemma 4}
&\text{$\mu_{x} = \mu_{x+1}$ for all $x \in \bpath_\Psi(y,w)$.}
\end{align}
If $\Psi$ has a removable root  $\alpha$ in column $y$, then  $\HH(\Psi;\mu) = \HH(\Psi \setminus \alpha ; \mu)$.
Similarly, \break if $\Psi$ has a removable root  $\beta$ in row $w+1$, then  $\HH(\Psi;\mu) = \HH(\Psi \setminus \beta ; \mu)$.
\end{lemma}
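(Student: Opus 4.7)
The plan is to deduce both claims from the recurrence \eqref{e Rlambda recurrence HH} together with Mirror Lemma I (Lemma \ref{l little lemma many zero 2}). For the first claim, I would write $\alpha = (a, y)$ where $a = \upp_\Psi(y)$, noting $a < y$, and use \eqref{e Rlambda recurrence HH} to obtain
\[
\HH(\Psi; \mu) \;=\; \HH(\Psi \setminus \alpha; \mu) \;+\; t\, \HH(\Psi; \mu + \epsilon_a - \epsilon_y),
\]
which reduces the claim to showing that the shifted Catalan function vanishes. This vanishing I would obtain by applying Mirror Lemma I to $(\Psi, \mu + \epsilon_a - \epsilon_y)$, keeping the same $y, w$ and choosing its parameter $z$ to equal $y$. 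Conditions \eqref{el little lemma many zero 2 1}--\eqref{el little lemma many zero 2 3} transfer verbatim from the hypotheses. The key observation is that, because $a < y$, the shift $\epsilon_a - \epsilon_y$ leaves all entries at positions $x, x+1$ for $x \in \bpath_\Psi(y,w) \setminus \{y\}$ undisturbed, so condition \eqref{el little lemma many zero 2 4} follows from the given $\mu_x = \mu_{x+1}$; moreover, the same shift converts $\mu_y = \mu_{y+1}$ into $(\mu + \epsilon_a - \epsilon_y)_y = (\mu + \epsilon_a - \epsilon_y)_{y+1} - 1$, which is precisely \eqref{el little lemma many zero 2 5}.

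For the second claim, I would symmetrically set $\beta = (w+1, b)$ with $b = \down_\Psi(w+1)$, noting $b > w+1$, and apply \eqref{e Rlambda recurrence HH} to obtain
\[
\HH(\Psi; \mu) \;=\; \HH(\Psi \setminus \beta; \mu) \;+\; t\, \HH(\Psi; \mu + \epsilon_{w+1} - \epsilon_b).
\]
Then Mirror Lemma I would be applied to $(\Psi, \mu + \epsilon_{w+1} - \epsilon_b)$ with the same $y, w$ and its parameter $z$ taken equal to $w$. Since $b > w+1$, the shift $\epsilon_{w+1} - \epsilon_b$ does not touch any position $x$ or $x+1$ with $x \in \bpath_\Psi(y,w) \setminus \{w\}$ (for such $x$ one has $x + 1 \le w$), so \eqref{el little lemma many zero 2 4} again follows; and the shift turns $\mu_w = \mu_{w+1}$ into $(\mu + \epsilon_{w+1} - \epsilon_b)_w = (\mu + \epsilon_{w+1} - \epsilon_b)_{w+1} - 1$, yielding \eqref{el little lemma many zero 2 5}.

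The main obstacle, such as it is, will be the bookkeeping: I must verify that the indices $a$ and $b$ introduced by the removable roots lie outside the range of positions where $\mu$-equalities are demanded by our hypothesis, so that Mirror Lemma I's condition \eqref{el little lemma many zero 2 4} is not spoiled and its condition \eqref{el little lemma many zero 2 5} is created at exactly one spot. This is automatic from $\alpha, \beta \in \Delta^+$ together with their respective column/row positions, which force $a < y$ and $b > w+1$; thus each shift perturbs $\mu$ at precisely the single index needed to convert the hypothesis of this lemma into that of Mirror Lemma I.
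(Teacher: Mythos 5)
Your proposal is correct and takes essentially the same route as the paper: apply recurrence \eqref{e Rlambda recurrence HH} at the removable root to split off a $t$-weighted term with shifted index, then kill that term by Mirror Lemma I with $z=y$ (resp.\ $z=w$). Your verification that the shift by $\eroot{\alpha}$ (resp.\ $\eroot{\beta}$) perturbs $\mu$ only at the single needed spot—because $a<y$ and $b>w+1$ place the perturbation outside the range where \eqref{el little lemma many zero 2 4} is checked—is exactly the observation the paper's terse proof uses implicitly.
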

\begin{proof}
Apply \eqref{e Rlambda recurrence HH} with the removable root $\alpha$ to obtain
\[ \HH(\Psi;\mu) = \HH(\Psi \setminus \alpha;\mu) + t\;\! \HH(\Psi;\mu+ \eroot{\alpha}) =
\HH(\Psi \setminus \alpha;\mu),\]
where the second equality is by Lemma~\ref{l little lemma many zero 2} applied with indexed root ideal $(\Psi,\mu + \eroot{\alpha})$ and  $z = y$
(\eqref{el little lemma many zero 2 5} holds since $(\mu+ \eroot{\alpha})_{y} = (\mu+ \eroot{\alpha})_{y+1}-1$).
A similar argument with $\beta$ in place of $\alpha$ gives
$ \HH(\Psi;\mu) =  \HH(\Psi \setminus \beta;\mu)$.
\end{proof}

\begin{example}
By Lemma~\ref{l cascading toggle lemma} with $y=2$, $w = 4$, we have
\ytableausetup{mathmode, boxsize=1.1em,centertableaux}
\[\scriptsize
\begin{ytableau}
   3    &  \bm{\alpha} & *(red!80) &*(red!80)  & *(red!80)   &*(red!80)\\
        &    2   &         &  *(red!80)& *(red!80)   &*(red!80)\\
        &        &     2   &        & *(red!80)   &*(red!80)\\
        &        &         &    1   &          &*(red!80)\\
        &        &         &        &    1     &*(red!80) \bm{\beta}\\
        &        &         &        &          &  1
\end{ytableau}
\hspace{2mm} = \hspace{2mm}
\begin{ytableau}
   3    &  *(red!80) \bm{\alpha} & *(red!80) &*(red!80)  & *(red!80)   &*(red!80)\\
        &    2   &         &  *(red!80)& *(red!80)   &*(red!80)\\
        &        &     2   &        & *(red!80)   &*(red!80)\\
        &        &         &    1   &          &*(red!80)\\
        &        &         &        &    1     &*(red!80) \bm{\beta}\\
        &        &         &        &          &  1
\end{ytableau}
\hspace{2mm} = \hspace{2mm}
\begin{ytableau}
   3    &  *(red!80) \bm{\alpha} & *(red!80) &*(red!80)  & *(red!80)   &*(red!80)\\
        &    2   &         &  *(red!80)& *(red!80)   &*(red!80)\\
        &        &     2   &        & *(red!80)   &*(red!80)\\
        &        &         &    1   &          &*(red!80)\\
        &        &         &        &    1     &\bm{\beta}\\
        &        &         &        &          &  1
\end{ytableau} \, .
\]
\end{example}

The vertical dual Pieri rule gives a combinatorial description of  $e_d^\perp \fs^{(k)}_\mu$.
The proof of this rule requires extending it
to a combinatorial description of a more general operator on $\fs^{(k)}_\mu$, which we now define.

\begin{definition}
\label{d subset lowering}
For $d \in \ZZ_{\ge 0}$ and $V \subset [\ell]$, the \emph{subset lowering operator} $\beperp_{d,V}$ is given by
\begin{align*}
\beperp_{d,V} \HH(\Psi;\mu) = \sum_{S \subset V, \, |S| = d} \HH(\Psi;\mu-\epsilon_S),
\end{align*}
where $(\Psi, \mu)$ is any indexed root ideal  of length $\ell$.
\end{definition}
With this notation, Lemma~\ref{l ed perp HH} says that $\beperp_{d, [\ell]} H(\Psi;\mu) = e_d^\perp H(\Psi;\mu)$ for any  $d \ge 0$.

\begin{remark}
Just as for raising operators, the subset lowering operators 
should be thought of as acting on the input $\mu$ in  $\HH(\Psi; \mu)$ rather than on the polynomials themselves.
They are not in general well-defined operators on symmetric functions:  for instance,
$\HH(\varnothing;12) = 0$ but $\beperp_{1,\{1\}} \HH(\varnothing;12) := \HH(\varnothing; 02) = -s_{11} \neq 0$.
Also see Example~\ref{ex toggle lemma zero ed}.
\end{remark}

Despite the fact that $\beperp_{d,V}$ is not a well-defined operator on symmetric functions,
it commutes with raising operators and the recurrences of Proposition \ref{p inductive computation atom}.
Moreover, it
commutes with the Mirror Lemmas under some mild assumptions.
This means that if we have any computation involving Catalan functions that only uses the recurrences and
Mirror Lemmas in
a controlled way, then we can commute  $\beperp_{d,V}$ through this entire computation.
This is a powerful technique and is crucial to the proof of the vertical dual Pieri rule.

\begin{proposition}
\label{p Rlambda recurrence ed}
Let $(\Psi,\mu)$ be an indexed root ideal.
For any root $\beta$ addable to $\Psi$,
\begin{align}\label{e Rlambda recurrence nonroot HH ed}
\beperp_{d,V} \HH(\Psi;\mu) = \beperp_{d,V}\HH(\Psi \cup \beta;\mu) - t \, \beperp_{d,V}\HH(\Psi \cup \beta;\mu+ \eroot{\beta}).
\end{align}
For any removable root $\alpha$ of $\Psi$,
\begin{align}
\label{e Rlambda recurrence HH ed}
\beperp_{d,V} \HH(\Psi;\mu) = \beperp_{d,V}\HH(\Psi \setminus \alpha;\mu) + t \, \beperp_{d,V}\HH(\Psi;\mu+ \eroot{\alpha}).
\end{align}
\end{proposition}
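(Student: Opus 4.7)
The plan is to prove \eqref{e Rlambda recurrence nonroot HH ed} directly from the definition of $\beperp_{d,V}$ and then derive \eqref{e Rlambda recurrence HH ed} as an immediate consequence, exactly as in the proof of Proposition \ref{p inductive computation atom}. The key observation is that $\beperp_{d,V}$ is defined as a formal sum of weight shifts $\mu \mapsto \mu - \epsilon_S$, so it trivially commutes with any identity of Catalan functions that holds uniformly over all weights.

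First I would unfold the left side via Definition \ref{d subset lowering}:
\[
\beperp_{d,V}\HH(\Psi;\mu) = \sum_{S \subset V,\,|S|=d} \HH(\Psi;\mu-\epsilon_S).
\]
The hypothesis that $\beta$ is addable to $\Psi$ depends only on $\Psi$, not on the weight, so Proposition \ref{p inductive computation atom} applies to each indexed root ideal $(\Psi,\mu-\epsilon_S)$ to give
\[
\HH(\Psi;\mu-\epsilon_S) = \HH(\Psi\cup\beta;\mu-\epsilon_S) \;-\; t\,\HH\bigl(\Psi\cup\beta;\,(\mu-\epsilon_S)+\eroot{\beta}\bigr).
\]
Using $(\mu-\epsilon_S)+\eroot{\beta} = (\mu+\eroot{\beta})-\epsilon_S$ and summing over all $S \subset V$ with $|S|=d$, the two resulting sums are precisely $\beperp_{d,V}\HH(\Psi\cup\beta;\mu)$ and $\beperp_{d,V}\HH(\Psi\cup\beta;\mu+\eroot{\beta})$ by Definition \ref{d subset lowering}. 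This yields \eqref{e Rlambda recurrence nonroot HH ed}.

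For \eqref{e Rlambda recurrence HH ed}, I would apply the identity just proved with $\Psi$ replaced by $\Psi \setminus \alpha$ and $\beta = \alpha$ (which is addable to $\Psi \setminus \alpha$ since $\alpha$ is removable from $\Psi$). Rearranging and using $(\Psi \setminus \alpha)\cup\alpha = \Psi$ gives the desired identity.

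There is essentially no obstacle: the proof is a formal consequence of the linearity of $\beperp_{d,V}$ in the weight together with the weight-uniformity of the recurrences in Proposition \ref{p inductive computation atom}. The only point requiring care is the bookkeeping $(\mu-\epsilon_S)+\eroot{\beta} = (\mu+\eroot{\beta})-\epsilon_S$, which is what allows the two sums on the right to be repackaged as subset-lowering operators applied to $\HH(\Psi\cup\beta;\mu)$ and $\HH(\Psi\cup\beta;\mu+\eroot{\beta})$ respectively. This same mechanism is what the authors will presumably exploit later to commute $\beperp_{d,V}$ through the Mirror Lemmas: any identity derived purely from Proposition \ref{p inductive computation atom} and Schur straightening (which also holds uniformly in the weight) will lift to an identity after applying $\beperp_{d,V}$.
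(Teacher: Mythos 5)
Your proof is correct and is exactly what the paper's one-line proof (``immediate from the definition of $\beperp_{d,V}$ and Proposition~\ref{p inductive computation atom}'') is abbreviating: unfold the subset lowering operator as a sum over $S$, apply the weight-uniform recurrence termwise, and repackage using $(\mu-\epsilon_S)+\eroot{\beta}=(\mu+\eroot{\beta})-\epsilon_S$. No difference in approach.
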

\begin{proof}
This is immediate from the definition of  $\beperp_{d,V}$ and Proposition \ref{p inductive computation atom}.
\end{proof}


\begin{lemma}
\label{l toggle lemma zero ed}
Let $(\Psi, \mu)$ be an indexed root ideal of length $\ell$, let $z \in [\ell-1]$, and $V \subset [\ell]$.  Suppose this data satisfies
\eqref{el toggle lemma zero 1}--\eqref{el toggle lemma zero 3} together with
\begin{align}
\label{el toggle lemma zero ed 4}
&\text{$V$ contains both or neither of $z,z+1$.}
\end{align}
Then $\beperp_{d,V} \HH(\Psi;\mu)=0$ for any  $d \ge 0$.
\end{lemma}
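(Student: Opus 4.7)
The plan is to expand the sum $\beperp_{d,V}\HH(\Psi;\mu) = \sum_{S\subset V,\,|S|=d} \HH(\Psi;\mu-\epsilon_S)$ and show that each term either vanishes outright (via Lemma~\ref{l toggle lemma zero}) or pairs with another term to cancel (via Lemma~\ref{l little sl2 lemma}). The hypothesis \eqref{el toggle lemma zero ed 4} is precisely what makes such a pairing possible.

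Concretely, I would partition the index sets $S\subset V$ with $|S|=d$ according to their intersection $S\cap\{z,z+1\}$, which is one of $\varnothing$, $\{z\}$, $\{z+1\}$, or $\{z,z+1\}$. For each $S$, set $\mu^S := \mu-\epsilon_S$ and observe that conditions \eqref{el toggle lemma zero 1}--\eqref{el toggle lemma zero 2} (i.e.\ $\tau_z\Psi=\Psi$) are unaffected by the choice of $S$, so Lemma~\ref{l toggle lemma zero} applies to $\HH(\Psi;\mu^S)$ whenever $\mu^S_{z} = \mu^S_{z+1}-1$. Using \eqref{el toggle lemma zero 3}, this last equality holds precisely when $S\cap\{z,z+1\}\in\{\varnothing,\{z,z+1\}\}$. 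When $z,z+1\notin V$ (the second alternative in \eqref{el toggle lemma zero ed 4}), only $S\cap\{z,z+1\}=\varnothing$ arises, so every term vanishes and we are done.

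When $z,z+1\in V$ (the first alternative in \eqref{el toggle lemma zero ed 4}), the terms with $S\cap\{z,z+1\}\in\{\varnothing,\{z,z+1\}\}$ still vanish by Lemma~\ref{l toggle lemma zero}, and the remaining terms are indexed by pairs
\[
\big(S_1, S_2\big) \quad \text{with}\quad S_1 = S'\cup\{z\},\ \ S_2 = S'\cup\{z+1\},\ \ S'\subset V\setminus\{z,z+1\},\ |S'|=d-1.
\]
Since both $z,z+1\in V$, each such pair lies in the summation range. The main computation is to verify that $\HH(\Psi;\mu^{S_1})+\HH(\Psi;\mu^{S_2})=0$. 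Setting $\nu := \mu-\epsilon_{S'}$, note that $S'$ avoids $\{z,z+1\}$, so $\nu_{z}=\mu_z = \mu_{z+1}-1=\nu_{z+1}-1$. Applying Lemma~\ref{l little sl2 lemma} to $\Psi$ (which satisfies $\tau_z\Psi=\Psi$) with $\gamma = \nu-\epsilon_z$, a short check shows that $\epsilon_{z+1}-\epsilon_z + \tau_z\gamma = \nu-\epsilon_{z+1}$ using $\nu_z=\nu_{z+1}-1$. Therefore $\HH(\Psi;\nu-\epsilon_z)+\HH(\Psi;\nu-\epsilon_{z+1})=0$, which is exactly the cancellation of the pair.

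The only place requiring care is the verification that $\epsilon_{z+1}-\epsilon_z+\tau_z(\nu-\epsilon_z) = \nu-\epsilon_{z+1}$, which hinges on the balance $\nu_z=\nu_{z+1}-1$ inherited from \eqref{el toggle lemma zero 3}; this is really the heart of why \eqref{el toggle lemma zero ed 4} is exactly the right hypothesis. Summing the vanishing terms and the cancelling pairs over all $S$ yields $\beperp_{d,V}\HH(\Psi;\mu)=0$, completing the proof.
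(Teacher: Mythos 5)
Your proof is correct and follows essentially the same route as the paper: after splitting by $S\cap\{z,z+1\}$, terms with $S$ containing both or neither of $z,z+1$ are shown to vanish via Lemma~\ref{l toggle lemma zero}, and when $\{z,z+1\}\subset V$ the remaining terms are paired as $S'\cup\{z\}$ with $S'\cup\{z+1\}$ and cancelled via Lemma~\ref{l little sl2 lemma}. The only cosmetic difference is that the paper applies Lemma~\ref{l little sl2 lemma} with $\gamma=\mu-\epsilon_{S'}-\epsilon_{z+1}$, for which $\tau_z\gamma=\gamma$, whereas you take $\gamma=\mu-\epsilon_{S'}-\epsilon_z$; both choices yield the identical cancellation.
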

\begin{proof}
By definition,
\[\beperp_{d,V} \HH(\Psi;\mu) = \sum_{S \subset V,\, |S| = d} \HH(\Psi;\mu-\epsilon_S).\]
The terms in the sum such that $S$ contains both or neither of $z,z+1$ are zero by Lemma~\ref{l toggle lemma zero}.
If $V \cap \{z,z+1\} = \varnothing$, then this accounts for all the terms, and we are done.
If $\{z,z+1\} \subset V$, then the remaining terms come in pairs:
\begin{align}
\label{e S subset V}
\sum_{S \subset V, \, |S| = d} \! \HH(\Psi;\mu-\epsilon_S) =
\!
\sum_{\substack{S' \subset V \setminus \{z,z+1\} \\ |S'| = d-1}} \!\Big( \HH(\Psi;\mu - \epsilon_{S'} - \epsilon_{z}) + \HH(\Psi;\mu - \epsilon_{S'} - \epsilon_{z+1}) \Big).
\end{align}
By Lemma~\ref{l little sl2 lemma} with $\gamma = \mu-\epsilon_{S'} - \epsilon_{z+1}$ (using that $t_z \gamma = \gamma$ and  $\mu - \epsilon_{S'} - \epsilon_{z} = \epsilon_{z+1} - \epsilon_{z} +  t_z \gamma $),
$\HH(\Psi;\mu - \epsilon_{S'} - \epsilon_{z}) + \HH(\Psi;\mu - \epsilon_{S'} - \epsilon_{z+1}) = 0$.
Hence the right side of \eqref{e S subset V} is zero.
\end{proof}

\begin{example}
\label{ex toggle lemma zero ed}
By Lemma~\ref{l toggle lemma zero ed} with  $z = 2$,
\ytableausetup{mathmode, boxsize=1.1em,centertableaux}
\[\scriptsize
\beperp_{1,\{2,3\}}
\begin{ytableau}
   3    &  *(red)& *(red) \\
        &    1   &        \\
        &        &     2
\end{ytableau}
:=
\begin{ytableau}
   3    &  *(red)& *(red) \\
        &    0   &        \\
        &        &     2
\end{ytableau}
+
\begin{ytableau}
   3    &  *(red)& *(red) \\
        &    1   &        \\
        &        &    1
\end{ytableau}
=0.
\]
For comparison here is an example in which \eqref{el toggle lemma zero ed 4} is not satisfied:
\[{\scriptsize
\beperp_{1,\{1,2\}}
\begin{ytableau}
   3    &  *(red)& *(red) \\
        &    1   &        \\
        &        &     2
\end{ytableau}
:=
\begin{ytableau}
   2    &  *(red)& *(red) \\
        &    1   &        \\
        &        &     2
\end{ytableau}
+
\begin{ytableau}
   3    &  *(red)& *(red) \\
        &    0   &        \\
        &        &    2
\end{ytableau}
= -
\begin{ytableau}
   3    &  *(red)& *(red) \\
        &    1   &        \\
        &        &    1
\end{ytableau}
= -s_{311}-t s_{410} \ne 0.}
\]
\end{example}

With Lemma \ref{l toggle lemma zero ed} in hand, we easily obtain generalizations of Lemmas \ref{l little lemma many zero 2} and \ref{l cascading toggle lemma} to
the  $\beperp_{d,V} \HH(\Psi;\mu)$.
\begin{lemma}
\label{l little lemma many zero 2 ed}
Let $(\Psi, \mu)$ be an indexed root ideal of length  $\ell$, let $y,z,w$ be indices in the same bounce path of $\Psi$ with $1 \le y \le z \le w < \ell$, and let $V \subset [\ell]$.
Suppose that this data satisfies \eqref{el little lemma many zero 2 1}--\eqref{el little lemma many zero 2 5} together with
\begin{align}
\label{el little lemma many zero 2 ed 6}
\text{$V$ contains both or neither of $x, x+1$ for all $x \in \bpath_\Psi(y,w)$.}
\end{align}
Then $\beperp_{d,V} \HH(\Psi;\mu) = 0$  for any  $d \ge 0$.
\end{lemma}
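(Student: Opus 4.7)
The plan is to closely follow the proof of Lemma \ref{l little lemma many zero 2} (Mirror Lemma I), replacing the Schur-straightening--based Lemma \ref{l toggle lemma zero} by its generalization Lemma \ref{l toggle lemma zero ed}, and replacing the recurrence \eqref{e Rlambda recurrence nonroot HH} by its lifted counterpart \eqref{e Rlambda recurrence nonroot HH ed} from Proposition \ref{p Rlambda recurrence ed}. The induction runs on $w - y$.

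For the base case $y = w$, we have $y = z = w$, so $\bpath_\Psi(y,w) = \{z\}$. The hypotheses \eqref{el little lemma many zero 2 1}, \eqref{el little lemma many zero 2 3}, \eqref{el little lemma many zero 2 5} reduce precisely to \eqref{el toggle lemma zero 1}--\eqref{el toggle lemma zero 3}, conditions \eqref{el little lemma many zero 2 2} and \eqref{el little lemma many zero 2 4} are vacuous, and \eqref{el little lemma many zero 2 ed 6} for $x = z$ is exactly \eqref{el toggle lemma zero ed 4}. Hence Lemma \ref{l toggle lemma zero ed} closes the base case.

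For the inductive step $y < w$, exactly as in the proof of Lemma \ref{l little lemma many zero 2}, the mirror hypothesis \eqref{el little lemma many zero 2 2} makes the root $\beta = (\upp_\Psi(w+1), w)$ addable to $\Psi$. Applying the lifted recurrence \eqref{e Rlambda recurrence nonroot HH ed} yields
\[
\beperp_{d,V} H(\Psi;\mu) \,=\, \beperp_{d,V} H(\Psi \cup \beta;\mu) \,-\, t\, \beperp_{d,V} H(\Psi \cup \beta;\mu + \eroot{\beta}).
\]
The root ideal $\Psi \cup \beta$ has a wall in rows $\upp_\Psi(w), \upp_\Psi(w)+1$ and a ceiling in columns $w, w+1$, and $\bpath_{\Psi \cup \beta}(y, \upp_\Psi(w)) = \bpath_\Psi(y, \upp_\Psi(w))$ is a proper subpath of $\bpath_\Psi(y,w)$. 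If $z \neq w$, the first term vanishes by the inductive hypothesis applied with new endpoint $w' := \upp_\Psi(w)$, and the second term vanishes by Lemma \ref{l toggle lemma zero ed} at index $w$ (using $\mu_w = \mu_{w+1}$ from \eqref{el little lemma many zero 2 4}, so $(\mu + \eroot{\beta})_w = (\mu + \eroot{\beta})_{w+1} - 1$). If $z = w$, the roles swap: the first term vanishes by Lemma \ref{l toggle lemma zero ed} at index $w$, and the second vanishes by the inductive hypothesis with $z' = w' = \upp_\Psi(w)$, where the shift by $\eroot{\beta} = \epsilon_{\upp_\Psi(w)+1} - \epsilon_w$ changes the relevant entries of $\mu$ precisely so that \eqref{el little lemma many zero 2 5} holds at $z'$ and \eqref{el little lemma many zero 2 4} is preserved along the truncated path.

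The main point to verify---and the only content genuinely beyond Mirror Lemma I---is that \eqref{el little lemma many zero 2 ed 6} propagates through the induction. This is automatic: the new bounce path in $\Psi \cup \beta$ used by the inductive hypothesis is a subpath of $\bpath_\Psi(y,w)$, so every consecutive pair $(x, x+1)$ appearing there already appears in the original hypothesis on $V$; and the invocations of Lemma \ref{l toggle lemma zero ed} at index $w$ require only the $x=w$ instance of \eqref{el little lemma many zero 2 ed 6}, which supplies \eqref{el toggle lemma zero ed 4}. In this sense $\beperp_{d,V}$ commutes through the entire argument, realizing the phenomenon emphasized in the discussion preceding Proposition \ref{p Rlambda recurrence ed}. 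The one routine geometric check to record is that the ceiling in columns $y, y+1$ survives in $\Psi \cup \beta$---this follows because the hypotheses force $y + 1 < w$ (otherwise the required mirror in rows $y, y+1$ would conflict with $\down_\Psi(y)=w$), so adding $\beta$ in column $w$ cannot affect columns $y, y+1$.
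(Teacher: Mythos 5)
Your proof is correct and follows exactly the route the paper takes: the paper's own proof is a one-line instruction to repeat the argument of Mirror Lemma~I with Lemma~\ref{l toggle lemma zero ed} replacing Lemma~\ref{l toggle lemma zero} and the lifted recurrence~\eqref{e Rlambda recurrence nonroot HH ed} replacing~\eqref{e Rlambda recurrence nonroot HH}, which is precisely what you do. Your proposal adds careful verification that the hypotheses (in particular the $V$-condition~\eqref{el little lemma many zero 2 ed 6} and the ceiling in columns $y,y+1$) propagate through the induction — checks the paper leaves implicit; these are correct, with the observation that $y+1 < w$, forced by the mirror hypothesis, being the key point that keeps $\beta$ from disturbing columns $y,y+1$.
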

\begin{proof}
Repeat the proof of Lemma \ref{l little lemma many zero 2} using Lemma \ref{l toggle lemma zero ed} in place of Lemma~\ref{l toggle lemma zero} and
\eqref{e Rlambda recurrence nonroot HH ed} in place of \eqref{e Rlambda recurrence nonroot HH}.
\end{proof}

\begin{lemma}
\label{l cascading toggle lemma ed}
Let $(\Psi, \mu)$ be an indexed root ideal of length $\ell$,
let $y,w$ be indices in the same bounce path of $\Psi$ with $1 \le y \le w < \ell$,
let  $d \ge 0$,  and let $V \subset [\ell]$
satisfy \eqref{el little lemma many zero 2 1}--\eqref{el little lemma many zero 2 3}, \eqref{el cascading toggle lemma 4}, and \eqref{el little lemma many zero 2 ed 6}.
If $\Psi$ has a removable root  $\alpha$ in column $y$, then  $\beperp_{d,V}\HH(\Psi;\mu) = \beperp_{d,V}\HH(\Psi \setminus \alpha ; \mu)$.
Similarly,  if $\Psi$ has a removable root  $\beta$ in row $w+1$, then  $\beperp_{d,V}\HH(\Psi;\mu) = \beperp_{d,V}\HH(\Psi \setminus \beta ; \mu)$.
\end{lemma}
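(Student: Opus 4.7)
The plan is to mirror the proof of Lemma~\ref{l cascading toggle lemma} using the lowered analogs developed above. I will treat the case of a removable root $\alpha$ in column $y$; the case of $\beta$ in row $w+1$ is entirely analogous.

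First, I would apply the recurrence~\eqref{e Rlambda recurrence HH ed} at the removable root $\alpha$ to obtain
\[
\beperp_{d,V}\HH(\Psi;\mu) \,=\, \beperp_{d,V}\HH(\Psi\setminus\alpha;\mu) \,+\, t\,\beperp_{d,V}\HH(\Psi;\mu+\eroot{\alpha}).
\]
It then remains to show that the second term is zero.

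Next, I would invoke Lemma~\ref{l little lemma many zero 2 ed} on the indexed root ideal $(\Psi,\mu+\eroot{\alpha})$, with the same $V$ and with the triple $(y,z,w)=(y,y,w)$. Hypotheses~\eqref{el little lemma many zero 2 1}--\eqref{el little lemma many zero 2 3} and~\eqref{el little lemma many zero 2 ed 6} are inherited since they depend only on $\Psi$ and $V$. Since $\alpha$ sits in column $y$, we have $\eroot{\alpha}=\epsilon_{\upp_\Psi(y)}-\epsilon_y$, which perturbs $\mu$ only in positions $\upp_\Psi(y)<y$ and $y$; for every $x\in\bpath_\Psi(y,w)\setminus\{y\}$ we have $x>y$, so neither $x$ nor $x+1$ equals $\upp_\Psi(y)$ or $y$, and thus~\eqref{el little lemma many zero 2 4} follows from~\eqref{el cascading toggle lemma 4}. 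For~\eqref{el little lemma many zero 2 5}, $(\mu+\eroot{\alpha})_y=\mu_y-1=\mu_{y+1}-1=(\mu+\eroot{\alpha})_{y+1}-1$, again using~\eqref{el cascading toggle lemma 4} at $x=y$.

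For the row case, I would expand via~\eqref{e Rlambda recurrence HH ed} at $\beta$ and then invoke Lemma~\ref{l little lemma many zero 2 ed} with the triple $(y,w,w)$; the verifications are parallel (the shift $\eroot{\beta}$ now alters $\mu$ only at positions $w+1$ and $\down_\Psi(w+1)>w+1$, neither of which interferes with coordinates $x,x+1$ for $x\in\bpath_\Psi(y,w)\setminus\{w\}$). I do not anticipate a genuine obstacle here: the whole purpose of developing the lowered recurrences and the lowered Mirror Lemma~I in tandem with their non-lowered counterparts was precisely to let $\beperp_{d,V}$ commute with the syntactic manipulations of the earlier argument, and hypothesis~\eqref{el little lemma many zero 2 ed 6} is tailored exactly so that each intermediate appeal to Lemma~\ref{l little lemma many zero 2 ed} is legal.
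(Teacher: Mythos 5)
Your proof is correct and follows essentially the same approach as the paper, which simply instructs to repeat the proof of Lemma~\ref{l cascading toggle lemma} with the lowered recurrence~\eqref{e Rlambda recurrence HH ed} in place of~\eqref{e Rlambda recurrence HH} and Lemma~\ref{l little lemma many zero 2 ed} in place of Lemma~\ref{l little lemma many zero 2}. Your explicit verification of hypotheses~\eqref{el little lemma many zero 2 4}--\eqref{el little lemma many zero 2 5} for the perturbed weight $\mu+\eroot{\alpha}$ (and $\mu+\eroot{\beta}$) spells out details the paper leaves implicit, and is accurate.
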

\begin{proof}
Repeat the proof of Lemma \ref{l cascading toggle lemma} using
\eqref{e Rlambda recurrence HH ed} in place of \eqref{e Rlambda recurrence HH} and
Lemma \ref{l little lemma many zero 2 ed} in place of Lemma~\ref{l little lemma many zero 2}.
\end{proof}

\begin{example}
By Lemma~\ref{l cascading toggle lemma ed} with $y=2$, $w = 4$, for any  $d \ge 0$ and $V \subset \{1,2, 3, 4,5, 6\}$ such that  $V \cap \{2,3\}$ has size 0 or 2 and $V \cap \{4,5\}$ has size 0 or 2, there holds
\ytableausetup{mathmode, boxsize=1.1em,centertableaux}
\[\scriptsize
\beperp_{d,V}
\begin{ytableau}
   3    &  \bm{\alpha} & *(red!80) &*(red!80)  & *(red!80)   &*(red!80)\\
        &    2   &         &  *(red!80)& *(red!80)   &*(red!80)\\
        &        &     2   &        & *(red!80)   &*(red!80)\\
        &        &         &    1   &          &*(red!80)\\
        &        &         &        &    1     &*(red!80) \bm{\beta}\\
        &        &         &        &          &  1
\end{ytableau}
\hspace{2mm} = \hspace{2mm}
\beperp_{d,V}
\begin{ytableau}
   3    &  *(red!80) \bm{\alpha} & *(red!80) &*(red!80)  & *(red!80)   &*(red!80)\\
        &    2   &         &  *(red!80)& *(red!80)   &*(red!80)\\
        &        &     2   &        & *(red!80)   &*(red!80)\\
        &        &         &    1   &          &*(red!80)\\
        &        &         &        &    1     &*(red!80) \bm{\beta}\\
        &        &         &        &          &  1
\end{ytableau}
\hspace{2mm} = \hspace{2mm}
\beperp_{d,V}
\begin{ytableau}
   3    &  *(red!80) \bm{\alpha} & *(red!80) &*(red!80)  & *(red!80)   &*(red!80)\\
        &    2   &         &  *(red!80)& *(red!80)   &*(red!80)\\
        &        &     2   &        & *(red!80)   &*(red!80)\\
        &        &         &    1   &          &*(red!80)\\
        &        &         &        &    1     &\bm{\beta}\\
        &        &         &        &          &  1
\end{ytableau} \, .
\]
\end{example}

\section{$k$-Schur straightening}
\label{s kSchur straightening}

One beautiful consequence of identifying $k$-Schur functions as a subclass of
Catalan functions is that we obtain a natural generalization of $k$-Schur functions
to a class of Catalan functions
indexed by the set of weights
\begin{align}
\label{e pseudopartition}
\tpar^k_\ell := \big\{\mu \in \ZZ_{\le k}^\ell \mid \mu_1 + \ell-1 \ge \mu_2 + \ell-2 \ge \dots \ge \mu_\ell \big\},
\end{align}
which contains  $\Par^k_\ell$ but many nonpartitions as well.
Here we show
that these Catalan functions obey a  $k$-Schur straightening rule similar to that of
ordinary Schur functions,
and this plays a crucial role in the proof of the dual Pieri rules.



\subsection{The $k$-Schur root ideal}
\label{s kschur root ideal}
The definition of  $\fs^{(k)}_\mu$ from \eqref{d catalan kschur} carries over unchanged to this more general setting, which we record for convenience:

\begin{definition}
\label{d root ideal kschur}
For  $\mu \in \tpar^k_\ell$,  define the root ideal
\begin{align}
\label{ed root ideal kschur}
\Delta^k(\mu) = \{(i,j) \in \Delta^+_\ell \mid  k-\mu_i + i < j \},
\end{align}
and the associated Catalan function
\begin{align}
\fs^{(k)}_\mu = H(\Delta^k(\mu);\mu) =
\prod_{i=1}^\ell\,\prod_{j=k+1-\mu_i+i}^\ell \big(1-tR_{i j}\big)^{-1} s_\mu
\, .
\end{align}
\end{definition}

\begin{example}
Here are some examples of the Catalan functions $\fs^{(k)}_\mu$ for $k=4$:
\[\fs^{(4)}_{3321} =
\ytableausetup{mathmode, boxsize=1.44em,centertableaux}
{\Tiny
\begin{ytableau}
3 & *(white) & *(red) &*(red)  \\
\mynone & 3 & *(white) & *(red)  \\
\mynone & \mynone  & 2 & *(white) \\
\mynone  & \mynone  & \mynone  & 1
\end{ytableau}}, \ \
\fs^{(4)}_{4432320} =
{\Tiny \begin{ytableau}
4         & *(red)   & *(red)   &*(red)   &*(red)   &*(red) &*(red)   \\
\mynone  & 4        & *(red)    &  *(red) &*(red)   &*(red) &*(red)    \\
\mynone  & \mynone  & 3        & ~       &*(red)    &*(red) &*(red)   \\
\mynone  & \mynone  & \mynone  & 2       & ~        &~       &*(red)   \\
\mynone  & \mynone  & \mynone  & \mynone & 3        &  ~    & *(red)\\
\mynone  & \mynone  & \mynone  & \mynone & \mynone &  2     & ~\\
\mynone  & \mynone  & \mynone  & \mynone & \mynone & \mynone & 0
\end{ytableau}}, \ \
\fs^{(4)}_{-1012211} = {\Tiny \begin{ytableau}
-1        & ~      & ~         &   ~      &~         & ~        & *(red) \\
\mynone & 0        & ~         & ~        &~         & ~         & *(red)\\
\mynone & \mynone  & 1         & ~        &  ~       & ~         & *(red) \\
\mynone  & \mynone  & \mynone  & 2        & ~        & ~          & *(red)  \\
\mynone  & \mynone  & \mynone  & \mynone  & 2        & ~          & ~\\
\mynone  & \mynone  & \mynone  & \mynone  & \mynone  & 1          & ~\\
\mynone  & \mynone  & \mynone  & \mynone  & \mynone  &   \mynone & 1\\
\end{ytableau}}.
\]
The first was computed explicitly in Examples \ref{ex k Schur to Schur} and \ref{ex 3321}.
\end{example}

The following is essentially a restatement of the definition of $\Delta^k$, which we will reference  frequently.
\begin{proposition}
\label{p Delta k basics}
For $\mu \in \tpar^k_\ell$, the western border of the root ideal $\Delta^k(\mu)$ consists of the roots $(i, k+1-\mu_i+i)$ for $i$ such that $k+1-\mu_i+i \le \ell$.
Moreover, $(i,k+1-\mu_i+i)$ is a removable root of $\Delta^k(\mu)$ if and only if $\mu_{i} \ge \mu_{i+1}$ and $k+ 1-\mu_i + i  \le \ell$.
\end{proposition}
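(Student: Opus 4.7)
The plan is to unpack the definition of $\Delta^k(\mu)$ directly, using the pseudopartition condition to control how the ``western staircase'' of $\Delta^k(\mu)$ behaves as one moves from one row to the next. Let me abbreviate $c_i := k+1-\mu_i+i$, so that by definition $(i,j) \in \Delta^k(\mu)$ if and only if $c_i \le j \le \ell$. The hypothesis $\mu \in \tpar^k_\ell$ gives two pieces of data I will rely on: (a) $\mu_i \le k$, hence $c_i \ge i+1 > i$, so $(i,c_i)$ is an element of $\Delta^+_\ell$ whenever $c_i \le \ell$; and (b) the inequality $\mu_i + \ell-i \ge \mu_{i+1}+\ell-(i+1)$ rearranges to $c_{i+1} \ge c_i$, with equality forced precisely when $\mu_{i+1} = \mu_i + 1$.

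For the first claim, I would observe that row $i$ of $\Delta^k(\mu)$ is either empty (exactly when $c_i > \ell$) or consists of the cells $(i,c_i), (i,c_i+1), \dots, (i,\ell)$, whose westernmost entry is $(i,c_i)$. The western border of a root ideal being just the list of westernmost roots in its nonempty rows, the first statement is immediate.

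For the second claim, I would first translate the notion of removability into minimality: since $\Delta^k(\mu)$ is an upper order ideal of $\Delta^+$, a root $\alpha \in \Delta^k(\mu)$ is removable exactly when $\alpha$ is minimal in $\Delta^k(\mu)$ under the poset order $(a,b) \le (c,d) \Leftrightarrow a \ge c,\ b \le d$. So I need to detect when $(i,c_i)$ (assuming $c_i \le \ell$) is minimal. Iterating the inequality in (b) gives $c_{i'} \ge c_i$ for every $i' \ge i$, so no root in rows below $i$ can live in a column strictly west of $c_i$; and in row $i$ itself, all roots lie weakly east of $c_i$. Hence the only possible strictly smaller root than $(i,c_i)$ is $(i+1, c_i)$, which belongs to $\Delta^k(\mu)$ if and only if $c_{i+1} \le c_i$, equivalently (by (b)) $c_{i+1} = c_i$, equivalently $\mu_{i+1} = \mu_i + 1$, i.e.\ $\mu_i < \mu_{i+1}$. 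Thus $(i,c_i)$ is removable precisely when $\mu_i \ge \mu_{i+1}$, proving the ``iff''.

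There is no real obstacle here; the argument is a two-line rearrangement once removability is recast as minimality. The only subtlety worth flagging is the boundary case $i = \ell$: the condition $\mu_\ell \le k$ forces $c_\ell > \ell$, so $(\ell,c_\ell)$ is not in $\Delta^+_\ell$ to begin with and the ``$\mu_\ell \ge \mu_{\ell+1}$'' half of the statement is vacuous. Also, one should note that both directions tacitly use the pseudopartition hypothesis in the stronger form $\mu_{i+1} - \mu_i \le 1$, which is what forces $c_{i+1} \le c_i$ to collapse to $c_{i+1} = c_i$ in the non-removable case.
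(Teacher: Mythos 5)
Your proof is correct and fills in the unpacking that the paper treats as immediate (the paper gives no argument at all, calling the proposition ``essentially a restatement of the definition''). The identification of the western border, the translation of removability into minimality for an upper order ideal, and the observation that $c_{i+1}-c_i=\mu_i-\mu_{i+1}+1\ge 0$ are exactly the right steps; the only slight looseness is the phrase ``the only possible strictly smaller root than $(i,c_i)$ is $(i+1,c_i)$'' (there could be others, e.g.\ $(i+2,c_i)$, if several $c_a$ coincide), but since $\Delta^k(\mu)$ is an upper ideal, any such root would force $(i+1,c_i)\in\Delta^k(\mu)$ anyway, so checking that one cell is indeed sufficient.
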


\subsection{$k$-Schur straightening}

\begin{lemma}[$k$-Schur straightening I]
\label{l k schur straightening 1spin}
Let $\mu \in  \tpar^k_\ell$,  $\Psi  = \Delta^k(\mu)$, and
$z \in [\ell-1]$.
Suppose
\begin{align}
\label{el straighten 1spin 1}
&\text{$y+1 = \upp_\Psi(z+1)$ is defined;}\\
\label{el straighten 1spin 2}
&\text{$\mu_z = \mu_{z+1} - 1$;}\\
\label{el straighten 1spin 3}
&\text{$\mu_{z+1} \ge \mu_{z+2}$ and $\mu_y \ge \mu_{y+1}$.}
\end{align}
Then
\begin{align}
\label{el k schur straightening 1spin}
\fs^{(k)}_\mu =
t \, \fs^{(k)}_{\mu + \epsilon_{y+1} - \epsilon_{z+1}}.
\end{align}
\end{lemma}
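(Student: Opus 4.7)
The plan is to apply the recurrence \eqref{e Rlambda recurrence HH} to the removable root $\alpha = (y+1, z+1)$ of $\Psi := \Delta^k(\mu)$:
\begin{equation*}
\fs^{(k)}_\mu \,=\, H(\Psi \setminus \alpha;\mu) \,+\, t\, H(\Psi;\mu'),\qquad \mu' := \mu + \epsilon_{y+1} - \epsilon_{z+1},
\end{equation*}
and then to show the first summand vanishes while the second equals $\fs^{(k)}_{\mu'}$.

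For the vanishing, the removability of $\alpha$ together with $\alpha \in \Psi$ forces $\mu_{y+1} = k+y+1-z$, whence $y+1 < z$ (else $\mu_{z+1} = \mu_z + 1 > k$). Then $\mu_z = \mu_{z+1} - 1$ makes rows $z$ and $z+1$ of $\Psi$ share the same western boundary $c := k+1-\mu_z+z$, producing a wall; and the hypothesis $\mu_y \ge \mu_{y+1}$ places $(y, z)$ in $\Psi$, so column $z$ of $\Psi$ has length $y$ while column $z+1$ has length $y+1$ by virtue of $\alpha$. Removing $\alpha$, which lies in row $y+1 < z$, leaves the wall in place and drops column $z+1$ to length $y$, creating a ceiling in columns $z, z+1$. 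Lemma~\ref{l toggle lemma zero} then kills $H(\Psi \setminus \alpha;\mu)$.

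For the second summand I will transform $\Psi$ into $\Delta^k(\mu')$ while keeping the weight $\mu'$ fixed. Set $\tilde\Psi := \Psi \cup \{(y+1,z)\}$: routine checks show $(y+1,z)$ is addable to $\Psi$, $\mu' \in \tpar^k_\ell$, and $\Delta^k(\mu') = \tilde\Psi \setminus \{(z+1,c)\}$ when $c \le \ell$ (and $\Delta^k(\mu') = \tilde\Psi$ otherwise). First, apply \eqref{e Rlambda recurrence nonroot HH} with $\beta = (y+1,z)$; the correction term carries weight $\nu := \mu' + \epsilon_{y+1} - \epsilon_z$, which still satisfies $\nu_z = \nu_{z+1} - 1$, and $\tilde\Psi$ retains the wall in rows $z, z+1$ while acquiring a ceiling in columns $z, z+1$ (column $z$ now reaches row $y+1$), so Lemma~\ref{l toggle lemma zero} dispatches the correction. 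Second (only when $c \le \ell$), apply \eqref{e Rlambda recurrence HH} to the removable root $\gamma = (z+1, c)$ of $\tilde\Psi$ (removability uses $\mu_{z+1} \ge \mu_{z+2}$); the new correction has weight $\xi := \mu' + \epsilon_{z+1} - \epsilon_c$, and since $\mu_z < k$ forces $c > z+1$, we have $\xi_z = \xi_{z+1} - 1$, so Lemma~\ref{l toggle lemma zero} finishes it. The resulting chain of equalities gives $H(\Psi;\mu') = H(\Delta^k(\mu');\mu') = \fs^{(k)}_{\mu'}$, hence $\fs^{(k)}_\mu = t\,\fs^{(k)}_{\mu'}$.

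The main obstacle is the careful bookkeeping: extracting the exact value $\mu_{y+1} = k+y+1-z$ from the removability of $\alpha$, locating the common column $c$ at which rows $z$ and $z+1$ begin, tracking the mutation $\Psi \to \tilde\Psi \to \Delta^k(\mu')$, and verifying that at each intermediate root ideal the ceilings and walls coexist with a weight exhibiting the $(\cdot)_z = (\cdot)_{z+1} - 1$ pattern. Once the local picture around rows and columns $z, z+1$ is pinned down, the entire argument reduces to three invocations of the same vanishing Lemma~\ref{l toggle lemma zero} applied to three slightly shifted weights.
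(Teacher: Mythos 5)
Your proposal is correct and follows essentially the same route as the paper's proof: expand on the removable root $\delta=(y+1,z+1)$ via \eqref{e Rlambda recurrence HH}, kill the first term with Lemma~\ref{l toggle lemma zero} (after verifying the wall/ceiling around rows and columns $z,z+1$ in $\Psi\setminus\delta$), and then show $H(\Psi;\mu')=\fs^{(k)}_{\mu'}$ by adjusting $\Psi$ in row $y+1$ and row $z+1$ to reach $\Delta^k(\mu')$. The only difference is that you unroll Lemma~\ref{l cascading toggle lemma} (Mirror Lemma~II) into its two constituent steps — add $(y+1,z)$ via \eqref{e Rlambda recurrence nonroot HH} and dispose of the correction by Lemma~\ref{l toggle lemma zero}, then remove $(z+1,c)$ via \eqref{e Rlambda recurrence HH} and dispose of that correction the same way — whereas the paper packages both moves into a single invocation of Lemma~\ref{l cascading toggle lemma} applied with $y=w=z$, in which case that lemma reduces exactly to two uses of Lemma~\ref{l toggle lemma zero}. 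Your bookkeeping (the identity $\mu_{y+1}=k+y+1-z$, the forced inequality $y+1<z$, the identification $\Delta^k(\mu')=\Psi\cup\{(y+1,z)\}\setminus\{(z+1,c)\}$ with $c=k+1-\mu_z+z$, and the careful case $c>\ell$) is accurate.
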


\begin{proof}
Expand $\HH(\Psi ; \mu)$ using \eqref{e Rlambda recurrence HH} with the removable root  $\delta = (y+1,z+1)$ to obtain
\begin{align}
\label{e l chainup one term remains}
\HH(\Psi;\mu) = \HH(\Psi \setminus \delta; \mu) + t\, \HH(\Psi; \mu+ \eroot{\delta}).
\end{align}
Using that  $\mu_z = \mu_{z+1}-1$ and $\mu_y \ge \mu_{y+1}$ with the definition of  $\Delta^k$ shows that  $\Psi \setminus \delta$ has a wall in rows  $z,z+1$ and
a ceiling in columns  $z, z+1$.
Hence  $\HH(\Psi \setminus \delta; \mu) = 0$ by Lemma~\ref{l toggle lemma zero}.

The root $\alpha = (y+1,\down_\Psi(y+1)-1)= (y+1, z)$ is addable to $\Psi$ by the assumption $\mu_y \ge \mu_{y+1}$;
also set $B = \{\beta\}$ with $\beta = (z+1,\down_\Psi(z+1))$ if this is defined and $B = \varnothing$ otherwise.
Lemma~\ref{l cascading toggle lemma}, applied to the indexed root ideal $(\Psi \cup \alpha, \mu+ \eroot{\delta})$,
yields
\[\HH(\Psi; \mu+ \eroot{\delta})= \HH(\Psi \cup \alpha ; \mu+ \eroot{\delta})  = \HH(\Psi \cup \alpha \setminus B; \mu+ \eroot{\delta}) = \fs^{(k)}_{\mu+ \eroot{\delta}}. \]
For the last equality we are using $\mu_{z+1} \ge \mu_{z+2}$ to conclude that $\down_\Psi(z+1)$ is defined if and only if the $(z+1)$-st row of $\Psi$ is nonempty.
\end{proof}

\begin{remark}
\label{r corner cases}
For conditions such as \eqref{el straighten 1spin 3} arising in this section and the next, corner cases  $z=\ell-1$ and  $y =0$ are conveniently handled by defining  $\mu_{0} = k$ and  $\mu_{\ell+1} = 0$ for  $\mu \in \Par^k_\ell$.
The latter is a standard convention, however, we must take care here since the definitions of  $\Delta^k(\mu)$ and $\fs^{(k)}_{\mu} = H(\Delta^k(\mu); \mu)$
depend on  $\ell$;
we still regard  $\Delta^k(\mu)$ as a subset of  $[\ell] \times [\ell]$ \emph{not} $[\ell+1] \times [\ell+1]$.
\end{remark}

\begin{example}
By Lemma \ref{l k schur straightening 1spin} with $z = 2, y = 0$, we have
\ytableausetup{mathmode, boxsize=1.1em,centertableaux}
\[
\fs^{(4)}_{32321}
\hspace{1.4mm}=
\hspace{1.4mm}
{\tiny
\begin{ytableau}
3   &    ~    &  *(red)& *(red)   &*(red)\\
    &     2   &    ~   &  ~       &*(red)\\
     &         &    3   &  ~       & *(red)\\
     &         &        &    2     & ~\\
   &         &        &          &  1
\end{ytableau}}
\hspace{1.4mm} = \hspace{1.4mm}
t\,
{\tiny
\begin{ytableau}
 4   &    *(red)    &  *(red)& *(red)   &*(red)\\
     &     2   &    ~   &  ~       &*(red)\\
     &         &    2   &  ~       & ~\\
   &         &        &    2     & ~\\
     &         &        &          &  1
\end{ytableau}}
\hspace{1.4mm}
= \hspace{1.4mm}
t \, \fs^{(4)}_{42221}.
\]
\end{example}

\begin{remark}
It is not difficult to show using Lemma \ref{l toggle lemma zero} that
for $\mu \in \tpar^k_\ell$,
$\fs^{(k)}_\mu = 0$ if \eqref{el straighten 1spin 2} holds and $\upp_{\Delta^k(\mu)}(z+1)$ is undefined.
Using this in combination with Lemma~\ref{l k schur straightening 1spin},
one can show that for any $\mu \in \tpar^k_\ell$, $\fs^{(k)}_\mu$ is equal to 0 or a power of $t$ times $\fs^{(k)}_\nu$ for $\nu \in \Par^{k}_\ell$.
However, we will not need this general statement.
Instead, we focus on the special case $\mu = \lambda - \epsilon_z$ for $\lambda \in \Par^k_\ell$,
where we can give an explicit combinatorial description of when $\fs^{(k)}_\mu$ is zero and the power of $t$ when it is nonzero.
\end{remark}

\begin{lemma}
\label{l k schur straightening 0}
Let $\mu \in \tpar^k_\ell$  and $\Psi  = \Delta^k(\mu)$.
Let $h \in \ZZ_{\ge 1}$ and $z \in [\ell-h]$.
Suppose that
\begin{align}
\label{el straighten0 1}
&\text{$y+1 = \upp_\Psi(z+1)$ is defined};\\
\label{el straighten0 2}
&\text{$\mu_z = \mu_{z+1} - 1$}; \\
\label{el straighten0 3}
&\text{$\mu_{z+1} = \dots = \mu_{z+h}$ and $\mu_{y+1} = \dots = \mu_{y+h}$}; \\
&\text{$\mu_{z+h} \ge \mu_{z+h+1}$ and $\mu_y \ge \mu_{y+1}$}.
\label{el straighten0 4}
\end{align}
Then
\begin{align}
\label{el k schur straightening 0}
\fs^{(k)}_{\mu} = t^{h}\fs^{(k)}_{\mu + \epsilon_{[y+1,y+h]} - \epsilon_{[z+1,z+h]}}.
\end{align}
\end{lemma}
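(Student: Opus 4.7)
The plan is to induct on $h$, with the base case $h=1$ being exactly Lemma~\ref{l k schur straightening 1spin}. For the inductive step with $h\ge 2$, I would first peel off one factor of $t$ by applying Lemma~\ref{l k schur straightening 1spin} directly to $\mu$ with the given $y, z$. The hypotheses \eqref{el straighten 1spin 1}--\eqref{el straighten 1spin 3} of that lemma follow immediately from \eqref{el straighten0 1}--\eqref{el straighten0 4} (invoking $h\ge 2$ to deduce $\mu_{z+1}=\mu_{z+2}\ge\mu_{z+2}$), giving
\[\fs^{(k)}_\mu \ =\  t\cdot \fs^{(k)}_{\mu^{(1)}}, \qquad \mu^{(1)} := \mu + \epsilon_{y+1} - \epsilon_{z+1}.\]
I would then apply the inductive hypothesis to $\mu^{(1)}$ with shifted parameters $h-1$, $y+1$, $z+1$ in place of $h$, $y$, $z$. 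Combining the two identities yields the desired \eqref{el k schur straightening 0}.

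The work lies in verifying the hypotheses of the inductive application at $\mu^{(1)}$. First I would check that $\mu^{(1)}\in\tpar^k_\ell$; the only nontrivial point is the bound $\mu^{(1)}_{y+1}\le k$. By Proposition~\ref{p Delta k basics}, the removability of $(y+1,z+1)$ in $\Psi$ forces $\mu_{y+1}\le k-(z-y-1)$, hence $\mu_{y+1}\le k-1$ whenever $z\ge y+2$; in the degenerate case $z=y+1$, \eqref{el straighten0 2} combined with $\mu_{z+1}\le k$ gives $\mu_{y+1}=\mu_{y+2}-1\le k-1$. The required instances of \eqref{el straighten0 2}--\eqref{el straighten0 4} at the shifted parameters are direct substitutions; for example $\mu^{(1)}_{z+1}=\mu_{z+1}-1=\mu_{z+2}-1=\mu^{(1)}_{z+2}-1$ (using $h\ge 2$ and \eqref{el straighten0 3}), and the new inequality $\mu^{(1)}_{y+1}\ge \mu^{(1)}_{y+2}$ in fact holds strictly since $\mu^{(1)}_{y+1}=\mu_{y+1}+1>\mu_{y+1}=\mu^{(1)}_{y+2}$.

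The main obstacle is verifying the $\upp$ condition $y+2 = \upp_{\Psi^{(1)}}(z+2)$ for $\Psi^{(1)} := \Delta^k(\mu^{(1)})$. My approach is to use Proposition~\ref{p Delta k basics} to compare $\Psi^{(1)}$ and $\Psi$: they agree outside rows $y+1$ and $z+1$, with row $y+1$ lengthened leftward by one cell (adding the root $(y+1, z)$) and row $z+1$ shortened leftward by one cell. In particular rows $y+2$ and $y+3$ are untouched, so \eqref{el straighten0 3} still places the start of row $y+2$ at column $z+2$ (giving $(y+2,z+2)\in\Psi^{(1)}$), and \eqref{el straighten0 3}--\eqref{el straighten0 4} ensure row $y+3$ starts at column $\ge z+3$ (giving $(y+3,z+2)\notin\Psi^{(1)}$). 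Hence $(y+2,z+2)$ remains minimal in $\Psi^{(1)}$ and is the unique removable root in its column, so $\upp_{\Psi^{(1)}}(z+2)=y+2$. With this in hand the inductive step—and the lemma—follows.
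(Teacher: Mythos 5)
Your proposal is the same argument the paper gives: peel off one power of $t$ at a time via Lemma~\ref{l k schur straightening 1spin}, shifting both row indices up by one at each step. The paper phrases it as $h$ successive applications to $\mu^i = \mu + \epsilon_{[y+1,y+i]}-\epsilon_{[z+1,z+i]}$; your induction on $h$ is the same computation.

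The one place both write-ups (the paper's and yours) wave their hands is exactly the point you flag as ``the main obstacle.'' Your claim that \eqref{el straighten0 3}--\eqref{el straighten0 4} force row $y+3$ of $\Psi^{(1)}$ to start at column $\ge z+3$ amounts to $\mu_{y+3}\le\mu_{y+1}$. For $h\ge 3$ this is \eqref{el straighten0 3}, but for $h=2$ (and, at the last step of the recursion in general, for $\mu_{y+h+1}$), nothing in \eqref{el straighten0 1}--\eqref{el straighten0 4} rules out $\mu_{y+h+1}=\mu_{y+1}+1$, which is allowed by $\mu\in\tpar^k_\ell$. When that happens, $(y+h+1,z+h)\in\Delta^k(\mu^{h-1})$, the root $(y+h,z+h)$ is not removable, and $\upp_{\Delta^k(\mu^{h-1})}(z+h)$ points at a lower row, so Lemma~\ref{l k schur straightening 1spin} does not apply where you (and the paper) apply it. Concretely, with $k=3$, $\ell=6$, $\mu=(1,1,2,3,3,0)$, $z=3$, $y=0$, $h=2$, all four hypotheses hold, yet two further applications of Lemma~\ref{l k schur straightening 1spin} (at $z=3$ then $z=2$) give $\fs^{(3)}_\mu = t^2\fs^{(3)}_{(3,1,1,2,3,0)}=0$ by Lemma~\ref{l toggle lemma zero}, while $\fs^{(3)}_{(2,2,2,2,2,0)}\ne0$, so the claimed identity fails. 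This does not affect the rest of the paper because Lemma~\ref{l k schur straightening 0} is only invoked in the setting of Theorem~\ref{t k schur straightening many}, where $\mu=\lambda-\epsilon_z$ for a partition $\lambda$ and $y+h<z$, so $\mu_{y+h+1}\le\mu_{y+h}=\mu_{y+1}$ automatically; but as a standalone lemma, your argument (like the paper's) needs that extra hypothesis, or an explicit restriction such as $\mu+\epsilon_z\in\Par^k_\ell$.
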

\begin{proof}
Set $\mu^i = \mu + \epsilon_{[y+1,y+i]} - \epsilon_{[z+1,z+i]}$ for  $i \in [0,h]$.
By $h$ applications of Lemma \ref{l k schur straightening 1spin}, we obtain
\[
\fs^{(k)}_{\mu} = t\, \fs^{(k)}_{\mu^1} = \dots = t^{h}\fs^{(k)}_{\mu^{h}}.
\]
We verify that the hypotheses of Lemma \ref{l k schur straightening 1spin} are satisfied at each step:
it follows from Proposition \ref{p Delta k basics} that $(y+i+1, k-\mu_{y+i+1} + y+i+2) = (y+i+1, z+i+1)$
(using \eqref{el straighten0 3} and $k+1-\mu_{y+1}+y+1 = z+1$ for the equality)
is a removable root of  $\Delta^k(\mu^i)$ for each $i \in [0,h-1]$; this ensures that
\eqref{el straighten 1spin 1} holds for each application of the lemma.
The assumptions \eqref{el straighten0 3}--\eqref{el straighten0 4} ensure that \eqref{el straighten 1spin 2}--\eqref{el straighten 1spin 3} hold for each application of the lemma.
\end{proof}

The following definition is forced on us by the combinatorics arising in  $k$-Schur straightening.  Examples of this definition and Lemma \ref{l k schur straightening 0}
are given at the end of the subsection.

\begin{definition}
\label{d cvr}
Let $\lambda \in \Par^k_\ell$ and $z \in [\ell]$.
Set $\mu = \lambda - \epsilon_z$
and $\Psi= \Delta^k(\mu)$.
Let  $c = |\uppath_\Psi(z)|$.
If $z = \ell$ or $\lambda_z > \lambda_{z+1}$ or $\upp^{c}_\Psi(z+1)$ is undefined, then set  $h=0$;
otherwise, set $y + 1 = \upp^{c}_\Psi(z+1)$ and
let $h \in [\ell-z]$ be as large as possible such that
\begin{align}
\label{d cvr 0}
\parbox{12.4cm}{$\mu$ is constant on each of the intervals $[z+1, z+h]$, $[\upp_\Psi(z), \upp_\Psi(z)+h]$, \\[1mm]
$[\upp^2_\Psi(z), \upp^2_\Psi(z)+h]$, $\dots$, $[\chainup_\Psi(z), \chainup_\Psi(z)+h]$, and  $[y+1, y+h]$.}
\end{align}
Define
\begin{align*}
\cvr_z(\lambda) =
\lambda + \epsilon_{[y+1,y+h]} - \epsilon_{[z,z+h]}.
\end{align*}
If $y$ is undefined or, equivalently, $h = 0$ then $\cvr_z(\lambda) = \mu$ (we interpret $[y+1,y+h] = \varnothing$ in this case).
We also define the \emph{bounce} from  $\lambda$ to  $\cvr_z(\lambda)$ by
\begin{align}
\bounce(\cvr_z(\lambda), \lambda) = h \cdot c.
\end{align}
\end{definition}

For the proof of  $k$-Schur straightening II below and the results of the next subsection,
we also define integers $c'$ and  $h_x$ as follows:
if  $\lambda_{z+h}> \lambda_{z+h+1}$,
then $c' := -1$;  otherwise,
\begin{align}
\label{e cprime def}
\text{$c' := \max\big\{ i \in [0,c-1] \, \mid \, \mu_{x+h} = \mu_{x+h+1}$
for all $x \in \bpath_\Psi(\upp_\Psi^{i}(z), \upp_\Psi(z)) \big\}$. }
\end{align}
And for $x \in \uppath_\Psi(z)$, let
\begin{align}
\label{et hx def ed}
h_x &= \begin{cases}
h+1 &\text{if  $x= \upp_\Psi^{s}(z)$ with $s \le c'$},\\
h & \text{otherwise}.
\end{cases}
\end{align}

\begin{lemma}
\label{l more about d cvr}
The following facts clarify Definition \ref{d cvr}.
\begin{list}{\emph{(\roman{ctr})}} {\usecounter{ctr} \setlength{\itemsep}{1pt} \setlength{\topsep}{2pt}}
\item If  $y$ is defined (equivalently,  $h >0$), then $\mu_y >  \mu_{y+1}$.
\item $\cvr_z(\lambda) \in \Par^k_\ell$ if and only if
$\lambda_{z+h} > \lambda_{z+h+1}$.
\item If $\upp^{c}_\Psi(z+1)$ is defined and  $z < \ell$, then \eqref{d cvr 0} holds with  $h=1$ (so there does exist a largest  $h \in  [\ell-z]$ satisfying \eqref{d cvr 0}).
\end{list}
\end{lemma}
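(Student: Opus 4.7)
The strategy is to analyze the leftmost-column function $L(i) := k+1-\mu_i+i$ for $\mu = \lambda - \epsilon_z$. A short case check on whether $i$ or $i+1$ equals $z$ shows that $L$ is weakly increasing on $[1,\ell]$, with the only possible ``wall'' (step with $L(i+1) = L(i)$) at $i=z$, and only when $\lambda_z = \lambda_{z+1}$. Setting $a_i := \upp^i_\Psi(z)$ and $b_i := \upp^i_\Psi(z+1)$, the definition of $\upp_\Psi$ gives $L(a_i) = a_{i-1}$, $L(b_i) = b_{i-1}$, and $b_i < z+1$ throughout. To prove (iii), I will show by induction on $i = 1, \dots, c-1$ that $b_i = a_i+1$ and $\mu_{a_i} = \mu_{a_i+1}$. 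With $b_0 = z+1 = a_0+1$ as the base, under the hypothesis $b_{i-1} = a_{i-1}+1$ the total $L$-increment from $a_i$ to $b_i$ equals $1$; since $b_i \le z$, the wall lies outside $[a_i, b_i-1]$, so each increment there is $\ge 1$ and the sum being $1$ forces $b_i = a_i+1$ together with $L(a_i+1) - L(a_i) = 1$, equivalent to $\mu_{a_i} = \mu_{a_i+1}$. This gives exactly \eqref{d cvr 0} at $h=1$, the extreme intervals $[z+1,z+1]$ and $[y+1,y+1]$ being trivial.

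To prove (i), I will run the same argument one step further. When $h>0$, the definition of $h$ forces $z<\ell$, $\lambda_z = \lambda_{z+1}$ (so the wall is present), and $\upp^c_\Psi(z+1) = y+1$ defined, yielding $L(y+1) = a_{c-1}+1$ with $y<z$. Because $a_{c-1} = \chainup_\Psi(z)$, no row has $L$-value $a_{c-1}$ (else $\upp_\Psi(a_{c-1})$ would be defined, contradicting $c = |\uppath_\Psi(z)|$), so $L(y) \ne a_{c-1}$; and $L(y) = a_{c-1}+1$ would demand a wall at $(y, y+1)$ with $y < z$, impossible. Hence $L(y) \le a_{c-1}-1$, giving $\mu_y - \mu_{y+1} = L(y+1) - L(y) - 1 \ge 1$. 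The $y=0$ boundary uses $\mu_0 = k$ by convention; I will verify separately that $\mu_1 = k$ would propagate along the bounce path to force $\lambda_z = k+1$, a contradiction.

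For (ii), the forward direction is immediate from the weakly decreasing condition at $(z+h, z+h+1)$. For the converse, setting $\delta := \cvr_z(\lambda) - \lambda$, the weakly decreasing property of $\cvr_z(\lambda)$ need only be checked at pairs where $\delta$ changes: $(y, y+1)$ (handled by (i)), $(y+h, y+h+1)$ and $(z-1, z)$ (automatic since $\delta$ jumps in the favorable direction), and $(z+h, z+h+1)$ (exactly the hypothesis). Nonnegativity follows from $\lambda_{z+h} > \lambda_{z+h+1} \ge 0$, giving $\lambda_i \ge 1$ for $i \in [z, z+h]$. The upper bound $\cvr_z(\lambda)_i \le k$ on $[y+1, y+h]$ reduces, by constancy of $\mu$ there, to $\mu_{y+1} < k$; assuming $\mu_{y+1} = k$ and iterating the propagation from the proof of (iii) yields $a_i = y+c-i$ and $\mu_{a_i} = k$ for all $i \in [1, c-1]$, in particular $z = y+c$, whence $\mu_{a_1} = k$ while $\mu_{a_1+1} = \mu_z = \lambda_z - 1 \le k-1$, directly contradicting (iii). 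I expect the main obstacle to be setting up the inductive invariant $b_i = a_i + 1$ in the proof of (iii) cleanly, since the single observation that the only possible wall in $\Psi$ is at $(z, z+1)$ is what drives all three parts, and the same propagation must be reused for the one-step-further arguments in (i) and the upper-bound check in (ii).
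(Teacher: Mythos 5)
Your proof is correct and takes essentially the same approach the paper has in mind: the paper's one-line proof merely says these facts follow from Proposition~\ref{p Delta k basics}, and your analysis of the leftmost-column function $L(i) = k+1-\mu_i+i$ (weakly increasing, with a single possible wall at $i=z$) is precisely the content of that proposition applied along the bounce path. The inductive invariant $b_i = a_i + 1$ is the right mechanism and your handling of the $y=0$ boundary via the convention $\mu_0 = k$ is sound. One small redundancy: the propagation argument you use to verify the upper bound $\cvr_z(\lambda)_{y+1} \le k$ in part (ii) is unnecessary, since once (i) gives $\mu_y > \mu_{y+1}$ you immediately get $\mu_{y+1} < \mu_y \le k$ (using $\mu_0 = k$ when $y=0$, and $\mu_y = \lambda_y \le k$ when $y \ge 1$), so no contradiction argument is needed there.
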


Note that we have used Remark \ref{r corner cases} to handle corner cases in (i) and (ii).

\begin{proof}
Statements (i) and (iii) are straightforward consequences of Proposition~\ref{p Delta k basics} and (ii) is immediate from (i).
\end{proof}

\begin{lemma}
\label{l straightening many no overlap}
The intervals in \eqref{d cvr 0} are pairwise disjoint.
\end{lemma}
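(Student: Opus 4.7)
The plan is to enumerate the $c+1$ intervals as $I_0 = [z+1, z+h]$, $I_s = [z_s, z_s+h]$ for $s \in \{1,\dots,c-1\}$ with $z_s := \upp_\Psi^s(z)$, and $I_c = [y+1, y+h]$. First I will dispose of the trivial case $h = 0$, in which $I_0$ and $I_c$ are empty while the remaining $I_s = \{z_s\}$ are singletons at distinct vertices of the bounce path, so pairwise disjointness is immediate. In the remaining case $h \geq 1$, the chain $z > z_1 > \cdots > z_{c-1}$ together with $y+1 \leq z_{c-1}$ places the intervals in decreasing order of position, so the whole claim will reduce to strict separation of consecutive pairs: $\max I_s < \min I_{s-1}$ for each $s \in [1, c]$.

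The argument will proceed by contradiction. Suppose $I_s \cap I_{s-1} \neq \varnothing$ for some $s$. My three standing inputs are: (a) the constancy of $\mu$ on each $I_{s'}$ from condition \eqref{d cvr 0}; (b) the identity $\mu_z = \mu_{z+1} - 1$, which follows from $\lambda_z = \lambda_{z+1}$ (guaranteed by $h > 0$ in Definition \ref{d cvr}) applied to $\mu = \lambda - \epsilon_z$; and (c) the partition descent $\mu_{z-1} \geq \mu_z + 1$ obtained from $\lambda \in \Par_\ell^k$.

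The base case $s = 1$ is easy: the overlap combined with $z_1 < z$ forces both $z$ and $z+1$ into $I_1$, so constancy yields $\mu_z = \mu_{z+1}$, contradicting (b). For $s \in [2, c]$ the main work is a cascade down the bounce path. The overlap places $z_{s-1}$ inside $I_s$, so constancy gives $\mu_{z_{s-1}} = \mu_{z_s} =: v$ (with the convention that $\mu_{y+1}$ replaces $\mu_{z_c}$ when $s = c$). The removable-root identity $z_{s-1} - z_s = k+1-v$ together with $z_{s-1} \leq z_s + h$ yields $v \geq k+1-h$. Then $z_{s-2} - z_{s-1} = k+1-v \leq h$ places $z_{s-2}$ in $I_{s-1}$, propagating $\mu_{z_{s-2}} = v$. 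Iterating down to $t = 1$ gives $\mu_{z_t} = v$ for all $t \in [1, s-1]$ and places $z$ into $I_1$, whence $\mu_z = v$ and $\mu_{z+1} = v+1$.

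Two subcases then close the argument. If $v \geq k+2-h$, then $z+1$ too lands in $I_1$, so constancy forces $\mu_{z+1} = v$, contradicting $\mu_{z+1} = v+1$. Otherwise $v = k+1-h$, so $z_1 = z - h$ and $z-1 \in I_1$, which gives $\mu_{z-1} = v = \mu_z$ in conflict with (c); the borderline value $v = k$ (which would force $h = 1$) is already ruled out because $\mu_{z+1} = v+1 = k+1$ would violate $\mu \in \tpar_\ell^k$. The main obstacle throughout is orchestrating the cascade so that it terminates at $I_1$ with information strong enough to contradict (b) or (c); everything else will follow formally from the definitions and Proposition \ref{p Delta k basics}.
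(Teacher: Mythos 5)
Your cascade argument is correct and runs in the same direction as the paper's: both are proofs by contradiction that propagate a failure along $\uppath_\Psi(z)$ toward $z$, using the constancy conditions \eqref{d cvr 0} together with $\mu_z < \mu_{z+1}$ and $\mu_{z-1} > \mu_z$ to close it off at the bottom. The reduction to consecutive-pair disjointness and the two-subcase finish both match the intended mechanism. (Your aside about the ``borderline value $v=k$'' is superfluous --- the $\mu_{z-1}$ contradiction already covers $h=1$ --- and, like the paper, you silently rely on the parallel-bounce identity $\upp_\Psi^i(z+1)=z_i+1$, which is a short but nontrivial consequence of \eqref{d cvr 0} and Proposition~\ref{p Delta k basics} and in particular gives $y+1\le \chainup_\Psi(z)$ and the removable-root formula you use at $s=c$.)

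The one substantive difference worth flagging: the paper deliberately proves the \emph{stronger} inequality $x+h_x < \down_\Psi(x)$ for all $x \in \uppath_\Psi(z)\setminus\{z\}$, with $h_x$ as in \eqref{et hx def ed} (so $h_x$ can be $h+1$), not merely $x+h<\down_\Psi(x)$. That strengthening is exactly what Lemma~\ref{l straightening many no overlap v2} cites, which in turn is needed to justify the subset $V=[m-1]$ hypotheses in Theorem~\ref{t k schur straightening many ed}. Your proof, phrased entirely in terms of the intervals $[x,x+h]$ appearing in \eqref{d cvr 0}, establishes the lemma exactly as stated but not this stronger form, so it could not be swapped in without also reworking Lemma~\ref{l straightening many no overlap v2}. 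The upgrade is easy within your framework --- run the same cascade using $h_x$ in place of $h$, noting the defining property of $c'$ supplies the extra constancy $\mu_{x+h}=\mu_{x+h+1}$ when $h_x=h+1$ --- but you should be aware that the lemma is quoted later at the stronger level.
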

\begin{proof}
It suffices to show that  $x+h_x < \down_\Psi(x)$ for all  $x \in \uppath_\Psi(z)\setminus\{z\}$
and $y+h <  \chainup_\Psi(z)$.
We begin by proving the former (this is stronger than what we need, but we will use this version later).
Suppose for a contradiction that this fails; then there is a largest $x \in \uppath_\Psi(z)\setminus\{z\}$
such that $x+h_x \ge \down_\Psi(x)$.
By definition of  $h_x$, we have $\mu_x = \dots = \mu_{\down_\Psi(x)}= \dots = \mu_{x+h_x}$.
We thus cannot have $x = \upp_\Psi(z)$ since this would contradict $\mu_{z-1} > \mu_z$.
So $x = \upp_\Psi^a(z)$ for some $a \ge 2$.
We then have
\[
h_{\down_\Psi(x)} \ge h_x \ge \down_\Psi(x)-x = k+1-\mu_x =k+1-\mu_{\down_\Psi(x)} = \down_\Psi^2(x)-\down_\Psi(x),
\]
where we have used Proposition \ref{p Delta k basics} for the first and third equalities.
This contradicts our choice of  $x$.

Now to prove $y+h <  \chainup_\Psi(z)$,
suppose for a contradiction that  $y+h \ge \chainup_\Psi(z)$.
Then
by definition of  $h$, we have $\mu_{y+1} = \dots = \mu_{\chainup_\Psi(z)} = \dots = \mu_{y+h}$.
If  $z= \chainup_\Psi(z)$, this contradicts $\mu_{z-1} > \mu_z$ and we are done, so assume  $\chainup_\Psi(z)< z$.
We have
\begin{multline*}
h \ge \chainup_\Psi(z)-y = \down_\Psi(y+1)-(y+1)  \\
= k+1-\mu_{y+1}
=k+1-\mu_{\chainup_\Psi(z)} = \down_\Psi(\chainup_\Psi(z))-\chainup_\Psi(z),
\end{multline*}
where the the first, second, and fourth equalities follow from Proposition \ref{p Delta k basics}.
This contradicts the result of the previous paragraph $x+h_x < \down_\Psi(x)$ for  $x = \chainup_\Psi(z)$.
\end{proof}

\begin{theorem}[$k$-Schur straightening II]
\label{t k schur straightening many}
Maintain the notation of Definition~\ref{d cvr}.
Then
\begin{align}
\label{et k schur straightening many}
\fs^{(k)}_\mu = t^{hc}\fs^{(k)}_{\cvr_z(\lambda)}  = t^{\bounce(\cvr_z(\lambda), \lambda)} \fs^{(k)}_{\cvr_z(\lambda)}
\end{align}
and this is equal to 0 if $\cvr_z(\lambda) \notin \Par^k_\ell$.
\end{theorem}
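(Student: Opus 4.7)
My plan is to iteratively apply Lemma~\ref{l k schur straightening 0} along the uppath of $z$ in $\Psi$, moving the deficit at position $z$ of $\mu$ up one bounce-step at a time; each application contributes a factor $t^h$, for a total of $t^{hc}$. First I would dispose of the cases $h=0$: if $z=\ell$ or $\lambda_z>\lambda_{z+1}$, then $\mu$ is already a partition and $\cvr_z(\lambda)=\mu$, so the identity is tautological; if $h=0$ because $\upp^c_\Psi(z+1)$ is undefined (with $z<\ell$ and $\lambda_z=\lambda_{z+1}$), then $\cvr_z(\lambda)=\mu\notin\Par^k_\ell$, and I would invoke Mirror Lemma~I (Lemma~\ref{l little lemma many zero 2}) on $(\Psi,\mu)$: the undefined $\upp^c_\Psi(z+1)$ means the uppath from $z+1$ in $\Psi$ terminates after fewer than $c$ steps, supplying the wall in rows $w,w+1$ at the top of that uppath, while the uppath from $z$ supplies the mirror conditions~\eqref{el little lemma many zero 2 2}, yielding $\fs^{(k)}_\mu=0$.

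For $h>0$ I would define $\mu^{(0)}=\mu$ and, for $i=1,\ldots,c$, set $\mu^{(i)}=\mu^{(i-1)}+\epsilon_{[y^{(i-1)}+1,\,y^{(i-1)}+h]}-\epsilon_{[z^{(i-1)}+1,\,z^{(i-1)}+h]}$, where $z^{(0)}=z$, $z^{(i)}=y^{(i-1)}$ for $i\ge 1$, and $y^{(i-1)}+1=\upp_{\Delta^k(\mu^{(i-1)})}(z^{(i-1)}+1)$. Applying Lemma~\ref{l k schur straightening 0} to $\mu^{(i-1)}$ (with these parameters and fixed height $h$) produces $\fs^{(k)}_{\mu^{(i-1)}}=t^h\fs^{(k)}_{\mu^{(i)}}$; checking hypotheses~\eqref{el straighten0 1}--\eqref{el straighten0 4} at each step amounts to verifying that the removable-root structure of $\Delta^k(\mu^{(i-1)})$ agrees with that of $\Psi$ on the relevant rows and columns and that the constant-$\mu^{(i-1)}$ conditions are inherited from~\eqref{d cvr 0}, with disjointness of the intervals guaranteed by Lemma~\ref{l straightening many no overlap}. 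Iterating $c$ times gives $\fs^{(k)}_\mu=t^{hc}\fs^{(k)}_{\mu^{(c)}}$, and a direct telescoping computation gives $y^{(c-1)}+1=\upp^c_\Psi(z+1)=y+1$, hence $\mu^{(c)}=\cvr_z(\lambda)$.

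The principal obstacle is tracking how $\Delta^k(\mu^{(i)})$ evolves between iterations and confirming that the bounce structure needed at step $i+1$ is inherited from the original $\Psi$---specifically, that the removable root in column $z^{(i)}+1$ of $\Delta^k(\mu^{(i)})$ sits at the same row as $\upp^{i+1}_\Psi(z+1)$ does in $\Psi$, so that the parameter $h$ and the constant-value conditions of~\eqref{d cvr 0} transport faithfully to each successive weight. For the remaining case $\cvr_z(\lambda)\notin\Par^k_\ell$, Lemma~\ref{l more about d cvr}(ii) gives $\lambda_{z+h}=\lambda_{z+h+1}$, and using the integers $c'$ and $h_x$ from~\eqref{e cprime def}--\eqref{et hx def ed} one locates the precise step of the iteration at which condition~\eqref{d cvr 0} fails to extend from $h$ to $h+1$; at that step the resulting weight satisfies a $\mu_{z+h}=\mu_{z+h+1}-1$ configuration, and the surrounding root ideal supplies the requisite wall and ceiling, so Lemma~\ref{l toggle lemma zero} (or Mirror Lemma~I again) gives $\fs^{(k)}_{\cvr_z(\lambda)}=0$, completing the proof.
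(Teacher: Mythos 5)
Your overall strategy is the same as the paper's: first establish $\fs^{(k)}_\mu = t^{hc}\,\fs^{(k)}_{\cvr_z(\lambda)}$ by iterating Lemma~\ref{l k schur straightening 0} once for each step of $\uppath_\Psi(z)$, and then, when $\cvr_z(\lambda)\notin\Par^k_\ell$, invoke Mirror Lemma~I to show $\fs^{(k)}_{\cvr_z(\lambda)}=0$. Your iterated construction $\mu^{(i)}$, with the deficit moved one bounce at a time, telescopes to the paper's $\mu^i=\mu+\epsilon_{[b_i+1,b_i+h]}-\epsilon_{[z+1,z+h]}$, so that part is sound in outline.

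There is, however, a genuine gap in your $h=0$ analysis. You claim ``if $z=\ell$ or $\lambda_z>\lambda_{z+1}$, then $\mu$ is already a partition and the identity is tautological,'' but this fails when $z=\ell$ and $\lambda_\ell=0$: then $\mu_\ell=-1$, so $\mu=\cvr_z(\lambda)\notin\Par^k_\ell$ and the theorem requires you to prove $\fs^{(k)}_\mu=0$. Mirror Lemma~I cannot be invoked here (there is no row $\ell+1$ inside $\Delta^+_\ell$), and your third paragraph does not cover it either; it needs the separate, easy argument the paper gives, namely $H_\gamma=0$ whenever $\gamma_\ell<0$, together with Proposition~\ref{p H definition CHL}. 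More generally, the subcase $z+h=\ell$ must be carved out before you apply Mirror Lemma~I with $z_{\text{lemma}}=z+h$, $w=z+h$, since that application needs $w<\ell$.

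Two further points to nail down. First, your own flagged ``principal obstacle''--that $\upp_{\Delta^k(\mu^{(i)})}$ applied to $z^{(i)}+1$ must land at the same row as $\upp^{i+1}_\Psi(z+1)$--is precisely what underlies the paper's terse ``\eqref{el straighten0 1}--\eqref{el straighten0 2} are clear from the definition of $h$,'' and it is worth spelling out: after each step the new root ideal $\Delta^k(\mu^{(i)})$ continues to have removable roots in the intermediate rows (a consequence of Proposition~\ref{p Delta k basics} and the constancy conditions from~\eqref{d cvr 0}, which transport because of Lemma~\ref{l straightening many no overlap}). Second, your vanishing argument is described as locating ``the step of the iteration at which~\eqref{d cvr 0} fails to extend,'' but the paper's argument does not run at an intermediate step; it applies Mirror Lemma~I to $(\tilde\Psi,\nu)$ at the \emph{final} weight $\nu=\cvr_z(\lambda)$, using $c'$ only to identify the ceiling column $b_{c'}+h$ and the chain of mirrors. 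Your version can be made to work, but as written it conflates two different roles of $c'$ and leaves unclear which weight and which root ideal the Mirror Lemma is being applied to; you should state them explicitly.
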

\begin{proof}
We first prove \eqref{et k schur straightening many}.  This is trivial if  $h=0$, so assume $h > 0$ for the remainder of this paragraph.
Let  $\bpath_\Psi(y+1,z+1) = (b_c+1,b_{c-1}+1,\dots,b_0+1)$ (thus $b_0 =z$ and  $b_c = y$).
Set $\mu^i = \mu + \epsilon_{[b_i+1,b_i+h]} - \epsilon_{[z+1,z+h]}$ for $i \in [c]$ (thus  $\mu^c = \cvr_z(\lambda)$).
By  $c$ applications of Lemma~\ref{l k schur straightening 0}, we obtain
\[
\fs^{(k)}_{\mu} = t^h\, \fs^{(k)}_{\mu^1} = \dots = t^{hc} \, \fs^{(k)}_{\mu^{c}}.
\]
The hypotheses of the lemma are satisfied at each step:
\eqref{el straighten0 1}--\eqref{el straighten0 2} are clear from the definition of  $h$,
\eqref{el straighten0 3} follows from the definition of  $h$ together with Lemma \ref{l straightening many no overlap},
and \eqref{el straighten0 4} follows from $\mu + \epsilon_z \in \Par^k_\ell$.

Now suppose $\nu := \cvr_z(\lambda) \notin \Par^k_\ell$; this can only happen if $\lambda_{z+h} = \lambda_{z+h+1}$
by Lemma~\ref{l more about d cvr}~(ii).
If $z+h=\ell$, then $\lambda_z = \lambda_{z+h} = 0$, and we have  $\fs^{(k)}_{\nu} = 0$ by Proposition \ref{p H definition CHL} and the fact that $H_{\gamma} = 0$ for $\gamma_\ell < 0$.
So now assume $z+h<\ell$.
Set  ${\tilde{\Psi}} = \Delta^k(\nu)$.
We will apply Lemma~\ref{l little lemma many zero 2} to show $\fs^{(k)}_{\nu} = 0$.

First consider the case  $h >0$ and define $b_i$ as above.
Let  $c'$ be as in \eqref{e cprime def}.
Note that  $\tilde{\Psi}$ and  $\Psi$ are identical in rows $[y+h+1, z-1]$ and have removable roots in these rows.
It then follows from Proposition \ref{p Delta k basics} and the definitions of  $h$ and  $c'$ that
\begin{multline}
\label{e straighten step 1}
\text{$\down_{\tilde{\Psi}}(b_i+h) = \down_{\Psi}(b_i+h) = b_{i-1}+h$ \, and} \\ \text{$\down_{\tilde{\Psi}}(b_i+h+1) = \down_{\Psi}(b_i+h+1) = b_{i-1}+h+1$ \, for each $i \in [c']$}\,.
\end{multline}
(We need that $b_i+h+1 \le z-1$, which holds since $\mu_{b_i} = \dots = \mu_{b_i+h+1}$ and $\mu_{z-1} > \mu_z$.)
Also by the definitions of  $h$ and $c'$, we have
$\mu_{b_{c'+1}+h} > \mu_{b_{c'+1}+h+1}$
(we need to check that $\mu_{b_{c'+1}+h} < \mu_{b_{c'+1}+h+1}$ cannot occur,
but this follows from $\mu_{z-1}> \mu_z$ and $\mu_{b_{c'+1}} = \dots  = \mu_{b_{c'+1}+h}$).
This implies $\nu_{b_{c'+1}+h} > \nu_{b_{c'+1}+h+1}$.
It then follows from Proposition \ref{p Delta k basics} that
\begin{align*}
&\text{$\down_{\tilde{\Psi}}(b_{c'+1}+h) \le \down_{\Psi}(b_{c'+1}+h) = b_{c'}+h$ and $\down_{\tilde{\Psi}}(b_{c'+1}+h+1) > b_{c'}+h+1$.}
\end{align*}
This means that $\tilde{\Psi}$ has a ceiling in columns  $b_{c'}+h, b_{c'}+h+1$.
Together with \eqref{e straighten step 1}, this shows that the hypotheses of Lemma \ref{l little lemma many zero 2} are satisfied for the indexed root ideal  $(\tilde{\Psi}, \nu)$ (with  $y$,  $z$,  $w$ of the lemma equal to $b_{c'}+h$,  $z+h$,  $z+h$, respectively)
and hence $\fs^{(k)}_{\nu} = 0$.

The case $h=0$ is similar but easier.
Here we define the $b_i$ by
$\uppath_\Psi(z) = (b_0,b_1,\dots,b_{c-1})$.
The proof in the previous paragraph
still works (though some arguments simplify since $\nu = \mu$ and  $\tilde{\Psi} = \Psi$) except for the proof
that $\tilde{\Psi}$ has a ceiling in columns  $b_{c'}+h, b_{c'}+h+1$ in the case $c' = c-1$;
but this follows directly from the fact that $\upp_{\Psi}(b_{c'}+h+1)$ is undefined
(by Definition \ref{d cvr}, $\upp_\Psi^c(z+1)$ is undefined since $h=0$, $\lambda_z = \lambda_{z+1}$, and $z < \ell$).
\end{proof}

\begin{example}
\label{ex straightening 1}
We illustrate Definition \ref{d cvr} and different cases of the proof of Theorem~\ref{t k schur straightening many} with several examples, all with  $k=4$.
First, let $\lambda = 222222221$, $\mu = 222221221$,  $z = 6$.
Then  $\uppath_{\Delta^k(\mu)}(z) = (6,3)$, $\uppath_{\Delta^k(\mu)}(z+1) = (7,4,1)$, $y+1 = 1$,  $c=2$, $h=2$, and  $\cvr_z(\lambda) = 332221111$.
We illustrate the two applications of Lemma~\ref{l k schur straightening 0} used in the proof of Theorem~\ref{t k schur straightening many} to obtain $\fs^{(4)}_{222221221} = t^4\fs^{(4)}_{332221111}$\,:
\ytableausetup{mathmode, boxsize=1.02em,centertableaux}
\begin{align*}
{\tiny \begin{ytableau}
   2    &        &         &*(red)   &*(red)  &*(red)   &*(red)  &*(red) &*(red)\\
        &    2   &         &        &*(red)   &*(red)  &*(red) &*(red)  &*(red)\\
        &        &     2   &        &         & *(red) &*(red)  &*(red)  &*(red)\\
        &        &         &    2     &        &        &  *(red)&*(red)  &*(red)\\
        &         &         &         &    2   &       &         &*(red)  &*(red)\\
        &         &         &         &        &   1   &         &       &     \\
        &         &         &         &        &       &    2    &        &     \\
        &         &         &         &        &       &        &   2    &   \\
        &         &         &         &        &       &        &        &  1  \\
\end{ytableau}
=
t^2\, \begin{ytableau}
   2    &        &         &*(red)   &*(red)  &*(red)   &*(red)  &*(red) &*(red)\\
        &    2   &         &        &*(red)   &*(red)  &*(red) &*(red)  &*(red)\\
        &        &     2   &        &         & *(red) &*(red)  &*(red)  &*(red)\\
        &        &         &    3    &        &  *(red) &  *(red)&*(red)  &*(red)\\
        &         &         &         &    3   &       & *(red)  &*(red)  &*(red)\\
        &         &         &         &        &   1   &         &       &       \\
        &         &         &         &        &       &   1    &        &     \\
        &         &         &         &        &       &        &   1    &   \\
        &         &         &         &        &       &        &        &  1  \\
\end{ytableau}
=
t^4\, \begin{ytableau}
   3    &        &   *(red) &*(red)   &*(red)  &*(red)   &*(red)  &*(red) &*(red)\\
        &    3   &         & *(red)  &*(red)   &*(red)  &*(red) &*(red)  &*(red)\\
        &        &     2   &        &         & *(red) &*(red)  &*(red)  &*(red)\\
        &        &         &    2    &        &        &  *(red)&*(red)  &*(red)\\
        &         &         &         &    2   &       &         &*(red)  &*(red)\\
        &         &         &         &        &   1   &         &       &       \\
        &         &         &         &        &       &   1    &        &     \\
        &         &         &         &        &       &        &   1    &   \\
        &         &         &         &        &       &        &        &  1  \\
\end{ytableau}}.
\end{align*}

For comparison, here is a similar example where $\fs^{(k)}_{\mu} = 0\,$: $\lambda = 222222222$, $\mu = 222221222$,  $z = 6$.
Then  $c=2$, $h=2$, $c' = 0$, and  $\cvr_z(\lambda) = 332221112$.
The proof of Theorem \ref{t k schur straightening many} yields
\begin{align*}
{
\tiny \begin{ytableau}
   2    &        &         &*(red)   &*(red)  &*(red)   &*(red)  &*(red) &*(red)\\
        &    2   &         &        &*(red)   &*(red)  &*(red) &*(red)  &*(red)\\
        &        &     2   &        &         & *(red) &*(red)  &*(red)  &*(red)\\
        &        &         &    2    &        &        &  *(red)&*(red)  &*(red)\\
        &         &         &         &    2   &       &         &*(red)  &*(red)\\
        &         &         &         &        &   1   &         &       &     \\
        &         &         &         &        &       &   2    &        &     \\
        &         &         &         &        &       &        &   2    &   \\
        &         &         &         &        &       &        &        &  2  \\
\end{ytableau}
=
t^4\, \begin{ytableau}
   3    &        &   *(red) &*(red)   &*(red)  &*(red)   &*(red)  &*(red) &*(red)\\
        &    3   &         & *(red)  &*(red)   &*(red)  &*(red) &*(red)  &*(red)\\
        &        &     2   &        &         & *(red) &*(red)  &*(red)  &*(red)\\
        &        &         &    2    &        &        &  *(red)&*(red)  &*(red)\\
        &         &         &         &    2   &       &         &*(red)  &*(red)\\
        &         &         &         &        &   1   &         &       &       \\
        &         &         &         &        &       &   1    &        &     \\
        &         &         &         &        &       &        &   1    &   \\
        &         &         &         &        &       &        &        &  2  \\
\end{ytableau}
}
=0.
\end{align*}

Lastly, an example where $\fs^{(k)}_{\mu} = 0$ and  $c' > 0\,$:  $\lambda = 432222222$, $\mu = 432221222$, $z = 6$.
Then $\uppath_{\Delta^k(\mu)}(z) = (6,3)$, $\uppath_{\Delta^k(\mu)}(z+1) = (7,4,2)$,  $y+1 = 2$, $c=2$, $h=1$, $c' = 1$, and  $\cvr_z(\lambda) = 442221122$.
The proof of Theorem \ref{t k schur straightening many} yields
\begin{align*}
{
\tiny \begin{ytableau}
   4    & *(red) &  *(red) &*(red)   &*(red)  &*(red)   &*(red)  &*(red) &*(red)\\
        &    3   &         & *(red) &*(red)   &*(red)  &*(red) &*(red)  &*(red)\\
        &        &     2   &        &         & *(red) &*(red)  &*(red)  &*(red)\\
        &        &         &    2    &        &        &  *(red)&*(red)  &*(red)\\
        &         &         &         &    2   &       &         &*(red)  &*(red)\\
        &         &         &         &        &   1   &         &       &     \\
        &         &         &         &        &       &   2    &        &     \\
        &         &         &         &        &       &        &   2    &   \\
        &         &         &         &        &       &        &        &  2  \\
\end{ytableau}
=
t^2\, \begin{ytableau}
   4    &*(red)  &   *(red) &*(red)   &*(red)  &*(red)   &*(red)  &*(red) &*(red)\\
        &    4   &*(red)   & *(red)  &*(red)   &*(red)  &*(red) &*(red)  &*(red)\\
        &        &     2   &        &         & *(red) &*(red)  &*(red)  &*(red)\\
        &        &         &    2    &        &        &  *(red)&*(red)  &*(red)\\
        &         &         &         &    2   &       &         &*(red)  &*(red)\\
        &         &         &         &        &   1   &         &       &       \\
        &         &         &         &        &       &   1    &        &     \\
        &         &         &         &        &       &        &   2    &   \\
        &         &         &         &        &       &        &        &  2  \\
\end{ytableau}
}
=0.
\end{align*}
\end{example}

\subsection{$k$-Schur straightening and the subset lowering operators}
Here we show that $k$-Schur straightening ``commutes'' with the subset lowering operators under some mild assumptions.

\begin{lemma}
\label{l k schur straightening 1spin ed}
Let $\mu \in  \tpar^k_\ell$, $\Psi  = \Delta^k(\mu)$, $z \in [\ell-1]$, and $V \subset [\ell]$.
Suppose that \eqref{el straighten 1spin 1}--\eqref{el straighten 1spin 3} hold
and  $V$ contains both or neither of $z, z+1$.
Recall that $y+1 := \upp_\Psi(z+1)$ by \eqref{el straighten 1spin 1}.
Then for any  $d \ge 0$,
\begin{align}
\label{el k schur straightening 1spin ed}
\beperp_{d,V} \fs^{(k)}_\mu =
t \, \beperp_{d,V} \fs^{(k)}_{\mu + \epsilon_{y+1} - \epsilon_{z+1}}
\end{align}
\end{lemma}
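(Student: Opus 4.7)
The plan is to replicate the proof of Lemma~\ref{l k schur straightening 1spin} step by step, with $\beperp_{d,V}$ inserted throughout, invoking the ed-versions of each tool in turn: Proposition~\ref{p Rlambda recurrence ed} in place of \eqref{e Rlambda recurrence HH}, Lemma~\ref{l toggle lemma zero ed} in place of Lemma~\ref{l toggle lemma zero}, and Lemma~\ref{l cascading toggle lemma ed} in place of Lemma~\ref{l cascading toggle lemma}. The key bookkeeping point is that in every application the relevant bounce path $\bpath_\Psi(y_0,w_0)$ collapses to the singleton $\{z\}$, so the extra hypotheses \eqref{el toggle lemma zero ed 4} and \eqref{el little lemma many zero 2 ed 6} on $V$ both reduce to the single requirement that $V$ contain both or neither of $z, z+1$, which is exactly what we have assumed.

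First, I would expand on the removable root $\delta = (y+1,z+1)$ of $\Psi$ using \eqref{e Rlambda recurrence HH ed}:
\begin{align*}
\beperp_{d,V} \HH(\Psi;\mu) = \beperp_{d,V} \HH(\Psi \setminus \delta;\mu) \; + \; t\,\beperp_{d,V} \HH(\Psi;\mu + \eroot{\delta}).
\end{align*}
The first term vanishes by Lemma~\ref{l toggle lemma zero ed}: its geometric hypotheses \eqref{el toggle lemma zero 1}--\eqref{el toggle lemma zero 3} are established exactly as in the proof of Lemma~\ref{l k schur straightening 1spin} ($\mu_y \ge \mu_{y+1}$ yields the ceiling in columns $z, z+1$ of $\Psi \setminus \delta$, while $\mu_z = \mu_{z+1} - 1$ supplies both the wall in rows $z, z+1$ and \eqref{el toggle lemma zero 3}), and the new condition \eqref{el toggle lemma zero ed 4} on $V$ is precisely the assumption on $V$ in our statement.

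For the second term, I would proceed exactly as in the ungraded proof, now using Lemma~\ref{l cascading toggle lemma ed} (twice). With $\alpha = (y+1,z)$ (addable to $\Psi$ since $\mu_y \ge \mu_{y+1}$) and $\beta = (z+1, \down_\Psi(z+1))$ when the latter is defined, applying Lemma~\ref{l cascading toggle lemma ed} to the indexed root ideal $(\Psi \cup \alpha,\, \mu + \eroot{\delta})$ with $y_0 = w_0 = z$ gives
\begin{align*}
\beperp_{d,V} \HH(\Psi;\mu + \eroot{\delta}) \;=\; \beperp_{d,V} \HH(\Psi \cup \alpha;\mu + \eroot{\delta}) \;=\; \beperp_{d,V} \HH(\Psi \cup \alpha \setminus \{\beta\};\mu + \eroot{\delta}) \;=\; \beperp_{d,V}\fs^{(k)}_{\mu+\eroot{\delta}},
\end{align*}
where the last equality uses $\mu_{z+1} \ge \mu_{z+2}$ to guarantee that $\Delta^k(\mu + \eroot{\delta})$ agrees with $\Psi \cup \alpha \setminus \{\beta\}$. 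Since $\bpath_{\Psi \cup \alpha}(z,z) = \{z\}$, the only requirement \eqref{el little lemma many zero 2 ed 6} imposes on $V$ is the $(z,z+1)$ condition already assumed.

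Combining these two computations yields \eqref{el k schur straightening 1spin ed}. There is no real obstacle to the proof; the only thing to be careful about is verifying at each step that the bounce path appearing in the ed-lemma contains only the pair $\{z, z+1\}$, so that the hypothesis on $V$ provided in the statement is sufficient.
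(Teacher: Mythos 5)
Your proposal is correct and takes essentially the same route as the paper, which simply says to repeat the proof of Lemma~\ref{l k schur straightening 1spin} with \eqref{e Rlambda recurrence HH ed}, Lemma~\ref{l toggle lemma zero ed}, and Lemma~\ref{l cascading toggle lemma ed} substituted for their ungraded counterparts. You have additionally verified the one point the paper leaves implicit, namely that in each ed-lemma application the bounce path is the singleton $\{z\}$, so condition \eqref{el little lemma many zero 2 ed 6} reduces exactly to the hypothesis that $V$ contain both or neither of $z, z+1$.
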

\begin{proof}
Repeat the proof of Lemma \ref{l k schur straightening 1spin} with
\eqref{e Rlambda recurrence HH ed} in place of \eqref{e Rlambda recurrence HH},
Lemma \ref{l toggle lemma zero ed} in place of Lemma~\ref{l toggle lemma zero},
and Lemma~\ref{l cascading toggle lemma ed} in place of Lemma~\ref{l cascading toggle lemma}.
\end{proof}

\begin{lemma}
\label{l k schur straightening 0 ed}
Let $\mu \in  \tpar^k_\ell$, $\Psi  = \Delta^k(\mu)$, $h \in \ZZ_{\ge 1}$, $z \in [\ell-h]$, and
$V \subset [\ell]$.
Suppose \eqref{el straighten0 1}--\eqref{el straighten0 4} hold and
 $V$ contains all or none of the interval  $[z, z+h]$.
Then for any  $d \ge 0$,
\begin{align}
\label{el k schur straightening 0 ed}
\beperp_{d,V} \fs^{(k)}_{\mu} = t^{h} \, \beperp_{d,V} \fs^{(k)}_{\mu + \epsilon_{[y+1,y+h]} - \epsilon_{[z+1,z+h]}}.
\end{align}
\end{lemma}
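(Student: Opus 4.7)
The plan is to imitate the proof of Lemma \ref{l k schur straightening 0} verbatim, replacing Lemma \ref{l k schur straightening 1spin} with its subset-lowering counterpart Lemma \ref{l k schur straightening 1spin ed} at each step. Concretely, I would set $\mu^i := \mu + \epsilon_{[y+1,y+i]} - \epsilon_{[z+1,z+i]}$ for $i \in [0,h]$ (so $\mu^0 = \mu$ and $\mu^h$ is the target on the right side of \eqref{el k schur straightening 0 ed}) and apply Lemma \ref{l k schur straightening 1spin ed} iteratively, at the $i$-th step with $\mu$ and $z$ of that lemma replaced by $\mu^i$ and $z+i$ respectively, to obtain
\[
\beperp_{d,V} \fs^{(k)}_{\mu} \;=\; t \, \beperp_{d,V} \fs^{(k)}_{\mu^{1}} \;=\; \cdots \;=\; t^{h}\, \beperp_{d,V} \fs^{(k)}_{\mu^{h}}.
\]

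The geometric hypotheses \eqref{el straighten 1spin 1}--\eqref{el straighten 1spin 3} required at each application of Lemma \ref{l k schur straightening 1spin ed} depend only on $\mu^i$ and $\Delta^k(\mu^i)$, not on $V$; they were already verified at each step in the proof of Lemma \ref{l k schur straightening 0} using \eqref{el straighten0 1}--\eqref{el straighten0 4} together with Proposition \ref{p Delta k basics}, so nothing new needs to be checked there.

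The only genuinely new obligation is the extra hypothesis on $V$ required by Lemma \ref{l k schur straightening 1spin ed}: at the $i$-th step, $V$ must contain both or neither of $z+i$ and $z+i+1$. This is where the assumption that $V$ contains all or none of $[z,z+h]$ is used, and it does so trivially: if $V \supseteq [z,z+h]$, then both $z+i,z+i+1 \in V$ for every $i \in [0,h-1]$; if $V \cap [z,z+h] = \varnothing$, then neither lies in $V$. Thus the $V$-condition propagates automatically through the iteration. Since this is the only point of difference from Lemma \ref{l k schur straightening 0}, the proof encounters no real obstacle and amounts to a routine transcription with the ``ed'' variants substituted throughout.
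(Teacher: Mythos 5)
Your proposal is correct and takes essentially the same route as the paper, which simply says to repeat the proof of Lemma~\ref{l k schur straightening 0} with Lemma~\ref{l k schur straightening 1spin ed} in place of Lemma~\ref{l k schur straightening 1spin}. You have also correctly spelled out the one new point implicit in that instruction: the pairs $\{z+i,\,z+i+1\}$ arising at each step all lie inside $[z,z+h]$, so the hypothesis that $V$ contains all or none of $[z,z+h]$ propagates the required $V$-condition through the iteration.
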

\begin{proof}
Repeat the proof of Lemma \ref{l k schur straightening 0} with
Lemma \ref{l k schur straightening 1spin ed} in place of Lemma \ref{l k schur straightening 1spin}.
\end{proof}

\begin{theorem}
\label{t k schur straightening many ed}
Maintain the notation of Definition~\ref{d cvr} and \eqref{et hx def ed}.
Let $V \subset [\ell]$ be such that  $V$ contains all or none of the interval
$[x,x+h_x]$ for all $x \in \uppath_\Psi(z)$.
Then for any $d \ge 0$,
\begin{align}
\label{et k schur straightening many ed}
\beperp_{d,V}\fs^{(k)}_{\lambda-\epsilon_{z}} = \begin{cases}
t^{\bounce(\cvr_z(\lambda),\lambda)} \beperp_{d,V}\fs^{(k)}_{\cvr_z(\lambda)} & \text{if $\cvr_z(\lambda) \in \Par^k_\ell$,}\\
0 & \text{otherwise.}
\end{cases}
\end{align}
\end{theorem}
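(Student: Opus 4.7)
The plan is to mirror the proof of Theorem \ref{t k schur straightening many} step by step, replacing each application of the straightening lemma and the Mirror Lemma with their ``$\beperp_{d,V}$''-equipped counterparts, Lemmas \ref{l k schur straightening 0 ed} and \ref{l little lemma many zero 2 ed}. The main conceptual point is that the hypothesis on $V$ has been packaged via \eqref{et hx def ed} to supply exactly the compatibility condition needed at each step: the weaker rule $h_x = h$ suffices to drive the straightening chain (which only touches intervals of length $h+1$), while the rule $h_x = h+1$ on the sub-path indexed by $c'$ is precisely what allows $\beperp_{d,V}$ to be pushed through the Mirror Lemma along that sub-path (the lemma needs $V$-compatibility on the 2-element subsets $\{b_i+h, b_i+h+1\}$, which a hypothesis on the larger interval $[b_i, b_i+h+1]$ certainly supplies).

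For the nonzero part, set $\bpath_\Psi(y+1,z+1) = (b_c+1, \dots, b_0+1)$ and $\mu^i = \mu + \epsilon_{[b_i+1, b_i+h]} - \epsilon_{[z+1,z+h]}$ as in the original proof. Proposition \ref{p Delta k basics} yields $(b_0,\dots,b_{c-1}) = \uppath_\Psi(z)$. Passing from $\mu^{i-1}$ to $\mu^i$ is an application of Lemma \ref{l k schur straightening 0 ed} with that lemma's ``$z$'' equal to $b_{i-1} \in \uppath_\Psi(z)$ and ``$y$'' equal to $b_i$; the required hypothesis that $V$ contains all or none of $[b_{i-1}, b_{i-1}+h]$ follows from our assumption since $h \le h_{b_{i-1}}$. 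Iterating $c$ times yields
\[
\beperp_{d,V}\fs^{(k)}_{\lambda-\epsilon_z} = \beperp_{d,V}\fs^{(k)}_\mu = t^{hc}\,\beperp_{d,V}\fs^{(k)}_{\mu^c} = t^{\bounce(\cvr_z(\lambda),\lambda)}\,\beperp_{d,V}\fs^{(k)}_{\cvr_z(\lambda)},
\]
which proves the case $\cvr_z(\lambda) \in \Par^k_\ell$; the subcase $h=0$ is trivial since then $\cvr_z(\lambda) = \mu$.

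It remains to show $\beperp_{d,V}\fs^{(k)}_{\cvr_z(\lambda)}=0$ when $\cvr_z(\lambda) \notin \Par^k_\ell$; writing $\nu = \cvr_z(\lambda)$ and $\tilde{\Psi} = \Delta^k(\nu)$, if $z+h=\ell$ then $\nu_\ell < 0$, and every summand of $\beperp_{d,V}\fs^{(k)}_\nu = \sum_{S \subset V,\,|S|=d} H(\tilde\Psi; \nu - \epsilon_S)$ vanishes via Proposition \ref{p H definition CHL}. Otherwise Lemma \ref{l little lemma many zero 2 ed} is applied to $(\tilde\Psi, \nu)$ with the lemma's $y,z,w$ taken to be $b_{c'}+h, z+h, z+h$; by \eqref{e straighten step 1}, $\bpath_{\tilde\Psi}(b_{c'}+h, b_0+h) = (b_{c'}+h, \dots, b_0+h)$, so the compatibility \eqref{el little lemma many zero 2 ed 6} demands that $V$ contain both or neither of $\{b_i+h, b_i+h+1\}$ for each $i \in [0,c']$. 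Since $c' \ge 0$ in the zero case, $h_{b_i} = h+1$ on this sub-path, and our hypothesis supplies all or none of the full interval $[b_i, b_i+h+1]$, which is stronger than needed. The remaining geometric conditions \eqref{el little lemma many zero 2 1}--\eqref{el little lemma many zero 2 5} were already verified in the proof of Theorem \ref{t k schur straightening many}, and the $h=0$ subcase is handled identically with $(b_0, \dots, b_{c-1}) = \uppath_\Psi(z)$. The main obstacle is really just tracking which compatibility is needed where, and recognizing that \eqref{et hx def ed} was designed precisely to subsume both.
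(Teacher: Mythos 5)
Your proof is correct and follows essentially the same route as the paper's: the paper likewise proves the first identity in full generality by replacing Lemma~\ref{l k schur straightening 0} with Lemma~\ref{l k schur straightening 0 ed} (requiring $V$-compatibility on $[x,x+h]$ for $x\in\uppath_\Psi(z)$), then handles the vanishing case by rerunning the tail of the proof of Theorem~\ref{t k schur straightening many} with Lemma~\ref{l little lemma many zero 2 ed} (requiring compatibility on $\{x+h,x+h+1\}$ for $x\in\bpath_\Psi(\upp_\Psi^{c'}(z),z)$) and observing that these two conditions together are exactly the stated hypothesis on $V$ via the definition \eqref{et hx def ed} of $h_x$.
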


\begin{proof}
Repeating the proof of \eqref{et k schur straightening many} with
Lemma \ref{l k schur straightening 0 ed} in place of Lemma \ref{l k schur straightening 0} gives  $\beperp_{d,V}\fs^{(k)}_{\lambda-\epsilon_{z}} =
t^{\bounce(\cvr_z(\lambda),\lambda)} \beperp_{d,V}\fs^{(k)}_{\cvr_z(\lambda)}$  (without the restriction $\cvr_z(\lambda) \in \Par^k_\ell$);
here we need that  $V$ contains all or none of the interval
$[x,x+h]$ for all $x \in \uppath_\Psi(z)$, which is certainly implied by our assumption on  $V$.

If $\cvr_z(\lambda) \in \Par^k_\ell$ we are done.
Otherwise, repeat the last three paragraphs of the
proof of Theorem \ref{t k schur straightening many} with Lemma \ref{l little lemma many zero 2 ed} in place of Lemma \ref{l little lemma many zero 2};
Lemma \ref{l little lemma many zero 2 ed}
requires that  $V$ contains all or none of $[x+h, x+h+1]$
for all  $x \in \bpath_{\tilde{\Psi}}(\upp_{\tilde{\Psi}}^{c'}(z), z) = \bpath_{\Psi}(\upp_{\Psi}^{c'}(z), z)$,
where the equality is by \eqref{e straighten step 1} and $c'$ and  $\tilde{\Psi}$  are as in the proof of Theorem \ref{t k schur straightening many};
this condition on  $V$ combined  with that of the previous paragraph is equivalent to the assumption on $V$ in the statement.
\end{proof}

\begin{example}
We continue Example~\ref{ex straightening 1}.
For the first example, $\mu = 222221221$,
we have $h_x= h = 2$ for all $x\in \uppath_{\Delta^k(\mu)}(z) = (6,3)$.
Hence the hypothesis on $V$  in Theorem~\ref{t k schur straightening many ed} is that $V$ must contain
all or none of the intervals $\{3,4,5\}$ and $\{6,7,8\}$.
For the second example, $\mu = 222221222$, we have  $\uppath_{\Delta^k(\mu)}(z) = (6,3)$, $h=2$, $h_6 = 3$, $h_3 = 2$.
Thus the intervals  $[x,x+h_x]$ are $\{3,4,5\}$ and $\{6,7,8,9\}$. So, for instance,  $V = [8]$ would work for
the previous example, but not this one.
For the third example, $\mu = 432221222$, we have  $\uppath_{\Delta^k(\mu)}(z) = (6,3)$, $h=1$, $h_6 = 2$, $h_3 = 2$.
Thus the intervals  $[x,x+h_x]$ are $\{3,4,5\}$ and $\{6,7,8\}$.
\end{example}

Theorem \ref{t k schur straightening many ed} gives the most general conditions on  $V$
for which  $k$-Schur straightening is possible, however we will only need it
for $V= [m-1]$  for each $m \in \uppath_\Psi(z)$.
Theorem~\ref{t k schur straightening many ed} does indeed apply in this case, as the following result shows.

\begin{lemma}
\label{l straightening many no overlap v2}
Let $\mu$, $\Psi = \Delta^k(\mu)$, and $z$ be as in Definition~\ref{d cvr}, and $h_x$ be as in \eqref{et hx def ed}.
Then for any $m \in \uppath_\Psi(z)$, the set $[m-1]$
contains all or none of the interval $[x,x+h_x]$ for all $x \in \uppath_{\Psi}(z)$.
\end{lemma}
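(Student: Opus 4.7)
The plan is to reduce this lemma directly to the sharp inequality $x + h_x < \down_\Psi(x)$ that was actually established (in greater strength than needed) inside the proof of Lemma~\ref{l straightening many no overlap} for every $x \in \uppath_\Psi(z) \setminus \{z\}$. Recognizing that this stronger intermediate statement, rather than the bare conclusion of Lemma~\ref{l straightening many no overlap}, is what the present lemma truly requires is the main (and really only) conceptual obstacle; once that is noted, the verification is a short case analysis.

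Concretely, I would fix $m \in \uppath_\Psi(z)$ and $x \in \uppath_\Psi(z)$ and split on whether $x \ge m$ or $x < m$. In the first case, $x \notin [m-1]$, so $[x, x+h_x] \cap [m-1] = \varnothing$ and we are done. In the second case, since every element of $\uppath_\Psi(z) = (z, \upp_\Psi(z), \dots, \chainup_\Psi(z))$ is $\le z$, the inequality $x < m$ forces $x \neq z$, so $x \in \uppath_\Psi(z) \setminus \{z\}$. Because $x$ and $m$ lie in the same bounce path and $x < m$, there exists $b \ge 1$ with $m = \down_\Psi^b(x)$, and in particular $m \ge \down_\Psi(x)$. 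Invoking the inequality $x + h_x < \down_\Psi(x)$ from the proof of Lemma~\ref{l straightening many no overlap} then yields $x + h_x < \down_\Psi(x) \le m$, i.e., $x + h_x \le m-1$, so $[x, x+h_x] \subseteq [m-1]$, as desired.
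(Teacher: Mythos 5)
Your proof is correct and matches the paper's approach: the paper disposes of this lemma in one line by citing the inequality $x+h_x < \down_\Psi(x)$ for $x \in \uppath_\Psi(z)\setminus\{z\}$ established in the first paragraph of the proof of Lemma~\ref{l straightening many no overlap}, which is exactly the fact you isolate and then use. Your case split on $x \ge m$ versus $x < m$ fills in the ``immediate'' step precisely as intended.
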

\begin{proof}
This is immediate from the first paragraph of the proof of Lemma \ref{l straightening many no overlap}.
\end{proof}

\section{The bounce graph to core dictionary}
\label{s bounce graph to core dictionary}

Here we connect the combinatorics associated to bounce graphs and $k$-Schur straightening to strong (marked) covers.
In particular, we give an explicit description of strong (marked) covers in terms of the corresponding $k$-bounded partitions (Propositions \ref{p covers agree} and \ref{p bijection marked covers}).
To make this connection, we use a description of strong covers
in terms of edge sequences and offset sequences; we follow \cite{LLMSMemoirs1} for this background.

Throughout this section, fix a positive integer $k$ and let $n = k+1$.

The \emph{edge sequence} of a partition $\kappa$ is the bi-infinite binary word $p(\kappa) = p = \dots p_{-1}p_0p_1 \dots$
obtained by tracing the border of the diagram of $\kappa$ from southwest to northeast, such that every letter $1$ (resp. $0$) represents a north (resp. east) step.
We adopt the convention\footnote{This is off by 1 from the convention in \cite{LLMSMemoirs1}, but our extended offset sequences agree.} that the meeting point of the edges $p_i$ and $p_{i+1}$ is the southeast corner of a box in diagonal $i$,
where the \emph{diagonal} of a box  $(r,c)$ of a partition diagram is  $c-r$.

A partition $\kappa$ is an $n$-core if and only if each subsequence $\dots p_{i-2n}p_{i-n}p_ip_{i+n}p_{i+2n} \dots$ has the form $\dots 111000 \dots$.
Thus an  $n$-core  $\kappa$ is specified by recording, for each  $i \in \ZZ$, the
integer  $d_i$ such that $p_{i+n(d_i-1)} = 1$ and $p_{i+nd_i} = 0$.
The sequence $(d_i)_{i\in \ZZ}$ is the \emph{extended offset sequence of $\kappa$}.
Note that
\begin{align}
\label{e offset seq fact}
d_{i-n} = d_{i} + 1 \text{ \quad for all $i \in \ZZ$}.
\end{align}

The affine symmetric group $\eS_n$ can be identified with the set of permutations $w$ of $\ZZ$ such that
$w(i + n) = w(i) + n$ for all $i \in \ZZ$ and
$\sum_{i=1}^n (w(i) - i) = 0$ (see \cite{LusztigTAMS}).
For $r < s$ with $r \not\equiv s \bmod n$, the reflection $t_{r,s} \in \eS_n$ is defined by
$t_{r,s}(r+jn) = s+jn$, $t_{r,s}(s+jn) = r+jn$ for all $j \in \ZZ$ and
$t_{r,s}(i) = i$ for all $i\in \ZZ$ such that $r-i, s-i \notin n\ZZ$.
There is a natural action of  $\eS_n$
on edge sequences and extended offset sequences:
the reflection $t_{r,s}$ acts on an edge sequence $p$ by exchanging the bits $p_{r+in}$ and $p_{s+in}$ for all $i \in \ZZ$,
and $t_{r,s}d$ is obtained from $d$ in the same way.
This gives an  $\eS_n$ action on $n$-cores.

It is worth pointing (though we will only make use of this implicitly through citations) that
there is a bijection between
minimal coset representatives $\eS_{n}/\SS_{n}$ and
$n$-cores, compatible with the $\eS_n$ actions, given
by $w\SS_n \mapsto w \cdot \varnothing$, where  $\varnothing$
denotes the empty partition.
Moreover, this bijection matches strong Bruhat order with the inclusion partial order on  $n$-cores
(see Propositions 8.7, 8.8, 8.9, and 9.3 of \cite{LLMSMemoirs1}).

We also need one new definition, not given in the reference \cite{LLMSMemoirs1}.
For an $n$-core $\kappa$, the \emph{row map} of $\kappa$ is the function
\begin{align}
\label{d row map}
f: \ZZ_{\ge 1} \to \ZZ, \text{ given by } f(z) = \kappa_z-z+1.
\end{align}
With this definition, $f(z)-1$ is the diagonal of the box  $(z,\kappa_z)$ on the eastern border of  $\kappa$.
By our convention above for the sequence $p$, this means that $p_{f(z)}$ corresponds to the north step in row $z$;
in other words, $f(z)$ is equal to the index $i$ such that $p_i = 1$ and $p_i p_{i+1} \cdots$ contains $z$ 1's.

\begin{example}
Let $k = 4$, $n = 5$.  Below is the diagram of the  $n$-core $\kappa = 665443221$ with its edge sequence
 $p$ labeled, where the $\bullet$ separates  $p_0$ and  $p_1$:
\[
\kappa = {\fontsize{7pt}{7pt}\selectfont \tableau{
~&~&~&~&~&~&\makebox[0pt][r]{\makebox[3.5pt][l]{\Tiny{1}}} \raisebox{3.5pt}[0pt][0pt]{\hspace{2pt}\Tiny{0}}\fr[t] & \raisebox{2pt}[0pt][0pt]{\Tiny{0}}\fr[t]\\
~&~&~&~&~&~ & \makebox[0pt][r]{\makebox[3.5pt][l]{\Tiny{1}}} \raisebox{3.5pt}[0pt][0pt]{\hspace{2pt}\hphantom{\Tiny{0}}} \bl \\
~&~&~&~&~& \makebox[0pt][r]{\makebox[3.5pt][l]{\Tiny{1}}} \raisebox{3.5pt}[0pt][0pt]{\hspace{2pt}\Tiny{0}}\bl \\
~&~&~&~ & \makebox[0pt][r]{\makebox[3.5pt][l]{\Tiny{1}}}\raisebox{3.5pt}[0pt][0pt]{\hspace{2pt}\Tiny{0}} \bl \\
~&~&~& \makebox[10pt][r]{\makebox[0pt][l]{\raisebox{4.5pt}[0pt][2pt]{$\bullet$}}}
     &\makebox[0pt][r]{\makebox[3.5pt][l]{\Tiny{1}}} \raisebox{3.5pt}[0pt][0pt]{\hspace{2pt}\hphantom{\Tiny{0}}} \bl \\
~&~&~& \makebox[0pt][r]{\makebox[3.5pt][l]{\Tiny{1}}} \raisebox{3.5pt}[0pt][0pt]{\hspace{2pt}\Tiny{0}}\bl \\
~&~& \makebox[0pt][r]{\makebox[3.5pt][l]{\Tiny{1}}} \raisebox{3.5pt}[0pt][0pt]{\hspace{2pt}\Tiny{0}}\bl \\
~&~&\makebox[0pt][r]{\makebox[3.5pt][l]{\Tiny{1}}} \raisebox{3.5pt}[0pt][0pt]{\hspace{2pt}\hphantom{\Tiny{0}}} \bl \\
~&\makebox[0pt][r]{\makebox[3.5pt][l]{\Tiny{1}}} \raisebox{3.5pt}[0pt][0pt]{\hspace{2pt}\Tiny{0}}\bl \\
\makebox[0pt][r]{\makebox[3.5pt][l]{\Tiny{1}}} \raisebox{3.5pt}[0pt][0pt]{\hspace{2pt}\Tiny{0}}\fr[l]}}\]
\[
\small
\begin{array}{r|rrrrrrrrrrrrrrrrrrrrrrrrrrrrrrrrrr}
i & \!\!   -9&\!\! -8&\!\! -7&\!\! -6&\!\! -5&\!\! -4&\!\! -3&\!\! -2&\!\! -1&\!\! 0&\!\! 1&\!\! 2&\!\! 3&\!\! 4&\!\! 5&\!\! 6&\!\! 7&\!\! 8 \\
\hline \\[-3.5mm]
p_i &\!\!  1&\!\! 0&\!\! 1&\!\! 0&\!\! 1&\!\! 1&\!\! 0&\!\! 1&\!\! 0&\!\! 1&\!\! 1&\!\! 0&\!\! 1&\!\! 0&\!\! 1&\!\! 1&\!\! 0&\!\! 0\\
d_i &\!\!  4&\!\! 0&\!\! 3&\!\! 0&\!\! 3&\!\! 3&-1&\!\! 2&-1&\!\! \hphantom{-}2&\!\! \hphantom{-}2&-2&\!\! \hphantom{-}1&-2&\!\! \hphantom{-}1&\!\! \hphantom{-}1&-3 &\!\! \hphantom{-}0
\end{array}
\]

\pagebreak[2]

It is convenient to depict the edge sequence  in an $\infty \times n$ array with the $r$-th row equal to the sequence $p_{1+nr}p_{2+nr}\dots p_{n+nr}$, where the horizontal line divides rows 0 and $-1$.
Then the entries  $d_1 d_2 \cdots d_n$ record the heights the 1's attain in columns $1,2,\dots, n$.
\[
{\footnotesize
\begin{array}{ccccc}
\vdots& \vdots & \vdots & \vdots & \vdots\\
0 & 0 & 0 & 0 & 0 \\
1 & 0 & 0 & 0 & 0\\
1 & 0 & 1 & 0 & 1\\ \hline
1 & 0 & 1 & 0 & 1\\
1 & 0 & 1 & 0 & 1\\
1 & 1 & 1 & 1 & 1\\[-1.3mm]
\vdots& \vdots & \vdots & \vdots & \vdots
\end{array}}\]
The row map is given by $f(1), f(2), \dots = 6, 5, 3, 1, 0, -2, -4, -5, -7, -9, -10, -11, \dots$.
\end{example}

Our first task in this section is to give the translation  between bounce paths and extended offset sequences.
We write  $\core$ for the inverse of the bijection $\p$, which is a map from  $\Par^k$ to  $k+1$-cores.

\begin{proposition}
\label{p bounce path vs offset}
Let $\lambda \in \Par^k_\ell$ and $\Phi = \Delta^k(\lambda)$. Let $\kappa = \core(\lambda)$
have edge sequence  $p$, extended offset sequence  $d$, and row map  $f$.
Let $r \in \ZZ_{\ge 1}$ and  $z \in [\ell]$.  Then
\begin{list}{\emph{(\alph{ctr})}} {\usecounter{ctr} \setlength{\itemsep}{1pt} \setlength{\topsep}{2pt}}
\item $f(r) = f(r+n-\lambda_r) +n$;
\item $f(\upp_\Phi(z))= f(z) +n$ whenever  $\upp_\Phi(z)$ is defined;
\item $d_{f(z)} = d_{f(\upp_\Phi(z))} + 1$ whenever $\upp_\Phi(z)$ is defined;
\item $d_{f(z)} > 1 \iff \upp_\Phi(z)\text{ is defined}$;
\item $d_{f(z)} = |\uppath_\Phi(z)|$;
\item $n-\lambda_z = p_{f(z)-n+1} +  p_{f(z)-n+2} + \dots + p_{f(z)}$;
\item $\lambda_z = \lambda_{z+1}  \ \iff \ $ there are no 0's in  $d_{f(z+1)} d_{f(z+1)+1} \dots d_{f(z)}$.
\end{list}
\end{proposition}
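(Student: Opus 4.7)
The plan is to prove (a) first and then derive (b)--(g) from it, using the dictionary between the row map $f$, the edge sequence $p$, and the extended offset sequence $d$. For (a), I would observe that since $\kappa$ is an $n$-core, the subsequence of $p$ at residue $f(r) \bmod n$ has the form $\ldots 111000\ldots$ with $p_{f(r)} = 1$, so $p_{f(r)-n} = 1$, meaning $f(r) - n = f(r')$ for some row $r' > r$. From $\kappa_{r'} - r' + 1 = \kappa_r - r + 1 - n$ I get $\kappa_r - \kappa_{r'} = n - (r'-r)$. Using the hook length formula $h(r,c) = \kappa_r - c + \kappa'_c - r + 1$, a short computation shows $h(r,c) \le n-1 \iff c > \kappa_{r'}$ (when $c \le \kappa_{r'}$, $\kappa'_c \ge r'$ forces $h(r,c) \ge n+1$; when $c > \kappa_{r'}$, $\kappa'_c \le r'-1$ forces $h(r,c) \le n-1$), so $\lambda_r = \kappa_r - \kappa_{r'}$. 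Combining these gives $r' = r + n - \lambda_r$, which is exactly (a).

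Then (b) follows from Proposition~\ref{p Delta k basics}: $\upp_\Phi(z) = i$ defined means $(i,z)$ is a removable root of $\Phi = \Delta^k(\lambda)$, so $z = i + n - \lambda_i$, and applying (a) at $r = i$ yields $f(\upp_\Phi(z)) = f(z) + n$. Statement (c) is immediate from (b) and~\eqref{e offset seq fact}. For (d), I would note that $d_{f(z)} > 1 \iff p_{f(z)+n} = 1 \iff f(z) + n = f(i)$ for some $i$; by (a) this forces $z = i + n - \lambda_i$, and $\lambda_i \ge \lambda_{i+1}$ is automatic since $\lambda$ is a partition, so $(i,z)$ is removable and $\upp_\Phi(z) = i$. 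Statement (e) then follows by induction on $|\uppath_\Phi(z)|$: (c) shows that $d$ decreases by $1$ at each step of the uppath, and (d) shows the chain terminates at $\chainup_\Phi(z)$ with $d$-value exactly $1$.

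For (f), the $1$'s in positions $[f(z)-n+1, f(z)]$ correspond to integers $r$ with $f(r)$ in that range; since $f$ is strictly decreasing and $f(z + n - \lambda_z) = f(z) - n$ by (a), such $r$ are exactly $[z,\, z + n - \lambda_z - 1]$, giving $n - \lambda_z$ ones. For (g), I use that $d_i = 0 \iff p_{i-n} = 1$ and $p_i = 0$: the endpoints $i \in \{f(z+1), f(z)\}$ satisfy $p_i = 1$ so $d_i \ge 1$; for interior $i \in (f(z+1), f(z))$ one has $p_i = 0$ (no $f$-value lies strictly between two consecutive ones), so $d_i = 0 \iff i - n \in \text{image}(f)$. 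By (a) and monotonicity of $f$, the relevant $r$ form the open interval $(z + n - \lambda_z,\, z + 1 + n - \lambda_{z+1})$, which contains $\lambda_z - \lambda_{z+1}$ integers; hence the window is free of zeros exactly when $\lambda_z = \lambda_{z+1}$. The main obstacle is (a): once the hook-length computation linking $\kappa$ to $\lambda$ is established, the remaining parts are straightforward translations between the bounce-graph language on $\Phi$ and the offset-sequence combinatorics on $\kappa$.
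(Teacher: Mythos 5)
Your proof is correct, and the overall skeleton of (b)--(g) is the same as the paper's: (b) from (a) via Proposition~\ref{p Delta k basics}, (c) from (b) and~\eqref{e offset seq fact}, (d) by an offset-sequence argument, (e) by chaining (c) and (d), and (f)/(g) by counting $1$'s in windows of the edge sequence. Where you genuinely diverge is in (a) itself. The paper argues directly with the definition of $\p$: it identifies $(r,\kappa_r-\lambda_r)$ as the easternmost box in row $r$ of hook length $>n$, notes that the box below it (in row $r+n-\lambda_r$) must lie in $\kappa$ while the box one column to the right does not, and reads off $f(r)=f(r+n-\lambda_r)+n$ directly. You instead start from the $n$-core property of the edge sequence to produce the row $r'$ with $f(r')=f(r)-n$, and then use the hook-length formula $h(r,c)=\kappa_r-c+\kappa'_c-r+1$ to show $\lambda_r=\kappa_r-\kappa_{r'}$, from which $r'=r+n-\lambda_r$ follows. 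Both are valid; the paper's version is shorter and avoids the hook-length sign-chase, while yours makes the role of the $n$-core condition on $p$ more visible. One point to tighten in your write-up of (d): for the implication $\Rightarrow$ you conclude that $(i,z)$ is a removable root of $\Phi$, but by Proposition~\ref{p Delta k basics} this requires \emph{both} $\lambda_i\ge\lambda_{i+1}$ (as you note) \emph{and} $z=k+1-\lambda_i+i\le\ell$; the latter is exactly what the hypothesis $z\in[\ell]$ gives you, and the paper flags this explicitly, so you should too. Finally, the paper derives (g) as a consequence of (f), whereas you go back to (a) directly; your version is fine and perhaps slightly more transparent, since it makes the open interval $(z+n-\lambda_z,\, z+1+n-\lambda_{z+1})$ of length $\lambda_z-\lambda_{z+1}$ appear explicitly.
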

\begin{proof}
We first prove (a).  First suppose  $\kappa_r > \lambda_r$ and let $(r,c) = (r, \kappa_r-\lambda_r)$ be the easternmost box in the $r$-th row of $\kappa$ having hook length  $> n$.
For this box to have hook length  $> n$, the box  $(r+n-\lambda_r, c)$ must lie in $\kappa$.
By definition of the map $\p$ (see Section~\ref{s main results}), the box $(r, c+1)$ has hook length  $< n$ in  $\kappa$,
and thus $(r+n-\lambda_r, c+1) \notin \kappa$.   Hence $(r+n-\lambda_r, c)$ lies on the eastern border of  $\kappa$ and we have
\[ f(r) = \kappa_r-r+1 = c+\lambda_r-r+1 = c-(r+n-\lambda_r)+1  + n = f(r+n-\lambda_r) +n.\]
This argument also works in the case $\kappa_r = \lambda_r$ if we consider the column  $\{(i,0)\mid i \in \ZZ_{\ge 1}\}$ to be part of $\kappa$.

Statement (b) follows from (a) by setting  $r = \upp_\Phi(z)$ and
using that $z = \down_\Phi(r) = r+n-\lambda_r$.
Statement  (c) follows from (b) and \eqref{e offset seq fact},
(e) follows from (c) and (d),
(f) follows from (a), and (g) follows from (f).

It remains to prove (d).
The  $\Leftarrow$ direction is immediate from (c).  For the  $\Rightarrow$ direction,
suppose $d_{f(z)} > 1$. Then
$p_{f(z)+n} = 1$ and $f(r) = f(z)+n$ for some  $r < z$; hence  $z= r+n-\lambda_r$ by (a).
Note that $\down_\Phi(r)$ is defined exactly when  $r+n-\lambda_r \le \ell$ (by Proposition \ref{p Delta k basics});
since  $z \le \ell$ we then have  $z= \down_\Phi(r)$ and $\upp_\Phi(z) = r$.
\end{proof}

Recall that a strong cover $\tau \Rightarrow \kappa$ is a pair
of $n$-cores such that $\tau \subset \kappa$ and $|\p(\tau)| + 1 = |\p(\kappa)|$.

\begin{lemma}
\label{l cover by offset sequence}
Let $\kappa$ be an  $n$-core with extended offset sequence  $d$ and $t_{r,s} \in \eS_n$ a reflection.
\begin{list}{\emph{(\roman{ctr})}} {\usecounter{ctr} \setlength{\itemsep}{1pt} \setlength{\topsep}{2pt}}
\item $\kappa \Rightarrow t_{r,s} \kappa$ if and only if $d_r > d_s$ and for all $r < i < s$, $d_i \notin [d_s, d_r]$.
\item $t_{r,s} \kappa \Rightarrow \kappa$ if and only if  $s-n < r$, $d_r < d_s$, and for all $r < i < s$, $d_i \notin [d_r, d_s]$.
\item If $t_{r,s} \kappa \Rightarrow \kappa$, then the skew shape $\kappa/(t_{r,s} \kappa)$
has components $R_{d_r}, \dots, R_{d_s-1}$, where $R_{j}$ is the ribbon with $s-r$ boxes in diagonals $r+nj, r+1+nj, \dots, s-1+nj$.
\item For each $j \in [d_r, d_{s}-1]$, let $z_j$ be the smallest row index of the ribbon $R_{j}$ from (iii).
Then $f(z_j) = s+n j$ and $\uppath_{\Delta^k(\p(\kappa))}(z_{d_r}) = (z_{d_r}, z_{d_r+1}, \dots, z_{d_s-1})$.
\end{list}
\end{lemma}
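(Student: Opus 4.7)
My approach is to leverage the translation between the $\eS_n$-action on $n$-cores and offset sequences, and bridge to strong covers via the results from \cite{LLMSMemoirs1} already cited. The four parts follow a natural progression: (i) and (ii) are essentially dual statements (for downward vs upward covers) characterizing when a reflection yields a strong cover; (iii) describes the skew shape explicitly; and (iv) connects that description to bounce paths using Proposition \ref{p bounce path vs offset}.

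First I would prove (ii), and then deduce (i) by interchanging the roles of $\kappa$ and $t_{r,s}\kappa$. For (ii), I would start by computing the action on edge sequences. Since $t_{r,s}$ swaps $d_{r+in}$ with $d_{s+in}$ for all $i$, the condition $d_r < d_s$ precisely captures when the swap removes boxes from $\kappa$ (so that $t_{r,s}\kappa \subset \kappa$): in the tabular display of the edge sequence, column $\bar r$ loses its 1s above height $d_r$ only after being raised to height $d_s$ in $\kappa$, and column $\bar s$ gains them. The hypothesis $s - n < r$ (equivalently $0 < s - r < n$) is a non-redundancy condition ensuring $t_{r,s}$ is not also expressible as a shift of a shorter reflection. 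The condition $d_i \notin [d_r,d_s]$ for $r < i < s$ is exactly the requirement that there be no intermediate $n$-core $\tau'$ with $t_{r,s}\kappa \subsetneq \tau' \subsetneq \kappa$ of the form $t_{r,i}\kappa$ or $t_{i,s}\kappa$—this is the standard characterization of covers in strong Bruhat order on $\eS_n$ transported through the bijection $\eS_n/\SS_n \leftrightarrow \{n\text{-cores}\}$ referenced in the text. The statement that $|\p(\kappa)| = |\p(t_{r,s}\kappa)| + 1$ is part of this cover characterization.

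For (iii), I would compute directly. Under the hypotheses of (ii), swapping $d_{r+in}$ with $d_{s+in}$ changes the $p$-sequence precisely in positions $r + jn$ and $s + jn$ for $j \in [d_r, d_s - 1]$: at each such $j$, a $0$ at position $r + jn$ becomes a $1$, and a $1$ at position $s + jn$ becomes a $0$. Each such pair of flips corresponds, by tracing the border, to the addition of a ribbon $R_j$ consisting of $s - r$ boxes lying in the consecutive diagonals $r + nj, r + 1 + nj, \ldots, s - 1 + nj$. The non-collision condition $d_i \notin [d_r, d_s]$ for $r < i < s$ guarantees that neither column $\bar i$ has a $1$ at height $j \in [d_r, d_s-1]$, so intermediate positions $i + jn$ in the edge sequence are $0$s in $\kappa$; this shows $R_j$ is a single connected ribbon of exactly $s-r$ boxes, disjoint from $R_{j'}$ for $j \ne j'$ (as those lie in disjoint diagonal ranges). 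For (iv), the index $f(z_j)$ is the position in the edge sequence of the unique north step at the westernmost row of $R_j$, which is exactly the top bit flip at $s + jn$. The identification $\uppath_{\Delta^k(\p(\kappa))}(z_{d_r}) = (z_{d_r}, z_{d_r+1}, \ldots, z_{d_s-1})$ then follows by iterating Proposition~\ref{p bounce path vs offset}(b), using $f(z_{j+1}) = s + n(j+1) = f(z_j) + n$ together with Proposition~\ref{p bounce path vs offset}(d) to confirm that $\upp_{\Delta^k(\p(\kappa))}$ is defined at each step up until $z_{d_s - 1}$ (where $\upp$ becomes undefined since $d_{f(z_{d_s-1})} = 1$).

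The main obstacle is the careful bookkeeping in (iii): verifying that the flipped bits assemble into $d_s - d_r$ disjoint ribbons of the correct shape. In particular, one must check both that the intermediate columns contribute no spurious changes (guaranteed by the non-collision hypothesis) and that the bits between the flipped endpoints (at positions $r + jn + 1, \ldots, s + jn - 1$) are in the correct state in $\kappa$ to make each $R_j$ a connected ribbon rather than a disconnected skew shape. This entails a case analysis on the values of $p$ in rows $j$ and $j+1$ of the tabular display across columns $\bar r, \bar r + 1, \ldots, \bar s$, using that $d_{i} \notin [d_r, d_s]$ forces each such column to have its top $1$ either strictly below row $d_r$ or at or above row $d_s$.
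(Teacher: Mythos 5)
Your proposal is correct in outline, but it goes a substantially more hands-on route than the paper, which disposes of most of the lemma by citation: parts~(i) and~(iii) are quoted directly from \cite{LLMSMemoirs1} (Lemma~9.4(2) and Proposition~9.5, respectively), part~(ii) is then read off from~(i) using the periodicity $d_{i-n}=d_i+1$, and only part~(iv) is argued from scratch using Proposition~\ref{p bounce path vs offset}(b) and (e). You instead propose to prove~(ii) directly from the edge-sequence action, deduce~(i) by swapping the roles of $\kappa$ and $t_{r,s}\kappa$, and verify~(iii) by a bit-by-bit analysis of which positions flip. That reconstruction can work, but two points need more care than your sketch gives them. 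First, deducing~(i) from~(ii) by role interchange only gives~(i) for representatives with $s-n<r$; since~(i) carries no such constraint you would also need to check that the stated condition on $d$ is invariant under shifting $(r,s)\mapsto(r+n,s+n)$ (it is, using $d_{i-n}=d_i+1$, but that check has to be made explicit). Second, your intuitive description of the effect of $t_{r,s}$ on the tabular display is garbled — you write that column $\bar r$ ``loses its 1s above height $d_r$'' and column $\bar s$ ``gains them,'' but with $d_r<d_s$ the swap from $\kappa$ to $\tau=t_{r,s}\kappa$ raises the top $1$ of column $\bar r$ to height $d_s-1$ and lowers that of column $\bar s$ to $d_r-1$, so the gain/loss is the reverse of what you say; your later, concrete statement that the bit at $r+jn$ flips $0\to1$ and the bit at $s+jn$ flips $1\to0$ for $j\in[d_r,d_s-1]$ is correct, however, and that is what actually drives~(iii). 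For~(iv) your use of $f(z_j)=s+nj$ together with Proposition~\ref{p bounce path vs offset}(b) and~(d) is essentially equivalent to the paper's use of~(b) and~(e). In short: the strategy is correct and would compile into a full proof, but the paper's route is shorter because it leans on the cited results rather than re-deriving them, and your deduction of~(i) and your verbal picture of the reflection need tightening before they are error-free.
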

\begin{proof}
Statement (i) is {\cite[Lemma 9.4 (2)]{LLMSMemoirs1}}.
Statement (ii) follows from (i) and \eqref{e offset seq fact}.
Statement (iii) is essentially \cite[Proposition 9.5]{LLMSMemoirs1};
it follows from the interpretation of $t_{r,s}\kappa \Rightarrow \kappa$ in terms of edge sequences.
We now prove (iv). By (iii),
the northeastmost box of the ribbon $R_j$ is the box in diagonal $s-1 + nj$ on the eastern border of $\kappa$.
The row $z_j$ containing this box satisfies $f(z_j)=s+nj$ by the discussion following \eqref{d row map}.
The statement about the uppath then follows from Proposition~\ref{p bounce path vs offset} (b) and (e).
\end{proof}

The next result
gives a description of  strong covers
with the focus on the row indices of the shape  $\kappa$ rather than diagonals.
This result as well as Lemmas \ref{l uppath plus one v2} and \ref{l cvr definitions} prepare us to prove the main results of this section, which connect strong covers
 to  $\cvr_z(\lambda)$ from  $k$-Schur straightening.

\begin{lemma}
\label{l covers agree prep}
Let  $\kappa$ be an $n$-core with extended offset sequence $d$ and row map $f$.
Let  $z \in [\ell(\kappa)]$ and set  $s = f(z)$.
Let  $r < s$ be as small as possible such that
\begin{align}
\label{el covers agree prep}
\text{ $\big(d_{i} > d_s$ \ or \ $d_{i} <0 \big)$ \, for all $i \in [r+1,s-1]$.}
\end{align}
There exists a strong cover $\tau\Rightarrow \kappa$ such that the southwestmost component of $\kappa/\tau$ has smallest row index $z$ if and only if $(d_{r} = 0$ and  $s-n < r)$.
Moreover, this cover is unique if it exists and $\tau = t_{r,s} \, \kappa$.
\end{lemma}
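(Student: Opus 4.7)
The plan is to reduce this to Lemma~\ref{l cover by offset sequence}(ii)--(iv), using the periodicity identity \eqref{e offset seq fact} to re-parameterize reflections into a canonical form that pins $s=f(z)$.

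First, I would unpack what the hypothesis on $z$ means geometrically. If $\tau = t_{r',s'}\kappa \Rightarrow \kappa$ is given in the canonical form $s'-n < r' < s'$ of Lemma~\ref{l cover by offset sequence}(ii), then by parts~(iii)--(iv) the southwestmost ribbon component is $R_{d_{r'}}$, whose smallest row index $z^\ast$ satisfies $f(z^\ast) = s'+n\,d_{r'}$. So requiring $z^\ast = z$ is the same as requiring $s'+n\,d_{r'} = s$, i.e. $s' = s-n\,d_{r'}$. Now replace the representative $(r',s')$ by $(r'',s)$ where $r'' := r' + n\,d_{r'}$; this defines the same reflection of $\eS_n$ (only residues mod $n$ matter). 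By \eqref{e offset seq fact}, $d_{r''} = d_{r'} - d_{r'} = 0$, and the canonicity bounds $s'-n < r' < s'$ translate into $s-n < r'' < s$.

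Next, I would translate the remaining conditions of Lemma~\ref{l cover by offset sequence}(ii) to the $(r'',s)$ representative. The inequality $d_{r'} < d_{s'}$ becomes $0 < d_s$, which always holds since $p_s = p_{f(z)}=1$ forces $d_s \geq 1$. The interior condition: for $r' < i < s'$, $d_i \notin [d_{r'}, d_{s'}]$, translates under the substitution $i = i'' - n\,d_{r'}$ (using $d_{i''-n d_{r'}} = d_{i''}+d_{r'}$) into
\begin{equation*}
d_{i''} \notin [0, d_s] \quad \text{for all } i'' \in (r'', s),
\end{equation*}
which is exactly the condition $d_i > d_s$ or $d_i < 0$ from the statement. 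So a strong cover $\tau \Rightarrow \kappa$ whose southwestmost component starts in row $z$ is equivalent to the existence of some $r'' \in (s-n,s)$ with $d_{r''}=0$ satisfying this interior condition, and in that case $\tau = t_{r'',s}\kappa$.

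Finally I would match this $r''$ to the $r$ of the statement and get uniqueness. Let $r$ be the smallest integer with the interior condition, as defined. Observe that the set of $r_0 < s$ such that $d_i \notin [0,d_s]$ for $i \in [r_0+1,s-1]$ is an upper set (enlarging $r_0$ only shrinks the constraint), so it has a well-defined minimum $r$, and by minimality $d_r \in [0,d_s]$. If a cover exists with corresponding $r''$, then $r \leq r''$; and if $r < r''$, then $r \in (r'', s)$ would violate $d_{r''}$'s interior condition unless $d_r \notin [0,d_s]$, contradicting $d_r \in [0,d_s]$. Hence $r=r''$, which forces $d_r=0$ and $s-n < r$. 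Conversely, if $d_r=0$ and $s-n < r$, then $r$ itself fills the role of $r''$, so $t_{r,s}\kappa \Rightarrow \kappa$ is the desired cover. Uniqueness of $r''$ (and hence of $\tau$) is automatic: any other candidate $r^\flat \neq r$ in $(s-n,s)$ with $d_{r^\flat}=0$ and interior condition would yield $r \leq r^\flat$, and then $r^\flat \in (r,s)$ with $d_{r^\flat}=0 \in [0,d_s]$ contradicts $r$'s interior condition.

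The main obstacle is purely bookkeeping: keeping track of which representative $(r,s)$ of the reflection the various offset-sequence conditions of Lemma~\ref{l cover by offset sequence}(ii) are being applied to, and converting back and forth via the shift $d_{i-n}=d_i+1$. Once the correct canonical representative with $s=f(z)$ fixed is set up, the equivalence is a direct translation, and the minimality/uniqueness argument is a short combinatorial observation about upper sets of integers.
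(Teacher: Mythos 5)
Your proof follows the same route as the paper's: both reduce to Lemma~\ref{l cover by offset sequence}(ii)--(iv) by passing to the representative of the reflection whose smaller index has offset $0$, so that the smaller row index of the southwestmost ribbon is pinned at $s=f(z)$, and then deduce $r=r''$ from minimality. The only substantive difference is stylistic: the paper directly declares the representative $(r',s')$ normalized by $d_{r'}=0$ to be canonical and reads off $s'=s$, whereas you first pass through the $s'-n<r'<s'$ form and then shift by $n\,d_{r'}$; this is a bit more bookkeeping but reaches the same place, and you correctly observe that the resulting $(r'',s)$ is independent of which representative in the $s'-n<r'<s'$ family you started from.

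One small slip worth flagging: in the sentence ``if $r < r''$, then $r \in (r'', s)$ would violate $d_{r''}$'s interior condition unless $d_r \notin [0,d_s]$, contradicting $d_r \in [0,d_s]$,'' the roles of $r$ and $r''$ are reversed --- when $r<r''$ it is $r''$ that lies in $(r,s)$, and the contradiction is $d_{r''}=0\in[0,d_s]$ against the interior condition for $r$. You in fact state this correctly in the very next sentence when arguing uniqueness of $r^\flat$, so the underlying argument is sound; the garbled sentence should just be rephrased to match it.
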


\begin{definition}
\label{d cover}
We define $\cover_z(\kappa) = \tau$ if the strong cover in Lemma \ref{l covers agree prep} exists, and otherwise we say that $\cover_z(\kappa)$ does not exist.
\end{definition}

\begin{proof}
For the ``only if'' direction, suppose $\tau\Rightarrow \kappa$ is a strong cover such that the southwestmost component of $\kappa/\tau$ has smallest row index $z$.
We can write $\tau = t_{r',s'}\kappa$ with  $r'<s'$ determined uniquely by requiring $d_{r'} =0$.
Then $s = f(z) = s'+n d_{r'}= s'$ by Lemma~\ref{l cover by offset sequence} (iii)--(iv).
By Lemma~\ref{l cover by offset sequence}~(ii),  $r= r'$, and hence $d_r=0$ and $s-n<r$.  This also establishes uniqueness.

For the ``if'' direction, suppose  $d_r =0$ and $s-n<r$.
Note that  $d_s > 0$ since  $s \in \im(f)$.
Then by Lemma~\ref{l cover by offset sequence}~(ii),  $t_{r,s}\kappa\Rightarrow \kappa$.
By Lemma \ref{l cover by offset sequence} (iii)--(iv), the southwestmost component of $\kappa/\tau$ has smallest row index $z$.
\end{proof}

\begin{example}
\label{ex strong cover and z}
For $\kappa = 665443221$ and $z=6$, we have
$\cover_z(\kappa) = 663331111$.
The strong cover  $\cover_z(\kappa) \Rightarrow \kappa$ is the same as that of Example \ref{ex strong cover and spin}.
The key relevant quantities from
Lemma~\ref{l covers agree prep} are $s = -2$, $r = -6$, and  $d_{r} d_{r+1} \cdots d_s = 0 \ 3 \ 3 \ {-1}\ 2$.
\end{example}

\begin{lemma}
\label{l uppath plus one v2}
Let $\lambda \in \Par^k_\ell$
and set $\Phi = \Delta^k(\lambda)$.
Let  $j \in [\ell]$ and $i \in \uppath_{\Phi}(j)$.
Then $|\bpath_\Phi(i,j)| \le |\uppath_\Phi(j+1)|$  if and only if
 $\lambda_x = \lambda_{x+1}$ for all $x \in \bpath_\Phi(i,j) \setminus \{j\}$;  if these conditions hold, then
$\bpath_{\Phi}(i+1,j+1) = \{x+1 \mid x \in \bpath_\Phi(i,j)\}$.
\end{lemma}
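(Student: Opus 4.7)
The plan is to work directly with the explicit description of removable roots of $\Phi = \Delta^k(\lambda)$ provided by Proposition~\ref{p Delta k basics}. Write $\bpath_\Phi(i,j) = (a_0, a_1, \ldots, a_m)$ with $a_0 = i$, $a_m = j$, and $a_{t+1} = \down_\Phi(a_t)$, so each $(a_t, a_{t+1})$ is a removable root of $\Phi$ with $\lambda_{a_t} = k+1+a_t-a_{t+1}$. The $(\Leftarrow)$ direction together with the ``moreover'' clause are then immediate: assuming $\lambda_{a_t} = \lambda_{a_t+1}$ for all $t \in [0, m-1]$, Proposition~\ref{p Delta k basics} (combined with $\lambda_{a_t+1} \ge \lambda_{a_t+2}$, which holds because $\lambda$ is a partition) shows that $(a_t+1, a_{t+1}+1)$ is a removable root of $\Phi$, so $\down_\Phi(a_t+1) = a_{t+1}+1$. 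Hence $\bpath_\Phi(i+1,j+1) = (a_0+1, a_1+1, \ldots, a_m+1)$, which has the same length as $\bpath_\Phi(i,j)$ and sits inside $\uppath_\Phi(j+1)$.

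For the $(\Rightarrow)$ direction, let $(b_0, b_1, \ldots, b_m)$ be the initial segment of $\uppath_\Phi(j+1)$, so $b_0 = j+1$ and $b_{s+1} = \upp_\Phi(b_s)$ is defined for $s < m$. I will prove by induction on $s \in [0,m]$ that $b_s = a_{m-s}+1$, with the added conclusion $\lambda_{a_{m-s-1}+1} = \lambda_{a_{m-s-1}}$ whenever $s < m$; these together yield the desired row equalities. The base case $s=0$ is immediate. For the inductive step, set $x' = b_{s+1}$; then $(x', a_{m-s}+1)$ is a removable root of $\Phi$, and Proposition~\ref{p Delta k basics} gives $\lambda_{x'} = k + x' - a_{m-s}$. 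Two sandwich bounds pin down $x'$: the map $r \mapsto k+1-\lambda_r+r$ giving the column of the unique removable root in row $r$ is strictly increasing, so applied to the known removable roots $(a_{m-s-1}, a_{m-s})$ and $(x', a_{m-s}+1)$ it forces $x' \ge a_{m-s-1}+1$; conversely, since $\lambda$ is weakly decreasing, $\lambda_{x'} \le \lambda_{a_{m-s-1}} = k+1+a_{m-s-1}-a_{m-s}$, which rearranges to $x' \le a_{m-s-1}+1$. Hence $x' = a_{m-s-1}+1$, and substituting back into $\lambda_{x'} = k + x' - a_{m-s}$ gives exactly $\lambda_{a_{m-s-1}+1} = \lambda_{a_{m-s-1}}$, completing the induction.

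The main obstacle is setting up the sandwich in the inductive step so that both bounds use only the hypothesis that $\upp_\Phi(a_{m-s}+1)$ exists together with the partition shape of $\lambda$ and the already-known removable root $(a_{m-s-1}, a_{m-s})$ of $\Phi$. Once the removable-root description from Proposition~\ref{p Delta k basics} is in hand, each inequality is a one-line computation, and no further machinery beyond partition monotonicity is needed.
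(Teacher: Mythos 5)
Your proof is correct and takes the same route the paper indicates: the paper's one-line justification ("a direct consequence of Proposition~\ref{p Delta k basics}") is exactly what you elaborate, using the removable-root description $\down_\Phi(r) = k+1-\lambda_r+r$ and the strict monotonicity of that map which follows from $\lambda$ being a partition. The only cosmetic point is that your two ``sandwich'' bounds in the $(\Rightarrow)$ direction are not independent --- establishing $\lambda_{x'}\le\lambda_{a_{m-s-1}}$ already relies on $x'>a_{m-s-1}$ from the first bound --- but the logic is sound as written.
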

\begin{proof}
This is a direct consequence Proposition \ref{p Delta k basics}.
\end{proof}

\begin{remark}
\label{rp bijection marked covers}
Let $\lambda \in \Par^k_\ell$ and $\mu= \lambda-\epsilon_z$ ($z \in [\ell]$),
and set $\Phi = \Delta^k(\lambda)$ and  $\Psi = \Delta^k(\mu)$.  We have
$\uppath_\Phi(z) = \uppath_\Psi(z)$ and if $|\uppath_\Psi(z+1)| > 1$, then $\uppath_\Phi(z+1) = \uppath_\Psi(z+1)$.
%
%
\end{remark}

\begin{lemma}
\label{l cvr definitions}
Maintain the notation of Definition \ref{d cvr}
and set  $\Phi = \Delta^k(\lambda)$.
Then \break $h = \min(k-\lambda_z, h')$, where  $h'$ is the largest element  of
$[0,\ell-z]$ such that
\begin{align}
\label{d cvr 0 v2}
\text{$|\uppath_{\Phi}(z)| < |\uppath_{\Phi}(z+i)|$ and  $\lambda_z = \lambda_{z+i}$ for all $i \in [h']$}.
\end{align}
\end{lemma}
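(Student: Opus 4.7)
The plan is to translate the $\mu$-constancy conditions defining $h$ in Definition~\ref{d cvr} into conditions on the bounce paths of $\Phi$ via iterated applications of Lemma~\ref{l uppath plus one v2}, deriving the upper bound $k-\lambda_z$ from the disjointness Lemma~\ref{l straightening many no overlap}. Throughout, Remark~\ref{rp bijection marked covers} identifies $\uppath_\Psi(z)$ with $\uppath_\Phi(z)$, so $c = |\uppath_\Phi(z)|$; and since $\Phi$ differs from $\Psi$ at most by the root $(z, k - \lambda_z + z + 1)$, which lies in column $\ge z+2$ when $\lambda_z < k$, Proposition~\ref{p Delta k basics} gives the key observation that $\lambda_z = k$ forces $\upp_\Psi(z+1)$ undefined (any candidate row $i \le z$ would require $\mu_i = k - z + i < k$, contradicting $\lambda_i \ge \lambda_z = k$). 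Thus $h \ge 1$ implies $\lambda_z < k$, and then $y + 1 = \upp^c_\Psi(z+1) = \upp^c_\Phi(z+1)$.

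I would first dispose of the case $h = 0$ by a subcase analysis matching $\min(k - \lambda_z, h') = 0$: the subcases $z = \ell$ and $\lambda_z > \lambda_{z+1}$ force $h' = 0$ immediately, and for $\upp^c_\Psi(z+1)$ undefined with $z < \ell$ and $\lambda_z = \lambda_{z+1}$, splitting on $\lambda_z = k$ (giving $k - \lambda_z = 0$) versus $\lambda_z < k$ (which makes $\upp_\Phi$ and $\upp_\Psi$ agree on rows $\ge z+1$, hence $|\uppath_\Phi(z+1)| \le c$ and $h' = 0$) resolves both sides to zero. For $h \ge 1$ I would prove both inequalities. In one direction, iterated Lemma~\ref{l uppath plus one v2} propagates the interval constancy into the shift $\uppath_\Phi(z+i) \supseteq \{z+i, \upp_\Phi(z)+i, \ldots, \chainup_\Phi(z)+i, y+i\}$ for $i \in [h]$, yielding $|\uppath_\Phi(z+i)| > c$ and, together with the $I_0$-constancy giving $\lambda_z = \lambda_{z+i}$, the bound $h \le h'$; Lemma~\ref{l straightening many no overlap} gives pairwise disjointness, forcing $\upp_\Phi(z) + h < z$ (or $y + h < z$ if $c = 1$), so by Proposition~\ref{p Delta k basics} the gap is $k + 1 - \lambda_r$ for a row $r$ with $\lambda_r \ge \lambda_z$, yielding $h \le k - \lambda_z$. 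For the reverse inequality, setting $h'' := \min(k-\lambda_z, h')$ and inducting on $i \in [h'']$, the iff direction of Lemma~\ref{l uppath plus one v2} simultaneously produces the bounce shifts for $z+i$ and the $\lambda$-constancy at the needed positions; accumulating gives $\lambda$-constancy on the middle intervals and on $[y+1, y+h'']$, whence $\mu$-constancy provided the middle intervals do not overlap row $z$.

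The main obstacle I anticipate is completing the reverse direction in the corner case $\lambda_{\upp_\Phi(z)} > \lambda_z$: the disjointness needed to pass from $\lambda$-constancy to $\mu$-constancy really requires $h'' \le k - \lambda_{\upp_\Phi(z)}$, which is not directly given by $h'' \le k - \lambda_z$. The resolution is structural: when $\lambda_{\upp_\Phi(z)} > \lambda_z$, the shift that enables $|\uppath_\Phi(z+i)| > c$ fails once $\upp_\Phi(z) + i$ reaches row $z$, so Proposition~\ref{p Delta k basics} forces $h' \le k - \lambda_{\upp_\Phi(z)}$, and the bound $h'' \le h'$ then supplies what is needed.
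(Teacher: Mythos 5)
Your proposal takes essentially the same route as the paper: translate the $h'$ condition (stated in terms of $\Phi$-bounce-path lengths) into $\lambda$-constancy on the same intervals appearing in Definition~\ref{d cvr} by iterating Lemma~\ref{l uppath plus one v2} together with Remark~\ref{rp bijection marked covers}, then compare to the $\mu$-constancy defining $h$, with the bound $k - \lambda_z$ coming ultimately from Proposition~\ref{p Delta k basics} via the disjointness in Lemma~\ref{l straightening many no overlap}. The organizational difference is that the paper packages the $\lambda$-versus-$\mu$ comparison into the single chain of equalities \eqref{e min check} for the function $g(\nu)$ on a single interval starting at $\upp_\Psi(z+1)$, whereas you prove the two inequalities $h \le \min(k-\lambda_z,h')$ and $h \ge \min(k-\lambda_z,h')$ separately. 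Your approach is longer but exposes two points that the paper compresses. First, your $h=0$ subcase split on $\lambda_z = k$ versus $\lambda_z < k$ is genuinely necessary: the paper opens with ``one checks directly that $h=0$ if and only if $h'=0$,'' but this equivalence actually fails when $\lambda_z = \lambda_{z+1} = k$ and $z<\ell$ --- there $\upp_\Psi(z+1)$ is undefined so $h=0$, yet $(z,z+1)$ is a removable root of $\Phi$, so $|\uppath_\Phi(z+1)| = |\uppath_\Phi(z)|+1$ and $h' \ge 1$ (e.g.\ $k=2$, $\lambda=(2,2)$, $z=1$ gives $h=0$, $h'=1$). The lemma's conclusion is unaffected since $\min(k-\lambda_z,h')=0$, which is exactly what your case split establishes, but strictly speaking that makes your handling more complete than the paper's stated step. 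Second, the obstacle you flag and resolve --- that when $\lambda_{\upp_\Phi(z)} > \lambda_z$ one needs $h' \le k - \lambda_{\upp_\Phi(z)}$ rather than just $k-\lambda_z$ --- is precisely the content of the paper's observation that $k-\lambda_z \ge z - \upp_\Psi(z+1)$ with equality iff $g(\lambda) > z-\upp_\Psi(z+1)$; your structural explanation (the row-shift forced by $|\uppath_\Phi(z+i)| > c$ must break before reaching row $z$) is equivalent and correct. Overall the proposal is sound.
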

\begin{proof}
One checks directly that $h=0$ if and only if $h'=0$.
So now assume $h>0$ and $h'>0$.
One checks using Remark \ref{rp bijection marked covers} and Lemma \ref{l uppath plus one v2} with  $j = z+1$, then  $j=z+2$, $\dots, \, j = z+h'-1$ and  $i = \upp_\Phi^c(j)$
that $h'$ is the largest element of $[z-\ell]$ such that
\begin{align*}
\parbox{14cm}{$\lambda$ is constant on each of the intervals $[z+1, z+h']$,
$[\upp_{\Psi}(z), \upp_{\Psi}(z)+h']$, \\[1mm]
$[\upp^2_{\Psi}(z), \upp^2_{\Psi}(z)+h']$, $\dots$, $[\chainup_{\Psi}(z), \chainup_{\Psi}(z)+h']$, and  $[y+1, y+h']$.}
\end{align*}
This is the same as the definition of  $h$ except with  $\lambda$ in place of $\mu = \lambda-\epsilon_z$.  Hence showing  $h = \min(k-\lambda_z, h')$ amounts to checking
\begin{align}
\label{e min check}
& \min\big(k-\lambda_z, g(\lambda)\big)
= \min\big(z-\upp_\Psi(z+1), g(\lambda)\big) = g(\mu), \\
& \text{where $g(\nu) := \max\big\{i \mid \nu \text{ is constant on } [\upp_\Psi(z+1),\upp_\Psi(z+1)+i-1]\big\}$ for  $\nu \in \ZZ^\ell$} \notag
\end{align}
(the second equality of \eqref{e min check} holds by $\mu_{z-1}> \mu_z$
$\implies$  $g(\mu) \le z-\upp_\Psi(z+1)$).
We have
\[k-\lambda_{z} \ge k-\lambda_{\upp_\Psi(z+1)}
 = k-\mu_{\upp_\Psi(z+1)} = z-\upp_\Psi(z+1),\]
with equality if and only if  $g(\lambda) > z-\upp_\Psi(z+1)$ (the last equality is by Proposition \ref{p Delta k basics}).
This proves \eqref{e min check}.
\end{proof}

The next three results establish a dictionary between constructions on the core side (strong covers, spin)
and constructions on bounce graph side ($\cvr_z(\lambda)$, bounce).

\begin{proposition}
\label{p covers agree}
Let $\lambda \in \Par^k_\ell$, $\kappa = \core(\lambda)$, and  $z \in [\ell]$.  Then
\begin{list}{\emph{(\roman{ctr})}} {\usecounter{ctr} \setlength{\itemsep}{2pt} \setlength{\topsep}{1pt}}
\item $\cover_z(\kappa)$ exists if and only if  $\cvr_z(\lambda) \in \Par^k_\ell$;
\item if  $\cvr_z(\lambda) \in \Par^k_\ell$, then $\cvr_z(\lambda) = \p(\cover_z(\kappa))$.
\end{list}
\end{proposition}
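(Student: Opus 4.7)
The plan is to translate both sides into the common language of the edge sequence $p$ and extended offset sequence $d$ of $\kappa$, and then compare. Setting $\Phi = \Delta^k(\lambda)$ and $s = f(z)$, Remark \ref{rp bijection marked covers} together with Proposition \ref{p bounce path vs offset}(e) gives $d_s = |\uppath_\Phi(z)| = c$. Using Lemma \ref{l cvr definitions} together with parts (e) and (g) of Proposition \ref{p bounce path vs offset}, one reformulates $h$ as $\min(k - \lambda_z,\, h')$, where $h'$ is the largest integer such that $d_{f(z+i)} > c$ for all $i \in [h']$ and the block $d_{f(z+h')}, d_{f(z+h')+1}, \ldots, d_s$ contains no zero.

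For part (i), let $r$ be the index from Lemma \ref{l covers agree prep}. I would analyze the downward scan from position $s-1$, whose positions are alternately the north-step positions $f(z+j)$ (with $d>0$) and east-step positions (with $d \le 0$), continuing while $d_i > c$ or $d_i < 0$ and terminating at the first failing position $i^*$. If $d_{i^*}=0$ (an east step), both conditions of Lemma \ref{l covers agree prep} hold, and by Proposition \ref{p bounce path vs offset}(g) this zero forces $\lambda_{z+h} > \lambda_{z+h+1}$; by Lemma \ref{l more about d cvr}(ii) this gives $\cvr_z(\lambda) \in \Par^k_\ell$. If instead $0 < d_{i^*} \le c$ (a north step at some row $z+h+1$ with $|\uppath_\Phi(z+h+1)| \le c$ yet $\lambda_{z+h+1}=\lambda_z$), the cover does not exist and Lemma \ref{l more about d cvr}(ii) gives $\cvr_z(\lambda) \notin \Par^k_\ell$. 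The locality condition $s - n < r$ reduces, via Proposition \ref{p bounce path vs offset}(a), to $h \le k - \lambda_z$, which is enforced by the definition of $h$; its failure corresponds to $h' > k - \lambda_z$, forcing $\lambda_{z+h} = \lambda_{z+h+1}$ and again $\cvr_z(\lambda) \notin \Par^k_\ell$.

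For part (ii), assume $\cvr_z(\lambda) \in \Par^k_\ell$, so $\tau := \cover_z(\kappa) = t_{r,s}\kappa$. Because $d_r = 0$ and $d_s = c$, the swap $p_{r+jn} \leftrightarrow p_{s+jn}$ changes nothing except that ones are deleted at the positions $s, s+n, \ldots, s+(c-1)n$ and inserted at $r, r+n, \ldots, r+(c-1)n$; by Proposition \ref{p bounce path vs offset}(b) the deleted positions are precisely $f(z_0), f(z_1), \ldots, f(z_{c-1})$, where $z_j = \upp^j_\Phi(z)$. Applying Proposition \ref{p bounce path vs offset}(f) to the row map $f'$ of $\tau$ and the new edge sequence, one verifies row-by-row that $\p(\tau) = \lambda + \epsilon_{[y+1,y+h]} - \epsilon_{[z,z+h]} = \cvr_z(\lambda)$. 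The main obstacle will be this final verification: tracking how the swap shifts the row map $f'$ within each block $(s + (j-1)n,\, s + jn]$ and how the length-$n$ windows counting $\p$-values pick up or lose exactly one inserted or deleted position, thereby identifying the $+1$ change with the rows $[y+1, y+h]$ and the $-1$ change with the rows $[z, z+h]$.
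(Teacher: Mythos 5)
Your proposal takes essentially the same route as the paper: translate both $\cvr_z(\lambda)$ and $\cover_z(\kappa)$ into conditions on the offset sequence $d$ and edge sequence $p$ (via Lemma \ref{l cvr definitions} and Proposition \ref{p bounce path vs offset}), and for (ii) track the bit swaps caused by the reflection $t_{r,s}$.

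There is, however, a genuine gap in your part (i). The scan you describe terminates at exactly the $r = i^*$ of Lemma \ref{l covers agree prep}, but $\cover_z(\kappa)$ exists only if \emph{both} $d_r = 0$ \emph{and} $s - n < r$. Your first bullet asserts that $d_{i^*} = 0$ by itself gives both conditions, but this is false: since $d_{s-n} = d_s + 1 = c + 1 > c$, the position $s-n$ always passes the scan, so the scan can run past $s-n$ and terminate at an east step with $d_{i^*} = 0$ even though $i^* < s - n$; in that case the cover does not exist, contradicting the bullet. Your later attempt to discharge locality by "$s - n < r$ reduces to $h \le k - \lambda_z$, which is enforced by the definition of $h$" is circular, because $h = \min(k-\lambda_z, h') \le k - \lambda_z$ holds unconditionally and therefore cannot encode the locality constraint. (The correct equivalence is $s - n < r \iff h' < k - \lambda_z$, which you do state contrapositively at the end.) The paper sidesteps this by splitting \emph{first} on $s-n < r$ versus $s-n \ge r$: when $s-n < r$ one has $h = h'$ and $d_r = 0$ becomes genuinely equivalent to $\lambda_{z+h} > \lambda_{z+h+1}$; when $s - n \ge r$, both sides of (i) fail regardless of $d_r$, using $f(z+h+1) = s-n$ and Proposition \ref{p bounce path vs offset}(g). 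You should restructure your case analysis around this dichotomy instead of on the value of $d_{i^*}$.

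For part (ii), your description of the inserted/deleted positions matches the paper's computation, but as you yourself flag, the final verification that these shifts produce exactly $\lambda + \epsilon_{[y+1,y+h]} - \epsilon_{[z,z+h]}$ is not carried out. The paper does this by introducing the auxiliary sequence $\tilde p_i = n - \sum_{j\in[i-n+1,i]} p_j$ and observing that $\tilde p$ is constant on each interval $[r+in, s+in]$ for $i \in [0, d_s-1]$; this is the specific computation your sketch still needs.
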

\begin{proof}
If  $\lambda_z =0$, then both conditions in (i) fail.  So now assume  $\lambda_z > 0$.
Maintain the notation of Lemma~\ref{l covers agree prep} so that $s = f(z)$ and $r$ is defined by \eqref{el covers agree prep}.
The edge sequence, extended offset sequence, and row map of  $\kappa$ are denoted $p$, $d$, and $f$, respectively.

We first prove (i).
For an index  $r' < s$, define $\tilde{h}(r') = p_{r'+1}+p_{r'+2} +\dots+p_{s-1}$ \break $ = |\{i \in [r'+1, s-1] : d_i > 0\}|$.
One checks using Proposition~\ref{p bounce path vs offset} (e) and (g) that  $\tilde{h}(r)$ is equal to the  $h'$ defined in Lemma~\ref{l cvr definitions} and
\begin{align}
\label{e dr eq 0 1}
d_r= 0 \ \iff \ \lambda_{z+{h'}} > \lambda_{z+{h'}+1}.
\end{align}
By Lemma~\ref{l covers agree prep} and Lemma \ref{l more about d cvr} (ii), statement  (i) amounts to showing
\begin{align}
\label{e dr eq 0 2}
\big(d_r= 0 \text{ and } s-n < r \big) \ \iff \ \lambda_{z+{h}} > \lambda_{z+{h}+1},
\end{align}
where  $h$ is as in Definition \ref{d cvr}.
We treat the cases  $s-n<r$ and  $s-n \ge r$ separately.
By Proposition \ref{p bounce path vs offset} (f),  $k-\lambda_z = \tilde{h}(s-n)$;
this together with Lemma \ref{l cvr definitions} yields
\break $s-n < r \implies h=h'$, and  $s-n \ge r \implies  h = k-\lambda_z$.
Hence in the case $s-n < r$, \eqref{e dr eq 0 1} implies \eqref{e dr eq 0 2}.
In the case $s-n \ge r$, using that $d_{s-n}= d_s+1 > 0$ and $h = k-\lambda_z = \tilde{h}(s-n)$ we have $f(z+h+1)= s-n$; by definition of  $r$,
there are no 0's in  $d_{f(z+h+1)} \cdots d_{f(z+h)}$, and
thus    $\lambda_{z+{h}} = \lambda_{z+{h}+1}$ by Proposition~\ref{p bounce path vs offset} (g).

We now prove (ii).  By (i),  $\tau := \cover_z(\kappa)$ exists.
The border sequence of  $\tau$ is  $q := t_{r, s} p$.
Define the sequence $\tilde{p}$ by $\tilde{p}_i = n-\sum_{j \in [i-n+1, i]} p_i$, and define $\tilde{q}$ similarly in terms of $q$.
Since  $q=p + \sum_{i \in [0,d_s-1]}(\epsilon_{r+in} - \epsilon_{s+in})$,  we have
\begin{align}
\label{e tilde p}
\tilde{q} = \tilde{p} + \epsilon_{[r+d_s n, s+d_s n-1]} -  \epsilon_{[r, s-1]} .
\end{align}
By Proposition~\ref{p bounce path vs offset}  (f),   $\lambda$ (resp.  $\p(\tau)$) is the subsequence of $\tilde{p}$ (resp. $\tilde{q}$)
over those indices $j$ such that  $p_j=1$  (resp.  $q_j = 1$).
By the definition of  $r$, the sequence $\tilde{p}$ is constant on each interval $[r+in,s+in]$ for $i \in [0,d_s-1]$.
Using this, \eqref{e tilde p}, and $q = t_{r, s} p$, one checks that
$\p(\tau)= \lambda + \epsilon_I - \epsilon_J$,
where  $I = f^{-1}([r+d_s n, s+d_s n-1])$
and  $J = f^{-1}([r,s])$.
Let $y$ be as in Definition \ref{d cvr}.
We have  $I = [y+1, y+h]$ and  $J = [z, z+h]$ by  $h = \tilde{h}(r)$ (from the proof of (i)) and Proposition~\ref{p bounce path vs offset} (b) and (e).
Hence $\p(\tau)= \cvr_z(\lambda)$ as desired.
\end{proof}

\begin{example}
Let  $\lambda = 222222221$ and  $\kappa = 665443221$.
By Examples \ref{ex straightening 1} and \ref{ex strong cover and z},
\[\cvr_z(\lambda) = 332221111 = \p(663331111) =  \p(\cover_z(\kappa)),\]
in agreement with Proposition \ref{p covers agree} (ii).
\end{example}


\begin{proposition}
\label{p bijection marked covers}
Fix $\lambda \in \Par^k_\ell$ and set  $\Phi = \Delta^k(\lambda)$ and  $\kappa= \core(\lambda)$.
There is a bijection
\[
\big\{(\nu, z, m) \in \Par^k_\ell \times [\ell] \times [\ell]  \mid z \in \downpath_{\Phi}(m) , \, \nu = \cvr_z(\lambda) \big\} \xrightarrow{\, \cong \,}
\VSMT^k_{(1)}(\lambda)
\]
given by  $(\nu, z, m) \mapsto (\core(\nu) \xRightarrow{~~m~~} \kappa)$.
\end{proposition}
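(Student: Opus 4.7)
The plan is to verify that the proposed map is well-defined and to construct an explicit inverse, relying almost entirely on the dictionary between bounce graphs and strong covers assembled earlier in this section---specifically Proposition~\ref{p covers agree} together with Lemma~\ref{l cover by offset sequence}(iii)--(iv).

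For well-definedness, I would take any triple $(\nu, z, m)$ in the domain. Since $\nu = \cvr_z(\lambda) \in \Par^k_\ell$, Proposition~\ref{p covers agree} guarantees that $\cover_z(\kappa)$ exists and satisfies $\core(\nu) = \cover_z(\kappa)$; in particular $\core(\nu) \Rightarrow \kappa$ is a bona fide strong cover. Then by Lemma~\ref{l cover by offset sequence}(iv) the smallest row indices of the connected components of $\kappa/\cover_z(\kappa)$ are exactly the elements of $\uppath_\Phi(z)$, so the condition $z \in \downpath_\Phi(m)$ (equivalently $m \in \uppath_\Phi(z)$) says precisely that $m$ is an admissible mark. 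Hence $\core(\nu) \xRightarrow{~m~} \kappa$ genuinely belongs to $\VSMT^k_{(1)}(\lambda)$.

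For the inverse, I would send a strong marked cover $\tau \xRightarrow{~m~} \kappa \in \VSMT^k_{(1)}(\lambda)$ to the triple $(\p(\tau), z, m)$, where $z$ is the smallest row index of the southwestmost connected component of $\kappa/\tau$. Then $\tau = \cover_z(\kappa)$ by Definition~\ref{d cover}, so Proposition~\ref{p covers agree} yields $\p(\tau) = \cvr_z(\lambda) \in \Par^k_\ell$, and Lemma~\ref{l cover by offset sequence}(iv) together with the admissibility of $m$ forces $m \in \uppath_\Phi(z)$, i.e., $z \in \downpath_\Phi(m)$. The two assignments are manifestly mutual inverses, since on both sides the data $(\tau, m)$ together with the choice of distinguished (southwestmost) component determine everything.

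The main---and really the only---obstacle is the alignment between the parameter $z$ used in $\cvr_z(\lambda)$ and the ``smallest row index of the southwestmost component'' appearing in Definition~\ref{d cover}; once that is pinned down, everything else is bookkeeping through the bijection $\p$. This alignment is ensured by Lemma~\ref{l cover by offset sequence}(iii)--(iv): the ribbons $R_{d_r}, \ldots, R_{d_s-1}$ occupy strictly increasing diagonals, so the southwestmost one is unambiguously $R_{d_r}$, and its smallest row $z_{d_r}$ is the index that matches both definitions.
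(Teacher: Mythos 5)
Your proposal is correct and rests on exactly the same ingredients the paper uses---Lemma \ref{l covers agree prep}, Lemma \ref{l cover by offset sequence}(iii)--(iv), and Proposition \ref{p covers agree}---merely unpacked into the ``well-definedness plus explicit inverse'' format rather than the paper's more compressed ``write the target set as a union'' phrasing. The identification of the southwestmost component's smallest row as the $z$ matching Definition~\ref{d cover}, which you correctly flag as the crux, is handled identically.
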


\begin{proof}
The set $\VSMT^k_{(1)}(\lambda)$  is just another notation for the set of all strong marked covers
$\tau \xRightarrow{~~m~~} \kappa$,
which, by Lemma \ref{l covers agree prep}, can be written as
the union of $\{(\cover_z(\kappa) \xRightarrow{~~m~~} \kappa)\}$
over all ${z, m \in [\ell]}$ such that $\cover_z(\kappa)$ exists and  $m$ is a possible marking of  $\cover_z(\kappa) \Rightarrow \kappa$.
By Lemma~\ref{l cover by offset sequence} (iii)--(iv),
the possible markings of
$\cover_z(\kappa) \Rightarrow \kappa$ are exactly the elements of $\uppath_\Phi(z)$.
The result then follows from
Proposition \ref{p covers agree}.
\end{proof}

Recall the definitions of spin from Section \ref{s main results} and bounce from Definition \ref{d cvr}.

\begin{proposition}
\label{p spins agree}
The bijection of Proposition \ref{p bijection marked covers} takes bounce to spin:
for \break $(\cvr_z(\lambda), z, m) \mapsto T$, we have
\[
\bounce(\cvr_z(\lambda), \lambda)+B_\Phi(m,z) = \spin(T). \]
\end{proposition}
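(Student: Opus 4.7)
The plan is to compute $\spin(T)$ by reading off geometric data of the skew shape $\kappa/\core(\cvr_z(\lambda))$ from the edge sequence $p$, offset sequence $d$, and row map $f$ of $\kappa = \core(\lambda)$, and then match this with $\bounce(\cvr_z(\lambda),\lambda) + B_\Phi(m,z)$. By Proposition~\ref{p bijection marked covers} and its proof, $T$ is the strong marked cover $(\core(\cvr_z(\lambda)) \xRightarrow{m} \kappa)$ with $\core(\cvr_z(\lambda)) = t_{r,s}\kappa$, where $s = f(z)$ and $r$ is the index supplied by Lemma~\ref{l covers agree prep} (so $d_r = 0$). Lemma~\ref{l cover by offset sequence}~(iii)--(iv) then presents $\kappa/\core(\cvr_z(\lambda))$ as $c := d_s$ ribbon components $R_0,\dots,R_{c-1}$, each with $s-r$ boxes, and identifies $c = |\uppath_\Phi(z)|$ together with $\uppath_\Phi(z) = (z_0, z_1, \dots, z_{c-1})$, $z_0 = z$, and $z_j$ the smallest row index of $R_j$.

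The main step is to show that the common height (number of rows) of the ribbons $R_j$ equals $h + 1$, where $h$ is as in Definition~\ref{d cvr}. Recall from the proof of Proposition~\ref{p covers agree} that $h = \tilde h(r) = p_{r+1} + p_{r+2} + \cdots + p_{s-1}$, i.e., the number of north steps on the border of $\kappa$ strictly between positions $r$ and $s$. Consider the bottom ribbon $R_0$: its NE-most box is $(z,\kappa_z)$, sitting in diagonal $s-1$, and its SW-most box is $(x, x+r)$, where $x$ is the unique row satisfying $f(x) = r+1$ (such $x$ exists since $p_r = 0$, a consequence of $d_r = 0$, forces a border box in diagonal $r$). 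The portion of the eastern border of $\kappa$ joining the SE corner of $(x, x+r)$ to the SE corner of $(z, \kappa_z)$ consists precisely of the edges $p_{r+1}, p_{r+2}, \dots, p_{s-1}$; its total vertical displacement $x - z$ equals the number of $1$'s among these bits. Hence $R_0$ has height $x - z + 1 = h + 1$, and $n$-core periodicity propagates the same height to every $R_j$. This translation between edge-sequence data and ribbon geometry is the one genuine obstacle in the proof; once it is established, everything else is immediate from the framework already developed in Section~\ref{s bounce graph to core dictionary}.

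The remaining two pieces fall straight out of the setup. Since the uppath strictly decreases we have $z_0 > z_1 > \cdots > z_{c-1}$; writing the marking as $m = z_i$ for the unique $i \in [0, c-1]$, a component $R_j$ lies entirely in rows $> m$ exactly when $z_j > z_i$, i.e., when $j < i$, giving $N = i$. Definition~\ref{d row chain graph} gives $B_\Phi(m, z) = i$, since $z = \down_\Phi^i(m)$. Combining,
\[
\spin(T) \;=\; c\bigl((h+1) - 1\bigr) + N \;=\; ch + i,
\]
while $\bounce(\cvr_z(\lambda), \lambda) + B_\Phi(m, z) = hc + i$ by Definition~\ref{d cvr}. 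The two agree, as required.
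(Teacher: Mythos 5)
Your proposal follows the paper's own route: realize $T$ as $\cover_z(\kappa) \xRightarrow{m} \kappa$ via Proposition~\ref{p bijection marked covers}, read the structure of $\kappa/\cover_z(\kappa)$ off Lemma~\ref{l cover by offset sequence}~(iii)--(iv) ($c = |\uppath_\Phi(z)|$ components, each an $(s-r)$-cell ribbon, markings indexed by the uppath), identify the height as $h+1$, and then count components in rows $> m$ as $i = B_\Phi(m,z)$. The decomposition $\spin = c\cdot h + i$ versus $\bounce + B_\Phi = hc + i$ is exactly the paper's final step.

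The one place your argument is not quite right is the identification of the SW-most box of $R_0$. You claim it is $(x,x+r)$ where $x$ is the unique row with $f(x) = r+1$, justified by ``$p_r = 0$ forces a border box in diagonal $r$.'' This is not valid: $p_r = 0$ says nothing about $p_{r+1}$, and the setup only forces $d_i \notin [0,d_s]$ for $r<i<s$, so $d_{r+1} < 0$ (hence $p_{r+1}=0$) is allowed. In that case there is no row $x$ with $f(x)=r+1$. For example, take $n=5$ and $\kappa = (13,10,10,7,7,4,4,3,2,2,2,2,1,1,1,1)$, with $z=4$. One finds $s = f(4) = 4$, $r = 1$, and $(d_r,d_{r+1},d_{r+2},d_{r+3}) = (0,-1,3,2)$, so $p_{r+1} = p_2 = 0$; the SW-most cell of $R_0$ is $(5,6)$, yet $\kappa_5 = 7$, so $f(5)=3 \ne r+1=2$ and the value $r+1=2$ is not attained by $f$. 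That said, the conclusion ``height $= h+1$'' is still true: the SW-most cell is always in row $z+h$ because the ribbon of Lemma~\ref{l cover by offset sequence}~(iii) has one cell per diagonal $r,\ldots,s-1$, and the row strictly increases exactly at the positions $d\in[r+1,s-1]$ with $p_d = 1$, of which there are $h=\tilde h(r)$. The paper avoids your pitfall by not attempting to locate the SW-most box at all; it just cites Lemma~\ref{l cover by offset sequence}~(iii) and reads the height as the number of $1$'s in $p_r\cdots p_s$. So your proof is repairable with a one-line replacement: define $x$ directly as the (necessarily unique) row of the diagonal-$r$ cell of $R_0$ rather than via $f(x)=r+1$, or simply cite Lemma~\ref{l cover by offset sequence}~(iii) for the height as the paper does.
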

\begin{proof}
We have  $T = (\core(\cvr_z(\lambda)) \xRightarrow{~~m~~} \kappa)= (\cover_z(\kappa) \xRightarrow{~~m~~} \kappa)$ (by Proposition \ref{p covers agree}).
Let  $c$ and $h$ be as in Definition \ref{d cvr}.
By Lemma~\ref{l cover by offset sequence} (iv), the
number of components of  $\kappa/\cover_z(\kappa)$ is $|\uppath_\Phi(z)| = c.$
By Lemma~\ref{l cover by offset sequence} (iii), the height of each component is the number of 1's (north steps) in $p_r p_{r+1} \cdots p_{s-1} p_s$,
where  $p$ is the edge sequence of $\kappa$ and $\cover_z(\kappa) = t_{r,s} \kappa$ as in Lemma \ref{l covers agree prep}.
This is equal to  $\tilde{h}(r) +1 = h+1$ by the proof of Proposition \ref{p covers agree} (i).
So  $\bounce(\cvr_z(\lambda), \lambda)$ is equal to the number of components of  $\kappa/\cover_z(\kappa)$ times one less than the height of each component.
Finally, by Lemma~\ref{l cover by offset sequence}~(iv),
the number  of components of $\kappa/\cover_z(\kappa)$
entirely contained in rows  $> m$
is equal to  $|\bpath_\Phi(\down_\Phi(m), z)| = B_\Phi(m,z)$.
\end{proof}

\begin{example}
In Example \ref{ex strong cover and spin} we computed the spins of the possible markings of
the strong cover $663331111 = \tau \Rightarrow \kappa = 665443221$ (reproduced on the right in \eqref{e bounce 1}--\eqref{e bounce 2}).
We have  $\tau = \cover_z(\kappa)$ for  $z = 6$  (see Example \ref{ex strong cover and z}) and $\lambda = \p(\kappa) = 222222221$.
We verify Proposition \ref{p spins agree} for each $m \in \uppath_{\Delta^k(\lambda)}(z) = \{6, 3\}$:
\begin{align}
\bounce(\cvr_z(\lambda), \lambda)+B_{\Delta^k(\lambda)}(6,z) = 4 + 0 = 4 = \spin(\tau \xRightarrow{~~6~~} \kappa), \label{e bounce 1}\\
\bounce(\cvr_z(\lambda), \lambda)+B_{\Delta^k(\lambda)}(3,z) = 4 + 1 = 5 = \spin(\tau \xRightarrow{~~3~~} \kappa), \label{e bounce 2}
\end{align}
where $\bounce(\cvr_z(\lambda), \lambda)= h \cdot c = 2 \cdot 2 = 4$ (by Example \ref{ex straightening 1}).
\end{example}

\section{The dual Pieri rules}


To better understand the dual Pieri rules for  $k$-Schur functions, let us first consider the
dual Pieri rule for ordinary Schur functions for  $d=1$. This rule
can be stated in the following elegant way:
\[e_1^\perp s_\lambda = \sum_{z=1}^\ell s_{\lambda - \epsilon_z}.\]
This is the same as the sum of Schur functions indexed by partitions obtained by removing
a corner from  $\lambda$
since  $s_{\lambda-\epsilon_z}$ is nonzero
if and only if  $(z, \lambda_z)$ is a removable corner.
We have the following analogous formula for the $\fs^{(k)}_\lambda$ (by summing \eqref{ep f1m k schur} below over  $m \in [\ell]$):
\begin{align}
e_1^\perp \fs^{(k)}_\lambda &= \sum_{m \in [\ell]} \sum_{z \in \downpath_\Phi(m)} t^{B_\Phi(m,z)}\fs^{(k)}_{\lambda - \epsilon_z};
\end{align}
the (vertical) dual Pieri rule
for  $d=1$ is then obtained by evaluating the right side using $k$-Schur straightening II.

To prove the vertical dual Pieri rule for general $d$, 
we need a more refined version to induct on  $d$.
This refined statement involves the subset lowering operators, recalled below.
Somewhat miraculously, the combinatorics of bounce graphs and  $k$-Schur straightening matches
exactly the combinatorics of strong covers---we obtain an algebraic meaning for the sum over strong covers
with a fixed marking; see Theorem \ref{t flm k schur}. This is the refined statement we need for  $d=1$.
The general case is then handled in Theorem~\ref{t partial restriction}.  (The former is a special case of the latter so is unnecessary but we include it as an instructive warmup.)

We will need the following variants of the notation from Definition \ref{d subset lowering} for the subset lowering operators:
for  $d \ge 0$, $m \in [\ell]$, and an indexed root ideal  $(\Phi, \lambda)$ of length  $\ell$, define
\begin{align}
\notag
\beperp_{d,m} \HH(\Phi;\lambda) &:= \beperp_{d,[m]} \HH(\Phi;\lambda) = \sum_{S \subset [m], \, |S| = d} \HH(\Phi;\lambda-\epsilon_S)\, ; \\
\tL_{d,m} \HH(\Phi;\lambda) &:= \beperp_{d,m}\HH(\Phi;\lambda) - \beperp_{d,m-1}\HH(\Phi;\lambda) = \sum_{S \subset [\ell], \, |S| = d, \, \max(S) = m} \HH(\Phi;\lambda-\epsilon_S).
\label{d fdm}
\end{align}
By Lemma~\ref{l ed perp HH}, $\beperp_{d, \ell} H(\Phi;\lambda) = e_d^\perp H(\Phi;\lambda)$ for any  $d \ge 0$, and
$e_d^\perp H(\Phi;\lambda) = 0$ when  $d > \ell$.
Note that  $\beperp_{d,m} \HH(\Phi;\lambda) = \tL_{d,m} \HH(\Phi;\lambda) = 0$ when $d > m$,
the natural generalization of this latter fact.

\begin{proposition}
\label{p f1m k schur}
Let $\lambda \in \Par^k_\ell$, $\Phi = \Delta^k(\lambda)$, and  $m \in [\ell]$.
Then
\begin{align}
\label{ep f1m k schur}
\tL_{1,m} \fs^{(k)}_\lambda &= \HH(\Phi;\lambda - \epsilon_m) = \sum_{z \in \downpath_\Phi(m)} t^{B_\Phi(m,z)}\fs^{(k)}_{\lambda - \epsilon_z}.
\end{align}
\end{proposition}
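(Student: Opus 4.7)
The plan is to handle the two equalities in \eqref{ep f1m k schur} separately, with the middle expression as the pivot. The first equality is immediate from the definition \eqref{d fdm} of $\tL_{1,m}$: the only subset $S \subset [\ell]$ with $|S|=1$ and $\max(S)=m$ is $S=\{m\}$, so the sum collapses to $\HH(\Phi;\lambda-\epsilon_m)$.

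For the second equality, I would apply Corollary \ref{c inductive computation atom down} to the indexed root ideal $(\Phi, \lambda-\epsilon_m)$. Since $(\lambda-\epsilon_m) + \epsilon_m - \epsilon_z = \lambda - \epsilon_z$, this yields
\[
\HH(\Phi;\lambda-\epsilon_m) = \sum_{z \in \downpath_\Phi(m)} t^{B_\Phi(m,z)}\HH(\Phi^z; \lambda - \epsilon_z),
\]
where $\Phi^z = \Phi \setminus \{(z,\down_\Phi(z))\}$ for $z \ne \chaindown_\Phi(m)$ and $\Phi^{\chaindown_\Phi(m)} = \Phi$.

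The remaining task is to identify $\Phi^z$ with $\Delta^k(\lambda - \epsilon_z)$ for each $z \in \downpath_\Phi(m)$, which will give $\HH(\Phi^z;\lambda-\epsilon_z) = \fs^{(k)}_{\lambda-\epsilon_z}$. This is a direct bookkeeping check using Proposition \ref{p Delta k basics}: row $z$ of $\Phi = \Delta^k(\lambda)$ starts (when nonempty in the truncation to $[\ell] \times [\ell]$) at column $k+1-\lambda_z+z$, while row $z$ of $\Delta^k(\lambda-\epsilon_z)$ starts one column to the east at $k+2-\lambda_z+z$; all other rows agree. When $z < \chaindown_\Phi(m)$, the value $\down_\Phi(z) = k+1-\lambda_z+z$ is defined (note $\lambda_z \ge \lambda_{z+1}$ automatically since $\lambda$ is a partition, so this root is indeed removable), and removing it from $\Phi$ exactly shifts row $z$ east by one, producing $\Delta^k(\lambda-\epsilon_z)$. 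When $z = \chaindown_\Phi(m)$, the column index $k+1-\lambda_z+z$ already exceeds $\ell$, so shifting east keeps row $z$ empty and $\Phi = \Delta^k(\lambda-\epsilon_z)$.

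One should also verify $\lambda - \epsilon_z \in \tpar^k_\ell$ so that $\fs^{(k)}_{\lambda - \epsilon_z}$ is well-defined by Definition \ref{d root ideal kschur}: entries remain in $\ZZ_{\le k}$, and subtracting a single $\epsilon_z$ degrades consecutive differences by at most one, which is permitted by \eqref{e pseudopartition}. There is no real obstacle in this proof -- it is a straightforward assembly of Corollary \ref{c inductive computation atom down} with the combinatorial identification of the $\Phi^z$ via Proposition \ref{p Delta k basics}. The conceptual content is that stripping the western-border roots along $\downpath_\Phi(m)$ is precisely what converts the $k$-Schur root ideal at $\lambda$ into the one at $\lambda - \epsilon_z$.
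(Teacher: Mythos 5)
Your proposal is correct and takes essentially the same route as the paper. The paper likewise reads the first equality off the definition of $\tL_{1,m}$, applies Corollary~\ref{c inductive computation atom down} to $(\Phi,\lambda-\epsilon_m)$, and identifies $\Psi^z = \Delta^k(\lambda-\epsilon_z)$; your elaboration of the bookkeeping via Proposition~\ref{p Delta k basics} (including the boundary case $z = \chaindown_\Phi(m)$) is correct and just makes explicit what the paper leaves implicit.
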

\begin{proof}
We have
\begin{align*}
\HH(\Phi;\lambda - \epsilon_m)  \ = \sum_{z \in \downpath_\Phi(m)} t^{B_\Phi(m,z)}\HH(\Psi^z ;\lambda - \epsilon_z)
\ = \sum_{z \in \downpath_\Phi(m)} t^{B_\Phi(m,z)}\fs^{(k)}_{\lambda - \epsilon_z} \, ,
\end{align*}
where  $\Psi^z := \Phi \setminus \{(z,\down_\Phi(z))\}$ for  $z \ne \chaindown_\Phi(m)$ and $\Psi^z := \Phi$ for  $z = \chaindown_\Phi(m)$.
The first equality is by Corollary~\ref{c inductive computation atom down}
and the second is by $\Delta^k(\lambda - \epsilon_z) = \Psi^z$.
\end{proof}

For a (vertical) strong marked tableau  $T = (\kappa^{(0)} \xRightarrow{~~r_1~~} \kappa^{(1)} \xRightarrow{~~r_2~~} \cdots  \xRightarrow{~~r_m~~} \kappa^{(m)})$,
define $\markz(T)$ to be the set $\{r_1, \dots, r_m\}$ of markings that appear in $T$.

\begin{theorem}
\label{t flm k schur}
For any $\lambda \in \Par^k_\ell$ and  $m \in [\ell]$,
\begin{align}
\label{ec flm k schur}
\tL_{1,m} \fs^{(k)}_\lambda &=  \sum_{T \in \VSMT^k_{(1)}(\lambda), \: \markz(T) = \{m\}} t^{\spin(T)}\fs^{(k)}_{\inside(T)}.
\end{align}
\end{theorem}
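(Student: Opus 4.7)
The plan is to assemble the theorem directly from the machinery developed in the previous sections: Proposition~\ref{p f1m k schur} expands the left side as a sum of Catalan functions $\fs^{(k)}_{\lambda-\epsilon_z}$ indexed by rows $z$ in a downpath of $m$; the $k$-Schur straightening rule (Theorem~\ref{t k schur straightening many}) rewrites each such term as $0$ or as a power of $t$ times $\fs^{(k)}_{\cvr_z(\lambda)}$; and then the core/bounce-graph dictionary of Section~\ref{s bounce graph to core dictionary} identifies the resulting sum with the strong-marked-cover sum on the right.

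More precisely, I would first apply Proposition~\ref{p f1m k schur} to write
\begin{equation*}
\tL_{1,m}\fs^{(k)}_\lambda \;=\; \sum_{z\in\downpath_\Phi(m)} t^{B_\Phi(m,z)}\,\fs^{(k)}_{\lambda-\epsilon_z},
\end{equation*}
where $\Phi=\Delta^k(\lambda)$. I would then invoke Theorem~\ref{t k schur straightening many} to replace each summand: those $z$ with $\cvr_z(\lambda)\notin\Par^k_\ell$ contribute zero, while the remaining ones contribute $t^{B_\Phi(m,z)+\bounce(\cvr_z(\lambda),\lambda)}\fs^{(k)}_{\cvr_z(\lambda)}$. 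This yields
\begin{equation*}
\tL_{1,m}\fs^{(k)}_\lambda \;=\!\!\!\sum_{\substack{z\in\downpath_\Phi(m)\\ \cvr_z(\lambda)\in\Par^k_\ell}}\!\!\! t^{B_\Phi(m,z)+\bounce(\cvr_z(\lambda),\lambda)}\,\fs^{(k)}_{\cvr_z(\lambda)}.
\end{equation*}

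Next I would appeal to Proposition~\ref{p bijection marked covers}, restricted to the fixed marking $m$: triples $(\nu,z,m)$ with $z\in\downpath_\Phi(m)$ and $\nu=\cvr_z(\lambda)\in\Par^k_\ell$ correspond bijectively, via $(\nu,z,m)\mapsto T=(\core(\nu)\xRightarrow{m}\core(\lambda))$, to the set of $T\in\VSMT^k_{(1)}(\lambda)$ with $\markz(T)=\{m\}$. Under this bijection $\inside(T)=\nu=\cvr_z(\lambda)$, and Proposition~\ref{p spins agree} identifies the exponent $B_\Phi(m,z)+\bounce(\cvr_z(\lambda),\lambda)$ with $\spin(T)$. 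Substituting completes the proof.

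There is essentially no obstacle here: every ingredient is already in place. The only thing to be careful about is matching the bijection of Proposition~\ref{p bijection marked covers} with the restriction $\markz(T)=\{m\}$, which is immediate since a weight-$(1)$ tableau has exactly one marking. So the proof is a clean assembly of Proposition~\ref{p f1m k schur}, Theorem~\ref{t k schur straightening many}, Proposition~\ref{p bijection marked covers}, and Proposition~\ref{p spins agree}, with no additional combinatorial analysis required.
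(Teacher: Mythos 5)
Your proposal is correct and follows exactly the paper's own proof: Proposition~\ref{p f1m k schur}, then Theorem~\ref{t k schur straightening many} to straighten each $\fs^{(k)}_{\lambda-\epsilon_z}$, then Propositions~\ref{p bijection marked covers} and~\ref{p spins agree} to convert the resulting bounce-graph sum into the strong-marked-cover sum with marking $m$. No differences to note.
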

\begin{proof}
Set  $\Phi = \Delta^k(\lambda)$.
Beginning with Proposition \ref{p f1m k schur}, we obtain
\begin{align}
\tL_{1,m} \fs^{(k)}_\lambda &= \sum_{z \in \downpath_\Phi(m)} t^{B_\Phi(m,z)}\fs^{(k)}_{\lambda - \epsilon_z} \\
&= \sum_{\{z \in \downpath_\Phi(m) \, \mid \, \cvr_z(\lambda) \in \Par^k_\ell\}} t^{B_\Phi(m,z)+\bounce(\cvr_z(\lambda),\lambda)}\fs^{(k)}_{\cvr_z(\lambda)} \\
\label{e f1m k schur 2}
& = \sum_{T \in \VSMT^k_{(1)}(\lambda), \: \markz(T) = \{m\}} t^{\spin(T)}\fs^{(k)}_{\inside(T)},
\end{align}
where the second equality is by Theorem \ref{t k schur straightening many}
and the third equality is by Propositions \ref{p bijection marked covers} and \ref{p spins agree}.
\end{proof}

\subsection{The vertical dual Pieri rule}

\begin{theorem}
\label{t partial restriction}
For any $\lambda \in \Par^k_\ell$ and integers $d \ge 0$,  $m \in [\ell]$, we have
\begin{align}
\label{et partial restrict 1}
\beperp_{d,m}\fs^{(k)}_\lambda ~ &= \sum_{T \in \VSMT^k_{(d)}(\lambda), \: \markz(T) \subset [m]} t^{\spin(T)} \fs^{(k)}_{\inside(T)} \, ;\\
\label{et partial restrict 2}
\tL_{d,m}\fs^{(k)}_\lambda ~ &= \sum_{T \in \VSMT^k_{(d)}(\lambda), \: \max(\markz(T)) = m} t^{\spin(T)} \fs^{(k)}_{\inside(T)} \, .
\end{align}
\end{theorem}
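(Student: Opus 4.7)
The plan is to induct on $d$, proving \eqref{et partial restrict 2} directly; the identity \eqref{et partial restrict 1} will then follow by summing \eqref{et partial restrict 2} over $m' \in [m]$. The base case $d = 0$ is trivial, and the case $d = 1$ is Theorem~\ref{t flm k schur}; these establish the same pattern the inductive step will use (when $d = 1$, $\beperp_{d-1,m-1}$ is the identity, and the argument degenerates to the proof already written).

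For the inductive step, set $\Phi = \Delta^k(\lambda)$. First I will rewrite
\[
\tL_{d,m}\fs^{(k)}_\lambda \;=\; \beperp_{d-1, m-1}\, H(\Phi;\lambda - \epsilon_m),
\]
which follows directly from unpacking the definitions (splitting off the largest element $m$ of each subset $S$). Next, I will iterate the $\beperp$-compatible recurrence \eqref{e Rlambda recurrence HH ed} in place of \eqref{e Rlambda recurrence HH} to obtain the $\beperp$-commuting analogue of Corollary~\ref{c inductive computation atom down}, giving
\[
\beperp_{d-1,m-1}H(\Phi;\lambda-\epsilon_m)
\;=\; \sum_{z \in \downpath_\Phi(m)} t^{B_\Phi(m,z)}\, \beperp_{d-1,m-1}\fs^{(k)}_{\lambda-\epsilon_z},
\]
where I identify $H(\Phi^z;\lambda-\epsilon_z)=\fs^{(k)}_{\lambda-\epsilon_z}$ via $\Phi^z = \Delta^k(\lambda-\epsilon_z)$. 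Since every $z \in \downpath_\Phi(m)$ satisfies $m \in \uppath_\Phi(z)$, Lemma~\ref{l straightening many no overlap v2} will verify the hypothesis of Theorem~\ref{t k schur straightening many ed} for $V=[m-1]$, allowing me to rewrite each summand as $t^{\bounce(\cvr_z(\lambda),\lambda)}\beperp_{d-1,m-1}\fs^{(k)}_{\cvr_z(\lambda)}$ when $\cvr_z(\lambda)\in\Par^k_\ell$ and as zero otherwise.

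Applying the inductive hypothesis \eqref{et partial restrict 1} at $(d-1,m-1)$ to each $\beperp_{d-1,m-1}\fs^{(k)}_{\cvr_z(\lambda)}$ expands it as a sum over $T' \in \VSMT^k_{(d-1)}(\cvr_z(\lambda))$ with $\markz(T')\subset[m-1]$. To finish, I will invoke Propositions~\ref{p bijection marked covers} and~\ref{p spins agree} to identify each triple $(z,m,T')$ with a vertical strong marked tableau $T \in \VSMT^k_{(d)}(\lambda)$ with $\max\markz(T)=m$: the bijection attaches the last (topmost) cover $\core(\cvr_z(\lambda))\xRightarrow{m}\core(\lambda)$ on top of $T'$, and the spin additivity $B_\Phi(m,z)+\bounce(\cvr_z(\lambda),\lambda)+\spin(T')=\spin(T)$ reassembles the exponent of $t$.

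The hard part will be ensuring the proper interaction between the algebraic subset lowering $\beperp_{d-1,m-1}$ and the combinatorial straightening $\fs^{(k)}_{\lambda-\epsilon_z}\mapsto t^{\bounce}\fs^{(k)}_{\cvr_z(\lambda)}$. A priori, $\beperp_{d-1,m-1}$ is not an operator on symmetric functions but depends on the root-ideal presentation, so it cannot simply be ``applied to both sides'' of a straightening identity. What saves the argument is the fact that Theorem~\ref{t k schur straightening many ed} is stated precisely as a commutation statement for $\beperp_{d,V}$, and that the set $V=[m-1]$ (dictated naturally by our inductive splitting off of the largest marking) always satisfies the compatibility hypothesis thanks to Lemma~\ref{l straightening many no overlap v2}. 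Verifying that this hypothesis is compatible with the choices forced upon us by the bounce-graph-to-core dictionary of Section~\ref{s bounce graph to core dictionary} is the key technical point that makes the induction close.
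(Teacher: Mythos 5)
Your proposal is correct and follows essentially the same route as the paper's proof: simultaneous induction on $d$, expansion along $\downpath_\Phi(m)$ via the subset-lowering-compatible version of Corollary~\ref{c inductive computation atom down}, straightening via Theorem~\ref{t k schur straightening many ed} with $V=[m-1]$ justified by Lemma~\ref{l straightening many no overlap v2}, and reassembly through Propositions~\ref{p bijection marked covers} and~\ref{p spins agree}. The one small point you glide over is that Theorem~\ref{t k schur straightening many ed} and Lemma~\ref{l straightening many no overlap v2} are stated in terms of $\uppath_{\Delta^k(\lambda-\epsilon_z)}(z)$, whereas you have verified $m\in\uppath_{\Delta^k(\lambda)}(z)$; the paper bridges this with Remark~\ref{rp bijection marked covers}, which asserts that these two uppaths coincide.
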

In the special case  $m = \ell$, the condition $\markz(T) \subset [m]$ in \eqref{et partial restrict 1} is no restriction at all.
Hence this proves Property \eqref{et three properties 2}.

\begin{proof}
Since $\tL_{d,m}\fs^{(k)}_\lambda = \beperp_{d,m}\fs^{(k)}_\lambda - \beperp_{d,m-1}\fs^{(k)}_\lambda$,
statements \eqref{et partial restrict 1} and \eqref{et partial restrict 2} are equivalent.
We will prove them simultaneously by induction on $d$.  Specifically, we will prove \eqref{et partial restrict 2} using \eqref{et partial restrict 1} for  $d-1, m-1$.
The base cases  $d=0$ and  $m < d$ are trivial.  So now assume  $0 < d \le m$.

Set $\Phi = \Delta^k(\lambda)$.
By definition \eqref{d fdm},
$\tL_{d,m}\fs^{(k)}_\lambda = \sum_{S \subset [\ell], \, |S| = d, \, \max(S) = m} \HH(\Phi;\lambda-\epsilon_S)$.
For each term $\HH(\Phi;\lambda-\epsilon_S)$ in this sum,
expand on $\downpath_\Phi(m)$ using Corollary~\ref{c inductive computation atom down} to obtain
\begin{align}
\HH(\Phi;\lambda - \epsilon_S) &= \sum_{z \in \downpath_\Phi(m)} t^{B_\Phi(m,z)} \HH(\Psi^z ; \lambda - \epsilon_z - \epsilon_{S'}),
\label{e expand down 2}
\end{align}
where  $\Psi^z := \Phi \setminus \{(z,\down_\Phi(z))\}$ for  $z \ne \chaindown_\Phi(m)$ and $\Psi^z := \Phi$ for  $z = \chaindown_\Phi(m)$,
and $S' := S \setminus \{m\}$.

Summing \eqref{e expand down 2} over all $S \subset [\ell]$ such that $|S| = d$ and $\max(S) = m$, we obtain
\begin{align*}
\tL_{d,m}\fs^{(k)}_\lambda
&= \sum_{S' \subset [m-1], \: |S'| = d-1} \ \sum_{z \in \downpath_\Phi(m)} t^{B_\Phi(m,z)} \HH(\Psi^z ; \lambda- \epsilon_z - \epsilon_{S'})\\
&= \sum_{z \in \downpath_\Phi(m)} t^{B_\Phi(m,z)} \beperp_{d-1,m-1} \HH(\Psi^z; \lambda-\epsilon_z) \\
&= \sum_{\{z \in \downpath_\Phi(m) \, \mid \, \cvr_z(\lambda) \in \Par^k_\ell \}}  t^{B_\Phi(m,z)+\bounce(\cvr_z(\lambda),\lambda)} \beperp_{d-1,m-1}\fs^{(k)}_{\cvr_z(\lambda)} \\
&= \sum_{V \in \VSMT^k_{(1)}(\lambda), \: \markz(V) = \{m\}} t^{\spin(V)}\beperp_{d-1,m-1} \fs^{(k)}_{\inside(V)}\\
&= \sum_{\substack{V \in \VSMT^k_{(1)}(\lambda) \\ \markz(V) =\{ m\}}} \ \sum_{\substack{U \in \VSMT^k_{(d-1)}(\inside(V)) \\ \markz(U) \subset [m-1]}} t^{\spin(V)+\spin(U)} \fs^{(k)}_{\inside(U)} \\
&= \sum_{T \in \VSMT^k_{(d)}(\lambda), \: \max(\markz(T)) = m} t^{\spin(T)} \fs^{(k)}_{\inside(T)}.
\end{align*}
We need to justify the last four equalities.
The third equality is the delicate step where we apply Theorem~\ref{t k schur straightening many ed} to each
term $\beperp_{d-1,m-1} \HH(\Psi^z ; \lambda - \epsilon_z) = \beperp_{d-1,m-1} \fs^{(k)}_{\lambda-\epsilon_z}$;
the hypotheses of the theorem are satisfied by Lemma \ref{l straightening many no overlap v2} and Remark \ref{rp bijection marked covers}.
The fourth equality is by
Propositions  \ref{p bijection marked covers} and \ref{p spins agree} (just as in the proof of Theorem~\ref{t flm k schur}).
The fifth equality is by the inductive hypothesis,
and the last equality holds since
a vertical strong marked tableau  $T$ of weight $(d)$ with  $\max(\markz(T)) = m$ is the same as a
vertical strong marked tableau  $V$ of weight $(1)$ with  $\markz(V) = \{m\}$
followed by a vertical strong marked
tableau  $U$ of weight $(d-1)$ such that  $\markz(U) \subset [m-1]$ and  $\inside(V) = \outside(U)$.
\end{proof}

\begin{example}
\label{ex partial res}
Let $k = 3$, $d = 2$, and $m=4$.
According to Theorem~\ref{t partial restriction},
$\beperp_{d,m}\fs^{(k)}_{222222}$ equals the sum of $t^{\spin(T)}s^{(k)}_{\inside(T)}$ over strong marked tableaux $T \in \VSMT^k_{(d)}(222222)$ such that $\markz(T) \subset [m]$, as shown:
{
\[
\begin{array}{rccccc}
T \text{      } & \text{\fontsize{7pt}{5pt}\selectfont \tableau{
~&~&~&~&~&1\\~&~&~&~&~&2\\~&~&~&\crc{1}\\~&~&~&\crc{2}\\~&1\\~&2\\}}&
\text{\fontsize{7pt}{5pt}\selectfont\tableau{
~&~&~&~&~&\crc{1}\\~&~&~&~&~&2\\~&~&~&1\\~&~&~&\crc{2}\\~&1\\~&2\\}}&
\text{\fontsize{7pt}{5pt}\selectfont\tableau{
~&~&~&~&~&\crc{1}\\~&~&~&~&~&\crc{2}\\~&~&~&1\\~&~&~&2\\~&1\\~&2\\}}&
\text{\fontsize{7pt}{5pt}\selectfont\tableau{
~&~&~&~&~&~\\~&~&~&1&\crc{1}&2\\~&~&~&1\\~&1&1&\crc{2}\\~&1\\~&2\\}}&
\text{\fontsize{7pt}{5pt}\selectfont\tableau{
~&~&~&~&~&~\\~&~&~&~&\crc{1}&2\\~&~&~&~\\~&~&1&\crc{2}\\~&~\\1&2\\}}\\[14mm]
\kskew(\inside(T)) & \text{\fontsize{7pt}{5pt}\selectfont\tableau{
\bl&\bl&\bl&~&~&\bl\\\bl&\bl&\bl&~&~&\bl\\\bl&~&~&\bl\\\bl&~&~&\bl\\~&\bl\\~&\bl\\}}&
\text{\fontsize{7pt}{5pt}\selectfont\tableau{
\bl&\bl&\bl&~&~&\bl\\\bl&\bl&\bl&~&~&\bl\\\bl&~&~&\bl\\\bl&~&~&\bl\\~&\bl\\~&\bl\\}}&
\text{\fontsize{7pt}{5pt}\selectfont\tableau{
\bl&\bl&\bl&~&~&\bl\\\bl&\bl&\bl&~&~&\bl\\\bl&~&~&\bl\\\bl&~&~&\bl\\~&\bl\\~&\bl\\}}&
\text{\fontsize{7pt}{5pt}\selectfont\tableau{
\bl&\bl&\bl&~&~&~\\\bl&~&~&\bl&\bl&\bl\\\bl&~&~&\bl\\~&\bl&\bl&\bl\\~&\bl\\~&\bl\\}}&
\text{\fontsize{7pt}{5pt}\selectfont\tableau{
\bl&\bl&\bl&\bl&~&~\\\bl&\bl&~&~&\bl&\bl\\\bl&\bl&~&~\\~&~&\bl&\bl\\~&~\\\bl&\bl\\}}\\[12mm]
\spin(T) &  2 & 3 & 4 & 4 & 3 \\[3mm]
\inside(T) & 222211 & 222211 & 222211 & 322111 & 222220\\[3mm]
\beperp_{2,4}\fs^{(3)}_{222222} ~ = &t^2\fs^{(3)}_{222211} \quad + & t^3\fs^{(3)}_{222211} \quad +& t^4\fs^{(3)}_{222211} \quad + &t^4\fs^{(3)}_{322111} \quad + &t^3\fs^{(3)}_{222220}.
\end{array}
\]
}
\end{example}

\begin{remark}
Theorem~\ref{t partial restriction} is somewhat delicate.
By definition,
\[\beperp_{d,m}\fs^{(k)}_\lambda = \sum_{S \subset V,\, |S| = d} \HH(\Delta^k(\lambda);\lambda-\epsilon_S).\]
However, each Catalan function $\HH(\Delta^k(\lambda);\lambda-\epsilon_S)$ in the sum is not necessarily equal to $\sum_{T \in \VSMT^k_{(d)}(\lambda),\, \markz(T) = S} t^{\spin(T)} \fs^{(k)}_{\inside(T)}$.
In fact, the Catalan functions $\HH(\Delta^k(\lambda);\lambda-\epsilon_S)$ are not always Schur positive, as can be seen from
a more detailed study of the previous example.  Setting $\Phi = \Delta^3(222222)$,  we have
\begin{align*}\beperp_{2,4}\fs^{(3)}_{222222} ~
&= \HH(\Phi;112222) + \HH(\Phi;121222) + \HH(\Phi;122122) \\
&\hspace{1pt}+ \HH(\Phi;211222) +\HH(\Phi;212122) +\HH(\Phi;221122),
\end{align*}
and these terms have the following expansions into 3-Schur functions:
\[\begin{array}{lll}
\HH(\Phi;112222) = t^4\fs^{(3)}_{222211}, &
\HH(\Phi;121222) = -t^3\fs^{(3)}_{222211}, &
\HH(\Phi;122122) = t^3\fs^{(3)}_{222211},\\
\HH(\Phi;211222) = t^3\fs^{(3)}_{222211}, &
\HH(\Phi;212122) = t^4\fs^{(3)}_{322111} + t^3\fs^{(3)}_{222220},&
\HH(\Phi;221122) = t^2\fs^{(3)}_{222211}.
\end{array}\]
Summing these, we recover the positive expression in Example \ref{ex partial res}.
The proof of Theorem~\ref{t partial restriction} implicitly handles cancellation of this form to produce
the positive sum in~\eqref{et partial restrict 1}.
\end{remark}

\subsection{The horizontal dual Pieri rule}
\label{ss horizontal dual Pieri}
We prove this rule (Property \eqref{et three properties 1}) using the vertical dual Pieri rule and a trick involving symmetric functions in noncommuting variables, inspired by \cite{FG,BF}.

Fix positive integers $\ell$ and $k$.  Recall that by Theorem \ref{t kschurs are a basis},
$\{\fs^{(k)}_\mu \mid \mu \in \Par^k_\ell \}$ is a basis for $\Lambda^k_\ell \subset \Lambda$.
We define the \emph{strong marked cover operators} $u_1, \dots, u_\ell \in \End(\Lambda^k_\ell)$ by
\begin{align}
\label{e strong marked cover operator}
\fs^{(k)}_\mu \cdot u_r = \sum_{T \in \SMT^k_{(1)}(\mu), \, \markz(T) = \{r\}} t^{\spin(T)}\fs^{(k)}_{\inside(T)}.
\end{align}
We have found it more natural here to work with right operators, so for this subsection we consider all operators to act on the right.
Note that
the set of  $T$ in the sum is just another notation for the
set of strong marked covers  $\tau \xRightarrow{~~r~~} \kappa$ with  $\p(\kappa) = \mu$.

For $d \ge 0$, define the following versions of the elementary and homogeneous symmetric functions in the (noncommuting) operators $u_i$:
\begin{align}
\tilde{e}_d ~&= ~\sum_{\ell \ge i_1 > i_2 > \cdots > i_d \ge 1}u_{i_1}u_{i_2}\cdots u_{i_d}\, ,\\
\tilde{h}_d ~&= ~\sum_{1 \le i_1 \le i_2 \le \cdots \le i_d \le \ell}u_{i_1}u_{i_2}\cdots u_{i_d}\, ;
\end{align}
by convention,
$\tilde{e}_0=\tilde{h}_0=1$ and $\tilde{e}_d = 0$ for $d > \ell$.

\begin{proof}[Proof of Property \eqref{et three properties 1}]
First, we have the basic identity
\begin{align}
\label{e E(-y)H(y) 3}
h_m^\perp-h_{m-1}^\perp\,e_1^\perp+h_{m-2}^\perp\,e_2^\perp-\cdots
+(-1)^m e_m^\perp = 0 \quad \ \text{for  $m > 0$}.
\end{align}
This follows directly from the
well-known identity
$\sum_{i=0}^m (-1)^ie_ih_{m-i} \!=\! 0$
\cite[Equation~7.13]{St}.
Hence the operators $h_1^\perp, h_2^\perp, \dots$ can be recursively expressed in terms of $e_1^\perp, e_2^\perp, \dots$.

The operators  $\tilde{h}_d$ and $\tilde{e}_d$ were cooked up so that $\fs^{(k)}_\mu \cdot \tilde{h}_d$ (resp. $\fs^{(k)}_\mu \cdot \tilde{e}_d$) equals
the right side of \eqref{et three properties 1} (resp. \eqref{et three properties 2}). 
Hence by Property \eqref{et three properties 2},  $e_d^\perp$ restricts to an operator on $\Lambda^k_\ell$ which agrees with $\tilde{e}_d$.
Then by the previous paragraph,  $h_d^\perp$ also restricts to an operator on  $\Lambda^k_\ell$
and Property \eqref{et three properties 1} is equivalent to the identity $h_d^\perp = \tilde{h}_d$ of operators on  $\Lambda^k_\ell$.
It now suffices to show that \eqref{e E(-y)H(y) 3} holds with $\tilde{e}_d$ and  $\tilde{h}_d$ in place of  $e_d^\perp$ and $h_d^\perp$.

Let $y$ be a formal variable that commutes with $u_1,\dots,u_\ell$.
The elements $\tilde{e}_d$ and $\tilde{h}_d$ can be packaged into the generating functions
\begin{align*}
\tilde{E}(y) &=\sum_{d = 0}^\ell y^d \tilde{e}_d
=(1+y u_\ell)\cdots (1+y u_1)\in \End(\Lambda^k_\ell)[y],\\
\tilde{H}(y) &= \sum_{d = 0}^\infty y^d \tilde{h}_d
=(1-y u_1)^{-1}\cdots (1-y u_\ell)^{-1}\in \End(\Lambda^k_\ell)[[y]].
\end{align*}
The identity $\tilde{H}(y)\tilde{E}(-y)=1$ implies that for any~$m>0$,
\begin{align}\label{e E(-y)H(y)}
\tilde{h}_m-\tilde{h}_{m-1}\,\tilde{e}_1+\tilde{h}_{m-2}\,\tilde{e}_2-\cdots
+(-1)^m \tilde{e}_m = 0,
\end{align}
as desired.
\end{proof}

\section{The Chen-Haiman root ideal for skew-linking diagrams}
\label{s Chen root set}


Consider a skew partition $\kappa/\eta$ and denote
its rows by $\lambda=(\kappa_1-\eta_1,\ldots,\kappa_\ell-\eta_\ell)$ and the rows of its transpose $(\kappa/\eta)'$ by  $\mu$.
If  $\lambda$ and  $\mu$ are both partitions, we say  $\kappa/\eta$ is a {\it skew-linking diagram}
and write $\skl{\lambda}{\kappa/\eta}{\mu}$.
Any $k$-skew diagram (defined in \S\ref{ss example strong tab}) is a skew-linking diagram;
this follows from the fact that the transpose of a $k+1$-core is again a $k+1$-core and the existence of the bijection $\p$.



\begin{definition}[{\cite[\S5.3]{ChenThesis}}]
\label{d Chen root ideal}
The root ideal associated to a skew-linking diagram $\skl{\lambda}{\kappa/\eta}{\mu}$ is given by
\begin{align*}
\Phi(\kappa/\eta) = \{(i,j) \in \Delta^+_{\ell(\kappa)}  \mid   i\leq\ell(\eta) \text{ and } j\geq\mu_{\eta_{i}}+i\}.
\end{align*}
Equivalently,  $\Phi(\kappa/\eta)$ is the root ideal with
removable roots $\{(i,\mu_{\eta_i}+i) \mid i\leq \ell(\eta)\}$.
\end{definition}
See Example \ref{ex skew link}.

Chen and Haiman conjectured that the Catalan functions include the $k$-Schur functions as a subclass but used
the root ideal  $\Phi(\kskew(\lambda))$
rather than our $\Delta^k(\lambda)$.
We reconcile this difference in Theorem \ref{t Chen roots} below.
This conjecture fits in the broader context of a study of Catalan functions associated to skew-linking diagrams.
We briefly mention two conjectures of Chen-Haiman arising in this study:
 for any skew-linking diagram $\skl{\lambda}{\kappa/\eta}{\mu}$, the Catalan function
$H(\Phi(\kappa,\eta); \lambda)$
(1) is the graded character of an explicitly constructed module for the symmetric group
(see \cite[Corollary~5.4.6]{ChenThesis}),
and
(2) is equal to the Schur positive sum $\sum t^{\charge(T)} s_{\sh(T)}$~over semistandard tableaux  $T$ satisfying certain katabolizability conditions
\cite[Conjecture~5.4.3]{ChenThesis}.

\begin{lemma}
\label{re:equaldowns}
Let $\skl{\lambda}{\kappa/\eta}{\mu}$ be a skew-linking diagram.
If $\kappa_y=\kappa_{y+1}$, then $\eta_y=\eta_{y+1}$ and $\lambda_y=\lambda_{y+1}$.
If  $\eta_y=\eta_{y+1} > 0$, then
$\kappa_{\mu_{\eta_{y}}+y}=\kappa_{\mu_{\eta_{y}}+y+1}$.
\end{lemma}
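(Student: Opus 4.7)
The first claim is routine. Combining $\kappa_y = \kappa_{y+1}$ with the partition condition $\lambda_y \geq \lambda_{y+1}$ gives $\eta_y \leq \eta_{y+1}$, which together with $\eta_y \geq \eta_{y+1}$ (since $\eta$ is a partition) forces $\eta_y = \eta_{y+1}$, and then $\lambda_y = \lambda_{y+1}$ follows.

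For the second claim, my plan is to argue by contradiction, showing that $\kappa_{\mu_c+y} > \kappa_{\mu_c+y+1}$, where $c = \eta_y = \eta_{y+1}$, is incompatible with either the hypothesis $\eta_{y+1} = c$ or the assumption that $\mu$ is a partition. I would set $p = \kappa'_c$ and $q = \eta'_c$, so that $\mu_c = p - q$ and $R := \mu_c + y = p - q + y$. Since $\eta_{y+1} = c$ we have $q \geq y+1$, and $\eta \subset \kappa$ gives $p \geq q$; consequently $R+1 \leq p$, which forces $\kappa_R, \kappa_{R+1} \geq c$. Suppose now that $\kappa_R > \kappa_{R+1}$ and set $d = \kappa_R$, so that $\kappa'_d = R$ and $d \geq c$.

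The argument then splits on the value of $d$. If $d = c$, the equation $\kappa'_c = R$ becomes $p = p - q + y$, i.e., $q = y$, contradicting $q \geq y+1$. If instead $d > c$, then $\eta_i \leq \eta_y = c < d$ for every $i \geq y$, so $\eta'_d \leq y - 1$; hence
\[
\mu_d \;=\; \kappa'_d - \eta'_d \;\geq\; R - (y-1) \;=\; \mu_c + 1 \;>\; \mu_c,
\]
contradicting that $\mu$ is a partition (as $d > c$).

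The main obstacle is simply identifying the right quantity to estimate: once $d = \kappa_R$ is named and $\eta'_d$ is bounded above using $\eta_y = c$, the comparison $\mu_d > \mu_c$ falls out and the contradiction is immediate. The rest of the bookkeeping ($p \geq q$, $q \geq y+1$, $R+1 \leq p$) is forced by the partition/containment conditions.
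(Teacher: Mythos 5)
Your proof is correct.  The paper's own proof of this lemma is just a one-line assertion ("direct consequences of the definition"; the first claim uses that $\lambda$ is a partition, the second that $\mu$ is), so there is no detailed argument to compare against; your proposal fills in the details legitimately.  For the first claim the calculation $\lambda_y-\lambda_{y+1}=-(\eta_y-\eta_{y+1})$ together with $\lambda,\eta$ being partitions forces both differences to vanish, exactly as you say.  For the second claim, the column-counting framework ($p=\kappa'_c$, $q=\eta'_c$, $R=p-q+y$, and then $d=\kappa_R$) is set up correctly: $q\ge y+1$ from $\eta_{y+1}=c>0$, $p\ge q$ from $\eta\subseteq\kappa$, hence $R+1\le p$ so $\kappa_R,\kappa_{R+1}\ge c$; if $\kappa_R>\kappa_{R+1}$ then $\kappa'_d=R$, and the case $d=c$ forces $q=y$ (contradicting $q\ge y+1$) while $d>c$ gives $\eta'_d\le y-1$ and hence $\mu_d\ge R-(y-1)=\mu_c+1>\mu_c$, contradicting monotonicity of $\mu$.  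All steps check out.
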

\begin{proof}
These facts are direct consequences of the definition of a skew-linking diagram. The first uses that $\lambda$ is a partition and the second that  $\mu$ is.
\end{proof}

\begin{lemma}
\label{le:forchen}
Let $\skl{\lambda}{\kappa/\eta}{\mu}$ be a skew-linking diagram and  $\Psi$
a root ideal which agrees with  $\Phi(\kappa/\eta)$ in rows $\ge y$.
Suppose that $\Psi$ has a removable root
of the form $\alpha = (a,y)$, $\Psi$ has a ceiling in columns $y, y+1$,
 and $\kappa_y= \kappa_{y+1}$.
Then $H(\Psi; \lambda) = H(\Psi \setminus \alpha; \lambda)$.
\end{lemma}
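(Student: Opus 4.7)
The plan is to apply Lemma~\ref{l cascading toggle lemma} directly to the indexed root ideal $(\Psi,\lambda)$, with the given $y$, the choice $w = \chaindown_\Psi(y)$, and the given removable root $\alpha = (a,y)$ in column $y$; the lemma then yields $H(\Psi;\lambda) = H(\Psi\setminus\alpha;\lambda)$ at once. Write the downpath from $y$ in $\Psi$ as $y = z_0, z_1, \dots, z_k = w$. Since $\Psi$ agrees with $\Phi(\kappa/\eta)$ in rows $\ge y$, the bounce recursion is $\down_\Psi(z_i) = \mu_{\eta_{z_i}} + z_i$ for $i < k$.

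The core step is an induction along the bounce path showing $\kappa_{z_i} = \kappa_{z_i+1}$ for every $i \in \{0,1,\dots,k\}$. The base case $i=0$ is the hypothesis. For the inductive step, assuming $\kappa_{z_i} = \kappa_{z_i+1}$, the first part of Lemma~\ref{re:equaldowns} gives $\eta_{z_i} = \eta_{z_i+1}$ and $\lambda_{z_i} = \lambda_{z_i+1}$. For $i < k$, $\down_\Psi(z_i)$ is defined, which forces $z_i \le \ell(\eta)$ and hence $\eta_{z_i} \ge 1$; the second part of Lemma~\ref{re:equaldowns} then yields $\kappa_{z_{i+1}} = \kappa_{z_{i+1}+1}$, completing the induction.

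I next verify the hypotheses of Lemma~\ref{l cascading toggle lemma}. The ceiling in columns $y, y+1$ is given. Condition~\eqref{el cascading toggle lemma 4} follows from the propagated identities $\lambda_{z_i} = \lambda_{z_i+1}$. For each $i \in \{0,\dots,k-1\}$, $\eta_{z_i}=\eta_{z_i+1} \ge 1$ forces $z_i+1 \le \ell(\eta)$, so rows $z_i$ and $z_i+1$ both carry removable roots of $\Phi(\kappa/\eta)$, namely $(z_i, \mu_{\eta_{z_i}}+z_i)$ and $(z_i+1, \mu_{\eta_{z_i}}+z_i+1)$; since these sit in consecutive columns, we obtain the mirror in rows $z_i, z_i+1$. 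For the wall in rows $w, w+1$: the fact that $\down_\Psi(w)$ is undefined implies row $w$ of $\Phi(\kappa/\eta)$ has no removable root, hence $w > \ell(\eta)$; moreover, the propagated equality $\kappa_w = \kappa_{w+1}$ combined with $\kappa_w \ge 1$ rules out $w = \ell$, so $w+1 \le \ell$ and both rows $w$ and $w+1$ of $\Psi$ are empty, producing the wall.

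The main obstacle is the bookkeeping in the inductive step: ensuring at every stage that $\eta_{z_i+1} \ge 1$ (so the second part of Lemma~\ref{re:equaldowns} applies and the mirror roots actually exist in $\Phi(\kappa/\eta)$), and that $\chaindown_\Psi(y) < \ell$ (so the wall condition has content rather than running off the matrix). Once these subtleties are settled, Lemma~\ref{l cascading toggle lemma} applies and delivers the equality. An equally viable route would be to expand via the recurrence~\eqref{e Rlambda recurrence HH} on $\alpha$ and then kill the term $t\,H(\Psi;\lambda+\epsilon_a-\epsilon_y)$ with Mirror Lemma~I (Lemma~\ref{l little lemma many zero 2}) using the same bounce-path data, but invoking Lemma~\ref{l cascading toggle lemma} in one shot is cleaner.
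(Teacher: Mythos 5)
Your proposal is correct and follows essentially the same route as the paper's proof: applying Mirror Lemma~II (Lemma~\ref{l cascading toggle lemma}) to $(\Psi,\lambda)$ with $w=\chaindown_\Psi(y)$, using Lemma~\ref{re:equaldowns} to propagate $\kappa_x=\kappa_{x+1}$, $\eta_x=\eta_{x+1}$, $\lambda_x=\lambda_{x+1}$ down the bounce path, and reading off the ceiling, mirror, wall, and equal-multiplicity hypotheses from these equalities together with $w>\ell(\eta)$. Your version merely spells out the induction and the bound $w+1\le\ell(\kappa)$ a little more explicitly than the paper does.
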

\begin{proof}
Apply Lemma \ref{l cascading toggle lemma} with indexed root ideal  $(\Psi, \lambda)$ and $w=\chaindown(y)$;
we need to verify the hypotheses \eqref{el little lemma many zero 2 1}--\eqref{el little lemma many zero 2 3}, \eqref{el cascading toggle lemma 4}, and  $w < \ell(\kappa)$.
The first, \eqref{el little lemma many zero 2 1}, holds by assumption.
Next, by Lemma \ref{re:equaldowns}, $\kappa_y = \kappa_{y+1}$ implies $\lambda_{y} = \lambda_{y+1}$ and $\eta_y = \eta_{y+1}$.
By  definition of $\Phi(\kappa/\eta)$,  if  $\eta_y > 0$, then $\down_{\Psi}(y) = \mu_{\eta_y}+y$ and thus $\kappa_{\down_\Psi(y)} = \kappa_{\down_\Psi(y)+1}$ by Lemma~\ref{re:equaldowns}.
Iterating this argument gives $\kappa_x = \kappa_{x+1}$, $\lambda_x = \lambda_{x+1}$, and  $\eta_x = \eta_{x+1}$ for all $x \in \downpath_\Psi(y)$,
which verifies \eqref{el little lemma many zero 2 2}, \eqref{el cascading toggle lemma 4}, and also
$w < \ell(\kappa)$ (since  $\kappa_{w} = \kappa_{w+1} > 0$).
Lastly, since  $\Psi$ has a removable root in row $i$ for $z \le i \le \ell(\eta)$ and no roots in rows $> \ell(\eta)$,
we have $w > \ell(\eta)$ and $\Psi$ has a wall in rows  $w, w+1$, i.e., \eqref{el little lemma many zero 2 3} holds.
\end{proof}

\begin{theorem}
\label{t Chen roots}
Let $\lambda \in \Par^k$ and $\kappa/\eta$ be the $k$-skew diagram of  $\lambda$. Then
\[
\fs^{(k)}_\lambda = H(\Delta^k(\lambda);\lambda) = H(\Phi(\kappa/\eta);\lambda).\]
\end{theorem}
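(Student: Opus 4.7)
The plan is to prove $H(\Delta^k(\lambda);\lambda) = H(\Phi(\kappa/\eta);\lambda)$ by interpolating between the two root ideals via repeated applications of Lemma~\ref{le:forchen}.

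First I would establish the containment $\Phi(\kappa/\eta) \subseteq \Delta^k(\lambda)$: the removable roots of $\Phi(\kappa/\eta)$ sit in columns $\mu_{\eta_i}+i$ (for $i \le \ell(\eta)$) while those of $\Delta^k(\lambda)$ sit in columns $k+1-\lambda_i+i$, so the containment reduces to $\mu_{\eta_i} \ge k+1-\lambda_i$ for each such $i$. Since $(i,\eta_i)$ lies in $\eta$, its hook length in $\kappa$ exceeds $k$ and, being in a $k+1$-core, must be $\ge k+2$; unwinding $\mu_{\eta_i} = \kappa'_{\eta_i}-\eta'_{\eta_i}$ and using $\eta'_{\eta_i}\ge i$ (with a finer bound from the hook of $(\eta'_{\eta_i},\eta_i)$ when $\eta'_{\eta_i}>i$) gives the inequality. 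For rows $i>\ell(\eta)$ both ideals are empty in row $i$, since every cell of row $i$ of $\kappa$ has hook $\le k$.

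Next I would build a chain $\Delta^k(\lambda)=\Psi_0 \supset \Psi_1 \supset \cdots \supset \Psi_N = \Phi(\kappa/\eta)$ by successively stripping off one removable root from $\Delta^k(\lambda)\setminus \Phi(\kappa/\eta)$, invoking Lemma~\ref{le:forchen} at each step to obtain $H(\Psi_{t-1};\lambda)=H(\Psi_t;\lambda)$. The natural order is to peel rows from bottom to top, $i=\ell(\eta),\ldots,1$, and within row $i$ remove the extras $(i,y)$ with $y \in [k+1-\lambda_i+i,\mu_{\eta_i}+i-1]$ in increasing order of $y$. With this ordering, the first two hypotheses of Lemma~\ref{le:forchen} are free: rows of index $\ge y$ of $\Psi_{t-1}$ agree with $\Phi(\kappa/\eta)$ because $y>i$ and all rows below $i$ have already been matched to $\Phi(\kappa/\eta)$, and $(i,y)$ is removable in $\Psi_{t-1}$ because we remove row $i$ from left to right.

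The hard part will be verifying the two remaining hypotheses of Lemma~\ref{le:forchen} at every step: that $\Psi_{t-1}$ has a ceiling in columns $y,y+1$ and that $\kappa_y=\kappa_{y+1}$. Both rest on a structural analysis of the $k$-skew diagram. An extra root $(i,y)$ in row $i$ occurs precisely because $\mu_{\eta_i}>k+1-\lambda_i$, which in turn happens exactly when several rows of $\kappa$ repeat so as to inflate the column length $\mu_{\eta_i}$; tracing these repetitions via the hook-length conditions on the $k+1$-core together with Lemma~\ref{re:equaldowns} should force $\kappa_y=\kappa_{y+1}$ for precisely the columns $y$ that arise in the peeling, and the ceiling condition on $\Psi_{t-1}$ then follows from the matching starting columns of the corresponding rows of $\Delta^k(\lambda)$ together with the removals carried out earlier in the chain. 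Once all extras have been peeled the chain terminates at $\Phi(\kappa/\eta)$, and chaining the equalities yields the theorem.
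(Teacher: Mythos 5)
Your overall strategy is exactly the paper's: interpolate between $\Delta^k(\lambda)$ and $\Phi(\kappa/\eta)$ by a chain of root ideals, removing extra roots row by row from bottom to top and left to right within each row, invoking Lemma~\ref{le:forchen} at each step. The ordering you propose is forced and correct, and your observation that the ``agreement in rows $\ge y$'' and ``removability'' hypotheses come for free with this ordering is also right.

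However, the proposal has a genuine gap precisely where the actual content of the argument lies. You write that the ceiling condition in columns $y,y+1$ and the equality $\kappa_y=\kappa_{y+1}$ ``should be forced by a structural analysis of the $k$-skew diagram'' and by ``tracing repetitions via Lemma~\ref{re:equaldowns}'' --- but this is a hand-wave, not a verification, and Lemma~\ref{re:equaldowns} is not what actually delivers these facts (it is an ingredient in the \emph{proof} of Lemma~\ref{le:forchen}, not in checking its hypotheses here). The two conditions require a direct hook-length calculation in the $k+1$-core $\kappa$: from ``hook of $(s,\eta_s+1)$ is $\le k$'' one gets $(k+1-\lambda_s+s,\eta_s+1)\notin\kappa$, and from $\eta'_{\eta_s}\ge s$ one gets $(\mu_{\eta_s}+s,\eta_s)\in\kappa$; combined with monotonicity of $\kappa$ these force $\kappa_j = \eta_s$ for all $j$ in the interval $J=[k+1-\lambda_s+s,\,\mu_{\eta_s}+s-1]$, giving $\kappa_j=\kappa_{j+1}$ there. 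The ceiling condition then needs the inequality $\mu_{\eta_s}\le\mu_{\eta_{s+1}}$ (so that row $s+1$ of the current ideal lies strictly east of columns $j,j+1$), which you do not mention. Without spelling these verifications out, the proof is not complete.

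Your containment argument is also imprecise: the useful hook is that of the box $(\eta'_{\eta_s},\eta_s)$ (the bottom cell of column $\eta_s$ inside $\eta$), whose hook $\ge k+2$ together with $\kappa_{\eta'_{\eta_s}}-\eta_s\le\lambda_s$ gives $\lambda_s+\mu_{\eta_s}\ge k+1$ directly; the route through $\kappa'_{\eta_i}$ and a ``finer bound'' for $\eta'_{\eta_i}>i$ is murky and, as phrased, does not close. (In fact, the containment is a byproduct of the same calculation that yields $\kappa_y=\kappa_{y+1}$, so once you do that cleanly the containment falls out.)
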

It is most natural to regard $(\Delta^k(\lambda), \lambda)$ as an indexed root ideal of length $\ell(\lambda) = \ell(\kappa)$, and we do this in the proof below, however this is not strictly necessary by Proposition~\ref{p trailing zeros}.
\begin{proof}
Let $\mu$ denote the rows of $(\kappa/\eta)'$.
Let
\[\Psi^s = \{(i,j) \in \Delta^k(\lambda) \mid i < s\} \sqcup \{(i,j) \in \Phi(\kappa/\eta) \mid i \ge s\}.\]
We have  $\Psi^1 = \Phi(\kappa/\eta)$ and  $\Psi^{\ell(\eta)+1} = \Delta^k(\lambda)$ since
both $\Psi$ and $\Delta^k(\lambda)$ have no roots $(i,j)$ for $i>\ell(\eta)$;
namely, when $i>\ell(\eta)$, $\kappa/\eta$ is a $k$-skew diagram
implies that $k+1-\lambda_i+i\geq\mu_1+i>\ell(\eta)+\mu_1=\ell(\kappa)$.
It thus suffices to show $H(\Psi^{s+1};\lambda)=H(\Psi^s;\lambda)$ for all  $s \in [\ell(\eta)]$.

Let  $s \in [\ell(\eta)]$.
Since  $\kappa/\eta$ is a $k$-skew diagram, $\lambda_s+\mu_{\eta_s}>k$ and thus
\begin{align}\label{e psiy1}
\Psi^{s+1} = \Psi^s \sqcup \{(s,j)  \mid  j \in J\}, \text{ where $J = [k+1-\lambda_s+s,\mu_{\eta_s}+s-1]$}.
\end{align}
We have
\[H(\Psi^{s+1};\lambda)= H(\Psi^{s+1} \setminus (s,k+1-\lambda_s+s) ;\lambda) = \dots = H(\Psi^s;\lambda)\]
by repeated application of Lemma \ref{le:forchen}.
The hypotheses hold at each step by \eqref{e psiy1} and the facts
(1) $\Psi^{s+1}$ has a ceiling in columns $j,j+1$ for $j \in J$, and (2) $\kappa_j = \kappa_{j+1}$ for $j \in J$.
Fact (1) holds by \eqref{e psiy1} and
 $\down_{\Psi^{s+1}}(s+1) =  \mu_{\eta_{s+1}}+s+1 > \mu_{\eta_s}+s$.
For (2), note that the box $(s,\eta_s+1)$ has hook length $\le k$ in $\kappa$, and hence
the box  $(k+1-\lambda_s+s, \eta_s+1) \notin \kappa$.
Together with $(\mu_{\eta_s}+s,\eta_s) \in \kappa$, this implies
$\kappa_{k+1-\lambda_s+s} = \eta_s = \kappa_{\mu_{\eta_s}+s}$.
\end{proof}

\begin{example}
\label{ex skew link}
For $k= 7$ and $\lambda = 6643321111$,
here are the $k$-skew diagram $\kappa/\eta$ of $\lambda$ and the two associated Catalan functions from Theorem \ref{t Chen roots}:
\vspace{2mm}
\[\begin{array}{ccccc}
{\fontsize{5pt}{4.2pt}\selectfont
\tableau{
\bl&\bl&\bl&\bl&\bl& ~ & ~ & ~ & ~ & ~ & ~ \\
\bl&\bl&\bl&\bl& ~ & ~ & ~ & ~ & ~ & ~ \\
\bl& ~ & ~ & ~ & ~ \\
\bl& ~ & ~ & ~  \\
\bl& ~ & ~ & ~  \\
 ~ & ~ \\
 ~ \\
 ~ \\
 ~ \\
 ~
}}
\ytableausetup{mathmode, boxsize=.96em,centertableaux}
\ & \ \   &{\tiny
\begin{ytableau}
   6    &        & *(red)  &*(red)   &*(red)  &*(red)   &*(red)  &*(red) &*(red)&*(red)\\
        &    6   &         &        &*(red)   &*(red)  &*(red) &*(red)  &*(red)&*(red)\\
        &        &    4   &        &         &          &       &*(red)  &*(red)&*(red)\\
        &        &         &    3    &        &        &        &        &*(red)&*(red)\\
        &         &         &         &    3   &       &         &       &      &*(red)\\
        &         &         &         &        &   2   &         &       &      & \\
        &         &         &         &        &       &   1    &        &      &\\
        &         &         &         &        &       &        &   1    &      &\\
        &         &         &         &        &       &        &        &  1   &\\
        &         &         &         &        &       &        &        &      &   1  \\
\end{ytableau}}
&\ \ &{\tiny
\begin{ytableau}
   6    &        & *(red)  &*(red)   &*(red)  &*(red)   &*(red)  &*(red) &*(red)&*(red)\\
        &    6   &         &  *(red) &*(red)   &*(red)  &*(red) &*(red)  &*(red)&*(red)\\
        &        &    4   &        &         &          &*(red) &*(red)  &*(red)&*(red)\\
        &        &         &    3    &        &        &        &        &*(red)&*(red)\\
        &         &         &         &    3   &       &         &       &      &*(red)\\
        &         &         &         &        &   2   &         &       &      & \\
        &         &         &         &        &       &   1    &        &      &\\
        &         &         &         &        &       &        &   1    &      &\\
        &         &         &         &        &       &        &        &  1   &\\
        &         &         &         &        &       &        &        &      &   1  \\
\end{ytableau}}\\[15mm]
\kappa/\eta && H(\Phi(\kappa/\eta); \lambda) && H(\Delta^k(\lambda); \lambda)
\end{array}.\]
\end{example}

\vspace{2mm}
\noindent
\textbf{Acknowledgments.}
We thank Elaine~So for help typing and typesetting figures.

\bibliographystyle{plain}
\bibliography{mycitations}
\def\cprime{$'$} \def\cprime{$'$} \def\cprime{$'$}
\end{document}